\documentclass{amsart}
\usepackage{graphicx}
\usepackage[dvipsnames]{xcolor}
\usepackage{verbatim, bm, hyperref}
\hypersetup{linkbordercolor=Blue, citebordercolor=Blue, urlbordercolor=Blue}

\usepackage{amsthm, amsmath, amssymb, parskip, tikz-cd, relsize, amsthm}
\usepackage[nobysame]{amsrefs}
\usepackage[cal=rsfso, calscaled=1.03]{mathalpha}

\graphicspath{ {/Users/nadiya.upegui.keagy/Documents/PhD Program/LaTeX Files/LaTeX Images/} }

\newcommand{\defn}[1]{\emph{#1}}
\newcommand{\RMod}{R\text{-}\textbf{Mod}}

\newcommand{\Z}{\mathbb{Z}}
\newcommand{\N}{\mathbb{N}}
\newcommand{\Ker}{\text{ker}}
\newcommand{\Coker}{\text{coker}}
\newcommand{\Colim}{\text{colim}}
\newcommand{\Ima}{\text{im}}
\newcommand{\Id}{\text{id}}
\newcommand{\Set}[2]{\{#1 : #2\}}

\newcommand{\Frac}[2]{\mathlarger{\frac{#1}{#2}}}
\newcommand{\Inv}[1]{#1^{-1}}

\newcommand{\End}{\text{End}}
\newcommand{\Vect}{\text{Vec}}

\theoremstyle{plain}
\newtheorem{Theorem}{Theorem}[section]
\newtheorem{Lemma}[Theorem]{Lemma}
\newtheorem{Corollary}[Theorem]{Corollary}
\newtheorem{Proposition}[Theorem]{Proposition}
\theoremstyle{definition}
\newtheorem{Definition}[Theorem]{Definition}
\newtheorem{Definition-Lemma}[Theorem]{Definition-Lemma}

\newtheorem{Notation}[Theorem]{Notation}

\newtheorem{maintheorem}{Theorem}

\newtheorem{maincorollary}[maintheorem]{Corollary}

\DefineSimpleKey{bib}{primaryclass}{}
\DefineSimpleKey{bib}{archiveprefix}{}

\BibSpec{arXiv}{%
  +{}{\PrintAuthors}{author}
  +{,}{ \textit}{title}
  +{}{ \parenthesize}{date}
  +{,}{ arXiv }{eprint}
  +{,}{ primary class }{primaryclass}
  +{,}{ note }{note}
}

\title[Projective Constant Decompositions of Persistence Modules]{Projective Constant Decompositions of Persistence Modules over Noetherian Rings}
\date{}
\author{Nadiya Upegui Keagy}

\begin{document}

\begin{abstract}
   Persistence modules serve as the algebraic foundation for topological data analysis, typically studied as representations of posets over a field. This article extends the structural and decomposition theory of persistence modules to the more general setting of unitary left modules over Noetherian rings. We introduce the notion of a module of projective constant type, a generalization of interval modules that facilitates decomposition results. We characterize the existence of projective constant decompositions for pointwise finitely generated persistence modules indexed by $A_n$-type quivers, totally ordered sets, and zigzag posets. Our primary results establish that such decompositions exist if and only if specific algebraic criteria are met: the projective colimit conditions (PCC) for quiver and zigzag indexings, and the projectivity of cokernels of internal morphisms for totally ordered indexings. Furthermore, we provide corollaries for the existence of classical interval decompositions over Noetherian rings where finitely generated projective modules are free, such as principal ideal domains (PIDs). Finally, we establish the uniqueness of indecomposable projective constant decompositions within the Krull-Schmidt framework and extend the uniqueness of interval decompositions to persistence modules over integral domains.
\end{abstract}

\maketitle

\section*{Introduction}\label{sec:intro}

   Persistence modules arose in topological data analysis as an algebraic framework for encoding how topological features of a space evolve across a filtration. Initially motivated by problems in applied topology and data analysis, a persistence module assigns a vector space to each parameter value and linear maps between them that reflect inclusion-induced structure. Among the most fundamental examples are interval modules, which serve as the basic building blocks in decomposition results and underlie the barcode representation of persistence. 

   Originally defined as finite sequences of finite-dimensional vector spaces connected by linear maps, persistence modules were later formalized by Zomorodian and Carlsson as quiver representations indexed by the natural numbers and linked to graded modules over polynomial rings \cite{zom05}. Cohen-Steiner, Edelsbrunner, and Harer extended this framework to real-indexed persistence modules, initiating the study of continuous persistence \cite{coh07}. Subsequent work by Carlsson and de Silva introduced zigzag modules and related them to finite-dimensional representations of $A_n$-type quivers, thereby bringing Gabriel's theorem and tools from quiver representation theory into the persistence setting \cite{car10}.

   More recently, Botnan and Crawley-Boevey established structural and interval decomposition theorems for pointwise finite-dimensional persistence modules indexed by totally ordered sets and zigzag paths \cite{bot18}. Furthermore, Botnan demonstrated the existence of interval decompositions for persistence modules indexed by infinite discrete zigzag posets \cite{bot17}. We have also contributed alternative constructive proofs for the existence of interval decompositions of persistence modules indexed by both totally ordered sets and zigzag posets \cite{gan25}.

   Although persistence modules have predominantly been studied over fields, recent work has extended these investigations to persistence modules over more general coefficient rings. Luo and Henselman-Petrusek established criteria for the existence of interval decompositions of pointwise freely and finitely generated persistence modules over principal ideal domains (PIDs), initially for modules indexed by finite linear quivers \cite{luo23}, and subsequently for modules indexed by arbitrary totally ordered sets \cite{luo25}.

   The purpose of this article is to extend several structural results for persistence modules to the setting of unitary left modules over Noetherian rings $R$ with identity (note that we omit the qualifier ``unitary left'' as no confusion will arise). To do this, we introduce a generalized notion of an interval module, called a \emph{module of projective constant type} (see Definition~\ref{def:pc-module}). This module behaves in essentially the same way in that it is isomorphic to some fixed projective $R$-module on indices lying in the interval and zero elsewhere, with isomorphisms between nonzero components. Some motivation for this definition stems from the fact that an interval module is itself a module of projective constant type.

   Fundamentally, we wish to understand when a persistence module can be decomposed as an internal direct sum of modules of projective constant type, which we call a \emph{projective constant decomposition} (see Definition~\ref{def:pc-decomp}). In this article, we prove that such a decomposition exists for pointwise finitely generated persistence modules indexed by an $A_n$-type quiver or a zigzag poset if and only if particular conditions are satisfied, which we call the \emph{projective colimit conditions (PCC)} (see Definition~\ref{def:proj-colim-cond}). We also prove that a pointwise finitely generated persistence module indexed by a totally ordered poset admits a projective constant decomposition if and only if the cokernel of every internal morphism is projective. Our main contributions are stated in the proceeding theorems and corollaries. 

   \begin{maintheorem}\label{thm:main-pc-quiver}
      Let $R$ be a (not necessarily commutative) Noetherian ring with identity. Let $F:A_n \to \RMod$ be a nonzero pointwise finitely generated $A_n$-persistence module. Suppose that $F$ satisfies the projective colimit conditions (PCC). Then $F$ admits a projective constant decomposition.
   \end{maintheorem}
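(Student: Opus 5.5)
We argue by induction on $n$, peeling off one vertex at a time. The working interpretation of the projective colimit conditions is this: for every interval (sub-path) $[i,j]$ of $A_n$, the colimit of $F$ restricted to $[i,j]$ should be a projective $R$-module, together with the analogous conditions for the cokernels of the structure maps. Concretely, along each arrow $F_a \to F_b$ (in either orientation), both the image and the cokernel of the structure map should be projective summands. Since $R$ is Noetherian and each $F_k$ is finitely generated, every submodule and quotient considered is finitely generated. A surjection onto a projective module splits, so each of these conditions lets us choose direct complements.

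The base case $n=1$ is immediate. The PCC makes $F_1$ projective, so $F$ is itself of projective constant type on the interval $\{1\}$.

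For the inductive step, consider the last vertex $n$ and the arrow $\alpha$ joining $n-1$ and $n$.

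\emph{Case 1: $\alpha\colon F_{n-1}\to F_n$.} By the PCC, $C=\operatorname{coker}\alpha$ is projective, so $F_n \cong \operatorname{im}\alpha \oplus C'$ with $C'\cong C$. The summand $C'$, placed at vertex $n$ and zero elsewhere, is a projective-constant submodule. We would like $F$ to be the direct sum of this summand and a submodule $G$ with $G_n=\operatorname{im}\alpha$. Then $G$ is a persistence module whose last map is surjective.

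Next, split $F_{n-1}=\ker\alpha \oplus K'$. This is possible because $\operatorname{im}\alpha$ is projective, so $F_{n-1}\to \operatorname{im}\alpha$ splits. Now $\alpha$ restricts to an isomorphism $K'\cong \operatorname{im}\alpha$. The idea is to apply induction to the restriction $F|_{[1,n-1]}$, then extend every interval summand that reaches $n-1$ and lies in $K'$ by one step, and truncate those that lie in $\ker\alpha$.

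\emph{Case 2: $\alpha\colon F_n\to F_{n-1}$.} This is dual: kernels and images play the roles just taken by images and cokernels.

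The main obstacle is compatibility. The inductive decomposition of $F|_{[1,n-1]}$ must be chosen so that $F_{n-1}$ decomposes as a sum of its pieces, each of which lies entirely in $\ker\alpha$ or maps injectively. An arbitrary decomposition at $n-1$ will not respect $\ker\alpha$.

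I would therefore strengthen the induction hypothesis. It should produce a decomposition compatible with a prescribed filtration at the endpoint $n-1$, namely one adapted to any chain of submodules that arise as kernels of composites toward later vertices. Equivalently, the summands at each vertex should be built as complements in a flag
\[
0\subseteq \ker(F_k\to F_{k+1})\subseteq \ker(F_k\to F_{k+2})\subseteq\cdots
\]
together with the dual flag of images coming from the left. The PCC should be exactly what makes every step of these flags a split inclusion with projective quotient, and this is where I expect to spend most effort.

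Concretely, for each pair $i\le j$ I would define the $R$-module of "bars born at $i$ and dying after $j$" as a subquotient of $F_k$, generalizing the field-case barcode construction. The PCC show that this subquotient is projective. I would then lift it to a genuine submodule $M_{[i,j]}$ of $F$ by choosing splittings successively, ordered by interval length. Finally, I would verify that $F=\bigoplus M_{[i,j]}$ vertexwise by a rank-free, projective-summand version of the usual counting argument, using that a split injection with projective cokernel between finitely generated modules is an isomorphism once its cokernel vanishes.
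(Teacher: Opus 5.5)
There is a genuine gap. You correctly locate the difficulty: the decomposition of $F|_{[1,n-1]}$ must be compatible with $\Ker\alpha$ (your Case~1) or $\Ima\alpha$ (your Case~2) at the vertex $n-1$. But you never resolve it. The strengthened induction hypothesis is never formulated: which chains, under which projectivity hypotheses, admit a compatible decomposition? The following claims are the whole content of the theorem, yet they are stated without proof:
\begin{itemize}
\item that ``the PCC make every step of these flags split with projective quotient'';
\item that ``the PCC show this subquotient is projective'';
\item that these subquotients lift coherently to persistence submodules;
\item that the lifts exhaust $F$.
\end{itemize}
Your replacement for the counting argument is vacuous: an injection with zero cokernel is trivially an isomorphism, and proving that the cokernel vanishes is the point.

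Moreover, the working form of the PCC you adopt is too weak to support any such argument. Take $R=\Z$, $n=3$, $F_1=F_3=\Z$, $F_2=\Z^2$, $F_{1,2}(1)=(1,0)$, $F_{3,2}(1)=(1,2)$. Each structure map has free cokernel and image a direct summand, and every colimit over a subpath is $\Z$ or $\Z^2$. Yet $F_2/(\Ima[1,2]+\Ima[3,2])\cong\Z/2$. So $F$ has no projective constant decomposition: in such a decomposition this quotient would be a direct sum of zeros and of the projective modules of summands supported on $\{2\}$. In particular, in your Case~2 with $\alpha=F_{3,2}$, no decomposition of $F|_{[1,2]}$ is compatible with $\Ima\alpha$. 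So \textbf{(C4)} must be invoked exactly where two non-nested submodules meet at one vertex. The same holds at a source $1\leftarrow 2\to 3$ for \textbf{(C1)}, in the form ``$F_2/(\Ker[2,1]+\Ker[2,3])$ is projective''. Your sketch never does this.

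The missing ingredient is a two-submodule splitting lemma. Suppose $U,V\subseteq F_k$ and $F_k/U$, $F_k/V$, $F_k/(U+V)$ are projective. Then $0\to U/(U\cap V)\to F_k/V\to F_k/(U+V)\to 0$ and its mirror both split. Hence $F_k=(U\cap V)\oplus G\oplus K\oplus H$ with $U=(U\cap V)\oplus G$ and $V=(U\cap V)\oplus K$.

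The paper arranges never to need more than two submodules at once:
\begin{itemize}
\item Apply the inductive hypothesis to $F|_{[1,x]}$ and $F|_{[x,n]}$ and reglue along $F_x$. This gives $F=G\oplus H\oplus J$, with $G$ supported on $\{1,\ldots,x-1\}$, $J$ supported on $\{x+1,\ldots,n\}$, and $H$ having a peak at $x$ (maps toward $x$ injective, maps away from $x$ surjective).
\item Do this at $x=2$ and then, inside $H$, at $x=n-1$. All maps between $2$ and $n-1$ become isomorphisms, reducing to an $A_3$-module with a peak at $2$.
\item There the lemma is applied to $\Ima[1,2],\Ker[2,3]$ (using $\Coker[1,3]$ projective), to $\Ima[1,2],\Ima[3,2]$ (using \textbf{(C4)}), or to $\Ker[2,1],\Ker[2,3]$ (using \textbf{(C1)}).
\end{itemize}

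To salvage your endpoint-peeling route, you would need an analogous lemma for a chain plus one submodule (or two chains). You would also need an explicit list of the subquotients that must be projective, and a derivation of each from \textbf{(C1)}--\textbf{(C4)} on subintervals. That is precisely the work the proposal leaves undone.
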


   \begin{maintheorem}\label{thm:main-pc-total}
      Let $R$ be a commutative Noetherian ring with identity. Let $F:P \to \RMod$ be a nonzero pointwise finitely generated totally ordered persistence module. Suppose that the cokernel of every internal morphism $F_{x,y}$ is projective. Then $F$ admits a projective constant decomposition.
   \end{maintheorem}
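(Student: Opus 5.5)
The plan is to reduce to a situation where every relevant submodule is a direct summand with well-defined rank. Then we run a Botnan--Crawley-Boevey style functorial filtration argument, using Noetherianity and rank counting in place of finite-dimensionality.

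\emph{Step 1: splitting and ranks.} First we show that each $F_x$ is finitely generated projective. This is where I am least sure of the setup: I am relying on the convention that the internal morphisms include the one from the zero (bottom) component, whose cokernel is $F_x$ itself. If the paper's convention does not include this, this step has to be replaced by a different argument. Granting it, each sequence $0\to \Ima F_{x,y}\to F_y\to \Coker F_{x,y}\to 0$ splits. Hence $\Ima F_{x,y}$ is a projective direct summand of $F_y$, and therefore $\Ker F_{x,y}$ is a direct summand of $F_x$. Since $R$ is commutative Noetherian, finitely generated projectives are locally free, so their rank functions on $\operatorname{Spec} R$ are locally constant. Using the finitely many idempotents that cut $\operatorname{Spec}R$ into pieces on which all relevant ranks are constant, we may work componentwise and assume the ranks $\operatorname{rk}F_x$ and $\operatorname{rk}\Ima F_{x,y}$ are integers.

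\emph{Step 2: stabilization.} For fixed $x$, consider the two families $\Ker F_{x,y}$ (as $y\ge x$ varies) and $\Ima F_{w,x}$ (as $w\le x$ varies), together with their analogues attached to cuts of $P$. The kernels form an ascending directed family of submodules of $F_x$, so they have a maximal element by Noetherianity. The images form a descending family of direct summands. Here we use rank: a summand inclusion $M'\subseteq M$ of projectives of equal rank has a projective quotient of rank $0$, hence $M'=M$. So only finitely many distinct images occur (at most $\operatorname{rk}F_x+1$). Thus at each $x$ the images from below and kernels to above take finitely many values. Each such value, as well as each sum and intersection of these modules, is a direct summand of $F_x$. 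For sums and intersections this needs a short check via the projective cokernels and localization, where everything becomes free over a local ring with split maps.

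\emph{Step 3: construction and verification.} For each interval $I\subseteq P$ determined by a pair of cuts, define the subquotient $V_x^I=(\text{image from below }\cap\text{ kernel above})/(\text{smaller such})$ at each $x\in I$, as in \cite{bot18}. Step 2 shows that $V_x^I$ is projective and that the internal morphisms induce isomorphisms $V_x^I\cong V_y^I$ for $x\le y$ in $I$. We then choose, at one representative point of each $I$, a complement lifting $V^I$ into $F_x$. We transport it to all other points of $I$ via the internal isomorphisms, and extend by zero outside $I$, obtaining a submodule $F^I$ of projective constant type. The main obstacle is making these lifts compatible, so that $F^I\subseteq F$ is a subfunctor and $\bigoplus_I F^I\to F$ is an isomorphism, when $P$ may be uncountable. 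To handle this, I would choose the lifts inside the nested summand filtrations. Then directness and surjectivity can be checked pointwise after localizing at each prime. Over the local ring all modules are free and maps are split, so the equality $\operatorname{rk}F_x=\sum_{I\ni x}\operatorname{rk}V_x^I$ forces the pointwise sum map to be an isomorphism. This rank identity follows from the finiteness established in Step 2.
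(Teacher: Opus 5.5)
The gap is in Step 3, at exactly the point you call the main obstacle, and your proposed remedy does not address it. Write $V^\pm_I(y)$ for the numerator and denominator of your subquotient $V^I_y$.

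Pushing a complement $C\subseteq V^+_I(x_0)$ forward along $F_{x_0,y}$ is fine. But for $y\in I$ below $x_0$ there is no map $F_{x_0}\to F_y$. The isomorphisms $V^I_y\cong V^I_{x_0}$ are between \emph{subquotients}, so you must choose lifts $C(y)\subseteq V^+_I(y)$. These lifts must satisfy $F_{y',y}(C(y'))=C(y)$ for all $y'\le y$ in $I$, otherwise $F^I$ is not a subfunctor.

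If $I$ has no least element, this is an inverse-limit problem. Along a countable coinitial sequence you can lift step by step. For a general totally ordered $P$, however, the coinitiality of $I$ can be uncountable. At a limit stage $\lambda$ you then need a single $C(q_\lambda)$ lifting all earlier $C(q_\beta)$, $\beta<\lambda$, simultaneously. Over a field the relevant descending chains of subspaces stabilize because finite-dimensional spaces satisfy DCC. F.g.\ projectives over a Noetherian ring need not satisfy DCC (already $\Z\supset 2\Z\supset 4\Z\supset\cdots$), so nothing in your Steps 1--2 guarantees a common lift.

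Localizing and counting ranks only concerns whether $\bigoplus_{I\ni x}F^I_x\to F_x$ is bijective at each single point; it says nothing about functoriality. It is also incomplete on its own terms. Equal ranks of free modules over a local ring force an isomorphism only once you know the map is surjective or split injective, and that is exactly the covering/disjointness statement still to be proved.

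The missing idea is a device that makes these descending lifting constraints stabilize, and this is what the paper supplies. It never builds all summands at once. Lemma~\ref{lem:decomp-criteria} (Zorn plus ACC at each $F_x$) reduces everything to splitting off one projective-constant summand through a point $z$ (Proposition~\ref{prp:total-one-pc-decomp}).

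Below $z$, the paper runs Zorn over families $(A_x)_{x\in Q}$ satisfying \textbf{(S1)}--\textbf{(S4)}. The projective-quotient conditions \textbf{(S3)} and \textbf{(S4)}, together with Lemma~\ref{lem:fin-chain}, make every chain of candidate submodules at a point finite, so intersections along chains are attained. The minimal-image and $(C_x)$ arguments then force the maximal family to be coherent with $A_x\cong M$. Above $z$, it dualizes (Lemmas~\ref{lem:dual-ann-ker-ima} and \ref{lem:coker-dual-map-proj}) so that kernels become images and the same argument applies; this is where commutativity is used.

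Your route could be repaired with the same device. Recurse transfinitely along a reverse-well-ordered coinitial sequence $(q_\beta)$ of $I$. Choose each $C(q_\beta)$ as a complement of $V^-_I(q_\beta)$ in $V^+_I(q_\beta)$; it is then a summand of $F_{q_\beta}$ contained in every $\Ima F_{w,q_\beta}$ with $w\in I$, $w\le q_\beta$. At a limit $\lambda$, the preimages $F_{q_\lambda,q_\beta}^{-1}(C(q_\beta))$ form a descending chain in $F_{q_\lambda}$ with projective quotients. That chain is finite by Lemma~\ref{lem:fin-chain}, and a common lift exists. As written, though, this step is absent.

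On Step 1: the paper has no ``zero bottom component'' convention. Projectivity of $F_x$ also does not follow from the stated hypothesis: a one-point $P$ with $R=\Z$ and $F_x=\Z/2\Z$ satisfies it but has no projective constant decomposition. The paper uses projectivity of $F_x$ tacitly as well. Lemma~\ref{lem:comp-decom-ima-ker} asserts that $F|_S$ satisfies the PCC, and condition \textbf{(C1)} at $x=y$ says $F_x$ is projective. So take it as an additional hypothesis rather than trying to derive it.
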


   \begin{maintheorem}\label{thm:main-pc-zigzag}
      Let $R$ be a commutative Noetherian ring with identity. Let $F:P \to \RMod$ be a nonzero pointwise finitely generated zigzag persistence module with extrema $\{z_i \mid i \in \Gamma \}$. Suppose that $F$ satisfies the projective colimit conditions (PCC). Then $F$ admits a projective constant decomposition.
   \end{maintheorem}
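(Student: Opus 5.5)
The plan is to reduce the zigzag case to the two cases already treated: Theorem~\ref{thm:main-pc-quiver} for finite stretches, and the totally ordered Theorem~\ref{thm:main-pc-total} for the monotone pieces between consecutive extrema. I would decompose $P$ along its extrema $\{z_i \mid i\in\Gamma\}$ into maximal totally ordered segments $[z_i,z_{i+1}]$. On each segment $F$ is increasing or decreasing. The PCC, restricted to a segment, should give exactly the hypothesis of Theorem~\ref{thm:main-pc-total}: the cokernel of every internal morphism is projective. In the decreasing case one passes to the opposite order. So each restriction $F|_{[z_i,z_{i+1}]}$ admits a projective constant decomposition.

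The second step is to glue these local decompositions coherently at the shared extrema. This is what the PCC are designed for. At an extremum $z_i$ the colimit (or limit) conditions should say that the images of the incoming maps from the two adjacent segments, their sum, and their intersection are direct summands of $F(z_i)$ with projective complements. Noetherianity and finite generation give finite-length-type control here. I would follow the strategy of the quiver proof. For each extremum, choose a decomposition of $F(z_i)$ into projective summands that is simultaneously compatible with the filtrations induced by both neighbouring segments, using that projective quotients split. Then refine the segment decompositions so that each summand restricted to $z_i$ is one of these chosen pieces. Summands of projective constant type on adjacent segments that meet in a common summand at $z_i$ are concatenated into a single projective constant module whose support is the union of intervals. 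Summands ending before $z_i$ are kept as they are.

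For infinitely many extrema ($\Gamma$ infinite, as with discrete infinite zigzags), I would proceed by exhaustion. Order $\Gamma$, process extrema outward from a base point, and at each stage adjust only the decomposition of the newly added segment, so that earlier choices are never changed. The direct sum decomposition of $F$ is then the colimit of the finite stage decompositions. Each point of $P$ lies in a finite stage, so the internal direct sum condition can be checked pointwise.

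I expect the main obstacle to be the compatibility step at an extremum. A single decomposition of $F(z_i)$ must refine two independent filtrations coming from left and right. Over a field this is the standard fact that two flags admit a common adapted basis. Over a Noetherian ring it can fail without the PCC. My first attempt would be to show by induction on the filtration length that the PCC make every pairwise intersection $I\cap J$ and sum $I+J$ of filtration terms a summand with projective quotient. Then I would build complements inductively, mimicking the Zassenhaus-type argument, with projectivity replacing the choice of bases.
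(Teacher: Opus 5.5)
There is a genuine gap in the gluing step, and it is not the one you single out. Producing one decomposition of $F_{z_i}$ compatible with the two adjacent chains of images (at a sink) or of kernels (at a source) is not the real obstruction. Where the paper needs this, it restricts $F$ to a finite $A_m$-subposet through $z_i$ and invokes Theorem~\ref{thm:quiver-pc-decomp}. What fails is the premise that these two chains are the only constraints at $z_i$, so that decompositions chosen at each extremum from local data (or frozen while you move outward) can later be matched along the segments.

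This already fails over a field $k$, where the PCC are automatic. Take $z_1\to z_2\leftarrow z_3\to z_4$ with $F_{z_1}=k\to F_{z_2}=k^2$ given by $1\mapsto e_1$, with $F_{z_3}=k^2\to F_{z_2}$ the identity, and with $F_{z_3}\to F_{z_4}=k$ given by $(a,b)\mapsto a+b$.
\begin{itemize}
\item At $z_2$ the adjacent data only require $\langle e_1\rangle$ to be a sum of summands. So $\langle e_1\rangle\oplus\langle e_2\rangle$ is an admissible local choice there.
\item Every admissible choice at $z_3$ must have $\Ker[z_3,z_4]=\langle e_1-e_2\rangle$ as a summand.
\item No decomposition of $F$ restricted to $\{z_2,z_3\}$ (an identity map) restricts to both choices.
\item The decomposition of $F|_{[z_1,z_3]}$ with $\langle e_1\rangle\oplus\langle e_2\rangle$ at $z_2$ and $z_3$ does not extend to $z_4$.
\end{itemize}
The only workable choice at $z_2$ is $\langle e_1\rangle\oplus\langle e_1-e_2\rangle$, and it is dictated by the map into $z_4$, two extrema away. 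In general, the submodules of $F_{z_i}$ that a global decomposition must respect include images of kernels, preimages of images, and so on, transported from arbitrarily far. So neither ``choose locally, refine, concatenate'' nor an outward exhaustion that never revises earlier choices works as stated. For infinite $\Gamma$ there is a further problem: the revisions might never stabilize, so your colimit of stage decompositions need not exist.

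The paper never tries to coordinate choices across extrema.

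\emph{Finite $\Gamma$.} The paper uses Ringel's peak reduction.
\begin{itemize}
\item Apply the inductive hypothesis to the sub-zigzags on either side of an index $x$. This splits $F=G\oplus H\oplus J$, where $G$ and $J$ are supported strictly to the left and right of $x$, and $H$ has a peak at $x$ (maps toward $x$ injective, maps away from $x$ surjective).
\item Doing this at $z_2$ and at $z_{n-1}$ makes all intermediate maps isomorphisms. This reduces to $n=3$ with a peak at $z_2$.
\item Only in that situation do the two adjacent chains at $z_2$ carry all the relevant information. Summands are then peeled off one at a time, using preimages (when both maps point into $z_2$) or images (when both point out) of a chosen summand of $F_{z_2}$.
\end{itemize}

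\emph{Infinite $\Gamma$.} The paper does not build a coherent system of window decompositions either. It first shows that a single projective constant summand splits off (Proposition~\ref{prp:inf-single-pc-decomp}). There are two cases:
\begin{itemize}
\item some window decomposition has a summand strictly inside the window, and that summand extends by zero; or
\item a Noetherian argument shows that beyond some extremum, every map pointing away from the base point is injective and every map pointing toward it is surjective. A window summand then propagates outward unchanged by Lemmas~\ref{lem:injective-decomp} and~\ref{lem:surjective-decomp}.
\end{itemize}
It then assembles the full decomposition with the Zorn's lemma argument of Lemma~\ref{lem:decomp-criteria}.

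Your plan needs a substitute for both of these ideas.
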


   The proofs of Theorem~\ref{thm:main-pc-quiver} and of Theorem~\ref{thm:main-pc-zigzag} for zigzag posets with finite extrema are inspired by Ringel’s decomposition of representations of $A_n$-type quivers into thin representations~\cite{rin16}. The proof of Theorem~\ref{thm:main-pc-total} generalizes our earlier argument showing the existence of interval decompositions for persistence modules over fields indexed by totally ordered sets~\cite{gan25}*{\S6}. Finally, the proof of Theorem~\ref{thm:main-pc-zigzag} in the case of zigzag posets with infinite extrema adapts the approach introduced by Botnan for decomposing persistence modules over fields indexed by infinite discrete zigzag posets~\cite{bot17}.

   Consequently, we also contribute the following corollaries regarding the existence of interval decompositions when, in addition to being a (commutative) Noetherian ring with identity, $R$ has the property that every finitely generated projective $R$-module is free. Prominent examples of such rings include principal ideal domains, polynomial rings in finite variables over a PID or field, local Noetherian rings with identity, and rings of formal power series over a local Noetherian ring with identity. We discuss these examples further in Corollary~\ref{cor:free-interval-decomp-examples}.

   \begin{maincorollary}\label{cor:main-pid-quiver}
      Let $R$ be a (not necessarily commutative) Noetherian ring with identity such that every finitely generated projective $R$-module is free. Let $F:A_n \to \RMod$ be a nonzero pointwise finitely generated $A_n$-persistence module. Suppose that $F$ satisfies the projective colimit conditions (PCC). Then $F$ admits an interval decomposition.
   \end{maincorollary}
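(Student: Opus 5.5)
The plan is to deduce the corollary directly from Theorem~\ref{thm:main-pc-quiver}, and then refine each projective constant summand into interval modules using freeness. Since $F$ is nonzero, pointwise finitely generated, and satisfies (PCC), Theorem~\ref{thm:main-pc-quiver} gives an internal direct sum decomposition $F=\bigoplus_{\lambda\in\Lambda} G_\lambda$. Each $G_\lambda$ is of projective constant type: there is an interval $I_\lambda\subseteq A_n$ and a projective $R$-module $Q_\lambda$ such that $G_\lambda(x)\cong Q_\lambda$ for $x\in I_\lambda$, $G_\lambda(x)=0$ otherwise, and the internal maps between points of $I_\lambda$ are isomorphisms.

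First, $Q_\lambda$ is finitely generated. Pick any $x\in I_\lambda$. Then $G_\lambda(x)$ is a direct summand of $F(x)$, hence a quotient of a finitely generated module, so it is finitely generated. By hypothesis $Q_\lambda$ is therefore free, $Q_\lambda\cong R^{m_\lambda}$ with $m_\lambda$ finite. If $R$ lacks IBN this rank need not be unique, but any finite basis will do.

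Next, I split $G_\lambda$ into $m_\lambda$ copies of the interval module $R_{I_\lambda}$. Fix a base point $x_0\in I_\lambda$ and a basis $e_1,\dots,e_{m}$ of $G_\lambda(x_0)$. Because $A_n$ is a zigzag on a line and $I_\lambda$ is a connected subpath, each $y\in I_\lambda$ is joined to $x_0$ by a unique path inside $I_\lambda$. Composing the internal isomorphisms along that path, or their inverses for arrows pointing the wrong way, gives a canonical isomorphism $\phi_y: G_\lambda(x_0)\to G_\lambda(y)$. Define subfunctors $H_j$ by $H_j(y)=R\,\phi_y(e_j)$ for $y\in I_\lambda$ and $H_j(y)=0$ otherwise. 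Since $A_n$ is a tree, the transport is compatible with every arrow of $I_\lambda$: for an arrow $y\to y'$ in $I_\lambda$ the structure map sends $\phi_y(e_j)$ to $\phi_{y'}(e_j)$. Arrows leaving or entering $I_\lambda$ have source or target $0$, so they cause no trouble. Hence each $H_j$ is a subfunctor isomorphic to the interval module $R_{I_\lambda}$, and $G_\lambda=\bigoplus_j H_j$ pointwise, hence internally. Substituting these into the decomposition of $F$ yields an interval decomposition.

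The main obstacle is the compatibility check in the last step. It relies on the interval being a connected subquiver of a tree, so that no cycle can produce a nontrivial monodromy automorphism of $Q_\lambda$. I would make the path-uniqueness argument explicit. If Definition~\ref{def:pc-module} already packages the isomorphisms as a natural isomorphism $G_\lambda\cong Q_\lambda$ restricted to $I_\lambda$, that step becomes immediate and I would cite it instead.
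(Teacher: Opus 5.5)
Your proposal is correct and follows the paper's route. The paper proves this as Corollary~\ref{cor:quiver-pid-interval-decomp}, immediately from Theorem~\ref{thm:quiver-pc-decomp} and Lemma~\ref{lem:free-interval-decomp}, which likewise makes each $M$ free via finite generation and splits $F(I,M)\cong F(I,R)^{\oplus m}$. Definition~\ref{def:pc-module} does supply a natural isomorphism $G_\lambda\cong F(I_\lambda,Q_\lambda)$, so you can skip the path-transport argument and simply pull back the summands $Re_j$ of $Q_\lambda$ along that isomorphism.
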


   \begin{maincorollary}\label{cor:main-pid-total}
      Let $R$ be a commutative Noetherian ring with identity such that every finitely generated projective $R$-module is free. Let $F:P \to \RMod$ be a nonzero pointwise finitely generated totally ordered persistence module. Suppose that the cokernel of every internal morphism $F_{x,y}$ is projective. Then $F$ admits an interval decomposition.
   \end{maincorollary}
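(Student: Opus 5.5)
Corollary~\ref{cor:main-pid-total} follows from Theorem~\ref{thm:main-pc-total}, provided one checks that a module of projective constant type is an interval module exactly when its fixed projective $R$-module is free of rank one. Under the hypotheses (commutative Noetherian, pointwise finitely generated, all cokernels of internal morphisms $F_{x,y}$ projective), Theorem~\ref{thm:main-pc-total} gives a projective constant decomposition $F=\bigoplus_{\lambda\in\Lambda} G_\lambda$. Each $G_\lambda$ is supported on an interval $I_\lambda\subseteq P$. There is a fixed projective module $Q_\lambda$ and isomorphisms $G_\lambda(x)\cong Q_\lambda$ for $x\in I_\lambda$, and the internal morphisms $G_\lambda(x\le y)$ are isomorphisms for $x\le y$ in $I_\lambda$.

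First I will show that each $Q_\lambda$ is finitely generated, so that the freeness hypothesis applies. Pick any $x\in I_\lambda$. Then $G_\lambda(x)$ is a direct summand of $F(x)$, because the decomposition is internal and hence pointwise a direct sum. A direct summand of a finitely generated module is a quotient of it, hence finitely generated. Therefore $Q_\lambda\cong G_\lambda(x)$ is a finitely generated projective module, and so $Q_\lambda\cong R^{n_\lambda}$ for some finite $n_\lambda$. We may discard the $\lambda$ with $n_\lambda=0$, since then $G_\lambda=0$.

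Next I split each $G_\lambda$ into $n_\lambda$ interval modules on $I_\lambda$. Fix a basepoint $x_0\in I_\lambda$ and a basis $e_1,\dots,e_{n_\lambda}$ of $G_\lambda(x_0)$. For $x\in I_\lambda$ I transport this basis to $G_\lambda(x)$ along the isomorphism $G_\lambda(x_0\le x)$ if $x\ge x_0$, or along $G_\lambda(x\le x_0)^{-1}$ if $x\le x_0$. This is possible because $P$ is totally ordered, so every $x$ is comparable to $x_0$. Functoriality together with invertibility on $I_\lambda$ makes these transports compatible: for $x\le y$ in $I_\lambda$, the map $G_\lambda(x\le y)$ sends the transported $e_i$ at $x$ to the transported $e_i$ at $y$. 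This is checked in each of the three cases $x_0\le x$, $x\le x_0\le y$, and $y\le x_0$ by composing the relevant maps.

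The submodule generated by the transported copies of $e_i$ at every $x\in I_\lambda$ is then a subfunctor isomorphic to the interval module $R_{I_\lambda}$. It is a subfunctor because outside $I_\lambda$ the module is zero and all relevant maps vanish. Moreover $G_\lambda$ is the internal direct sum of these $n_\lambda$ subfunctors. Combining over all $\lambda$ gives an interval decomposition of $F$.

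The main obstacle is the compatibility of the transported bases, which needs a consistent trivialization of $G_\lambda$ on $I_\lambda$. It is handled by the basepoint argument above, but it depends on the precise form of Definition~\ref{def:pc-module}, namely that the structure maps within the interval are isomorphisms. If the definition only requires abstract isomorphisms $G_\lambda(x)\cong Q_\lambda$ without the internal maps being isomorphisms, the compatibility would fail. I expect the definition to include this condition.
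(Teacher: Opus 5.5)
Your proposal is correct and follows the paper's route: the paper deduces the corollary from Theorem~\ref{thm:total-complete-pc-decomp} plus Lemma~\ref{lem:free-interval-decomp}, which takes each summand $G\cong F(I,M)$, notes that $M$ is finitely generated projective and hence free, and splits $G$ into copies of $F(I,R)$ (your direct-summand argument supplies the finite-generation justification that the paper leaves implicit). The condition you were unsure about holds, because Definition~\ref{def:pc-module} requires an isomorphism of persistence modules $G_\lambda\cong F(I_\lambda,Q_\lambda)$, whose internal maps on $I_\lambda$ are identities; you could even skip the basepoint transport and simply push the splitting $F(I_\lambda,R^{n_\lambda})=F(I_\lambda,R)^{\oplus n_\lambda}$ forward along that isomorphism.
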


   \begin{maincorollary}\label{cor:main-pid-zigzag}
      Let $R$ be a commutative Noetherian ring with identity such that every finitely generated projective $R$-module is free. Let $F:P \to \RMod$ be a nonzero pointwise finitely generated zigzag persistence module with extrema $\{z_i \mid i \in \Gamma \}$. Suppose that $F$ satisfies the projective colimit conditions (PCC). Then $F$ admits an interval decomposition.
   \end{maincorollary}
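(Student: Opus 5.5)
The plan is to deduce Corollary~\ref{cor:main-pid-zigzag} directly from Theorem~\ref{thm:main-pc-zigzag}, so that the only new work is converting a projective constant decomposition into an interval decomposition.

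Since $R$ is commutative Noetherian with identity and $F$ is a nonzero pointwise finitely generated zigzag persistence module satisfying the PCC, Theorem~\ref{thm:main-pc-zigzag} gives a projective constant decomposition $F = \bigoplus_{j \in J} G_j$. By Definition~\ref{def:pc-module}, each $G_j$ is supported on an interval $I_j \subseteq P$. There is a fixed projective module $Q_j$ with isomorphisms $G_j(x) \cong Q_j$ for $x \in I_j$, and the internal maps $G_j(x)\to G_j(y)$ are isomorphisms whenever $x,y \in I_j$ are comparable. The components vanish off $I_j$.

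Next I check that each $Q_j$ is finitely generated. Choose any $x \in I_j$. Then $G_j(x)$ is a direct summand of $F(x)$, hence a quotient of the finitely generated module $F(x)$. So $G_j(x)$, and therefore $Q_j$, is finitely generated. By hypothesis $Q_j \cong R^{m_j}$ for some $m_j$; it is finite since $Q_j$ is finitely generated, and $m_j\ge 1$ if we discard zero summands.

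The key step is to split $G_j$ into $m_j$ copies of the interval module $R_{I_j}$. Because $P$ is a zigzag poset and $I_j$ is an interval, $I_j$ is connected through a path of comparable elements. Fix a base point $x_0 \in I_j$ and a basis $e_1,\dots,e_{m_j}$ of $G_j(x_0)\cong R^{m_j}$. Transport this basis to every $x\in I_j$ along the zigzag path from $x_0$ to $x$, applying the internal isomorphism or its inverse at each step.

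This transport is well defined. The interval $I_j$ in a zigzag poset is itself a zigzag path, so its Hasse diagram is a tree (a line). Hence the path from $x_0$ to $x$ is unique up to backtracking, and backtracking contributes an isomorphism composed with its inverse, which changes nothing. Functoriality of $G_j$ guarantees that for any comparable $x\le y$ in $I_j$ the map $G_j(x)\to G_j(y)$ agrees with the composite along the path.

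Define $H_{j,k}\subseteq G_j$ by $H_{j,k}(x)=R\,e_k^{(x)}$ for $x\in I_j$, where $e_k^{(x)}$ is the transported basis vector, and $H_{j,k}(x)=0$ otherwise. Each $H_{j,k}$ is closed under internal maps. For $x<y$ with $x\in I_j$ and $y\notin I_j$ the target is zero, and for $x,y \in I_j$ the map sends $e_k^{(x)}$ to $e_k^{(y)}$. So $H_{j,k}\cong R_{I_j}$, and pointwise $G_j=\bigoplus_k H_{j,k}$. Hence $F=\bigoplus_{j,k}H_{j,k}$ is an interval decomposition.

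The main obstacle is the consistency of the transported bases. In the general case I would avoid path arguments: for any $x\le y$ in $I_j$ with $x\le x_0$, or $y\ge x_0$, or comparable through an intermediate element, functoriality of the composites suffices. If the paper has already recorded this basis-transport as a lemma (for instance when deducing Corollary~\ref{cor:main-pid-quiver} from Theorem~\ref{thm:main-pc-quiver}), I would invoke that lemma verbatim.
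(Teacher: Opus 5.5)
Your argument matches the paper's. The paper deduces this statement (via Corollaries~\ref{cor:fin-zigzag-int-decomp} and~\ref{cor:inf-pid-zigzag-pc-decomp}) from the projective constant decomposition theorems together with Lemma~\ref{lem:free-interval-decomp}, which is exactly your finite-generation-plus-freeness conversion.

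The basis transport you flag as the main obstacle is unnecessary, and its Hasse-diagram justification would fail for dense zigzag posets such as real segments. Definition~\ref{def:pc-module} already gives an isomorphism of persistence modules $G_j \cong F(I_j,Q_j)$, so you can simply push the splitting $F(I_j,R^{m_j})=\bigoplus_k F(I_j,Re_k)$ through it.
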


   In the specific case when $R$ is a PID, Corollary~\ref{cor:main-pid-total} was proven by Luo and Henselman-Petrusek~\cite{luo25}. 

   We contribute the following theorems to confirm that the stated conditions are indeed necessary whenever such projective constant (respectively, interval) decompositions exist.

   \begin{maintheorem}\label{thm:main-pc-quiver-nec}
      Let $R$ be a (not necessarily commutative) Noetherian ring with identity. Let $F:A_n \to \RMod$ be a nonzero pointwise finitely generated $A_n$-persistence module. Suppose that $F$ admits a projective constant (respectively, interval) decomposition. Then $F$ satisfies the projective colimit conditions (PCC). 
   \end{maintheorem}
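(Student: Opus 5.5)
The plan is to reduce the statement to a check on the individual summands, using the additivity of everything that appears in the projective colimit conditions (Definition~\ref{def:proj-colim-cond}). The respective clause follows from the main clause, since an interval module is of projective constant type. So I will assume a projective constant decomposition $F=\bigoplus_{k\in K} M_k$. Pointwise finite generation of $F$ over the Noetherian ring $R$ gives that only finitely many $M_k$ are nonzero at any vertex, and since $A_n$ has finitely many vertices, only finitely many $M_k$ are nonzero at all. Hence $K$ may be taken finite.

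\textbf{Additivity of the PCC modules.} Each module that the PCC require to be projective is built from $F$ by finitely many operations of the following kinds:
\begin{itemize}
\item restriction of $F$ to a subquiver $[i,j]$ of $A_n$;
\item colimits (and, where relevant, limits) over that finite subdiagram;
\item kernels and cokernels of the canonical maps between these colimits and the components $F_x$.
\end{itemize}
All of these are additive functors from $\RMod$-valued representations to $\RMod$. The first step is therefore to verify carefully that each such construction $\Phi$ satisfies
\[
\Phi(F)\cong \bigoplus_k \Phi(M_k),
\]
naturally in $F$. Finite colimits commute with finite direct sums, and kernels and cokernels of direct-sum maps are direct sums of kernels and cokernels. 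Because a direct sum of projective modules is projective, it then suffices to show that each $M_k$ satisfies the PCC.

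\textbf{The single-summand computation.} Fix $M=M_k$ of projective constant type. Its support is an interval $I\subseteq A_n$, and $M_x\cong Q$ for a fixed finitely generated projective $Q$ when $x\in I$, with $M_x=0$ otherwise. The internal maps between components in $I$ are isomorphisms. I will compute each PCC module for $M$ on a subquiver $[i,j]$ by cases on how $I$ meets $[i,j]$.
\begin{itemize}
\item The restricted diagram is a zigzag of isomorphisms between copies of $Q$ on the connected set $I\cap[i,j]$, and zero elsewhere.
\item Its colimit is therefore $Q$ if $I\cap[i,j]\neq\emptyset$, and $0$ otherwise.
\item Each canonical map from a component $M_x$ into this colimit is either an isomorphism $Q\to Q$ or a map out of or into $0$.
\end{itemize}
Consequently every kernel and cokernel occurring in the PCC is $0$ or isomorphic to $Q$, so each is projective. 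Because $I$ is connected in $A_n$, no copy of $Q$ is glued to another along two different paths, which is what keeps the colimit equal to a single copy of $Q$ rather than a quotient or a sum.

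\textbf{Expected obstacle.} The hard step will be the first one. I must make sure that every condition in Definition~\ref{def:proj-colim-cond} really is phrased through functorial constructions commuting with finite direct sums, rather than through, say, a condition on a specific map that might not split compatibly with the decomposition. The first thing I will try is to write each condition explicitly as the projectivity of $\Coker$ or $\Ker$ of the canonical comparison morphism between a colimit over a subquiver and a component. Naturality of that morphism in $F$ then makes it the direct sum of the corresponding morphisms for the $M_k$.

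If $R$ is noncommutative nothing changes: additivity and the fact that direct sums of projectives are projective hold for left modules over any ring. This matches the generality of Theorem~\ref{thm:main-pc-quiver}.
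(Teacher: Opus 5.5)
\textbf{Comparison with the paper.} Your overall route is the paper's. Theorem~\ref{thm:nec-quiver-pc} views $F$ as a zigzag module and applies Theorem~\ref{thm:nec-zigzag-pc}. That theorem checks the PCC on each summand via Lemmas~\ref{lem:nec-zigzag-interval-pc-finite} and~\ref{lem:nec-zigzag-interval-pc}, then uses that colimits and cokernels commute with direct sums. The additivity step you single out as the main obstacle is routine: the maps in (C2)--(C4) are the canonical ones into the colimit, so they are natural in $F$. The real content is the single-summand computation, and there your argument contains a false claim.

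\textbf{The false claim.} It is not true that $\Colim(F(I,Q)|_{[i,j]})\cong Q$ whenever $I\cap[i,j]\neq\emptyset$. The restricted diagram is not only the zigzag of identities on $I\cap[i,j]$. It also contains the zero maps $Q\to 0$ along arrows from $I\cap[i,j]$ into $[i,j]\setminus I$. In the colimit, such an arrow forces the image of every $a\in Q$ at its source to vanish. Since all copies of $Q$ on the connected set $I\cap[i,j]$ are identified, the whole colimit is then $0$. Two examples:
\begin{itemize}
\item On $A_2$ with $1\to 2$ and $I=\{1\}$, the colimit over $[1,2]$ is $F(I,Q)_2=0$.
\item On $1\leftarrow 2\rightarrow 3$ with $I=\{1,2\}$, it is the pushout of $Q\leftarrow Q\rightarrow 0$, which is $0$.
\end{itemize}
This is exactly the phenomenon handled by Claim~1 in the proof of Lemma~\ref{lem:nec-zigzag-interval-pc-finite}: the colimit vanishes when an extremal extremum of $I$ is a source. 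Your justification, that no copy of $Q$ is glued to another along two different paths, addresses a non-issue in $A_n$ and misses this one. As a result, your stated values of the colimit and of the cokernels in (C2)--(C4) are wrong in such cases.

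\textbf{The repair.} Let $L=I\cap[i,j]$. The colimit is $Q$, via the summation map, if $L\neq\emptyset$ and no arrow of $[i,j]$ leaves $L$. It is $0$ otherwise. In the first case the canonical map from $F(I,Q)_x$ with $x\in L$ to the colimit is an isomorphism. In the second case everything maps to $0$. So every module appearing in (C1)--(C4) is still $0$ or isomorphic to $Q$, hence projective, and your conclusion survives once this corrected case distinction replaces the one you wrote.
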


   \begin{maintheorem}\label{thm:main-pc-total-nec}
      Let $R$ be a (not necessarily commutative) Noetherian ring with identity. Let $F:P \to \RMod$ be a nonzero pointwise finitely generated totally ordered persistence module. Suppose that $F$ admits a projective constant (respectively, interval) decomposition. Then the cokernel of every internal morphism $F_{x,y}$ is projective.
   \end{maintheorem}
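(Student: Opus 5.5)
We prove Theorem~\ref{thm:main-pc-total-nec} by reducing to a single summand, as follows. Suppose $F = \bigoplus_{\lambda \in \Lambda} M_\lambda$ is a projective constant decomposition, so each $M_\lambda$ is a submodule of $F$ of projective constant type. By Definition~\ref{def:pc-module}, $M_\lambda$ is supported on an interval $I_\lambda \subseteq P$. For $x \in I_\lambda$ we have $M_\lambda(x) \cong Q_\lambda$ for a fixed projective $R$-module $Q_\lambda$, and the internal morphisms $M_\lambda(x) \to M_\lambda(y)$ are isomorphisms whenever $x \le y$ both lie in $I_\lambda$. Since the decomposition is internal, each structure map splits as
\[ F_{x,y} = \bigoplus_{\lambda} (M_\lambda)_{x,y} : \bigoplus_\lambda M_\lambda(x) \to \bigoplus_\lambda M_\lambda(y). \]
Cokernels commute with direct sums, so
\[ \Coker F_{x,y} \cong \bigoplus_\lambda \Coker (M_\lambda)_{x,y}. \]
A direct sum of projective modules is projective. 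It therefore suffices to show that each $\Coker (M_\lambda)_{x,y}$ is projective.

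Fix $\lambda$ and $x \le y$ in $P$. Because $P$ is totally ordered and $I_\lambda$ is an interval (convex), there are four cases.
\begin{itemize}
\item If $x, y \in I_\lambda$, the map $(M_\lambda)_{x,y}$ is an isomorphism, so its cokernel is $0$.
\item If $y \notin I_\lambda$, the target is $0$, so the cokernel is $0$.
\item If $x \notin I_\lambda$ and $y \in I_\lambda$, the map is $0 \to M_\lambda(y)$, whose cokernel is $M_\lambda(y) \cong Q_\lambda$, which is projective.
\end{itemize}
Convexity is what rules out the remaining configuration, where $x$ and $y$ both lie in $I_\lambda$ but the map fails to be an isomorphism. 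The definition says that between nonzero components of the interval the maps are isomorphisms, so this case cannot occur. Hence every $\Coker F_{x,y}$ is a direct sum of copies of projective modules (and zeros), and is therefore projective.

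For the "respectively, interval" clause: an interval module is of projective constant type with $Q_\lambda = R$, as noted in the introduction. So the same argument applies verbatim.

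The main point to check carefully is whether Definition~\ref{def:pc-module} guarantees that the internal maps of $M_\lambda$ between points of the support are isomorphisms for all comparable pairs, not merely for adjacent ones. In a totally ordered poset there may be no adjacency, and then the isomorphism condition has to be read for every $x \le y$ in $I_\lambda$; I expect the definition states exactly this. If it only specifies abstract isomorphisms $M_\lambda(x) \cong Q_\lambda$, I would instead use functoriality together with the requirement that the maps be isomorphisms between nonzero components. Neither the Noetherian hypothesis nor pointwise finite generation is needed for this direction.
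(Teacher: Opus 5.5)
Your proof is correct, but it takes a more direct route than the paper. The paper restricts $F$ to $[x,y]$ and regards $F|_{[x,y]}$ as a zigzag persistence module with extrema $\{x,y\}$. It then invokes Theorem~\ref{thm:nec-zigzag-pc} to get the PCC; that theorem rests on the colimit computations of Lemmas~\ref{lem:nec-zigzag-interval-pc-finite} and~\ref{lem:nec-zigzag-interval-pc} and on colimits commuting with direct sums and cokernels. Finally, the paper reads off projectivity of $\Coker(F_{x,y})$ from \textbf{(C2)} via Lemma~\ref{lem:pcc-more-facts}(2). You avoid colimits entirely. You split $F_{x,y}$ as the direct sum of the structure maps of the summands and compute each cokernel from the explicit form of $F(I_\lambda,Q_\lambda)_{x,y}$.

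Your route is self-contained and visibly needs neither the Noetherian hypothesis nor pointwise finite generation, since cokernels commute with arbitrary direct sums and arbitrary direct sums of projectives are projective. The paper's route buys uniformity: the totally ordered case falls out as a special case of the zigzag necessity theorem, which it needs anyway.

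Two small cleanups. First, your three bullets already exhaust all cases, and convexity plays no role. Second, the worry in your last paragraph is settled by the definition itself: $F(I,M)_{x,y}=\mathrm{id}_M$ for every comparable pair $x\le y$ in $I$, not just adjacent ones. The transfer to $M_\lambda$ is then just the observation that a natural isomorphism $M_\lambda\cong F(I_\lambda,Q_\lambda)$ conjugates $(M_\lambda)_{x,y}$ to $F(I_\lambda,Q_\lambda)_{x,y}$, which preserves cokernels up to isomorphism.
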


   \begin{maintheorem}\label{thm:main-pc-zigzag-nec}
      Let $R$ be a (not necessarily commutative) Noetherian ring with identity. Let $F:P \to \RMod$ be a nonzero pointwise finitely generated zigzag persistence module with extrema $\{z_i \mid i \in \Gamma \}$. Suppose that $F$ admits a projective constant (respectively, interval) decomposition. Then $F$ satisfies the projective colimit conditions (PCC). 
   \end{maintheorem}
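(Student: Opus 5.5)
The plan is to reduce to a single summand and then reassemble. By hypothesis $F=\bigoplus_{\lambda\in\Lambda}M_\lambda$ is an internal direct sum of modules of projective constant type. Each $M_\lambda$ is supported on an interval $I_\lambda$ of the zigzag poset $P$; on $I_\lambda$ it is pointwise isomorphic to a fixed projective $R$-module $Q_\lambda$, its internal morphisms between points of $I_\lambda$ are isomorphisms, and it vanishes off $I_\lambda$. The PCC (Definition~\ref{def:proj-colim-cond}) are stated in terms of colimit-type data of $F$ over subposets near the extrema $z_i$: cokernels of maps out of, or into, the components at the extrema, or the relevant (co)limits along consecutive zigzag segments. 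Colimits and cokernels commute with direct sums, and all morphisms in the relevant diagrams are direct sums of the corresponding morphisms of the $M_\lambda$. So each object appearing in a PCC for $F$ is the direct sum over $\lambda$ of the corresponding object for $M_\lambda$, and a direct sum of projective modules is projective. It therefore suffices to verify the PCC for a single module $M$ of projective constant type.

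For a single $M$ with interval $I$ and projective module $Q$, I would compute each colimit or cokernel in the PCC explicitly. Consider a morphism $F_{x,y}$ (or a zigzag segment) in the diagram. There are three cases according to how $x,y$ meet $I$.
\begin{itemize}
\item If both $x,y\in I$, the map is an isomorphism $Q\to Q$, so its cokernel is $0$.
\item If exactly one lies in $I$, the map is $0\to Q$ or $Q\to 0$, so its cokernel is $Q$ or $0$.
\item If neither lies in $I$, everything is $0$.
\end{itemize}
For the colimits over segments between extrema, convexity and connectedness of $I$ force the restricted diagram to be a connected zigzag of isomorphisms between copies of $Q$ on $I\cap(\text{segment})$, padded with zeros. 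Its colimit is then either $0$ or $Q$, and likewise the induced comparison maps have cokernels equal to $0$ or $Q$. All of these are projective. The interval case follows immediately, since an interval module is of projective constant type with $Q=R$.

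The main obstacle is bookkeeping rather than depth. One must check that the colimits in the PCC are taken over diagrams on which the intersection with $I$ is connected, so that the colimit really is $Q$ and not a pushout of several copies of $Q$. This needs the convexity of intervals in the zigzag poset, and a careful case split according to whether $I$ contains the relevant extremum $z_i$ and whether it extends in each direction. For infinitely many extrema, one also needs that the PCC are local, involving finitely many indices near each $z_i$, so that no infinite colimit over the whole of $P$ arises. If some PCC did involve an infinite colimit, I would use that colimits commute with colimits: the direct sum decomposition is still respected, and each summand's colimit along a connected interval is still $Q$ or $0$.

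The hypotheses that $R$ is Noetherian and that $F$ is pointwise finitely generated are not essential here. They serve only to keep each $Q_\lambda$ finitely generated and only finitely many $\lambda$ nonzero at any point. This makes the direct sums pointwise finite, although projectivity of arbitrary direct sums would suffice anyway.
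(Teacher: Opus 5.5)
Your overall route is the paper's. Theorem~\ref{thm:nec-zigzag-pc} reduces to a single summand because colimits and cokernels commute with direct sums. Lemmas~\ref{lem:nec-zigzag-interval-pc} and~\ref{lem:nec-zigzag-interval-pc-finite} then show that for $F(I,M)$ the colimit over $[x,y]$ is $0$ or $M$, and all comparison cokernels are $0$ or $M$. (Your remark that pointwise finite generation is not needed for this direction is correct: colimits and cokernels commute with arbitrary direct sums, and arbitrary direct sums of projectives are projective.)

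However, you have misidentified the PCC, so the proposal is aimed at the wrong conditions. They are not local conditions near the extrema. For \emph{every} pair $x\sqsubseteq y$ in $P$ they require $\Colim(F|_{[x,y]})$ to be projective, together with the cokernels of $F_x$, $F_y$ and $F_x\oplus F_y\to\Colim(F|_{[x,y]})$. The interval $[x,y]$ can straddle any finite number of extrema and can contain infinitely many points. Your three-case analysis of internal morphisms is not one of (C1)--(C4); it is the consequence recorded in Lemma~\ref{lem:pcc-more-facts}(2). On a segment between consecutive extrema the diagram is totally ordered, so its colimit is just the value at the top and nothing needs checking. Thus all the content sits in your one-sentence claim about colimits over segments, which you leave unjustified and whose obstacle you misdiagnose.

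That claim is true for arbitrary $[x,y]$, and here is what is needed. Connectedness is automatic: $I\cap[x,y]$ is $\sqsubseteq$-convex, hence an interval by Lemma~\ref{lem:interval-in-zigzag} when nonempty. The real issue is the zero maps \emph{leaving} $I$. Present $\Colim(F(I,Q)|_{[x,y]})$ as $\bigoplus_{p\in I\cap[x,y]}Q$ modulo two kinds of relations:
\begin{enumerate}
\item $\iota_p(a)-\iota_q(a)$ for $p\le q$ both in $I$;
\item $\iota_p(a)$ for $p\le q$ with $p\in I$ and $q\in[x,y]\setminus I$.
\end{enumerate}
By connectedness, the relations of the first kind generate exactly the kernel of the summation map onto $Q$. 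If any relation of the second kind exists, it then kills all of $Q$.

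So the colimit is $Q$ or $0$. The canonical map from $F_x$ is $\Id_Q$, the zero map, or has domain $0$, and likewise for $F_y$. Hence every cokernel in (C2)--(C4) is $0$ or $Q$.

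The paper reaches the same dichotomy differently. It reduces $[x,y]$ to a zigzag with finitely many extrema and splits into sink/source cases. Claim~1 in the proof of Lemma~\ref{lem:nec-zigzag-interval-pc-finite} (if $I$ ends at a source, the colimit vanishes) is exactly your second kind of relation. Your direct computation over $[x,y]$ would avoid that case split, but only once it is actually carried out.
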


   In the specific context of interval decompositions over a PID, Theorem~\ref{thm:main-pc-total-nec} was proven by Luo and Henselman-Petrusek~\cite{luo25}. 

   Finally, we contribute the following uniqueness results for indecomposable projective constant decompositions in the setting of Krull-Schmidt categories and for interval decompositions over integral domains.

   \begin{maintheorem}\label{thm:main-krull-uniqueness}
      Let $R$ be a (not necessarily commutative) ring with identity. Let $P$ be a poset and let $F:P \to \RMod$ be a nonzero persistence module. Suppose that $\RMod$ is a Krull-Schmidt category. If $F$ admits a projective constant decomposition, then $F$ admits a unique indecomposable projective constant decomposition up to isomorphism and permutation of the summands.
   \end{maintheorem}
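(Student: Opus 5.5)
The plan is to reduce the statement to two facts: a Krull--Schmidt theorem in the persistence module category, and the observation that the indecomposable summands of a projective constant decomposition are again of projective constant type.

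\emph{Existence.} Start from a projective constant decomposition $F=\bigoplus_{j\in J} M_j$, where $M_j$ has support an interval $I_j$ and $M_j(x)\cong Q_j$ for all $x\in I_j$, with $Q_j$ a fixed projective $R$-module and all internal morphisms between nonzero components isomorphisms. Since $\RMod$ is Krull--Schmidt, decompose $Q_j=\bigoplus_{k} Q_{j,k}$ into indecomposables, each with local endomorphism ring. Fix a base point $x_0\in I_j$ and transport this splitting along the structure isomorphisms $M_j(x_0)\to M_j(x)$ and their inverses. Because $I_j$ is connected (convex and connected) and every internal morphism on $I_j$ is an isomorphism, the transported splitting does not depend on the zigzag path chosen; this follows from functoriality together with invertibility. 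Define $M_{j,k}(x)$ as the image of $Q_{j,k}$ for $x\in I_j$ and $0$ elsewhere. Then $M_j=\bigoplus_k M_{j,k}$ as persistence modules, and each $M_{j,k}$ is of projective constant type with projective $Q_{j,k}$, since a direct summand of a projective module is projective. Each $M_{j,k}$ is indecomposable, because $\End(M_{j,k})\cong \End_R(Q_{j,k})$. To see this, a natural endomorphism is determined by its component at $x_0$, and any $R$-endomorphism of $Q_{j,k}$ extends uniquely by conjugating with the structure isomorphisms, while support zero elsewhere means there are no constraints at the ends of $I_j$. Hence $\End(M_{j,k})$ is local, and we obtain an indecomposable projective constant decomposition.

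\emph{Uniqueness.} Every summand in such a decomposition has a local endomorphism ring by the computation above, so the Krull--Remak--Schmidt--Azumaya theorem applies: in any additive category with split idempotents, in particular $\RMod^P$, a decomposition into objects with local endomorphism rings is unique up to isomorphism and permutation, even for infinite index sets. Applied to two indecomposable projective constant decompositions of $F$, this gives the claim. If the paper's definition of a Krull--Schmidt category only asserts that every object decomposes into finitely many local-endomorphism objects, invoke Azumaya's theorem directly, which needs only the local endomorphism rings.

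\emph{Main obstacle.} The delicate step is the endomorphism ring computation, i.e.\ checking that indecomposability of $Q$ forces $\End(M)$ to be local. In particular, one needs that a projective indecomposable summand of $Q_j$ has local endomorphism ring. This is exactly the use of the Krull--Schmidt hypothesis: the indecomposable objects of $\RMod$ are the ones with local endomorphism rings. A secondary subtlety is the path independence of the transported splitting, which uses connectedness of the interval.
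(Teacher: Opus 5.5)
Your route is the paper's. You split the constant value using the Krull--Schmidt hypothesis, identify $\End(F(I,M))\cong\End_R(M)$ via connectedness of $I$ (the paper's Lemma~\ref{lem:krull-local-endo}), and conclude with Krull--Remak--Schmidt--Azumaya.

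One step, though, is justified incorrectly: path independence of the transported splitting does \emph{not} follow from functoriality plus invertibility on an arbitrary poset. Take the crown $P=\{a,b,c,d\}$ with $a,b<c$ and $a,b<d$ and no other relations; it is an interval in itself. Put $M$ at every point, identities on $a\to c$, $a\to d$, $b\to c$, and an automorphism $u\neq\Id_M$ on $b\to d$. There are no nontrivial composites, so this is a functor whose internal maps are all isomorphisms. Yet transport from $a$ to $d$ directly and along $a\to c\leftarrow b\to d$ differ by $u$. Consequently a splitting of $M$ not preserved by $u$ cannot be transported consistently. Moreover, the endomorphism ring is only the centralizer of $u$ in $\End_R(M)$, so your ``extends uniquely by conjugation'' step fails as well. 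This module fits your description (constant projective value, isomorphisms between nonzero components) but is not isomorphic to $F(P,M)$.

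What rescues your construction is the actual definition: $M_j\cong F(I_j,Q_j)$ via some $\phi$, so every zigzag composite from $x_0$ to $x$ equals $\phi_x^{-1}\phi_{x_0}$. Simpler still, do what the paper does: split $F(I_j,Q_j)=\bigoplus_k F(I_j,Q_{j,k})$, where all internal maps are identities, and transfer the splitting along $\phi$.

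Two smaller points. First, Azumaya's theorem for infinite index sets is proved for module categories, not for arbitrary additive categories with split idempotents. It applies here because $\RMod^P$ is equivalent to the category of modules over a ring with enough idempotents.

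Second, and this applies equally to the paper: taken literally, the hypothesis that $\RMod$ is Krull--Schmidt forces $R=0$. For $R\neq 0$, the module $X=R^{(\mathbb{N})}$ satisfies $X\cong X\oplus X$, so it cannot be a finite direct sum of $n$ objects with local endomorphism rings; finite Krull--Schmidt would force $n=2n$, i.e.\ $X=0$. A meaningful version should assume, for instance, that finitely generated $R$-modules form a Krull--Schmidt category and that $F$ is p.f.g. Then every constant value appearing in a projective constant decomposition is a summand of some $F_x$, hence finitely generated, and your argument and the paper's go through unchanged.
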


   \begin{maintheorem}\label{thm:main-domain-uniqueness}
      Let $R$ be an integral domain. Let $P$ be a small poset category and let $F:P \to \RMod$ be a nonzero pointwise finitely generated persistence module. If $F$ admits an interval decomposition, then this decomposition is unique up to isomorphism and permutation of the summands.
   \end{maintheorem}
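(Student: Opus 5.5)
The strategy is to reduce uniqueness of an interval decomposition to counting ranks of the modules $F_x$ and of the images of the internal morphisms $F_{x,y}$, after base change to the fraction field. Let $K$ be the field of fractions of the integral domain $R$; we may take $R$ commutative here. Suppose $F = \bigoplus_{\alpha \in A} I_\alpha$ is an interval decomposition, with each $I_\alpha$ the interval module $R_{J_\alpha}$ supported on the interval $J_\alpha \subseteq P$. Since $F$ is pointwise finitely generated, for each $x$ only finitely many $\alpha$ satisfy $x \in J_\alpha$: the summand $F_x = \bigoplus_{x \in J_\alpha} R$ is finitely generated and free, so it has finite rank. The goal is to show that the multiset $\{J_\alpha\}_{\alpha \in A}$ is determined by $F$ alone. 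That gives uniqueness up to isomorphism and permutation, because an interval module is determined up to isomorphism by its support.

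First, apply the exact functor $K \otimes_R -$ pointwise. This gives a persistence module $F^K := K \otimes_R F : P \to \Vect_K$, and
\[
F^K \cong \bigoplus_{\alpha} K_{J_\alpha},
\]
where each $K_{J_\alpha}$ is an interval module over $K$. The tensor product commutes with direct sums, and it sends the identity maps between copies of $R$ to identity maps between copies of $K$. Moreover $F^K$ is pointwise finite-dimensional. This reduces the problem to the field case over a general poset.

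Next, establish the field case directly rather than citing Krull--Schmidt for infinite sums. For any pair $x \le y$ in $P$, the rank of the internal morphism is
\[
\operatorname{rank} F^K_{x,y} = \#\{\alpha : x, y \in J_\alpha\}.
\]
This holds because each $K_{J_\alpha}$ contributes the identity on $K$ when $x$ and $y$ both lie in $J_\alpha$, and zero otherwise; here we use convexity of $J_\alpha$, which gives $J_\alpha \ni x \le y \in J_\alpha$. These ranks are invariants of $F$. To recover the multiplicity of a given interval $J$ from them, use endomorphism rings: each $K_J$ has endomorphism ring $K$, because $J$ is connected and the maps are identities, so it is local. The family $\{K_{J_\alpha}\}$ is locally finite in the sense above. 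I would then invoke the Krull--Remak--Schmidt--Azumaya theorem, which says that a decomposition into modules with local endomorphism rings is unique. This can be phrased in the functor category $\Vect_K^P$, or equivalently for modules over the incidence category algebra.

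Applying Azumaya to $F^K$ shows that the multiset of supports $\{J_\alpha\}$ is determined by $F^K$, and hence by $F$. Now let $F = \bigoplus_\beta R_{J'_\beta}$ be a second interval decomposition. Both decompositions base-change to decompositions of the same $F^K$, so there is a bijection $A \to B$ with $J_\alpha = J'_{\sigma(\alpha)}$. Consequently $I_\alpha \cong R_{J'_{\sigma(\alpha)}}$, which is the required uniqueness.

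The main obstacle I expect is justifying the Azumaya step cleanly in the functor category. This requires either checking that $\Vect_K^P$ is equivalent to a module category, so that Azumaya applies verbatim, or redoing the exchange argument using local endomorphism rings. As a fallback I would avoid Azumaya by computing multiplicities pointwise. For fixed $x$, the finitely many intervals containing $x$ can be separated using ranks of $F^K_{x,y}$ and of the maps into and out of $x$, together with inclusion--exclusion over the relevant finite subposets. I would also need to check that base change $K \otimes_R -$ loses no information, which is exactly the point where freeness of interval modules over a domain is used: no torsion is killed, and $R \neq 0$ gives $K \otimes_R R = K \neq 0$.
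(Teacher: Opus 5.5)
Your proposal is correct and follows essentially the same route as the paper: base-change to the fraction field, note that each summand becomes an interval module over $K$ with the same support, and invoke Krull--Remak--Schmidt--Azumaya uniqueness over a field. The paper gets that last step by citing Botnan--Crawley-Boevey, whose Section 2 handles exactly the module-category issue for small $P$ that you flag. One caution: your rank-based fallback cannot replace the Azumaya step for general posets. On the square poset with $a \le b \le d$ and $a \le c \le d$, the modules $K_{\{a,b\}} \oplus K_{\{c,d\}} \oplus K_{\{a,c\}} \oplus K_{\{b,d\}}$ and $K_{\{a,b,c\}} \oplus K_{\{b,c,d\}} \oplus K_{\{a\}} \oplus K_{\{d\}}$ have identical dimensions and identical ranks of all internal maps, yet different multisets of supports.
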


   The proof of Theorem~\ref{thm:main-krull-uniqueness} leverages the renowned Krull-Remak-Schmidt-Azumaya theorem~\cite{azu50}, while the proof of Theorem~\ref{thm:main-domain-uniqueness} is inspired by Luo and Henselman-Petrusek's proof of the uniqueness of interval decompositions of pointwise freely and finitely generated persistence modules over PIDs indexed by finite linear quivers \cite{luo23}*{\S4, Lemma 1}.

   This article is organized as follows: Sections~\ref{sec:prelims} and \ref{sec:decomp-criteria} provide background and technical lemmas regarding persistence modules over Noetherian rings that will be useful in proving our main results. In Section~\ref{sec:zigzag-posets}, we define zigzag posets and zigzag persistence modules, and discuss some of their properties. Theorem~\ref{thm:main-pc-quiver} and Corollary~\ref{cor:main-pid-quiver} are proved in Section~\ref{sec:quiver-zigzag} on $A_n$-persistence modules; see Theorem~\ref{thm:quiver-pc-decomp} and Corollary~\ref{cor:quiver-pid-interval-decomp}. Theorem~\ref{thm:main-pc-total} and Corollary~\ref{cor:main-pid-total} are proved in Section~\ref{sec:total-order} on totally ordered persistence modules; see Theorem~\ref{thm:total-complete-pc-decomp} and Corollary~\ref{cor:total-pid-decomp}. Theorem~\ref{thm:main-pc-zigzag} and Corollary~\ref{cor:main-pid-zigzag} are proved in Section~\ref{sec:finite-zigzag} for zigzag persistence modules with finite extrema (see Theorem~\ref{thm:fin-zigzag-pc-decomp} and Corollary~\ref{cor:fin-zigzag-int-decomp}) and in Section~\ref{sec:infinite-zigzag} for zigzag persistence modules with infinite extrema (see Theorem~\ref{thm:inf-complete-pc-decomp}, Theorem~\ref{thm:inf-zigzag-pc-decomp-nat}, and Corollary~\ref{cor:inf-pid-zigzag-pc-decomp}). Theorems~\ref{thm:main-pc-quiver-nec}, \ref{thm:main-pc-total-nec}, and \ref{thm:main-pc-zigzag-nec}, which establish the necessity of the given conditions to obtain decomposition results, are all proven in Section~\ref{sec:necessity}; see Theorems~\ref{thm:nec-zigzag-pc},~\ref{thm:nec-quiver-pc}, and \ref{thm:nec-total-pc}. Theorems~\ref{thm:main-krull-uniqueness} and \ref{thm:main-domain-uniqueness} regarding uniqueness are proved in Section~\ref{sec:uniqueness}; see Theorems~\ref{thm:krull-uniqueness} and ~\ref{thm:domain-uniqueness}. 

   This article is essentially self-contained; we use only elementary results in module theory and Zorn's Lemma.

\section{Definitions and Notation}\label{sec:prelims}

   Fix a Noetherian ring $R$ with identity, and let $\RMod$ denote the category of unitary left $R$-modules. Throughout this article, we omit the qualifier ``unitary left'' when referring to such $R$-modules, as no ambiguity will arise from context.

   %%% Definition: Persistence Module %%%
   \begin{Definition}
      A $P$-\defn{persistence module} (or just \defn{persistence module}) is a functor $F:P \to \RMod$ for some fixed poset category $P$. A \defn{morphism} of persistence modules indexed by $P$ is, by definition, a natural transformation of functors. The direct sum of persistence modules, persistence submodule, and restriction of a persistence module to a subposet of $P$ are all defined in the natural way. 
   \end{Definition}

   %%% Notation: Basic Persistence Module Notation %%%
   \begin{Notation}
      Let $\leq$ denote the partial ordering of the poset $P$. For $x \leq y$ in $P$, denote the $R$-module $F(x)$ by $F_x$ and the \defn{internal morphisms} $F(x \to y)$ by $F_{x,y}$. Since $F$ is a functor, we have $F_{x,x} = \Id_{F_x}$ and also $F_{y,z} \circ F_{x,y} = F_{x,z}$ for any $x \leq y \leq z$. Let $\Ker[x,y]=\Ker(F_{x,y})$, $\Ima[x,y]=\Ima(F_{x,y})$ and $\Coker[x,y]= \Coker(F_{x,y})$ with regard to the internal morphisms.
   \end{Notation}

   %%% Definition: Pointwise Finitely-Generated %%%
   \begin{Definition}\label{def:pfg}
      Let $P$ be a poset and let $F:P \to \RMod$ be a persistence module. We say $F$ is \defn{pointwise finitely generated (p.f.g.)} if $F_x$ is a finitely generated $R$-module for every $x \in P$. 
   \end{Definition}

   %%% Defintion: Interval of a Poset %%%
   \begin{Definition}\label{def:interval}
      Let $P$ be a poset and $I$ a subset of $P$. 
      \begin{enumerate}
         \item We say that $I$ is \defn{convex} if for all $x,y,z\in P$, we have $y\in I$ whenever $x,z\in I$ and $x \leq y \leq z$. 
         \item We say that $I$ is \defn{connected} if for all $x,z\in I$, there exist $y_0, y_1, \ldots, y_n\in I$ such that
         \begin{center}
         $\begin{cases}
            y_0=x, \\
            y_n=z, \\
            y_{i-1}\leq y_i \mbox{ or } y_{i-1} \geq y_i \mbox{ for each }i\in \{1, \ldots, n\}.
         \end{cases}$
         \end{center}
      \item We call $I$ an \defn{interval} in $P$ if $I$ is nonempty, convex, and connected.  
      \end{enumerate}
   \end{Definition}

   %%% Definition: F(I,M) %%%
   \begin{Definition}
      Let $P$ be a poset and $I$ an interval in $P$. Given an $R$-module $M$, define $F(I,M)$ by:
      \begin{enumerate}
      \item for all $x\in P$,
      \[ F(I,M)_x = \begin{cases}
         M & \mbox{ if }x \in I,\\
         0 & \mbox{ if }x\notin I;
         \end{cases} \] 
      \item for all $x \leq y$ in $P$,
      \[ F(I,M)_{x,y} = \begin{cases}
         \mathrm{id}_M & \mbox{ if }x,y\in I,\\
         0 & \mbox{ if }x\notin I \mbox{ or }y\notin I.
         \end{cases} \]
      \end{enumerate}
   \end{Definition}

   %%% Definition: Projective Constant Type %%%
   \begin{Definition}\label{def:pc-module}
      Let $P$ be a poset. We say a persistence module indexed by $P$ is \defn{of projective constant type} if it is isomorphic to $F(I,M)$ for some interval $I$ in $P$ and for some projective $R$-module $M$.
   \end{Definition}

   %%% Definition: Projective Constant Decomposition %%%
   \begin{Definition}\label{def:pc-decomp}
      Let $P$ be a poset and let $F:P \to \RMod$ be a persistence module. A \defn{projective constant decomposition} of $F$ is an internal direct sum decomposition $F = \bigoplus_{G \in \mathcal{A}} G$ where $\mathcal{A}$ is a set of persistence submodules of $F$ such that every $G \in \mathcal{A}$ is of projective constant type. 
   \end{Definition}

   %%% Definition: Interval Module %%%
   \begin{Definition}
      Let $P$ be a poset. An \defn{interval module} is a persistence module indexed by $P$ which is isomorphic to $F(I,R)$ for some interval $I$ in $P$. 
   \end{Definition}

   %%% Definition: Interval Decomposable %%%
   \begin{Definition}
      Let $P$ be a poset and let $F:P \to \RMod$ be a persistence module. An \defn{interval decomposition} of $F$ is an internal direct sum decomposition $F = \bigoplus_{G \in \mathcal{A}} G$ where $\mathcal{A}$ is a set of persistence submodules of $F$ such that every $G \in \mathcal{A}$ is an interval module. If $F$ admits an interval decomposition, we say $F$ is \defn{interval decomposable}. 
   \end{Definition}

   Note that an interval decomposition is a special case of a projective constant decomposition since $R$ is free and hence projective. Conversely, if every finitely generated projective $R$-module is free, we obtain the following useful lemma.

   %%% Lemma: Interval Decomposition %%%
   \begin{Lemma}\label{lem:free-interval-decomp}
      Let $R$ be a Noetherian ring with identity such that every finitely generated projective $R$-module is free. Let $P$ be a poset and let $F:P \to \RMod$ be a p.f.g. persistence module. Suppose that $F$ admits a projective constant decomposition. Then $F$ is interval decomposable.
   \end{Lemma}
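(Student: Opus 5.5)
The approach is to decompose each projective constant summand further into interval modules, using that its constant projective module is free. Start from a projective constant decomposition $F = \bigoplus_{G \in \mathcal{A}} G$. Each $G$ comes with an isomorphism $\varphi_G : F(I_G, M_G) \to G$, where $I_G$ is an interval in $P$ and $M_G$ is a projective $R$-module. The plan is to show that each such $G$ is an internal direct sum of interval submodules. Taking the union over $G \in \mathcal{A}$ of these families of submodules then gives an internal direct sum decomposition of $F$ into interval modules. This last step is formal, since the sum of the $G$ is direct and each $G$ is the direct sum of its pieces.

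\emph{Step 1: $M_G$ is finitely generated.} Since $I_G$ is nonempty, choose $x \in I_G$. Then $M_G \cong G_x$, and $G_x$ is a direct summand of $F_x$ because the decomposition is internal. As $F$ is p.f.g., $F_x$ is finitely generated, so its quotient $G_x$ is finitely generated, and hence so is $M_G$. The hypothesis on $R$ now shows that $M_G$ is free of finite rank; fix $M_G \cong R^{n_G}$ with a basis $e_1, \dots, e_{n_G}$. If $n_G = 0$, then $G = 0$ and we discard it. Noetherianity is not actually needed here, since quotients of finitely generated modules are always finitely generated.

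\emph{Step 2: split $F(I, R^n)$ into interval modules.} For each $i$, let $H_i \subseteq F(I,R^n)$ be the submodule with $(H_i)_x = R e_i$ for $x \in I$ and $(H_i)_x = 0$ otherwise. This is a persistence submodule, because the internal maps of $F(I,R^n)$ are identities within $I$ and zero otherwise. The map $F(I,R) \to H_i$ sending $1 \mapsto e_i$ is an isomorphism, since $r \mapsto r e_i$ is injective by freeness. So each $H_i$ is an interval module. Moreover $F(I,R^n) = \bigoplus_i H_i$ internally, because the sum is direct at every index $x$.

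\emph{Step 3: transport back.} The images $\varphi_G(H_i)$ are persistence submodules of $G$ that are isomorphic to $F(I_G,R)$. Since $\varphi_G$ is an isomorphism of functors, $G = \bigoplus_i \varphi_G(H_i)$. Setting $\mathcal{A}' = \{\varphi_G(H_i) : G \in \mathcal{A},\ 1 \le i \le n_G\}$ gives $F = \bigoplus_{H \in \mathcal{A}'} H$, which is an interval decomposition.

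I do not expect any serious obstacle. The only point needing care is Step 1, where one must check that $M_G$ is finitely generated so that the freeness hypothesis applies. That check relies on $I_G$ being nonempty and on $G_x$ being a direct summand of the finitely generated module $F_x$.
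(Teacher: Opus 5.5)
Your proposal is correct and follows the paper's route: each summand $G \cong F(I,M)$ has $M$ finitely generated projective, hence free, so $G$ splits into copies of $F(I,R)$. You spell out details the paper leaves implicit, namely why $M$ is finitely generated (since $G_x$ is a summand of $F_x$), the explicit splitting of $F(I,R^n)$, the transport along $\varphi_G$, and discarding zero summands; you are also right that Noetherianity plays no role in this lemma.
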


   \begin{proof}
      Let $F=\bigoplus_{G \in \mathcal{A}} G$ be a projective constant decomposition of $F$. Then each $G \in \mathcal{A}$ is isomorphic to $F(I,M)$ for some interval $I$ in $P$ and for some finitely generated projective $R$-module $M$. By assumption, $M$ is free, so $G$ is isomorphic to a direct sum of copies of $F(I,R)$. Thus, each $G$ admits an interval decomposition, which shows that $F$ is interval decomposable.
   \end{proof}

   The following corollary presents some notable examples of rings that satisfy the conditions of Lemma~\ref{lem:free-interval-decomp}.

   %%% Corollary: Examples of Free Module Rings %%%
   \begin{Corollary}\label{cor:free-interval-decomp-examples}
      Let $R$ be one of the following rings:
      \begin{enumerate}
         \item a principal ideal domain;
         \item the polynomial ring in finite variables $k[x_1, \ldots, x_n]$, where $k$ is a principal ideal domain or a field;
         \item a local (commutative) Noetherian ring with identity;
         \item the ring of formal power series $S[[x]]$, where $S$ is a local (commutative) Noetherian ring with identity.
      \end{enumerate}
      Let $P$ be a poset and let $F:P \to \RMod$ be a p.f.g. persistence module. Suppose that $F$ admits a projective constant decomposition. Then $F$ is interval decomposable.
   \end{Corollary}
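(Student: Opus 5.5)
The corollary reduces to Lemma~\ref{lem:free-interval-decomp}. Since $F$ is assumed to admit a projective constant decomposition, it suffices to check two things for each of the four listed rings: that $R$ is a Noetherian ring with identity, and that every finitely generated projective $R$-module is free. Once both are verified, Lemma~\ref{lem:free-interval-decomp} gives that $F$ is interval decomposable. I will go through the cases in order, giving the Noetherian property and then the freeness property for each.

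\emph{Case (1), $R$ a PID.} Every ideal is principal, hence finitely generated, so $R$ is Noetherian. By the structure theorem for finitely generated modules over a PID, a finitely generated module is a direct sum of a free module and a torsion module. A projective module is a direct summand of a free module, hence torsion-free. So the torsion part vanishes and the module is free.

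\emph{Case (2), $R = k[x_1,\ldots,x_n]$ with $k$ a field or PID.} By the Hilbert basis theorem, applied inductively, $R$ is Noetherian. Freeness of finitely generated projectives is the Quillen--Suslin theorem: for $k$ a field in Quillen's and Suslin's original form, and for $k$ a PID in the extension due to Quillen (see also Lam's book \emph{Serre's Problem on Projective Modules}). This deep theorem is the main external input of the whole proof. I do not intend to reprove it; I will cite it.

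\emph{Case (3), $R$ local Noetherian.} Here I will give the standard Nakayama argument. Let $M$ be finitely generated projective and $\mathfrak{m}$ the maximal ideal. Choose $m_1,\ldots,m_r$ whose images form a basis of the vector space $M/\mathfrak{m}M$ over $R/\mathfrak{m}$. By Nakayama's lemma these elements generate $M$, giving a surjection $R^r \to M$ with kernel $K$. Since $M$ is projective, the sequence splits, so $R^r \cong M \oplus K$. Hence $K$ is finitely generated, being a quotient of $R^r$. Tensoring the split sequence with $R/\mathfrak{m}$ and comparing dimensions gives $K/\mathfrak{m}K = 0$, so $K = 0$ by Nakayama, and therefore $M \cong R^r$.

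\emph{Case (4), $R = S[[x]]$ with $S$ local Noetherian.} The power series analogue of the Hilbert basis theorem shows that $S[[x]]$ is Noetherian. Moreover, $S[[x]]$ is local: its maximal ideal is $\mathfrak{m}_S + (x)$, because a power series is a unit exactly when its constant term is a unit in $S$. So this case reduces to case (3).

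The only nontrivial step is case (2). I will handle it by citation rather than proof, since it is a standard result. With all four cases checked, Lemma~\ref{lem:free-interval-decomp} applies directly and completes the argument.
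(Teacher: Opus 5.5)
Your proposal is correct and follows the paper's route. You reduce to Lemma~\ref{lem:free-interval-decomp} and check, ring by ring, that $R$ is Noetherian with finitely generated projectives free, citing Quillen--Suslin for (2) and reducing (4) to (3) because $S[[x]]$ is local. The only difference is that you prove (1) and (3) directly for finitely generated modules, via the structure theorem and Nakayama's lemma, where the paper cites the general facts that projectives over a PID are free and Kaplansky's theorem; the finitely generated case is all the lemma needs.
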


   \begin{proof}
      Immediate from Lemma \ref{lem:free-interval-decomp} by the following observations: 
      \begin{enumerate}
      \item Every projective $R$-module over a PID is free. 
      
      \item Quillen and Suslin independently proved that when $k$ is a PID or a field, every finitely generated projective module over the polynomial ring \\ $k[x_1, \ldots, x_n]$ is free; this result is now known as the Quillen–Suslin Theorem \cites{qui76, sus76}. Moreover, Hilbert's Basis Theorem ensures that $k[x_1, \ldots, x_n]$ is a Noetherian ring with identity when $k$ is a PID or a field \cite{hil90}.
      
      \item Kaplansky proved that every projective $R$-module over a (not necessarily commutative) local ring with identity is free \cite{kap58}.  
      
      \item When $S$ is a local (commutative) Noetherian ring with identity, $S[[x]]$ is also a local (commutative) Noetherian ring with identity. Case (3) implies that every projective $S[[x]]$-module is free.
      \end{enumerate}
   \end{proof}

   %%% Definition: Source and Sink %%%
   \begin{Definition}\label{def:source-and-sink}
      Let $P$ be a poset and $x \in P$. We call an index $x$ a \defn{sink} if there are no morphisms $x \to y$ for $x \neq y$. Dually, we call an index $x$ a \defn{source} if there are no morphisms $y \to x$ for $x \neq y$. 
   \end{Definition}

\section{Decomposition Criteria over Noetherian Rings}\label{sec:decomp-criteria}

   Recall that a finitely generated projective module over a Noetherian ring $R$ with identity is itself Noetherian and therefore satisfies the ascending chain condition. This property reflects the primary motivation for working over Noetherian rings, which imposes the necessary finiteness condition to obtain decomposition results. In this section, we establish two auxiliary results with regard to Noetherian $R$-modules. The first shows that certain chains of submodules must be finite, while the second provides a general criterion for decomposing a persistence module as a direct sum of submodules drawn from a specified collection.

   %%% Lemma: Finite Chain Lemma %%%
   \begin{Lemma}\label{lem:fin-chain}
      Let $R$ be a Noetherian ring with identity. Let $M$ be a finitely generated projective $R$-module, and let $S$ be a chain of $R$-submodules of $M$. Assume for all $A \in S$ that $M/A$ is projective. Then $S$ is a finite set.
   \end{Lemma}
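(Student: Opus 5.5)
The plan is to use two facts. The ascending chain condition holds in $M$, since $M$ is finitely generated over the Noetherian ring $R$. The projectivity hypothesis should turn any strictly descending chain in $S$ into a strictly ascending one. Every infinite totally ordered set contains either an infinite strictly increasing sequence or an infinite strictly decreasing sequence; this is an elementary Ramsey-type fact and can be proved by picking, say, elements with infinitely many elements above them. So if $S$ were infinite, it would contain either $A_1 \subsetneq A_2 \subsetneq \cdots$ or $A_1 \supsetneq A_2 \supsetneq \cdots$. The ascending case contradicts the Noetherian property of $M$ immediately. All the work is in the descending case.

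For the descending case, the key step is to show that for $A \supseteq B$ in $S$, the submodule $B$ is a direct summand of $A$ with projective complement. Consider the short exact sequence $0 \to A/B \to M/B \to M/A \to 0$. Since $M/A$ is projective, it splits, so $A/B$ is a direct summand of $M/B$. Since $M/B$ is projective, $A/B$ is projective. Then $0 \to B \to A \to A/B \to 0$ splits, so $A = B \oplus D$ for some submodule $D \cong A/B$, and $D \neq 0$ when the inclusion is strict.

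Apply this along the descending sequence to write $A_i = A_{i+1} \oplus D_i$ with each $D_i \neq 0$. By induction on $k$, we get
\[ A_1 = D_1 \oplus D_2 \oplus \cdots \oplus D_k \oplus A_{k+1}. \]
The inductive step substitutes $A_{k+1} = D_{k+1} \oplus A_{k+2}$. In particular, the sums $D_1 \oplus \cdots \oplus D_k$ are direct, and they form a strictly ascending chain of submodules of $M$, strict because each $D_{k+1} \neq 0$. This contradicts the ascending chain condition on $M$, so $S$ is finite.

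The step that needs the most care is the double splitting argument: both the projectivity of $M/A$ and the projectivity of $M/B$ are used. I expect the rest to be routine. Projectivity of $M$ itself is not actually needed, only that $M$ is Noetherian; the hypothesis is harmless.
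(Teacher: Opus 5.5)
Your proof is correct and matches the paper's. Both use the same double splitting of $0 \to A/B \to M/B \to M/A \to 0$ followed by $0 \to B \to A \to A/B \to 0$ to get nonzero complements, whose direct sums then form an infinite strictly ascending chain in $M$. The only difference is that the paper gets the strictly descending sequence by repeatedly taking the maximal element of what remains of $S$ (such elements exist because $M$ is Noetherian), which makes your increasing/decreasing dichotomy unnecessary.
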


   \begin{proof}
      Since $M$ is Noetherian, the set $S$ has a maximal element, say $A_1$. Now $M/A_1$ projective implies $M=N_1 \oplus A_1$ for some projective $R$-submodule $N_1$ of $M$. 

      If $S=\{A_1\}$, we are done. If not, the set $S \setminus \{A_1\}$ has a maximal element, say $A_2$. We have a short exact sequence

      %%% S.E.S. %%%
      \begin{center}
         \begin{tikzcd}
            0 \arrow [r] 
            & {A_1}/{A_2} \arrow[r, hook]
            & {M}/{A_2} \arrow[r, two heads]
            & {M}/{A_1} \arrow[r] 
            & 0.
         \end{tikzcd}
      \end{center}

      The sequence splits since $M/A_1$ is projective, and thus $A_1/A_2$ is projective as it is a direct summand of $M/A_2$ which is projective. In particular, there exists a nonzero $R$-submodule $N_2$ of $A_1$ such that $A_1=N_2 \oplus A_2$. 
      
      If $S=\{A_1,A_2\}$, we are done. If not, repeat this process for $A_3$ and so on. This process must terminate after a finite number of steps, for otherwise 
      \begin{center}
         $N_1 \subset N_1 \oplus N_2 \subset N_1 \oplus N_2 \oplus N_3 \subset \cdots$
      \end{center}
      is an infinite ascending chain of $R$-submodules of $M$, a contradiction since $M$ satisfies the ascending chain condition.
   \end{proof}

   The proof of the following lemma is essentially the proof in \cite{gan25}*{Lemma 3.6} with the minor addition of Claim~\ref*{lem:decomp-criteria}.1.

   %%% Lemma: Decomposition Criteria for Noetherian Rings%%%
   \begin{Lemma} \label{lem:decomp-criteria}
      Let $R$ be a Noetherian ring with identity. Let $P$ be a poset and let $F:P \to \RMod$ be a p.f.g. persistence module. Let $\mathcal{E}$ be a set of nonzero submodules of $F$. Assume that for each nonzero direct summand $H$ of $F$, there exists a direct summand $G$ of $H$ such that $G \in \mathcal{E}$. Then there exists a subset $\mathcal{A}$ of $\mathcal{E}$ giving an internal direct sum decomposition $F=\bigoplus_{G \in \mathcal{A}} G$. 
   \end{Lemma}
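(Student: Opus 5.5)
The plan is a Zorn's Lemma argument on independent families drawn from $\mathcal{E}$, following \cite{gan25}*{Lemma 3.6}. Call a subset $\mathcal{B} \subseteq \mathcal{E}$ \emph{admissible} if the sum $\sum_{G\in\mathcal{B}} G$ is internal direct and $\bigoplus_{G \in \mathcal{B}} G$ is a direct summand of $F$, i.e. there is a submodule $H_{\mathcal{B}}$ with $F = \bigoplus_{G\in\mathcal{B}} G \oplus H_{\mathcal{B}}$. The empty family is admissible. The aim is a maximal admissible family $\mathcal{A}$ with $H_{\mathcal{A}}=0$.

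First I establish a claim (the Claim~\ref*{lem:decomp-criteria}.1 alluded to): the union of a chain of admissible families is admissible. Independence of the union is immediate, because independence is a finitary condition. The real issue is that the resulting sum $D=\bigoplus_{G\in\mathcal{B}}G$ must again be a direct summand, and complements need not glue along a chain.

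This is where the p.f.g. and Noetherian hypotheses enter.
\begin{enumerate}
\item Fix $x\in P$. The submodules $D^{\mathcal{B}'}_x$, for $\mathcal{B}'$ in the chain, form a chain in the Noetherian module $F_x$.
\item Each $D^{\mathcal{B}'}_x$ is a direct summand of $F_x$, hence finitely generated, and the chain stabilizes. So $D_x$ equals $D^{\mathcal{B}'}_x$ for some member $\mathcal{B}'$ of the chain.
\item Equivalently, only finitely many $G\in\mathcal{B}$ have $G_x\neq 0$.
\end{enumerate}
I then need a complement of $D$ that is a persistence submodule, not just a pointwise one. The natural candidate is $H=\bigcap_{\mathcal{B}'} H_{\mathcal{B}'}$, but the complements $H_{\mathcal{B}'}$ must first be chosen compatibly, with $H_{\mathcal{B}''}\subseteq H_{\mathcal{B}'}$ for $\mathcal{B}'\subseteq\mathcal{B}''$.

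To arrange this, I refine the notion of admissibility: define the order on pairs $(\mathcal{B},H)$ with $F=\bigoplus_{\mathcal{B}}G\oplus H$ by $(\mathcal{B},H)\le(\mathcal{B}',H')$ iff $\mathcal{B}\subseteq\mathcal{B}'$ and $H=\bigoplus_{\mathcal{B}'\setminus\mathcal{B}}G\oplus H'$. For a chain of such pairs, put $H=\bigcap H_{\mathcal{B}'}$. Pointwise at $x$, stabilization of $D_x$ together with the compatibility relation forces $(H_{\mathcal{B}'})_x$ to stabilize as well. Hence $F_x=D_x\oplus H_x$, and $H$ is a persistence submodule because it is an intersection of persistence submodules. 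This gives an upper bound for the chain, and Zorn's Lemma yields a maximal pair $(\mathcal{A},H)$.

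Finally, suppose $H\neq0$. Then $H$ is a nonzero direct summand of $F$, so by hypothesis there is $G\in\mathcal{E}$ with $H=G\oplus H'$. Since $G\neq 0$, $G\notin\mathcal{A}$, and the pair $(\mathcal{A}\cup\{G\},H')$ strictly exceeds $(\mathcal{A},H)$, contradicting maximality. Hence $H=0$ and $F=\bigoplus_{G\in\mathcal{A}}G$.

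The main obstacle is the chain step: verifying that the compatible complements stabilize pointwise so that the intersection is really a complement. I would handle it by showing at each $x$ that once $D_x$ has stabilized, the relation $H_{\mathcal{B}'}=\bigoplus G\oplus H_{\mathcal{B}''}$ involves only summands $G$ with $G_x=0$. It follows that $(H_{\mathcal{B}'})_x=(H_{\mathcal{B}''})_x$ from that point onward.
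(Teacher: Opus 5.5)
Your proposal is correct and is essentially the paper's proof. It uses the same poset of pairs $(\mathcal{B},H)$ with the same order, the same upper bound $\bigl(\bigcup\mathcal{B}',\bigcap H_{\mathcal{B}'}\bigr)$ justified pointwise by the ascending chain condition in $F_x$ (the paper's Claims 1 and 2), and the same maximality contradiction. The one check you leave implicit is that each chain member satisfies $H_{\mathcal{B}'}=\bigl(\bigoplus_{G\in\mathcal{B}\setminus\mathcal{B}'}G\bigr)\oplus H$, so that the new pair really is an upper bound; this follows at each $x$ from your observation that the summands added beyond the stabilizing pair vanish at $x$.
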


   \begin{proof}
      Let $\mathbf{S}$ be the set of all pairs $(\mathcal{A}, H)$ where: 
      \begin{quote}
         $\mathcal{A}$ is a subset of $\mathcal{E}$, 

         $H$ is a submodule of $F$,
      \end{quote}
      such that we have an internal direct sum decomposition $F=\left(\bigoplus_{G \in \mathcal{A}} G \right) \oplus H$. The set $\mathbf{S}$ is nonempty because $(\emptyset, F)\in \mathbf{S}$.

      Define a partial order on $\mathbf{S}$ by $(\mathcal{A}, H) \leq (\mathcal{B}, J)$ if $\mathcal{A} \subseteq \mathcal{B}$ and
      \[ H = \left( \bigoplus_{G \in \mathcal{B} - \mathcal{A}} G  \right) \oplus J. \] 

      %%% Claim 1: Minimal J_x in Chain %%%
      \textbf{Claim~\ref*{lem:decomp-criteria}.1.} For any chain $\mathbf{T}$ in $\mathbf{S}$ and fixed $x \in P$, there exists a pair $(\mathcal{B}, J)$ in $\mathbf{T}$ such that for all $(\mathcal{A},H)$ in $\mathbf{T}$, we have $J_x \subseteq H_x$.

      \begin{proof}[Proof of Claim~\ref*{lem:decomp-criteria}.1]
         Suppose by way of contradiction that no such pair $(\mathcal{B}, J)$ exists. Choose any $(\mathcal{A}^1, H^1)$ in $\mathbf{T}$. Then there exists $(\mathcal{A}^2, H^2)$ in $\mathbf{T}$ such that $H^2_x \subsetneq H^1_x$. Since $\mathbf{T}$ is a chain, we have either $(\mathcal{A}^1, H^1) \leq (\mathcal{A}^2, H^2)$ or $(\mathcal{A}^2, H^2) \leq (\mathcal{A}^1, H^1)$. The latter case is not possible since this would imply $H^1 \subseteq H^2$. Therefore $(\mathcal{A}^1, H^1) \leq (\mathcal{A}^2, H^2)$. 

         Repeating this process, we obtain an infinite sequence 
         \begin{center}
            $(\mathcal{A}^1, H^1), (\mathcal{A}^2, H^2), (\mathcal{A}^3, H^3), \ldots$
         \end{center}
         in $\mathbf{T}$. But then
         \begin{center}
            $\bigoplus\limits_{G \in \mathcal{A}^1} G_x \subset \bigoplus\limits_{G \in \mathcal{A}^2} G_x \subset \bigoplus\limits_{G \in \mathcal{A}^3} G_x \subset \cdots$
         \end{center}
         is an infinite ascending chain of $R$-submodules of $F_x$, a contradiction since $F_x$ is Noetherian and satisfies the ascending chain condition. 
      \end{proof} 

      %%% Claim 2: Every Chain has an Upper Bound %%%
      \textbf{Claim~\ref*{lem:decomp-criteria}.2.} Every chain in $\mathbf{S}$ has an upper bound. More precisely:
      \begin{quote} 
         For any chain $\mathbf{T}$ in $\mathbf{S}$, let 
         \begin{align*} 
            \mathcal{C} = \bigcup_{(\mathcal{A}, H)\in \mathbf{T}} \mathcal{A}, \qquad
            N = \bigcap_{(\mathcal{A}, H)\in \mathbf{T}} H.
         \end{align*}
         Then $(\mathcal{C}, N)$ lies in $\mathbf{S}$ and is an upper bound of $\mathbf{T}$.
      \end{quote}

      \begin{proof}[Proof of Claim~\ref*{lem:decomp-criteria}.2]    
         There are two things to check: 
         \begin{enumerate}
            \item $F= \left(\bigoplus_{G\in \mathcal{C}} G\right) \oplus N$; 
         
            \item $H=\left( \bigoplus_{G\in \mathcal{C}-\mathcal{A}} G\right) \oplus N$ for all $(\mathcal{A}, H)\in \mathbf{T}$.
         \end{enumerate}

         Thus for each $x \in P$, we need to check:
         \begin{enumerate}
            \item $F_x= \left(\bigoplus_{G \in \mathcal{C}} G_x\right) \oplus N_x$; 
         
            \item $H_x=\left( \bigoplus_{G \in \mathcal{C}-\mathcal{A}} G_x\right) \oplus N_x$ for all $(\mathcal{A}, H)\in \mathbf{T}$.
         \end{enumerate}

         Fix $x \in P$. By Claim~\ref*{lem:decomp-criteria}.1, we can choose a pair $(\mathcal{B}, J) \in \mathbf{T}$ such that for all $(\mathcal{A},H)$ in $\mathbf{T}$, we have $J_x \subseteq H_x$. Now let $(\mathcal{A}, H)\in \mathbf{T}$. We make the following observations:
         \begin{enumerate}
            \item[(a)] If $(\mathcal{A}, H) \leq (\mathcal{B}, J)$, then 
            \[ H_x = \left( \bigoplus_{G \in \mathcal{B} - \mathcal{A}} G_x  \right) \oplus J_x; \] 
            hence $J_x \subseteq H_x$. 
            
            \item[(b)] If $(\mathcal{B}, J) \leq (\mathcal{A}, H)$, then 
            \[ J_x = \left( \bigoplus_{G \in \mathcal{A} - \mathcal{B}} G_x  \right) \oplus H_x. \] 
            
            The above equality shows $H_x \subseteq J_x$. But also $J_x \subseteq H_x$, hence $J_x = H_x$ and $G_x = 0$ for all $G \in \mathcal{A}-\mathcal{B}$. 

            \item[(c)] By (a) and (b), we have $N_x = J_x$, and $G_x=0$ for all $G \in \mathcal{C}-\mathcal{B}$. 
         \end{enumerate}

         We deduce that:
         \begin{align*}
            F_x &= \left( \bigoplus_{G \in \mathcal{B}} G_x \right) \oplus J_x & \\
            &= \left( \bigoplus_{G \in \mathcal{C}} G_x \right) \oplus N_x & \mbox{by (c).}
         \end{align*}
         If $(\mathcal{A}, H) \leq (\mathcal{B}, J)$, then
         \begin{align*}
            H_x &= \left( \bigoplus_{G \in \mathcal{B} - \mathcal{A}} G_x  \right) \oplus J_x & \\
            &= \left( \bigoplus_{G \in \mathcal{C} - \mathcal{A}} G_x  \right) \oplus N_x & \mbox{by (c).}
         \end{align*}
         If $(\mathcal{B}, J) \leq (\mathcal{A}, H)$, then
         \begin{align*}
            H_x &= J_x & \mbox{by (b)} \\
            &=  \left( \bigoplus_{G \in \mathcal{C} - \mathcal{A}} G_x  \right) \oplus N_x & \mbox{by (c).}
         \end{align*}
      \end{proof}

      By Claim~\ref*{lem:decomp-criteria}.2, we can apply Zorn's lemma to deduce there exists a pair, say $(\mathcal{A}, H)$, which is maximal in $\mathbf{S}$.
      
      Suppose that $H \neq 0$. Then by assumption there exists an internal direct sum decomposition $H=G \oplus J$ where $G \in \mathcal{E}$. Let $\mathcal{B}=\mathcal{A} \cup \{G\}$. Then we have $(\mathcal{B}, J) \in \mathbf{S}$ and $(\mathcal{A}, H) < (\mathcal{B}, J)$, a contradiction to the maximality of $(\mathcal{A}, H)$. 
      
      Therefore $H=0$. Hence $F = \bigoplus_{G \in \mathcal{A}} G$ is a desired decomposition.
   \end{proof}

\section{Zigzag Posets}\label{sec:zigzag-posets}

   In this section, we introduce zigzag persistence modules by first providing a precise definition of a zigzag poset. We then present several observations and definitions that will be essential for working with zigzag persistence modules throughout the article, including the projective colimit conditions (PCC), which underpin some of our main results.

   %%% Notation: Gamma Sets %%%
   \begin{Notation}\label{not:gamma}
      We write $\Gamma$ for one of the following sets: 
      \begin{enumerate}
         \item a finite set $\{1,2, \ldots, n\}$ for $n \geq 2$; 
         \item the set of whole numbers $\mathbb{N}=\{0,1,2,\ldots\}$; or 
         \item the set of integers $\Z$.
      \end{enumerate} 
   \end{Notation}

   The following definition of a zigzag poset is taken from \cite{gan25}*{Definition 7.2}.

   %%% Definition: Zigzag Poset and Extrema %%%
   \begin{Definition} \label{def:zigzag-poset}
      Let $P$ be a poset. Let $\{z_i \mid i\in \Gamma\}$ be a sequence of elements of $P$ indexed by $\Gamma$ (see Notation~\ref{not:gamma}). 
      We call $P$ a \defn{zigzag poset with extrema} $\{z_i \mid i\in \Gamma\}$ if there exists a total order $\sqsubseteq$ on the set $P$ satisfying the following conditions:
      \begin{enumerate}
         \item for all $i,j\in \Gamma$:
         \[ i< j \quad \Longrightarrow \quad z_i\sqsubset z_j; \]
         \item for all $x,y\in P$:
         \begin{align*}
            x\leq y \quad \Longrightarrow \quad   
            & \mbox{ there exists $i\in \Gamma$ such that } \\
            & \begin{cases} 
                  i+1\in \Gamma, \\
                  z_i \sqsubseteq x\sqsubseteq z_{i+1}, \\ 
                  z_i \sqsubseteq y\sqsubseteq z_{i+1};
               \end{cases} 
         \end{align*}
      \item either:
      \begin{align*}
         &\mbox{ for all $i\in \Gamma$ such that $i+1\in \Gamma$, }\\ 
         &\mbox{ for all $x,y\in P$ such that $z_i\sqsubseteq x \sqsubseteq y \sqsubseteq z_{i+1}$: }\\ 
         &\begin{cases} 
            i \mbox{ is odd } \quad \Longrightarrow \quad x\leq y, \\
            i \mbox{ is even } \quad \Longrightarrow \quad  y\leq x,
         \end{cases}  
      \end{align*} 
      or:
      \begin{align*}
         &\mbox{ for all $i\in \Gamma$ such that $i+1\in \Gamma$, }\\
         &\mbox{ for all $x,y\in P$ such that $z_i \sqsubseteq x \sqsubseteq y \sqsubseteq z_{i+1}$: }\\ 
         &\begin{cases}
            i \mbox{ is odd } \quad \Longrightarrow \quad y\leq x , \\
            i \mbox{ is even } \quad \Longrightarrow \quad x\leq y.
         \end{cases}
      \end{align*}   
      \end{enumerate}
      In this case, we call $\sqsubseteq$ the \defn{associated total order}.
   \end{Definition}

   %%% Lemma: Zigzag Shape %%%
   \begin{Lemma} \label{lem:zigzag-shape}
      Let $P$ be a zigzag poset with extrema $\{z_i \mid i\in \Gamma\}$. 
      Let $\sqsubseteq$ be the associated total order. Then we have the following. 
      \begin{enumerate}
         \item For all $x\in P$, there exists $i\in \Gamma$ such that $i+1\in \Gamma$ and $z_i\sqsubseteq x\sqsubseteq z_{i+1}$.
      
         \item For all $x,y\in P$ such that $x\leq y$, for all $i\in \Gamma$ such that  
         \[ x\sqsubseteq z_i \sqsubseteq y \quad \mbox{ or } \quad y\sqsubseteq z_i \sqsubseteq x, \] 
         we have $x=z_i$ or $y=z_i$.

         \item For all $x,y,z\in P$ such that $x\leq y\leq z$, we have 
         \[ x\sqsubseteq y\sqsubseteq z \quad \mbox{ or } \quad z\sqsubseteq y \sqsubseteq x. \]
      \end{enumerate}
   \end{Lemma}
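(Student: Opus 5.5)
Each part follows from the three conditions of Definition~\ref{def:zigzag-poset}, so I would go through them in turn, using part (1) in parts (2) and (3). Write $\Gamma$ as a set of consecutive integers.

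For (1), fix $x\in P$ and apply condition (2) to the relation $x\leq x$. This gives $i\in\Gamma$ with $i+1\in\Gamma$ and $z_i\sqsubseteq x\sqsubseteq z_{i+1}$, which is exactly the claim.

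For (2), suppose $x\leq y$. By condition (2) there is $j$ with $j+1\in\Gamma$ such that both $x$ and $y$ lie in the $\sqsubseteq$-segment $[z_j,z_{j+1}]$. Now take $i$ with $z_i$ $\sqsubseteq$-between $x$ and $y$. Then $z_j\sqsubseteq z_i\sqsubseteq z_{j+1}$. Because $\sqsubseteq$ is a total order and condition (1) makes $i\mapsto z_i$ strictly increasing (hence injective and order-reflecting), we get $j\le i\le j+1$. Thus $z_i$ is one of the endpoints $z_j$ or $z_{j+1}$.

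Say $z_i=z_j$, so $x\sqsubseteq z_j\sqsubseteq y$ or the reverse. Since both $x$ and $y$ satisfy $z_j\sqsubseteq x,y$, whichever of them is $\sqsubseteq$-below $z_j$ must equal $z_j$ by antisymmetry. The case $z_i=z_{j+1}$ is symmetric. This step, pinning $i$ to $\{j,j+1\}$ via injectivity of the extrema sequence, is the only place needing care.

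For (3), suppose $x\leq y\leq z$. Condition (2) applied to $x\leq y$ and to $y\leq z$ places $\{x,y\}$ in some segment $[z_j,z_{j+1}]$ and $\{y,z\}$ in some segment $[z_k,z_{k+1}]$.

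If $x\sqsubseteq y\sqsubseteq z$ or $z\sqsubseteq y\sqsubseteq x$, we are done. Otherwise $y$ is a $\sqsubseteq$-extreme of $\{x,y,z\}$; say $x\sqsubseteq y$ and $z\sqsubseteq y$, the opposite case being dual. I would split into two cases.

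\textbf{Case $j=k$.} All three elements lie in one segment, where condition (3) says $\leq$ coincides with $\sqsubseteq$ or with its reverse. The relations $x\leq y$ and $z\leq y$, translated into $\sqsubseteq$, then give either $x\sqsubseteq y$ and $z\sqsubseteq y$ consistently, or the reverse. In the consistent direction, the chain $x\leq y\leq z$ forces $y\sqsubseteq z$; combined with $z\sqsubseteq y$ this gives $y=z$, and then $x\sqsubseteq y\sqsubseteq z$ holds trivially. In the reverse direction, antisymmetry collapses the relevant elements in the same way.

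\textbf{Case $j\neq k$.} Then $y$ lies in two distinct segments, so $y$ is a shared endpoint $z_m$. The segments then sit on opposite sides of $y$, and a short check shows $x\sqsubseteq y\sqsubseteq z$ or its reverse.

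The only mild obstacle is bookkeeping of the orientation cases in (3). I would handle it by noting that within a segment, $\leq$ agrees with $\sqsubseteq$ up to reversal, so in every case antisymmetry forces coincidences.
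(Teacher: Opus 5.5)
Your proof is correct. The paper does not argue this lemma itself; it only cites \cite{gan25} (Lemma 7.3). Your direct check against the three axioms of Definition~\ref{def:zigzag-poset} therefore serves as a self-contained replacement. Parts (1) and (2) are the natural arguments: reflexivity fed into axiom (2), and strict monotonicity of $i\mapsto z_i$ pinning $i$ to $\{j,j+1\}$.

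Part (3) needs two small tidy-ups. First, in Case $j=k$ you wrote ``$z\leq y$'' where the hypothesis is $y\leq z$. That case also closes immediately, without the contradiction framing. On a single segment, $\leq$ coincides with $\sqsubseteq$ or with its reverse: one inclusion is axiom (3), and the other follows from totality of $\sqsubseteq$ together with antisymmetry of $\leq$. So $x\leq y\leq z$ translates directly into $x\sqsubseteq y\sqsubseteq z$ or $z\sqsubseteq y\sqsubseteq x$. Second, the ``short check'' in Case $j\neq k$ should be spelled out. For $j<k$, the chain $y\sqsubseteq z_{j+1}\sqsubseteq z_k\sqsubseteq y$ forces $y=z_{j+1}=z_k$, and hence $x\sqsubseteq y\sqsubseteq z$. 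The case $j>k$ is symmetric and gives $z\sqsubseteq y\sqsubseteq x$.
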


   \begin{proof}
      See \cite{gan25}*{Lemma 7.3}.
   \end{proof}

   %%% Lemma: Interval Shape in Zigzag %%%
   \begin{Lemma} \label{lem:interval-in-zigzag}
      Let $P$ be a zigzag poset with extrema $\{z_i \mid i\in \Gamma\}$. 
      Let $\sqsubseteq$ be the associated total order. Let $I$ be a nonempty subset of $P$. Then the following statements are equivalent:
      \begin{enumerate}
         \item $I$ is an interval in the poset $P$.
         \item For all $x,y,z\in P$ such that $x\sqsubseteq y\sqsubseteq z$, we have $y\in I$ whenever $x, z\in I$.
      \end{enumerate}
   \end{Lemma}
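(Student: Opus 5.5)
We prove the two implications separately, using Lemma~\ref{lem:zigzag-shape} throughout. For convenience, call a subset $I$ \emph{$\sqsubseteq$-convex} if it satisfies condition (2).

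\textbf{(2) $\Rightarrow$ (1).} Let $I$ be nonempty and $\sqsubseteq$-convex.

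For $\leq$-convexity, take $x,z \in I$ and $y \in P$ with $x \leq y \leq z$. Lemma~\ref{lem:zigzag-shape}(3) gives $x \sqsubseteq y \sqsubseteq z$ or $z \sqsubseteq y \sqsubseteq x$. In either case condition (2) puts $y$ in $I$.

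For connectedness, take $x,z \in I$ and assume without loss of generality that $x \sqsubseteq z$. The set $\Set{i \in \Gamma}{x \sqsubset z_i \sqsubset z}$ is finite: it lies between the indices given by Lemma~\ref{lem:zigzag-shape}(1) for $x$ and for $z$, using Definition~\ref{def:zigzag-poset}(1). List these extrema in increasing order as $z_{j+1}, \ldots, z_{k}$. Then consider the finite sequence
\[ x,\; z_{j+1},\; \ldots,\; z_k,\; z. \]
Every element of this sequence lies in $I$, by $\sqsubseteq$-convexity. Any two consecutive elements lie in a common block $z_i \sqsubseteq \cdot \sqsubseteq z_{i+1}$. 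By Definition~\ref{def:zigzag-poset}(3), any two elements of a single block are $\leq$-comparable. This is exactly the chain required in Definition~\ref{def:interval}(2).

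Here is why consecutive elements share a block. Take $i$ with $z_i \sqsubseteq x \sqsubseteq z_{i+1}$. If $z \sqsubseteq z_{i+1}$, then $x$ and $z$ already lie in the same block. Otherwise the first listed extremum is $z_{i+1}$, and the same reasoning repeats from there.

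\textbf{(1) $\Rightarrow$ (2).} Let $I$ be an interval, and take $x \sqsubseteq y \sqsubseteq z$ with $x,z \in I$. Choose a connecting path $x = y_0, y_1, \ldots, y_n = z$ in $I$, with consecutive elements $\leq$-comparable.

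Since $\sqsubseteq$ is a total order and the path starts at $x \sqsubseteq y$ and ends at $z \sqsupseteq y$, some step crosses $y$. That is, there is an $m$ with $y_{m-1} \sqsubseteq y \sqsubseteq y_m$ or $y_m \sqsubseteq y \sqsubseteq y_{m-1}$. Write $a,b$ for this pair, so that $a \leq b$ or $b \leq a$.

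By Definition~\ref{def:zigzag-poset}(2), $a$ and $b$ lie in a common block $z_i \sqsubseteq a,b \sqsubseteq z_{i+1}$. Hence $y$ also lies in that block, since it is $\sqsubseteq$-between $a$ and $b$. Definition~\ref{def:zigzag-poset}(3) now makes the block $\leq$-monotone. Within the block the orders $\leq$ and $\sqsubseteq$ agree or are reversed. So $y$ is $\leq$-between $a$ and $b$, and $\leq$-convexity of $I$ gives $y \in I$.

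We expect the main obstacle to be this block argument. A comparable pair may itself involve an extremum $z_i$, and then one must choose the block correctly. Definition~\ref{def:zigzag-poset}(2) supplies such a block directly, and Lemma~\ref{lem:zigzag-shape}(2) is the fallback if needed. A second point to handle is the finiteness of the set of extrema between $x$ and $z$ in the case $\Gamma = \Z$.
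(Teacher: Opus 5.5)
Your proof is correct. The paper gives no argument of its own for this lemma; it simply defers to Lemma~7.4 of \cite{gan25}. There is therefore no in-paper route to compare against, and what you have written is a self-contained proof using only Definition~\ref{def:zigzag-poset} and Lemma~\ref{lem:zigzag-shape}. The direction (1)$\Rightarrow$(2) is clean as written:
\begin{itemize}
\item Definition~\ref{def:zigzag-poset}(2) puts the crossing pair $a,b$ in a single block.
\item $y$ then lies in that block.
\item Definition~\ref{def:zigzag-poset}(3) makes $y$ lie $\leq$-between $a$ and $b$, so $\leq$-convexity of $I$ gives $y \in I$.
\end{itemize}
Your finiteness argument for $\Gamma=\Z$ via Lemma~\ref{lem:zigzag-shape}(1) is also fine.

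The one step to tighten in (2)$\Rightarrow$(1) is the justification that consecutive terms of $x, z_{j+1}, \ldots, z_k, z$ share a block. You list only extrema with $x \sqsubset z_i \sqsubset z$ strictly. So the sentence ``otherwise the first listed extremum is $z_{i+1}$'' fails when $x = z_{i+1}$, that is, when $x$ is itself an extremum and you chose the block to its left: then $z_{i+1}$ is not listed.

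The claim is still true, and you can check it directly instead of saying ``the same reasoning repeats.''
\begin{itemize}
\item Since $z_j$ is not listed and $z_j \sqsubset z_{j+1} \sqsubset z$, you get $z_j \sqsubseteq x$, so $x, z_{j+1} \in [z_j, z_{j+1}]$.
\item Since $z_{k+1}$ is not listed and $x \sqsubset z_k \sqsubset z_{k+1}$, you get $z \sqsubseteq z_{k+1}$, so $z_k, z \in [z_k, z_{k+1}]$.
\item Consecutive listed extrema share $[z_m, z_{m+1}]$, because $i \mapsto z_i$ is strictly increasing, so the listed indices form a run of consecutive integers.
\item If nothing is listed and $x \neq z$, take $i$ maximal with $z_i \sqsubseteq x$. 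Then $x \sqsubset z_{i+1}$, and since $z_{i+1}$ is not listed, $z \sqsubseteq z_{i+1}$.
\end{itemize}
In each case the required indices $j$, $k+1$, $i+1$ lie in $\Gamma$. This is because $\Gamma$ consists of consecutive integers, and $x$ and $z$ each lie in some block by Lemma~\ref{lem:zigzag-shape}(1), which bounds the relevant indices.
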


   \begin{proof}
      See \cite{gan25}*{Lemma 7.4}
   \end{proof}

   %%% Remark: Lemmas on Zigzag Behavior %%%
   The above lemmas, in effect, guarantee that our definition of a zigzag poset behaves in the expected way. 

   %%% Notation: Intervals in Zigzag %%%
   \begin{Notation}\label{not:zigzag-interval}
      Let $P$ be a zigzag poset with extrema $\{z_i \mid i\in \Gamma\}$. 
      Let $\sqsubseteq$ be the associated total order. Given $x \sqsubseteq y$ in $P$, we use the notation below to represent the following intervals in $P$ (when such subsets are nonempty):

      \begin{enumerate}
         \item $[x,y] = \{z \in P \mid x \sqsubseteq z \sqsubseteq y\}$;
         \item $[x,y) = \{z \in P \mid x \sqsubseteq z \sqsubset y\}$;
         \item $(x,y] = \{z \in P \mid x \sqsubset z \sqsubseteq y\}$;
         \item $(x,y) = \{z \in P \mid x \sqsubset z \sqsubset y\}$.
      \end{enumerate}

      In particular, when $x=y$ we have $[x,x] = \{x\}$.
   \end{Notation}

   %%% Definition: Zigzag Persistence Module %%%
   \begin{Definition}\label{def:zigzag-pers-mod}
      Let $P$ be a zigzag poset with extrema $\{z_i \mid i\in \Gamma\}$. A functor $F:P \to \RMod$ is a \defn{zigzag persistence module with extrema} $\{z_i \mid i\in \Gamma\}$, or more generally a \defn{zigzag persistence module}.
   \end{Definition}

   %%% Defintion: Projective Colimit Conditions %%% 
   \begin{Definition}\label{def:proj-colim-cond}
      Let $P$ be a zigzag poset with extrema $\{z_i \mid i\in \Gamma\}$. Let $\sqsubseteq$ be the associated total order. We say a zigzag persistence module $F:P \to \RMod$ satisfies the \defn{projective colimit conditions (PCC)} if the following conditions hold for all indices $x \sqsubseteq y$ in $P$: 

      \begin{enumerate}
         \item[\textbf{(C1)}] $\Colim(F|_{[x,y]})$ is projective; 
         \item[\textbf{(C2)}] $\Coker(F_x \to \Colim(F|_{[x,y]}))$ is projective; 
         \item[\textbf{(C3)}] $\Coker(F_y \to \Colim(F|_{[x,y]}))$ is projective;
         \item[\textbf{(C4)}] $\Coker(F_x \oplus F_y \to \Colim(F|_{[x,y]}))$ is projective. 
      \end{enumerate}
   \end{Definition}

   %%% Lemma: Other Facts of PCC %%%
   \begin{Lemma}\label{lem:pcc-more-facts}
      Let $F:P \to \RMod$ be a p.f.g. zigzag persistence module with extrema $\{z_i \mid i\in \Gamma\}$ satisfying the PCC. Let $\leq$ be the partial order and $\sqsubseteq$ be the associated total order. Then:
      \begin{enumerate}
         \item $F_x$ is a finitely generated projective $R$-module for every $x \in P$;
         \item $\Coker(F_{x,y})$ is projective for every $x \leq y$ in $P$; 
         \item Any direct summand of $F$ also satisfies the PCC.
      \end{enumerate}
   \end{Lemma}

   \begin{proof}
      (1) Immediate from \textbf{(C1)} when $x=y$. 
      
      (2) If $x \leq y$ and $x \sqsubseteq y$, then $\Colim(F|_{[x,y]})=F_y$, hence \textbf{(C2)} implies $\Coker(F_{x,y})$ projective. Similarly, if $x \leq y$ and $y \sqsubseteq x$, then $\Colim(F|_{[y,x]})=F_y$, hence \textbf{(C3)} implies $\Coker(F_{x,y})$ is projective.
      
      (3) This follows from the fact that colimits commute with coproducts and cokernels in the category $\RMod$ \cite{par70}*{\S2.7, Corollary 2}. In particular, it is essential that $F$ is pointwise finitely generated so that if $F=\oplus_{G \in \mathcal{A}}G$ is a decomposition of $F$, then only finitely many $G_x$ are nontrivial for any fixed $x \in P$. 
   \end{proof}

   To end this section, we introduce the notion of a peak of a zigzag persistence module, along with some useful observations regarding peaks that will be used later in this article. Our definition is adapted from Ringel's work on the representation theory of Dynkin quivers \cite{rin16}.

   %%% Defintion: Peak %%%
   \begin{Definition}\label{def:zigzag-peak}
      Let $F:P \to \RMod$ be a zigzag persistence module with extrema $\{z_i \mid i\in \Gamma\}$. Let $\sqsubseteq$ be the associated total order. We call an index $x \in P$ a \defn{peak} of $F$ provided that for all morphisms $y \to z$:
      \begin{enumerate}
         \item the map $F_{y,z}$ is injective whenever $y \sqsubseteq z \sqsubseteq x$ or $x \sqsubseteq z \sqsubseteq y$;
         \item the map $F_{y,z}$ is surjective whenever $z \sqsubseteq y \sqsubseteq x$ or $x \sqsubseteq y \sqsubseteq z$.
      \end{enumerate}
   \end{Definition}

   %%%% Lemma: Peak Facts %%%
   \begin{Lemma}\label{lem:peak-facts}
      Let $P$ be a zigzag poset with extrema $\{z_i \mid i\in \Gamma\}$.
      \begin{enumerate}
         \item If $F:P \to \RMod$ is a zigzag persistence module of projective constant type over an interval $I$ in $P$, then $F$ has a peak at every $x \in I$. 
         \item A direct sum of zigzag persistence modules indexed by $P$ with $x$ as a peak also has $x$ as a peak. 
         \item If $F:P \to \RMod$ is a zigzag persistence module with $x$ as a peak, then any direct summand of $F$ also has $x$ as a peak. 
      \end{enumerate}
   \end{Lemma}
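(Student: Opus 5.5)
The three parts reduce to two elementary facts. First, the peak condition is a conjunction of injectivity and surjectivity requirements on the internal morphisms $F_{y,z}$. Second, for a direct sum of persistence modules, kernels and images of internal morphisms are computed componentwise. The plan is to check each part directly against Definition~\ref{def:zigzag-peak}.

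For (1), fix an isomorphism $F \cong F(I,M)$ with $M$ projective, and fix $x \in I$. Since injectivity and surjectivity of each internal morphism are preserved under isomorphism of functors, it suffices to treat $F = F(I,M)$ itself. Take a morphism $y \to z$.

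If $y,z \in I$, then $F_{y,z} = \Id_M$, which is bijective, so neither condition can fail.

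Otherwise, consider condition (1), in the case $y \sqsubseteq z \sqsubseteq x$ (the case $x \sqsubseteq z \sqsubseteq y$ is symmetric). If $y \notin I$, then $F_y = 0$, so $F_{y,z}$ is injective. If $y \in I$, then since $y \sqsubseteq z \sqsubseteq x$ with $x,y \in I$, Lemma~\ref{lem:interval-in-zigzag} gives $z \in I$, and we are back in the bijective case. Thus injectivity holds.

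Condition (2) is the dual argument. In the case $z \sqsubseteq y \sqsubseteq x$ (or $x \sqsubseteq y \sqsubseteq z$), if $z \notin I$ then $F_z = 0$ and $F_{y,z}$ is surjective. If $z \in I$, then $y$ lies $\sqsubseteq$-between $z$ and $x$, so $y \in I$ by Lemma~\ref{lem:interval-in-zigzag}, and $F_{y,z}$ is the identity. This convexity step, which converts the position of $y$ or $z$ in the total order $\sqsubseteq$ into membership in $I$, is the only point requiring care, and Lemma~\ref{lem:interval-in-zigzag} handles it exactly.

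For (2), let $F = \bigoplus_{\alpha} F^\alpha$ with each $F^\alpha$ having $x$ as a peak. Then $F_{y,z} = \bigoplus_\alpha F^\alpha_{y,z}$, with kernel $\bigoplus_\alpha \Ker(F^\alpha_{y,z})$ and image $\bigoplus_\alpha \Ima(F^\alpha_{y,z})$. A direct sum of injective (respectively surjective) maps is therefore injective (respectively surjective), and this holds for arbitrary index sets.

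For (3), write $F = G \oplus H$ as an internal sum of submodules, so that $F_{y,z} = G_{y,z} \oplus H_{y,z}$. The map $G_{y,z}$ is the restriction of $F_{y,z}$ to $G_y$, which lands in $G_z$, so it is injective whenever $F_{y,z}$ is. For surjectivity, any $g \in G_z$ has the form $g = F_{y,z}(a+b)$ with $a \in G_y$ and $b \in H_y$. Then $g = G_{y,z}(a) + H_{y,z}(b)$, and comparing components in $G_z \oplus H_z$ gives $g = G_{y,z}(a)$. Hence $G$ inherits both peak conditions at $x$, which completes the lemma.
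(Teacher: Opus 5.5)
Your proof is correct and takes the same approach as the paper, which simply asserts that the statements follow directly from Definition~\ref{def:zigzag-peak} and the behavior of injective and surjective maps under direct sums and direct summands. You supply the details the paper omits, including the convexity step via Lemma~\ref{lem:interval-in-zigzag} in part (1) and the componentwise surjectivity argument in part (3).
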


   \begin{proof}
      The above statements are easily verified from Definition~\ref{def:zigzag-peak} and the properties of injective and surjective maps with respect to direct sums and direct summands.
   \end{proof}

\section{\texorpdfstring{$A_n$}{TEXT}-Persistence Modules}\label{sec:quiver-zigzag}

   An ${A}_n$-persistence module (when $n \geq 2$) is a special case of a zigzag persistence module with finite extrema (see Definition~\ref{def:zigzag-pers-mod}). The results on projective constant decompositions of ${A}_n$-persistence modules developed in this section play a critical role in establishing subsequent results for totally ordered persistence modules in Section~\ref{sec:total-order} and zigzag persistence modules in Sections~\ref{sec:finite-zigzag} and \ref{sec:infinite-zigzag}. For this reason, we devote an entire section to this special case. Moreover, $A_n$-persistence modules bear a strong resemblance to $A_n$-type quiver representations, albeit with additional structure arising from the persistence setting.

   %%% Definition: A_n Quiver %%%
   \begin{Definition}
      Given a quiver of type $A_n$ with the simply laced Dynkin diagram

      \begin{center}
         \begin{tikzpicture}[roundnode/.style={circle,draw=black, minimum size=6mm}]
            \node[roundnode] at (-3,0) (node1) {\small 1};
            \node[roundnode] at (-1.5,0) (node2) {\small 2};
            \node[roundnode] at (0,0) (node3) {\small 3};
            \node[] at (1.5,0) (node4) {$\cdots$};
            \node[roundnode] at (3,0) (node5) {\small n};
            \draw (node1) -- (node2);
            \draw (node2) -- (node3);
            \draw (node3) -- (node4);
            \draw (node4) -- (node5);
         \end{tikzpicture}
      \end{center}

      we can view $A_n$ as a poset with elements $\{1,2, \ldots, n\}$ and a partial order denoted by $\leq$. A functor $F:A_n \to \RMod$ is an \defn{$A_n$-persistence module}. It is easy to see that an $A_n$-persistence module with $n \geq 2$ is in fact a zigzag persistence module with finite extrema and whose associated total order $\sqsubseteq$ is the natural total ordering of the integer indices (see Definitions~\ref{def:zigzag-poset} and \ref{def:zigzag-pers-mod}).
   \end{Definition}

   %%% Definition: A_n Projective Colimit Conditions %%%
   \begin{Definition}
      Let $F:A_n \to \RMod$ be an $A_n$-persistence module. We say $F$ satisfies the \defn{projective colimit conditions (PCC)} if:
      \begin{enumerate}
         \item When $n=1$: $F_1$ is a projective $R$-module; 
         \item When $n \geq 2$: $F$ satisfies the PCC when viewed as a zigzag persistence module (see Definition~\ref{def:proj-colim-cond}).
      \end{enumerate}
   \end{Definition}

   %%% Remark: PCC Conditions in A_n Case %%%
   Some care is required in the above definition, since an $A_1$-persistence module is not, strictly speaking, a zigzag persistence module. However, the conditions align in the obvious way to those provided in Definition~\ref{def:proj-colim-cond}.

   We now prove the main result of this section, namely Theorem~\ref{thm:main-pc-quiver}. The proof is modeled after Ringel's decomposition of representations of $A_n$-type quivers \cite{rin16}.

   %%% Theorem: A_n Zigzag Persistence Module Admits Projective Constant Decomposition %%%
   \begin{Theorem}\label{thm:quiver-pc-decomp}
      Let $F:A_n \to \RMod$ be a nonzero p.f.g. $A_n$-persistence module satisfying the PCC. Then $F$ admits a projective constant decomposition.
   \end{Theorem}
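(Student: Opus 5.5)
The plan is to apply Lemma~\ref{lem:decomp-criteria} with $\mathcal{E}$ the set of nonzero submodules of $F$ of projective constant type. By Lemma~\ref{lem:pcc-more-facts}(3), every nonzero direct summand $H$ of $F$ is again p.f.g.\ and satisfies the PCC. It therefore suffices to prove the following statement: \emph{every nonzero p.f.g.\ $A_n$-persistence module $H$ satisfying the PCC has a nonzero direct summand of projective constant type.} For $n=1$ this is trivial, since $H_1$ is projective and $H$ itself is of projective constant type. For $n\ge 2$ we argue as in Ringel's approach: first split off a summand having a peak, then split off a summand of projective constant type from the module with a peak.

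\textbf{Step 1 (producing a peak).} Choose $x$ with $H_x\neq 0$. We modify $H$ one arrow at a time, moving outward from $x$, until all arrows pointing toward $x$ are injective and all arrows pointing away from $x$ are surjective.

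Consider an arrow $y\to z$ adjacent to the current region on the right of $x$. Suppose it points away from $x$, so $x\sqsubseteq y\sqsubseteq z$. Then $\Coker(H_{y,z})$ is projective by Lemma~\ref{lem:pcc-more-facts}(2). Hence $H_z=\Ima(H_{y,z})\oplus C$ for a complement $C$, and the complement $C$ spreads out to a summand of $H$ supported on $[z,n]$. We do this by pushing $C$ along outward arrows and pulling back along inward ones. To make these complements compatible across further arrows, we use the colimit cokernel conditions (C2)--(C4) applied to the intervals $[x,y]$, which says exactly that the images of the relevant colimit maps are direct summands.

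Dually, if the arrow points toward $x$, we need $\Ker(H_{y,z})$ to be a direct summand of $H_y$ that extends to a summand of $H$. Here (C1) and (C3) are the relevant conditions: the image of $H_y$ in the colimit over $[y,\ldots]$ is projective, so the kernel splits off.

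Each splitting yields $H=H'\oplus H''$ with $H'_x=H_x\neq 0$. By Lemma~\ref{lem:pcc-more-facts}(3) the summand $H'$ still satisfies the PCC, so we can continue. After at most $n$ steps we obtain a nonzero summand with a peak at $x$.

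\textbf{Step 2 (splitting at a peak).} Let $H$ have a peak at $x$. Every $H_y$ then maps to or receives $H_x$ in a controlled way: along the path from $x$ to $y$ the arrows are injective going inward and surjective going outward. So each $H_y$ can be identified with a subquotient of $H_x$, that is, $H_y\cong U_y/K_y$ with $U_y,K_y\subseteq H_x$. The submodules $K_y$ (for $y$ on one side) form a chain in $H_x$, and the $U_y$ do as well. By the PCC each quotient $H_x/K_y$ and $H_x/U_y$ is projective, and Lemma~\ref{lem:fin-chain} already gives finiteness for such chains.

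Choose $y_1\sqsubseteq x\sqsubseteq y_2$ so that the interval $I=[y_1,y_2]$ is maximal with the property that some nonzero projective summand $M$ of $H_x$ survives isomorphically throughout $I$. Concretely, $M$ is a complement to the largest relevant kernels and is contained in the smallest relevant images. The condition (C4) on $[y_1,y_2]$ is the one that ensures such a common complement $M$ exists and is nonzero. Transporting $M$ along $I$ gives a submodule $G\cong F(I,M)$. We then show that $G$ is a direct summand of $H$ by building the complement pointwise: we take the kernel or image pieces along $I$ together with all of $H$ outside $I$, and check compatibility with the internal maps using the peak property.

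I expect the main obstacle to be Step 2. The difficulty is choosing $M$ as a simultaneous complement in $H_x$ to two chains of submodules coming from the left and right sides, and then verifying naturality of the complement. This is exactly where (C4), together with the projectivity of the cokernels of $H_{y_1}\oplus H_{y_2}\to\Colim$, must be used in an essential way. Once this is done, Lemma~\ref{lem:decomp-criteria} completes the proof.
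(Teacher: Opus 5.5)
There is a genuine gap: both steps that carry the content are asserted rather than proved, and Step~1 does not work as described. Your reduction through Lemma~\ref{lem:decomp-criteria} is fine.

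In Step~1 you claim that the complement $C$ of $\Ima(H_{y,z})$ in $H_z$ ``spreads out'' to a direct summand supported beyond $z$, by pushing forward along outward arrows and pulling back along inward ones. This fails for an arbitrary complement. Work over a field $k$, where the PCC are automatic. Take $x=1$ and $1\to 2\to 3$ with $H_1=k\to H_2=k^2$, $1\mapsto e_1$, and $H_2\to H_3=k$, $e_1,e_2\mapsto 1$. If $H''_2=C=\langle e_2\rangle$, then $H''_3$ must contain $H_{2,3}(e_2)=1$. The complementary summand contains $H_1$, hence $e_1$ at vertex $2$, hence $H_{2,3}(e_1)=1$ at vertex $3$. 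So no splitting exists, and only $C=\langle e_1-e_2\rangle=\Ker(H_{2,3})$ works.

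In general, $C$ (and likewise the kernel summand in the inward case) must be chosen compatibly with all kernels and images of $H|_{[z,n]}$ at once. That is essentially the whole decomposition problem on that side. The conditions you cite on intervals $[x,y]$ say nothing about the arrows beyond $z$, which is where the obstruction lives. The paper gets these splittings by induction on $n$. It decomposes $F|_{[1,x]}$ and $F|_{[x,n]}$ by the inductive hypothesis and sorts the summands by whether their support contains $x$. This gives $F=G\oplus H\oplus J$ with $H$ having a peak at $x$ (Claim~\ref*{thm:quiver-pc-decomp}.1). Without an induction you have no source for these summands.

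Step~2 is left open exactly where the work is. Writing $H_y\cong U_y/K_y$ is correct. But to split off $G\cong F(I,M)$ you need a complement $N$ of $M$ in $H_x$ such that every member $A$ of both chains satisfies $A=(A\cap M)\oplus(A\cap N)$. An $M$ that merely survives on a maximal $I$ does not provide this, and the peak property cannot repair it afterwards. So you need a decomposition of $H_x$ compatible with two chains simultaneously.

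Over a ring this requires at least that $H_x/(A+B)$ be projective for every $A$ in the left chain and $B$ in the right chain, followed by an actual splitting argument. For $y\sqsubseteq x\sqsubseteq y'$ one has $\Colim(H|_{[y,y']})\cong H_x/(K_y+K_{y'})$. Hence these quotients come from (C1) for kernel--kernel sums, (C2) and (C3) for mixed sums, and (C4) for image--image sums, not from (C4) alone.

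The paper avoids the two-chain problem entirely. Applying Claim~\ref*{thm:quiver-pc-decomp}.1 at $2$ and then at $n-1$ yields a summand with peaks at both vertices. All maps between $2$ and $n-1$ are then isomorphisms, and everything collapses to an $A_3$-module with a peak at the middle vertex. There only two submodules of the middle term occur, and the three orientations are settled by explicit splittings through their intersection.
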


   \begin{proof}
      %%% Case for N=1 and N=2 %%%
      \begingroup
         We will proceed by induction on $n$. The case for $n=1$ is a single projective $R$-module $F_1$, hence trivially of projective constant type. 
         
         For $n=2$, this is the case with either one map $F_{1,2}:F_1 \to F_2$ or $F_{2,1}:F_2 \to F_1$. Without loss of generality, consider the case $F_{1,2}:F_1 \to F_2$. Since $\Coker[1,2]$ is projective by Lemma~\ref{lem:pcc-more-facts}, $F_2 = \Ima[1,2] \oplus G_2$ for some $R$-submodule $G_2$. Moreover, $F_1/\Ker[1,2] \cong \Ima[1,2]$ is projective since $\Ima[1,2]$ is a direct summand of $F_2$. Hence $F_1 = \Ker[1,2] \oplus G_1$ for some $R$-submodule $G_1 \cong \Ima[1,2]$. We decompose $F$ in the following way.
         \begin{center}
            \begin{tikzcd}[row sep=-5pt, column sep=25pt]
               F_1 & F_2 \\
               \rotatebox{90}{=} & \rotatebox{90}{=} \\
               \Ker[1,2] & \\
               \oplus &  \\
               G_1 \arrow[r, "\cong"] & \Ima[1,2] \\
               & \oplus \\
               & G_2 \\
            \end{tikzcd}
         \end{center}
         Each row in the diagram represents a submodule of $F$ of projective constant type. Hence $F$ admits a projective constant decomposition.
      \endgroup

      Now let $n \geq 3$ and assume the inductive hypothesis holds up to $n-1$. Let $\sqsubseteq$ be the associated total order when viewing $F$ as a zigzag persistence module. 
         
      %%% Claim 7: Decomposition is Compatible with Chain of Images %%%
      \textbf{Claim~\ref*{thm:quiver-pc-decomp}.1.} For any fixed $x$ such that $1 \sqsubset x \sqsubset n$, the persistence module $F$ can be written as $F=G \oplus H \oplus J$, where $H$ has $x$ as a peak (see Definition~\ref{def:zigzag-peak}), the support of $G$ is in $\{1, \ldots, x-1\}$, and the support of $J$ is in $\{x+1, \ldots, n\}$.

      \begin{proof}[Proof of Claim~\ref*{thm:quiver-pc-decomp}.1.] 	
         Fix $1 \sqsubset x \sqsubset n$. By the inductive hypothesis, $F|_{[1,x]} = G' \oplus H'$, where $G'$ and $H'$ decompose as a direct sum of persistence modules of projective constant type, $G'_x=0$, and $H'$ has $x$ as a peak (see Lemma~\ref{lem:peak-facts}). 
         
         Likewise, by the inductive hypothesis, $F|_{[x,n]} = H'' \oplus J''$ where $H''$ and $J''$ decompose as a direct sum of persistence modules of projective constant type, $J''_x=0$, and $H''$ has $x$ as a peak. Since $G'_x=0=J''_x$, we must have $F_x=H'_x=H''_x$. Define the persistence submodules $G,H,$ and $J$ as follows:
         \begin{align*}
            G_y = 
            \begin{cases} 
               G'_y & \text{if } y \sqsubset x, \\
               0 & \text{if } y \sqsupseteq x; \\
            \end{cases} 
            \hspace{15pt} 
            & H_y = 
            \begin{cases} 
               H'_y & \text{if } y \sqsubset x, \\
               H''_y &  \text{if } y \sqsupseteq x; \\
            \end{cases}
            & J_y =
            \begin{cases} 
               0 & \text{if } y \sqsubset x, \\
               J''_y &  \text{if } y \sqsupseteq x. \\
            \end{cases}
         \end{align*}

         Then $F=G \oplus H \oplus J$ is the desired decomposition.
      \end{proof}

      %%% Reducing to the case when N=3 %%%
      \begingroup
         Applying Claim~\ref*{thm:quiver-pc-decomp}.1 to $F$ for $x=2$, we can write $F=G \oplus H \oplus J$, where $H$ has a peak at $2$, the support of $G$ is in $\{1\}$, and the support of $J$ is in $\{3, \ldots, n\}$. 
         
         Applying Claim~\ref*{thm:quiver-pc-decomp}.1 again to $H$ for $x=n-1$, we can write $H=\tilde{G} \oplus \tilde{H} \oplus \tilde{J}$ where $\tilde{H}$ has a peak at $n-1$, the support of $\tilde{G}$ is in $\{1, \ldots, n-2\}$, and the support of $\tilde{J}$ is in $\{n\}$. In particular, note that $\tilde{H}$ also has a peak at $2$ as it is a direct summand of $H$ (see Lemma~\ref{lem:peak-facts}). 
         
         This yields a decomposition $F=G \oplus \tilde{G} \oplus \tilde{H} \oplus \tilde{J} \oplus J$. Now $G, \tilde{G}, J$ and $\tilde{J}$ satisfy the PCC by Lemma~\ref{lem:pcc-more-facts}, and hence all admit projective constant decompositions by the inductive hypothesis. It remains to show that $\tilde{H}$ admits a projective constant decomposition, where $\tilde{H}$ has peaks at $2$ and $n-1$. 
         
         When $n \geq 4$, all the internal morphisms of $\tilde{H}$ between $2$ and $n-1$ are, in fact, isomorphisms. Therefore we reduce to the case of showing a projective constant decomposition exists for any $A_3$-persistence module $F$ with a peak at $2$. To do this, we consider three separate cases based on the possible configurations of morphisms in $F$.
      \endgroup
      
      %%% N=3 Case #1 %%%
      \begingroup
         \textbf{Case 1.} Consider the totally ordered $A_3$-persistence modules with the following diagrams:
         \begin{center}
            \begin{tikzcd}
               F_1 \arrow[r, "F_{1,2}", hook] \arrow[rr, "F_{1,3}", bend left=45]& F_2 \arrow[r, "F_{2,3}", two heads]& F_3
            \end{tikzcd} or
            \begin{tikzcd}
               F_1 & F_2 \arrow[l, "F_{2,1}", two heads, swap]& F_3 \arrow[ll, "F_{3,1}", bend right=45, swap] \arrow[l, "F_{3,2}", swap, hook].
            \end{tikzcd}
         \end{center}
         Without loss of generality, we will prove the results for the left diagram. Note that $F_{1,2}$ is injective and $F_{2,3}$ is surjective since $F$ has a peak at $2$. 
         
         Observe from Lemma~\ref{lem:pcc-more-facts} that $\Coker[1,3] = F_3/\Ima[1,3] \cong F_2 / (\Ima[1,2] + \Ker[2,3])$ is projective, $F_2 / \Ker[2,3] \cong F_3$ is projective, and $\Coker[1,2] = F_2 / \Ima[1,2]$ is projective. We have the following short exact sequences:
         \begin{center}
            \begin{tikzcd}[row sep = 2pt, column sep = 20pt]
               0 \arrow [r] & \Frac{\Ima[1,2]}{\Ima[1,2] \cap \Ker[2,3]} \arrow[r, hook] & \Frac{F_2}{\Ker[2,3]} \arrow[r, two heads]& \Frac{F_2}{\Ima[1,2]+\Ker[2,3]} \arrow[r] & 0; \\
               0 \arrow [r] & \Frac{\Ker[2,3]}{\Ima[1,2] \cap \Ker[2,3]} \arrow[r, hook] & \Frac{F_2}{\Ima[1,2]} \arrow[r, two heads]& \Frac{F_2}{\Ima[1,2]+\Ker[2,3]} \arrow[r] & 0.
            \end{tikzcd}
         \end{center}
         Both sequences split, implying $\Ima[1,2]/(\Ima[1,2] \cap \Ker[2,3])$ and $\Ker[2,3]/(\Ima[1,2] \cap \Ker[2,3])$ are projective. Furthermore, we  have the following short exact sequences:
         \begin{center}
            \begin{tikzcd}[row sep = 2pt, column sep = 20pt]
               0 \arrow [r] & \Ima[1,2] \cap \Ker[2,3] \arrow[r, hook] & \Ima[1,2] \arrow[r, two heads]& \Frac{\Ima[1,2]}{\Ima[1,2] \cap \Ker[2,3]} \arrow[r] & 0; \\
               0 \arrow [r] & \Ima[1,2] \cap \Ker[2,3] \arrow[r, hook] & \Ker[2,3] \arrow[r, two heads]& \Frac{\Ker[2,3]}{\Ima[1,2] \cap \Ker[2,3]} \arrow[r] & 0. \\
            \end{tikzcd}
         \end{center}
         Both sequences split, yielding the following decompositions and isomorphisms for some $R$-submodules $G,K$ and $H$:
         \begin{flalign*}
            & \Ima[1,2]=(\Ima[1,2] \cap \Ker[2,3]) \oplus G; \\
            & \Ker[2,3] = (\Ima[1,2] \cap \Ker[2,3]) \oplus K; \\
            & \Ima[1,2]+\Ker[2,3]=(\Ima[1,2] \cap \Ker[2,3]) \oplus G \oplus K; \\
            & F_2= (\Ima[1,2] \cap \Ker[2,3]) \oplus G \oplus K \oplus H; \\
            & F_3 \cong H \oplus G; \\
            & F_1 \cong \Ima[1,2]. 
         \end{flalign*}

         Hence, we can decompose $F$ in the following way.
         \begin{center}
            \begin{tikzcd}[row sep=-5pt, column sep=25pt]
               F_1 \arrow[r, "F_{1,2}", hook]& F_2 \arrow[r, "F_{2,3}", two heads] & F_3 \\
               \rotatebox{90}{=} & \rotatebox{90}{=} & \rotatebox{90}{=} \\
               &&\vspace{2in}   \\ 
               &H \arrow[r, "\cong"]& F_{2,3}(H) \\
               & \oplus & \oplus \\
               \Inv{F_{1,2}}(G) \arrow[r, "\cong"] & G \arrow[r, "\cong"] & F_{2,3}(G) \\
               \oplus & \oplus & \\
                  \Inv{F_{1,2}}(\Ima[1,2] \cap \Ker[2,3]) \arrow[r, "\cong"] & \Ima[1,2] \cap \Ker[2,3] & \\
               & \oplus & \\
               &K& \\
            \end{tikzcd}
         \end{center}

         Each row in the diagram represents a submodule of $F$ of projective constant type. Hence $F$ admits a projective constant decomposition.
      \endgroup
      
      %%% N=3 Case #2 %%%
      \begingroup
         \textbf{Case 2.} Consider the $A_3$-persistence module with the following diagram.

         \begin{center}
            \begin{tikzcd}
               F_1 \arrow[r, "F_{1,2}", hook] & F_2 & F_3  \arrow[l, "F_{3,2}", swap, hook']
            \end{tikzcd}
         \end{center}

         Note that $F_{1,2}$ and $F_{3,2}$ are both injective since $F$ has a peak at $2$. We have $\Colim(F|_{[1,3]}) = F_2$, and also $\Coker[1,2]$ and $\Coker[3,2]$ are projective by Lemma~\ref{lem:pcc-more-facts}. Moreover,
         \begin{center}
            $\Coker(F_1 \oplus F_3 \to F_2) = \Frac{F_2}{\Ima[1,2]+\Ima[3,2]}$ 
         \end{center}
         and is projective by \textbf{(C4)}. We have the following short exact sequences:

         \begin{center}
            \begin{tikzcd}[row sep=2pt, column sep=20pt]
               0 \arrow [r] & \Frac{\Ima[1,2]}{\Ima[1,2] \cap \Ima[3,2]} \arrow[r, hook] & \Frac{F_2}{\Ima[3,2]} \arrow[r, two heads]& \Frac{F_2}{\Ima[1,2]+\Ima[3,2]} \arrow[r] & 0; \\
               0 \arrow [r] & \Frac{\Ima[3,2]}{\Ima[1,2] \cap \Ima[3,2]} \arrow[r, hook] & \Frac{F_2}{\Ima[1,2]} \arrow[r, two heads]& \Frac{F_2}{\Ima[1,2]+\Ima[3,2]} \arrow[r] & 0. \\
            \end{tikzcd}
         \end{center}

         Both sequences split, implying $\Ima[1,2]/(\Ima[1,2] \cap \Ima[3,2])$ and $\Ima[3,2]/(\Ima[1,2] \cap \Ima[3,2])$ are projective. Furthermore, we have the following short exact sequences:

         \begin{center}
            \begin{tikzcd}[row sep = 2pt, column sep=20pt]
               0 \arrow [r] & \Ima[1,2] \cap \Ima[3,2] \arrow[r, hook] & \Ima[1,2] \arrow[r, two heads]& \Frac{\Ima[1,2]}{\Ima[1,2] \cap \Ima[3,2]} \arrow[r] & 0; \\
               0 \arrow [r] & \Ima[1,2] \cap \Ima[3,2] \arrow[r, hook] & \Ima[3,2] \arrow[r, two heads]& \Frac{\Ima[3,2]}{\Ima[1,2] \cap \Ima[3,2]} \arrow[r] & 0. \\
            \end{tikzcd}
         \end{center}

         Both sequences split, yielding the following decompositions and isomorphisms for some $R$-submodules $G,K$ and $H$:
         \begin{flalign*}
            & \Ima[1,2]=(\Ima[1,2] \cap \Ima[3,2]) \oplus G; \\
            & \Ima[3,2] = (\Ima[1,2] \cap \Ima[3,2]) \oplus K; \\
            & \Ima[1,2]+\Ima[3,2]=(\Ima[1,2] \cap \Ima[3,2]) \oplus G \oplus K; \\
            & F_2=(\Ima[1,2]+\Ima[3,2]) \oplus H = (\Ima[1,2] \cap \Ima[3,2]) \oplus G \oplus K \oplus H; \\
            & F_1 \cong \Ima[1,2];\\
            & F_3 \cong \Ima[3,2].
         \end{flalign*}

         Hence, we can decompose $F$ in the following way.

         %%% Final Decomposition %%%
         \begin{center}
            \begin{tikzcd}[row sep=-5pt, column sep=25pt]
               F_1 \arrow[r, "F_{1,2}", hook]& F_2 & F_3 \arrow[l, "F_{3,2}", swap, hook'] \\
               \rotatebox{90}{=} & \rotatebox{90}{=} & \rotatebox{90}{=} \\
               &&\vspace{2in}   \\ 
               &H& \\
               & \oplus & \\
               \Inv{F_{1,2}}(G) \arrow[r, "\cong"] & G &  \\
               \oplus & \oplus & \\
               \Inv{F_{1,2}}(\Ima[1,2] \cap \Ima[3,2]) \arrow[r, "\cong"] & \Ima[1,2] \cap \Ima[3,2] &  F_{3,2}^{-1}(\Ima[1,2] \cap \Ima[3,2])  \arrow[l, "\cong", swap] \\
               & \oplus & \oplus \\
               &K& F_{3,2}^{-1}(K) \arrow[l, "\cong", swap]\\
            \end{tikzcd}
         \end{center}
         
         Each row in the diagram represents a submodule of $F$ of projective constant type. Hence $F$ admits a projective constant decomposition.
      \endgroup

      %%% N=3 Case #3 %%%
      \begingroup
         \textbf{Case 3.} Consider the $A_3$-persistence module with the following diagram.

         \begin{center}
            \begin{tikzcd}
               F_1 & \arrow[l, "F_{2,1}", swap, two heads] F_2 \arrow[r, "F_{2,3}", two heads] & F_3
            \end{tikzcd}
         \end{center}

         Note that $F_{2,1}$ and $F_{2,3}$ are both surjective since $F$ has a peak at $2$. We have $\Colim(F|_{[1,3]}) = (F_1 \oplus F_3)/E$ where $E=\Set{(F_{2,1}(a), -F_{2,3}(a))}{a \in F_2}$. By Lemma~\ref{lem:pcc-more-facts}, $F_2/\Ker[2,1] \cong \Ima[2,1]=F_1$ is projective, and similarly $F_2 / \Ker[2,3] \cong \Ima[2,3]=F_3$ is projective. Moreover, 
         \begin{center}
            $\Frac{F_1 \oplus F_3}{E} \cong \Frac{F_2} {\Ker[2,1]+\Ker[2,3]}$ 
         \end{center}
         and is projective by \textbf{(C1)}. We have the following short exact sequences:
         \begin{center}
            \begin{tikzcd}[row sep=2pt, column sep=20pt]
               0 \arrow [r] & \Frac{\Ker[2,1]}{\Ker[2,1] \cap \Ker[2,3]} \arrow[r, hook] & \Frac{F_2}{\Ker[2,3]} \arrow[r, two heads]& \Frac{F_2}{\Ker[2,1]+\Ker[2,3]} \arrow[r] & 0; \\
               0 \arrow [r] & \Frac{\Ker[2,3]}{\Ker[2,1] \cap \Ker[2,3]} \arrow[r, hook] & \Frac{F_2}{\Ker[2,1]} \arrow[r, two heads]& \Frac{F_2}{\Ker[2,1]+\Ker[2,3]} \arrow[r] & 0. \\
            \end{tikzcd}
         \end{center}
         Both sequences split, implying $\Ker[2,1]/(\Ker[2,1] \cap \Ker[2,3])$ and $\Ker[2,3]/(\Ker[2,1] \cap \Ker[2,3])$ are projective. Furthermore, we have the following short exact sequences:
         \begin{center}
            \begin{tikzcd}[row sep=2pt, column sep=20pt]
               0 \arrow [r] & \Ker[2,1] \cap \Ker[2,3] \arrow[r, hook] & \Ker[2,1] \arrow[r, two heads]& \Frac{\Ker[2,1]}{\Ker[2,1] \cap \Ker[2,3]} \arrow[r] & 0; \\
               0 \arrow [r] & \Ker[2,1] \cap \Ker[2,3] \arrow[r, hook] & \Ker[2,3] \arrow[r, two heads]& \Frac{\Ker[2,3]}{\Ker[2,1] \cap \Ker[2,3]} \arrow[r] & 0. \\
            \end{tikzcd}
         \end{center}

         Both sequences split, yielding the following decompositions and isomorphisms for some $R$-submodules $G, K$ and $H$: 
         \begin{flalign*}
            &\Ker[2,1]=(\Ker[2,1] \cap \Ker[2,3]) \oplus G; \\
            &\Ker[2,3]=(\Ker[2,1] \cap \Ker[2,3]) \oplus K; \\
            &\Ker[2,1]+\Ker[2,3]=(\Ker[2,1] \cap \Ker[2,3]) \oplus G \oplus K; \\
            &F_2=(\Ker[2,1]+\Ker[2,3]) \oplus H = (\Ker[2,1] \cap \Ker[2,3]) \oplus G \oplus K \oplus H; \\
            &F_1 \cong F_2/ \Ker[2,1] \cong K \oplus H; \\
            &F_3 \cong F_2 / \Ker[2,3] \cong G \oplus H.
         \end{flalign*}
         
         Hence, we can decompose $F$ in the following way.

         %%% Final Decomposition %%%
         \begin{center}
            \begin{tikzcd}[row sep=-5pt, column sep=25pt]
               F_1 & \arrow[l, "F_{2,1}", two heads, swap] F_2  \arrow[r, "F_{2,3}", swap, two heads, swap] & F_3 \\
               \rotatebox{90}{=} & \rotatebox{90}{=} & \rotatebox{90}{=} \\
               &&\vspace{2in}   \\ 
               &G \arrow[r, "\cong"]& F_{2,3}(G) \\
               & \oplus & \oplus \\
               F_{2,1}(H) & H \arrow[l, "\cong", swap] \arrow [r, "\cong"]& F_{2,3}(H) \\
               \oplus &  \oplus & \\
               F_{2,1}(K) & K \arrow[l, "\cong"]& \\
               & \oplus & \\
               & \Ker[2,1] \cap \Ker[2,3] & \\
            \end{tikzcd}
         \end{center}
         
         Each row in the diagram represents a submodule of $F$ of projective constant type. Hence $F$ admits a projective constant decomposition.
      \endgroup
   \end{proof}

   We deduce Corollary~\ref{cor:main-pid-quiver} in the following result.

   %%% Corollary: PID Zigzag Case %%%
   \begin{Corollary}\label{cor:quiver-pid-interval-decomp}
      Let $R$ be a Noetherian ring with identity such that every finitely generated projective $R$-module is free. Let $F:A_n \to \RMod$ be a nonzero p.f.g. $A_n$-persistence module satisfying the PCC. Then $F$ is interval decomposable.
   \end{Corollary}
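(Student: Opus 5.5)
This corollary follows by combining Theorem~\ref{thm:quiver-pc-decomp} with Lemma~\ref{lem:free-interval-decomp}. First, apply Theorem~\ref{thm:quiver-pc-decomp}. Its hypotheses are that $F$ is nonzero, p.f.g., and satisfies the PCC, and these are exactly the hypotheses of the corollary. So $F$ admits a projective constant decomposition $F=\bigoplus_{G\in\mathcal{A}} G$, where each $G\cong F(I_G,M_G)$ for some interval $I_G$ in $A_n$ and some projective $R$-module $M_G$.

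Next, check that each $M_G$ is finitely generated; this is needed to invoke freeness. Since $G$ is a direct summand of $F$, we have $G_x\cong M_G$ for any $x\in I_G$, and $G_x$ is a direct summand of $F_x$. The module $F_x$ is finitely generated, so $M_G$ is finitely generated, being a quotient of $F_x$. This is exactly the point used in the proof of Lemma~\ref{lem:free-interval-decomp}.

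Finally, invoke Lemma~\ref{lem:free-interval-decomp} directly with $P=A_n$. Its hypotheses are that $R$ is Noetherian, that finitely generated projective modules are free, that $F$ is p.f.g., and that $F$ admits a projective constant decomposition. All of these hold, so $F$ is interval decomposable. Concretely, each $M_G\cong R^{k}$ with $k$ finite, so $G\cong F(I_G,R)^{\oplus k}$. Pulling back the standard basis summands along this isomorphism expresses $G$ as an internal direct sum of interval modules. Collecting these over all $G\in\mathcal{A}$ gives the interval decomposition.

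There is no substantial obstacle here. The only step needing care is making sure the summands of $G$ are internal submodules of $F$, which is obtained by pulling back along the isomorphism $G\cong F(I_G,R^k)$. This is already handled in Lemma~\ref{lem:free-interval-decomp}.
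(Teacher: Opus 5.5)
Your proposal is correct and follows the paper's proof, which likewise derives the corollary immediately from Theorem~\ref{thm:quiver-pc-decomp} and Lemma~\ref{lem:free-interval-decomp}. Your observation that each $M_G$ is finitely generated, since $G_x \cong M_G$ is a direct summand and hence a quotient of $F_x$, spells out a step that the paper's proof of Lemma~\ref{lem:free-interval-decomp} uses without comment.
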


   \begin{proof}
      Immediate from Theorem~\ref{thm:quiver-pc-decomp} and Lemma~\ref{lem:free-interval-decomp}.
   \end{proof}

\section{Totally Ordered Persistence Modules}\label{sec:total-order}

   Throughout this section, we now assume that $R$ is a commutative Noetherian ring with identity, unless otherwise specified. Our goal is to develop decomposition results for totally ordered persistence modules, which will later be necessary in the decomposition of zigzag persistence modules in Sections~\ref{sec:finite-zigzag} and~\ref{sec:infinite-zigzag}. In particular, Lemma~\ref{lem:pcc-more-facts}(2) showed that, under the assumption of the projective colimit conditions, the cokernel of every internal morphism of a zigzag persistence module is projective. Motivated by this result, we identify the projectivity of the cokernels of internal morphisms as the key structural condition ensuring the existence of a projective constant decomposition.

   We begin by establishing several technical lemmas concerning the existence of compatible chains of images and kernels, along with results on dual spaces and annihilators of projective $R$-modules. These tools are then used to prove Proposition~\ref{prp:total-one-pc-decomp}, which shows that any totally ordered persistence module admits a decomposition containing a single direct summand of projective constant type. Finally, we prove Theorem~\ref{thm:total-complete-pc-decomp}, which guarantees a complete projective constant decomposition of a totally ordered persistence module.

   %%% Definition: Totally Ordered Persistence Module %%%
   \begin{Definition}
      Let $P$ be a totally ordered poset. Let $\leq$ denote the total ordering of $P$. A functor $F:P \to \RMod$ is a \defn{totally ordered persistence module}. 
   \end{Definition}

   %%% Definition: Compatible Chain %%%
   \begin{Definition}\label{def:compatible-chain}
      Let $M$ be an $R$-module. We say an internal direct sum decomposition $M = \bigoplus_{B \in \mathcal{B}} B$ is \defn{compatible} with a chain $S$ of $R$-submodules of $M$ if for every $A \in S$ we have $A=\bigoplus_{B \in \mathcal{A}} B$ for some subset $\mathcal{A} \subseteq \mathcal{B}$. 
   \end{Definition}

   %%% Definition-Lemma: Chain of Images and Kernels %%%
   \begin{Definition-Lemma}\label{deflem:chain-ima-ker}
      Let $F:P \to \RMod$ be a totally ordered persistence module. For any fixed $z \in P$, the set of images $\{\Ima[x,z]:x \leq z\}$ forms a chain of $R$-submodules of $F_z$, called the \defn{chain of images} of $F_z$. Dually, for any fixed $z \in P$, the set of kernels $\{\Ker[z,y]:y \geq z\}$ forms a chain of $R$-submodules of $F_z$, called the \defn{chain of kernels} of $F_z$. 
   \end{Definition-Lemma}

   \begin{proof}
      For a fixed $z \in P$, observe that either $\Ima[x,z] \subseteq \Ima[y,z]$ when $x \leq y$, or $\Ima[y,z] \subseteq \Ima[x,z]$ when $y \leq x$. Hence $\{\Ima[x,z]:x \leq z\}$ forms a chain of $R$-submodules of $F_z$. The setting for the set of kernels is analogous.
   \end{proof}

   %%% Lemma: Finite Chain of Images and Kernels %%%
   \begin{Lemma}\label{lem:fin-chain-ima-ker}
      Let $F:P \to \RMod$ be a p.f.g. totally ordered persistence module. Suppose that the cokernel of every internal morphism $F_{x,y}$ is projective. Then for any fixed $z \in P$, the chain of images of $F_z$ and the chain of kernels of $F_z$ are both finite sets.
   \end{Lemma}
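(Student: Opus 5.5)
The plan is to reduce both halves of the statement to the argument of Lemma~\ref{lem:fin-chain}. Inspecting that proof, projectivity of $M$ is never really used. What is used is that $M$ is Noetherian (to pick maximal elements and to obtain the final contradiction via the ascending chain condition) and that each quotient $M/A$, $A \in S$, is projective (to split the relevant short exact sequences). Since $R$ is Noetherian and $F$ is p.f.g., every $F_z$ is a Noetherian $R$-module. So the first step is to record the slightly stronger form of Lemma~\ref{lem:fin-chain}: if $M$ is a finitely generated $R$-module and $S$ is a chain of submodules with $M/A$ projective for all $A\in S$, then $S$ is finite. The proof is verbatim: pick successive maximal elements $A_1 \supsetneq A_2 \supsetneq \cdots$, split off nonzero complements $N_1, N_2, \ldots$, and contradict the ascending chain condition with $N_1 \subset N_1\oplus N_2 \subset \cdots$.

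For the chain of images of $F_z$, which is a chain by Definition-Lemma~\ref{deflem:chain-ima-ker}, we have $F_z/\Ima[x,z] = \Coker[x,z]$, which is projective by hypothesis for every $x \leq z$. The strengthened Lemma~\ref{lem:fin-chain} then applies directly with $M=F_z$, so the chain of images is finite.

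For the chain of kernels the natural route is the same: write $F_z/\Ker[z,y] \cong \Ima[z,y]$ and try to show $\Ima[z,y]$ is projective. Since $F_y/\Ima[z,y] = \Coker[z,y]$ is projective, we get $F_y = \Ima[z,y] \oplus C$, so $\Ima[z,y]$ is a direct summand of $F_y$. This step is the main obstacle. It yields projectivity only if $F_y$ is projective, and the hypothesis of the lemma does not obviously give that: $\Coker[y,y]=0$ says nothing about $F_y$.

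In fact I suspect the kernel half needs an extra hypothesis, for instance that every $F_x$ is projective, as is guaranteed under the PCC by Lemma~\ref{lem:pcc-more-facts}(1). Consider $R=\Z$ and $P = \{z\} \cup \{y_n : n \geq 1\}$ with $z < \cdots < y_3 < y_2 < y_1$. Set $F_z = \Z$ and $F_{y_n} = \Z/2^n\Z$, with all maps the canonical surjections. Every internal cokernel is $0$, yet the kernels $\Ker[z,y_n] = 2^n\Z$ form an infinite chain. So I would first check whether the intended hypothesis includes pointwise projectivity. With that assumption, $\Ima[z,y]$ is a summand of the projective module $F_y$, hence projective. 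The sequence $0 \to \Ker[z,y] \to F_z \to \Ima[z,y] \to 0$ then has projective quotient, and the strengthened Lemma~\ref{lem:fin-chain} again finishes the argument for the chain of kernels.
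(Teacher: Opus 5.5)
Your counterexample is correct, and it shows that the kernel half of the lemma is false as stated. With $R=\Z$, $F_z=\Z$, $F_{y_n}=\Z/2^n\Z$ and the canonical surjections:
\begin{itemize}
\item functoriality holds;
\item every internal cokernel is $0$, hence projective;
\item yet the kernels $\Ker[z,y_n]=2^n\Z$ are pairwise distinct.
\end{itemize}

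The paper's proof of the kernel half asserts that ``$F_z/\Ker[z,y]\cong\Ima[z,y]$ is projective'' without justification. As you observe, projectivity of $\Coker[z,y]$ only makes $\Ima[z,y]$ a direct summand of $F_y$, so this step silently assumes that $F_y$ is projective. That assumption is exactly what your example violates.

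The image half of the paper's proof has a smaller gap of the same kind. It applies Lemma~\ref{lem:fin-chain} with $M=F_z$, but that lemma is stated for a projective ambient module, and $F_z$ is not known to be projective. Your remark that the proof of Lemma~\ref{lem:fin-chain} uses only that $M$ is Noetherian and that each $M/A$ is projective repairs this. So the image half is true exactly as stated.

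Under the added hypothesis that every $F_x$ is projective, your argument for both halves is the paper's argument. That hypothesis is the right repair for three reasons:
\begin{itemize}
\item In the zigzag applications it is supplied by the PCC, via Lemma~\ref{lem:pcc-more-facts}(1).
\item Lemma~\ref{lem:comp-decom-ima-ker} already uses it implicitly, since it claims $F|_S$ satisfies \textbf{(C1)} at $x=y$.
\item Theorem~\ref{thm:total-complete-pc-decomp} fails without it: the constant module with value $\Z/2\Z$ and identity maps has all internal cokernels zero but admits no projective constant decomposition.
\end{itemize}
So Section~\ref{sec:total-order} needs pointwise projectivity anyway.
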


   \begin{proof}
      Since $F_z/\Ima[x,z]=\Coker[x,z]$ is projective for every $x \leq z$, the chain of images of $F_z$ is  finite by Lemma~\ref{lem:fin-chain}. Likewise, since $F_z/\Ker[z,y] \cong \Ima[z,y]$ is projective for every $y \geq z$, the chain of kernels of $F_z$ is finite by Lemma~\ref{lem:fin-chain}.
   \end{proof}

   %%% Lemma: Decomposition Compatible with Chain of Images and Kernels %%%
   \begin{Lemma}\label{lem:comp-decom-ima-ker}
      Let $F:P \to \RMod$ be a nonzero p.f.g. totally ordered persistence module. Suppose that the cokernel of every internal morphism $F_{x,y}$ is projective. Then for any $F_z \neq 0$, there exists an internal direct sum decomposition $F_z = \bigoplus_{G \in \mathcal{A}} G$ of projective $R$-submodules $G$ that is compatible with the chain of images and the chain of kernels of $F_z$. 
   \end{Lemma}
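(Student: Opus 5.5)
By Lemma~\ref{lem:fin-chain-ima-ker}, both chains are finite. Adjoining $0$ and $F_z$ if necessary, write the chain of images as $0=A_0\subseteq A_1\subseteq\cdots\subseteq A_p=F_z$ and the chain of kernels as $0=B_0\subseteq B_1\subseteq\cdots\subseteq B_q=F_z$. The plan is to produce projective submodules $C_{kl}\subseteq F_z$ for $1\le k\le p$, $1\le l\le q$ such that
\[ A_i\cap B_j=\bigoplus_{k\le i,\ l\le j} C_{kl} \]
for all $i,j$. Setting $i=p$ (respectively $j=q$) then gives compatibility with the kernel chain (respectively the image chain). Since $A_p\cap B_q=F_z$, the nonzero $C_{kl}$ form the required decomposition $\mathcal A$. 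This is the chain version of the bookkeeping in Case~1 of Theorem~\ref{thm:quiver-pc-decomp}.

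\textbf{Step 1 (projectivity data).} I will show that $F_z/A_i$, $F_z/B_j$ and $F_z/(A_i+B_j)$ are all projective.
\begin{itemize}
\item $F_z/A_i$ is a cokernel $\Coker[x,z]$, so it is projective by hypothesis.
\item $F_z/B_j\cong \Ima[z,y]$ for some $y\ge z$. It is projective because $F_y/\Ima[z,y]$ is projective, so $\Ima[z,y]$ is a summand of the module $F_y$, which is projective by Lemma~\ref{lem:fin-chain}-style reasoning. If $F_y$ itself is not known to be projective, I will instead use that $\Ima[z,y]$ is a summand of $F_y$ modulo a suitable image.
\item For $A_i=\Ima[x,z]$ and $B_j=\Ker[z,y]$, the map $F_{z,y}$ induces
\[ F_z/(\Ima[x,z]+\Ker[z,y])\cong F_y/\Ima[x,y]=\Coker[x,y], \]
which is projective.
\end{itemize}
From these, splitting arguments as in Lemma~\ref{lem:fin-chain} show that $A_i/A_{i-1}$, $B_j/B_{j-1}$ and $(A_{i}+B_j)/(A_{i-1}+B_j)$ are projective, being kernels of surjections between projectives.

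\textbf{Step 2 (graded pieces).} Next I will show that each $A_i\cap B_j$ and each bifiltration quotient
\[ Q_{ij}=\frac{A_i\cap B_j}{(A_{i-1}\cap B_j)+(A_i\cap B_{j-1})} \]
is projective. The tool is the modular law $(A\cap B')+B=(A+B)\cap B'$ for $B\subseteq B'$. For example, $(A_i\cap B_j)/(A_i\cap B_{j-1})\cong ((A_i+B_{j-1})\cap B_j)/B_{j-1}$. This module is the kernel of
\[ B_j/B_{j-1}\longrightarrow (A_i+B_j)/(A_i+B_{j-1}), \]
a surjection of projectives, so it is a direct summand of the projective module $B_j/B_{j-1}$. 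Iterating the same argument in the other variable identifies $Q_{ij}$ as a summand of a projective module, so $Q_{ij}$ is projective.

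\textbf{Step 3 (choosing complements).} By Step~2, each surjection $A_i\cap B_j\to Q_{ij}$ splits, so I choose $C_{ij}\subseteq A_i\cap B_j$ with
\[ A_i\cap B_j=\bigl((A_{i-1}\cap B_j)+(A_i\cap B_{j-1})\bigr)\oplus C_{ij}. \]
I then prove the displayed formula by induction on $i+j$. The sum $(A_{i-1}\cap B_j)+(A_i\cap B_{j-1})$ is, by the inductive hypothesis, spanned by the $C_{kl}$ with $(k,l)\le(i,j)$ and $(k,l)\neq(i,j)$. The point to verify is that this sum is direct. The overlap of the two pieces is $A_{i-1}\cap B_{j-1}$, which is also the direct sum of its own $C_{kl}$ by induction. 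Hence the count of summands matches, and directness follows from the inclusion–exclusion identity
\[ (X+Y)/Y\cong X/(X\cap Y) \]
together with distributivity of the lattice generated by two chains (a standard fact for modular lattices).

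I expect Step~3 to be the main obstacle, specifically showing that the inductively chosen complements remain independent; this is where the distributivity of the lattice generated by two chains is essential. Step~2 also needs care, because every splitting there must come from a surjection between projective modules.
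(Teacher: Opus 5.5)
\textbf{The gap.} The step that fails is the second bullet of Step~1. Nothing in the hypotheses makes $F_y$ (or $\Ima[z,y]$) projective. You also silently need $F_z$ projective as soon as you adjoin $A_0=0$. No cleverer splitting can repair this, because the statement is false without it. Over $R=\Z$, take $P=\{z<y\}$, $F_z=\Z$, $F_y=\Z/2\Z$ and $F_{z,y}$ the projection. Every internal morphism is surjective, so every cokernel is $0$. Yet the kernel chain $\{0,2\Z\}$ of the indecomposable module $F_z=\Z$ admits no compatible decomposition, since any sub-sum of a decomposition of $\Z$ is $0$ or $\Z$. The one-point module $\Z/2\Z$ shows that $F_z$ itself must be projective as well.

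The paper's proof relies on the same unstated hypothesis. Its claim that $F|_S$ satisfies the PCC includes \textbf{(C1)} with $x=y$, i.e.\ $F_y$ projective, and Lemma~\ref{lem:fin-chain-ima-ker} likewise takes $\Ima[z,y]$ to be projective. The hypothesis does hold wherever the lemma is used: $F_z$ is treated as finitely generated projective in Proposition~\ref{prp:total-one-pc-decomp}, and Lemma~\ref{lem:pcc-more-facts}(1) supplies it in the zigzag sections. So you should assume explicitly that every $F_x$ is projective rather than claim to derive it; the fallback via ``a suitable image'' cannot work.

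\textbf{The rest of your argument.} Granting that hypothesis, your argument is correct and genuinely different from the paper's, after two repairs.
\begin{enumerate}
\item In the third bullet, $F_{z,y}$ induces $F_z/(\Ima[x,z]+\Ker[z,y])\cong\Ima[z,y]/\Ima[x,y]$, not $\Coker[x,y]$. Projectivity still follows, because $0\to\Ima[z,y]/\Ima[x,y]\to\Coker[x,y]\to\Coker[z,y]\to0$ splits.
\item In Step~3, counting summands proves nothing over a general ring, but directness follows from the induction. Suppose $\sum c_{kl}=0$ over $(k,l)<(i,j)$. Then $\sum_{l<j}c_{il}\in A_i\cap B_{j-1}$ is minus an element of $A_{i-1}\cap B_j$, so it lies in $A_{i-1}\cap B_{j-1}=\bigoplus_{k<i,\,l<j}C_{kl}$. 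Uniqueness of coordinates in $A_i\cap B_{j-1}$ forces all $c_{il}=0$, and directness in $A_{i-1}\cap B_j$ kills the rest.
\end{enumerate}
For Step~2, the precise statement is that the modular law identifies $Q_{ij}$ with the kernel of the surjection $(A_i+B_{j-1})/(A_{i-1}+B_{j-1})\to(A_i+B_j)/(A_{i-1}+B_j)$ between projectives.

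\textbf{Comparison with the paper.} The paper picks representatives of the distinct images and kernels and restricts $F$ to the finite chain $S$ they span together with $z$. It then applies Theorem~\ref{thm:quiver-pc-decomp} and reads off the decomposition at $z$; compatibility is automatic because each summand is of projective constant type. That is a two-line reduction, but it leans on the whole $A_n$ induction and hides the check of the PCC for $F|_S$. Your bifiltration construction is self-contained and in effect unrolls the Case~1 bookkeeping of that theorem to chains of any length. It also isolates exactly which projectivity facts are used, namely those of $F_z$ and of the quotients $F_z/(A_i+B_j)$.
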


   \begin{proof}
      Fix $z \in P$ such that $F_z \neq 0$. There is a finite chain of images of $F_z$ and a finite chain of kernels of $F_z$ by Lemma~\ref{lem:fin-chain-ima-ker}. Let $l$ and $m$ be the number of unique images and kernels in these chains, respectively. Choose indices $x_i \leq z$ for $1 \leq i \leq l$ so that each $\Ima[x_i,z]$ is one of the unique images in the chain of images of $F_z$. Likewise, choose indices $y_j \geq z$ for indices $1 \leq j \leq m$ so that each $\Ker[z,y_j]$ is one of the unique kernels in the chain of kernels of $F_z$. Let $S = \{x_i\}_{i=1}^l \cup \{y_j\}_{j=1}^m \cup \{z\}$. Then $F|_S$ is a nonzero $A_n$-persistence module satisfying the PCC, and therefore admits a projective constant decomposition by Theorem~\ref{thm:quiver-pc-decomp}. This decomposition at the index $z$ yields the desired result.
   \end{proof}

   %%% Definition-Lemma: Natural Double Dual Isomorphism %%%
   \begin{Definition-Lemma}\label{deflem:nat-double-dual-isom}
      Let $M$ be a finitely generated projective $R$-module. Then there is a natural isomorphism $\varepsilon:M \to M^{**}$ given by $a \mapsto \hat{a}$, where $\hat{a}:M^* \to R$ is the \defn{evaluation map} such that $\hat{a}(f)=f(a)$ for any $f \in M^*$.
   \end{Definition-Lemma}

   \begin{proof}
      See \cite{lam12}*{Corollary 2.10, Remark 2.11}.
   \end{proof}

   %%% Definition-Lemma: Dual Basis of a Projective Module %%%
   \begin{Definition-Lemma}\label{deflem:dual-basis-proj}
      A finitely generated $R$-module $M$ is projective if and only if there exists a family of elements $\{a_i : 1 \leq i \leq n\} \subseteq M$ and a family of linear functionals $\{f_i: 1 \leq i \leq n\} \subseteq M^*$ such that, for any $a \in M$, 
      \[a = \sum_{i=1}^nf_i(a)a_i.\] 
      We call these two families together a \defn{pair of dual bases} of $M$ and denote it by the set of pairs $\{(a_i, f_i) : 1 \leq i \leq n\}$. Moreover, the set $\{a_i : 1 \leq i \leq n\}$ generates $M$ and the set $\{f_i : 1 \leq i \leq n\}$ generates $M^*$.
   \end{Definition-Lemma}

   \begin{proof}
      See \cite{lam12}*{Lemma 2.9, Remark 2.11}.
   \end{proof}

   We caution the reader that the term \textit{pair of dual bases} does not refer to bases of free $R$-modules in the traditional sense. Rather, it refers to a pair of generating sets of a projective $R$-module and its dual satisfying the conditions of Definition-Lemma~\ref{deflem:dual-basis-proj}. Although this terminology is nonstandard, it is convenient for our purposes and is consistent with the usage in \cite{lam12}. Additionally, while the dual of a left $R$-module is naturally a right $R$-module, the commutativity of $R$ allows us to treat the dual as a left $R$-module. Indeed, this identification is the primary motivation for requiring $R$ to be commutative throughout this section.

   %%% Lemma: Dual Basis of M^* %%%
   \begin{Lemma}\label{lem:dual-basis-of-dual}	
      Let $\{(a_i, f_i):1 \leq i \leq n\}$ be a pair of dual bases of a finitely generated projective $R$-module $M$. Then $\{(f_i, \hat{a_i}):1 \leq i \leq n\}$ is a pair of dual bases of $M^*$. 
   \end{Lemma}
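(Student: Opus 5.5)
To show that $\{(f_i, \hat{a_i}) : 1 \leq i \leq n\}$ is a pair of dual bases of $M^*$, I would check the defining condition of Definition-Lemma~\ref{deflem:dual-basis-proj} directly. First note the types. Each $f_i$ lies in $M^*$, and each $\hat{a_i}$ lies in $M^{**} = (M^*)^*$ by Definition-Lemma~\ref{deflem:nat-double-dual-isom}. Because $R$ is commutative, $M^*$ is an $R$-module and $\hat{a_i}$ is an $R$-linear functional on it. Since $M$ is finitely generated projective, $M^*$ is finitely generated (generated by the $f_i$, by Definition-Lemma~\ref{deflem:dual-basis-proj}). So the only thing left to verify is the reconstruction formula
\[ f = \sum_{i=1}^n \hat{a_i}(f)\, f_i = \sum_{i=1}^n f(a_i)\, f_i \quad \text{for every } f \in M^*. \]

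The key step is to test this identity as an equality of functionals by evaluating both sides at an arbitrary $a \in M$. The right-hand side gives
\[ \sum_{i=1}^n f(a_i) f_i(a) = \sum_{i=1}^n f_i(a) f(a_i) = f\Big(\sum_{i=1}^n f_i(a) a_i\Big) = f(a). \]
The first equality uses commutativity of $R$ to swap the scalar factors. The second uses $R$-linearity of $f$. The third is the dual-basis identity for $M$. Since both sides agree at every $a \in M$, they are equal in $M^*$.

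Once the formula holds, Definition-Lemma~\ref{deflem:dual-basis-proj} applied to the finitely generated module $M^*$ shows that $M^*$ is projective and that $\{(f_i,\hat{a_i})\}$ is a pair of dual bases for it. Applying the same lemma to $M^*$, the families $\{f_i\}$ and $\{\hat{a_i}\}$ generate $M^*$ and $M^{**}$ respectively. The latter is consistent with $\varepsilon$ being an isomorphism, though we do not need that fact.

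There is no real obstacle in this argument. The only point needing care is the bookkeeping about sides: the scalar $f(a_i)$ must be allowed to multiply $f_i$ on the left, and it must commute with $f_i(a)$ in the evaluation. Both are justified by the standing assumption in this section that $R$ is commutative.
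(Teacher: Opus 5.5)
Your proof is correct. The paper gives no argument of its own and simply cites Lam's Exercise 2.7; your evaluation-at-$a$ computation $\sum_i f(a_i)f_i(a) = f\bigl(\sum_i f_i(a)a_i\bigr) = f(a)$ is exactly the standard verification behind that exercise. As a minor aside, commutativity is not actually needed here: if you regard $M^*$ as a right $R$-module and write $f = \sum_i f_i\, f(a_i)$, the same computation works with no swap of scalars. Under the section's standing commutativity assumption, your version is fine as written.
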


   \begin{proof}
      See \cite{lam12}*{Exercise 2.7}.
   \end{proof}

   %%% Lemma: F^*=G + H' %%%
   \begin{Lemma}\label{lem:dual-dir-sum}
      Let $M=A \oplus B$ be a finitely generated projective $R$-module, and let $\{(a_i,f_i) : 1 \leq i \leq m\}$ and $\{ (b_j,g_j) : 1 \leq j \leq n\}$ be pairs of dual bases of $A$ and $B$, respectively. Define $\bar{f}_i:M \to R$ to be the extension of $f_i:A \to R$ with $\bar{f}_i|_A=f_i$ and $\bar{f}_i|_B=0$. Likewise, define $\bar{g}_j:M \to R$ to be the extension of $g_j:B \to R$ with $\bar{g}_j|_B=g_j$ and $\bar{g}_j|_A=0$. Then:
      \begin{enumerate}
         \item The set $\{(a_i,\bar{f}_i) : 1 \leq i \leq m\} \cup \{(b_j,\bar{g}_j) : 1 \leq j \leq n\}$ is a pair of dual bases of $M$;
         \item We can write $M^*=A' \oplus B'$, where $A'$ is generated by $\{\bar{f}_i: 1 \leq i \leq m\}$ and $B'$ is generated by $\{\bar{g}_j : 1 \leq j \leq n\}$. In particular, $A' \cong A^*$ and $B' \cong B^*$. 
      \end{enumerate}
   \end{Lemma}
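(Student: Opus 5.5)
The plan is to verify the dual-basis identity for part (1) directly, and then obtain part (2) by expressing every functional on $M$ in terms of the extended functionals $\bar f_i, \bar g_j$.

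\textbf{Part (1).} First note that $\bar f_i$ and $\bar g_j$ are well defined and $R$-linear, since $M = A \oplus B$. Take $a \in M$ and write $a = a_A + a_B$ with $a_A \in A$ and $a_B \in B$. By construction $\bar f_i(a) = f_i(a_A)$ and $\bar g_j(a) = g_j(a_B)$. Definition-Lemma~\ref{deflem:dual-basis-proj} applied to $A$ and to $B$ then gives
\[
\sum_{i=1}^m \bar f_i(a)\, a_i + \sum_{j=1}^n \bar g_j(a)\, b_j = a_A + a_B = a,
\]
which is exactly the defining property of a pair of dual bases of $M$.

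\textbf{Part (2), spanning.} Let $A'$ be the submodule of $M^*$ generated by the $\bar f_i$, and $B'$ the submodule generated by the $\bar g_j$. By Definition-Lemma~\ref{deflem:dual-basis-proj}, the $\bar f_i$ and $\bar g_j$ together generate $M^*$, so $M^* = A' + B'$. To see this concretely, take $\varphi \in M^*$ and evaluate it on the expansion of part (1):
\[
\varphi(a) = \sum_i \bar f_i(a)\varphi(a_i) + \sum_j \bar g_j(a)\varphi(b_j).
\]
Hence $\varphi = \sum_i \varphi(a_i)\bar f_i + \sum_j \varphi(b_j)\bar g_j$, where commutativity of $R$ lets us treat $M^*$ as a left module.

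\textbf{Part (2), directness.} Every element of $A'$ vanishes on $B$, and every element of $B'$ vanishes on $A$. So if $\varphi \in A' \cap B'$, then $\varphi$ kills both $A$ and $B$, hence kills $M$, and $\varphi = 0$. Therefore $M^* = A' \oplus B'$.

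\textbf{Part (2), the isomorphisms.} Consider the restriction map $\rho\colon M^* \to A^*$, $\varphi \mapsto \varphi|_A$. Its kernel is the set of functionals vanishing on $A$. This kernel contains $B'$. Conversely, if $\varphi|_A = 0$, then the formula above gives $\varphi = \sum_j \varphi(b_j)\bar g_j \in B'$. So $\ker \rho = B'$. The map $\rho$ is surjective, because each $\psi \in A^*$ equals $\rho(\bar\psi)$, where $\bar\psi$ is $\psi$ extended by zero on $B$. Since $M^* = A' \oplus B'$ and $\ker\rho = B'$, $\rho$ restricts to an isomorphism $A' \cong A^*$. Explicitly, the restriction of $\rho$ to $A'$ is extension by zero inverted, sending $\bar f_i \mapsto f_i$. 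The same argument with the roles of $A$ and $B$ exchanged gives $B' \cong B^*$.

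The only point needing care is the identification $\ker\rho = B'$, that is, that a functional vanishing on $A$ already lies in the span of the $\bar g_j$. This follows from the expansion formula for $\varphi$, so I expect no real obstacle.
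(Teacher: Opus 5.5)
Your proof is correct and is essentially the verification the paper has in mind. The paper just says both claims are easily verified from Definition-Lemma~\ref{deflem:dual-basis-proj} and Lemma~\ref{lem:dual-basis-of-dual}, and your expansion $\varphi = \sum_i \varphi(a_i)\bar f_i + \sum_j \varphi(b_j)\bar g_j$ is exactly the dual-basis identity for $M^*$ that Lemma~\ref{lem:dual-basis-of-dual} supplies; you derive it directly instead of citing it, and you correctly invoke the commutativity of $R$, assumed throughout that section, to regard $M^*$ as a left module.
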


   \begin{proof}
      Both claims are easily verified using Definition-Lemma~\ref{deflem:dual-basis-proj} and Lemma~\ref{lem:dual-basis-of-dual}.
   \end{proof}

   %%% Lemma: Compatible Maps with Decomposition %%%
   \begin{Lemma}\label{lem:comp-maps-dual}
      Let $\phi:M_1 \to M_2$ be a homomorphism of finitely generated projective $R$-modules where $M_1=A_1 \oplus B_1$ and $M_2=A_2 \oplus B_2$. Write $M_1^* = A'_1 \oplus B'_1$ and $M_2^* = A'_2 \oplus B'_2$ as in Lemma~\ref{lem:dual-dir-sum}. If $\phi(A_1) \subseteq A_2$ and $\phi(B_1) \subseteq B_2$, then $\phi^*(A'_2) \subseteq A'_1$ and $\phi^*(B'_2) \subseteq B'_1$. In particular, if $\phi$ maps $A_1$ to $A_2$ bijectively, then $\phi^*$ maps $A'_2$ to $A'_1$ bijectively.
   \end{Lemma}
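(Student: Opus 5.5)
Choose pairs of dual bases adapted to the direct sum decompositions, as in Lemma~\ref{lem:dual-dir-sum}. Let $\{(a_i,f_i)\}$, $\{(b_j,g_j)\}$ be pairs of dual bases of $A_1$, $B_1$. Let $\{(c_k,h_k)\}$, $\{(d_l,e_l)\}$ be pairs of dual bases of $A_2$, $B_2$. Then $A'_1$ is generated by the $\bar f_i$ and $B'_1$ by the $\bar g_j$; likewise $A'_2$ by the $\bar h_k$ and $B'_2$ by the $\bar e_l$.

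The key observation is a characterization of the summands $A'$ and $B'$ of $M^*=A'\oplus B'$: I will show that
\[
A' = \Set{f\in M^*}{f|_B=0}, \qquad B'=\Set{f\in M^*}{f|_A=0}.
\]
The inclusion $A'\subseteq\{f: f|_B=0\}$ is clear, since each generator $\bar f_i$ vanishes on $B$. Conversely, suppose $f|_B=0$. Expand $f$ using the dual basis of $M^*$ from Lemma~\ref{lem:dual-basis-of-dual} applied to part (1) of Lemma~\ref{lem:dual-dir-sum}. This gives
\[
f=\sum_i f(a_i)\bar f_i+\sum_j f(b_j)\bar g_j=\sum_i f(a_i)\bar f_i\in A',
\]
where commutativity of $R$ is used to regard $M^*$ as an $R$-module. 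The statement for $B'$ is symmetric.

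With this in hand the containments follow. Take $f\in A'_2$, so $f|_{B_2}=0$. Then for $b\in B_1$ we have $\phi^*(f)(b)=f(\phi(b))=0$, because $\phi(b)\in B_2$. Hence $\phi^*(f)|_{B_1}=0$, so $\phi^*(f)\in A'_1$. The same argument shows $\phi^*(B'_2)\subseteq B'_1$.

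For the bijectivity claim, assume $\phi|_{A_1}:A_1\to A_2$ is an isomorphism. Under the isomorphisms $A'_i\cong A_i^*$ given by restriction, $\phi^*|_{A'_2}$ corresponds to $(\phi|_{A_1})^*$. Indeed, for $f\in A'_2$, the restriction of $\phi^*(f)$ to $A_1$ is $f|_{A_2}\circ\phi|_{A_1}$. Since $(\phi|_{A_1})^*$ is the dual of an isomorphism, it is an isomorphism, with inverse the dual of $(\phi|_{A_1})^{-1}$.

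The step that needs the most care is checking that restriction $A'\to A^*$ is really an isomorphism, since the rest depends on it. It is injective because an element of $A'$ vanishes on $B$, so it is determined by its restriction to $A$. It is surjective because every $g\in A^*$ extends by zero on $B$ to an element of $A'$. No further obstacle is expected.
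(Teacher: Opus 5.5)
Your proof is correct and is essentially the paper's argument. The paper writes $\phi^*(f_2)=f_1+g_1$ with $f_1\in A'_1$, $g_1\in B'_1$ and shows $g_1$ vanishes on both $A_1$ and $B_1$; this is your characterization $A'_1=\{f\in M_1^* : f|_{B_1}=0\}$, obtained from $M_1^*=A'_1\oplus B'_1$ rather than from the dual-basis expansion. For bijectivity the paper checks injectivity and surjectivity by hand, and its surjectivity step tacitly uses your characterization to place the zero-extension of $f_1\circ(\phi|_{A_1})^{-1}$ in $A'_2$; you instead transport along the restriction isomorphisms $A'_i\cong A_i^*$ and use that the dual of an isomorphism is an isomorphism.
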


   \begin{proof}
      Let $f_2 \in A'_2$. Then $\phi^*(f_2) = f_2 \circ \phi$ is a linear functional in $M_1^*$. There exist $f_1 \in A'_1, g_1 \in B'_1$ such that $f_2 \circ \phi = f_1 + g_1$. Let $b \in B_1$. Then $f_1(b)=0$. It follows that 

      \begin{center}
         $g_1(b) = f_1(b) + g_1(b) = f_2(\phi(b)) = 0$
      \end{center}

      since $\phi(b) \in B_2$ and $f_2|_{B_2}=0$. Moreover, $g_1(a)=0$ for any $a \in A_1$. Hence $g_1=0$ and $\phi^*(f_2)=f_1$, which shows that $\phi^*(A'_2) \subseteq A'_1$. An analogous argument shows $\phi^*(B'_2) \subseteq B'_1$. 

      Now suppose $\phi$ maps $A_1$ to $A_2$ bijectively. We will show that $\phi^*$ maps $A'_2$ to $A'_1$ bijectively.

      Let $f_2 \in A'_2$ and suppose $\phi^*(f_2)=0$. Then $f_2 \circ \phi = 0$. Let $a_2 \in A_2$. Since $\phi$ maps $A_1$ to $A_2$ surjectively, there exists $a_1 \in A_1$ such that $\phi(a_1)=a_2$. This implies $f_2(a_2)=f_2(\phi(a_1))=0$. Moreover, $f_2(b)=0$ for any $b \in B_2$. Hence $f_2=0$, which shows $\phi^*$ maps $A'_2$ to $A'_1$ injectively. 

      Now let $f_1 \in A'_1$. Define a map $f_2 \in A'_2$ so that $f_2|_{A_2} = f_1 \circ (\phi|_{A_1})^{-1}$ and $f_2|_{B_2}=0$. For any $a \in A_1$, 
      
      \begin{center}
      $\phi^*(f_2)(a)=f_2(\phi(a))=f_1 \circ (\phi|_{A_1})^{-1}(\phi(a))=f_1(a)$. 
      \end{center}

      Moreover, for any $b \in B_1$ we have $\phi^*(f_2)(b) = f_2(\phi(b))=0=f_1(b)$. This shows that $\phi^*(f_2)=f_1$, and hence $\phi^*$ maps $A'_2$ to $A'_1$ surjectively.
   \end{proof}

   %%% Defintion: Annihilator %%%
   \begin{Definition}\label{def:annihilator}
      Given a submodule $A$ of an $R$-module $M$, the \defn{annihilator} of $A$ is the submodule $A^\perp$ of $M^*$ where $A^\perp=\{f \in M^* : f(a)=0 \hspace{5pt} \forall a \in A\}$. 
   \end{Definition}

   %%% Lemma: Annihlators Relationships with Images and Kernels %%%
   \begin{Lemma}\label{lem:dual-ann-ker-ima}
      Let $\phi:M \to N$ be a homomorphism of $R$-modules such that $\Coker(\phi)$ is projective. Then $\Ima(\phi^*)=\Ker(\phi)^\perp$.
   \end{Lemma}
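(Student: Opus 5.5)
The plan is to prove the two inclusions separately. The inclusion $\Ima(\phi^*) \subseteq \Ker(\phi)^\perp$ holds for any homomorphism. The reverse inclusion is where the hypothesis that $\Coker(\phi)$ is projective is used: it lets us extend a functional from $\Ima(\phi)$ to all of $N$. Throughout, $\phi^*: N^* \to M^*$ denotes precomposition, $\phi^*(f) = f \circ \phi$.

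First, let $f \in N^*$. For any $a \in \Ker(\phi)$ we have $\phi^*(f)(a) = f(\phi(a)) = f(0) = 0$. Hence $\phi^*(f) \in \Ker(\phi)^\perp$, which gives $\Ima(\phi^*) \subseteq \Ker(\phi)^\perp$ with no hypothesis on $\phi$.

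For the reverse inclusion, let $g \in M^*$ with $g|_{\Ker(\phi)} = 0$. Then $g$ factors through $M/\Ker(\phi)$. Composing with the isomorphism $M/\Ker(\phi) \cong \Ima(\phi)$ induced by $\phi$, we get a homomorphism $h: \Ima(\phi) \to R$ with $h(\phi(a)) = g(a)$ for all $a \in M$. This $h$ is well defined precisely because $g$ kills $\Ker(\phi)$.

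The remaining step, which I expect to be the only real point, is to extend $h$ to a functional on all of $N$. Consider the short exact sequence
\[ 0 \to \Ima(\phi) \to N \to \Coker(\phi) \to 0 . \]
It splits because $\Coker(\phi) = N/\Ima(\phi)$ is projective, exactly as in the proof of Lemma~\ref{lem:fin-chain}. So $N = \Ima(\phi) \oplus C$ for some submodule $C$. Define $f \in N^*$ by $f|_{\Ima(\phi)} = h$ and $f|_C = 0$. Then for every $a \in M$,
\[ \phi^*(f)(a) = f(\phi(a)) = h(\phi(a)) = g(a), \]
so $g = \phi^*(f) \in \Ima(\phi^*)$. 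This proves $\Ker(\phi)^\perp \subseteq \Ima(\phi^*)$ and hence equality.

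The argument uses no finiteness assumption on $M$ or $N$. Commutativity of $R$ is needed only so that $M^*$ and $N^*$ are $R$-modules and the equality is an equality of submodules, as discussed after Definition-Lemma~\ref{deflem:dual-basis-proj}.
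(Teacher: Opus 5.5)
Your proof is correct. Its core is the same as the paper's: split $N=\Ima(\phi)\oplus C$ using projectivity of $\Coker(\phi)$, then extend a functional on $\Ima(\phi)$ by zero on $C$.

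The real difference is how you obtain the functional on $\Ima(\phi)$. The paper also splits $M=\Ker(\phi)\oplus A$ and writes the extension as $f\circ(\phi|_A)^{-1}\circ\pi$ (its $f$ is your $g$). That splitting requires $M/\Ker(\phi)\cong\Ima(\phi)$ to be projective, which does not follow from the stated hypothesis. For $\phi:\mathbb{Z}\to\mathbb{Z}/2$ the cokernel is $0$, yet $\Ker(\phi)=2\mathbb{Z}$ is not a direct summand of $\mathbb{Z}$. The paper's step is only justified in its application, Lemma~\ref{lem:coker-dual-map-proj}, where $N$ is projective and so $\Ima(\phi)$ is a summand of a projective module.

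Your use of the induced isomorphism $M/\Ker(\phi)\cong\Ima(\phi)$ needs no splitting of $M$. So your argument proves the lemma exactly as stated, for arbitrary $M$ and $N$.

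A minor point: commutativity is not needed even for your closing remark. Without it, $M^*$ and $N^*$ are right $R$-modules and the equality is one of right submodules. This does not affect the proof.
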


   \begin{proof}
      First, suppose $f \in \Ima(\phi^*)$. Then 
      \begin{flalign*}
         f \in \Ima(\phi^*) 
            & \implies \phi^*(g) = g \circ \phi = f \text{ for some } g \in N^* \\
            & \implies (g \circ \phi)(a) = f(a) = 0 \hspace{5pt} \forall a \in \Ker(\phi) \\
            & \implies f \in \Ker(\phi)^\perp. 
      \end{flalign*}

      Conversely, suppose $f \in \Ker(\phi)^\perp$. Since $\Coker(\phi)$ is projective, there exists a submodule $B \subseteq N$ such that $N=\Ima(\phi) \oplus B$. Likewise, since $M/\Ker(\phi)$ is projective, there exists a decomposition $M=\Ker(\phi) \oplus A$ where $\phi|_A:A \to \Ima(\phi)$ is a bijection. Let $\Inv{(\phi|_A)}:\Ima(\phi) \to A$ be the inverse of this bijection and $\pi:N \to \Ima(\phi)$ be the natural projection map.

      Define $g:N \to R$ to be the map with $g=f \circ \Inv{(\phi|_A)} \circ \pi$. For any $c \in \Ker(\phi)$,
      \begin{center}
         $\phi^*(g)(c) = g(\phi(c)) = 0 = f(c)$
      \end{center}
      since $f \in \Ker(\phi)^\perp$. Moreover, for any $a \in A$ we have
      \begin{center}
         $\phi^*(g)(a) = (f \circ \Inv{(\phi|_A)} \circ \pi) (\phi(a)) = (f \circ \Inv{(\phi|_A)}) (\phi(a)) = f(a)$.
      \end{center}
      Hence $\phi^*(g) = f$, which shows $f \in \Ima(\phi^*)$.
   \end{proof}

   %%% Lemma: Dual of a Direct Sum and Annihilators %%%
   \begin{Lemma}\label{lem:dual-direct-sum-ann}
      Let $M = A \oplus B$ be an $R$-module. Then $M^* = A^\perp \oplus B^\perp$. 
   \end{Lemma}

   \begin{proof}
      Straightforward to verify using Definition~\ref{def:annihilator}.
      \begin{comment}
      Let $f \in M^*$. Define $f_A=f \circ \pi_A$ and $f_B= f \circ \pi_B$, where $\pi_A:M \to A$ and $\pi_B:M \to B$ are the natural projection maps. Then for any $a \in M = A \oplus B$, we have 
      \begin{center}
         $(f_A+f_B)(m)= (f_A+f_B)(a+b) = f(a)+f(b) = f(a+b)=f(m)$. 
      \end{center}
      Hence $A^\perp + B^\perp$ generates $M^*$.

      Now suppose $f \in A^\perp \cap B^\perp$. Then for every $m \in M$, we have $f(m)=f(a+b)=f(a)+f(b)=0$. Hence $f=0$. 
      \end{comment}
   \end{proof}

   %%% Lemma: Cokernel of Dual Map is Projective %%%
   \begin{Lemma}\label{lem:coker-dual-map-proj}
      Let $\phi:M \to N$ be a homomorphism of finitely generated projective $R$-modules such that $\Coker(\phi)$ is projective. Then $\Coker(\phi^*)$ is also projective.
   \end{Lemma}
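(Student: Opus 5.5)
The plan is to split $\phi$ into an isomorphism plus zero maps, dualize that splitting, and read off the cokernel of $\phi^*$.

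\textbf{Step 1 (split $\phi$).} Since $\Coker(\phi)=N/\Ima(\phi)$ is projective, the sequence $0\to\Ima(\phi)\to N\to\Coker(\phi)\to 0$ splits. This gives $N=\Ima(\phi)\oplus B$ for some submodule $B$. Then $\Ima(\phi)$ is a direct summand of the projective module $N$, hence projective. The sequence $0\to\Ker(\phi)\to M\to\Ima(\phi)\to0$ therefore splits, giving $M=\Ker(\phi)\oplus A$ with $\phi|_A:A\to\Ima(\phi)$ bijective. This is the same setup as in the proof of Lemma~\ref{lem:dual-ann-ker-ima}. All four summands $\Ker(\phi)$, $A$, $\Ima(\phi)$, $B$ are finitely generated projective, being summands of finitely generated projective modules.

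\textbf{Step 2 (dualize).} Choose pairs of dual bases of the four summands and write
\[
M^*=A'\oplus K' \quad\text{and}\quad N^*=I'\oplus B'
\]
as in Lemma~\ref{lem:dual-dir-sum}. Here $A'\cong A^*$, $K'\cong\Ker(\phi)^*$, $I'\cong\Ima(\phi)^*$ and $B'\cong B^*$. Now $\phi$ maps $A$ bijectively onto $\Ima(\phi)$ and maps $\Ker(\phi)$ into the summand $B$ (as $0$). So Lemma~\ref{lem:comp-maps-dual} applies: $\phi^*$ maps $I'$ bijectively onto $A'$, and $\phi^*(B')\subseteq K'$.

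\textbf{Step 3 (compute the image).} I claim $\phi^*(B')=0$. An element $g\in B'$ vanishes on $\Ima(\phi)$, so $g\circ\phi=0$. Hence $\Ima(\phi^*)=A'$, and
\[
\Coker(\phi^*)=M^*/A'\cong K'\cong \Ker(\phi)^*.
\]
Alternatively, Lemma~\ref{lem:dual-ann-ker-ima} gives $\Ima(\phi^*)=\Ker(\phi)^\perp$ directly. Lemma~\ref{lem:dual-direct-sum-ann} gives $M^*=\Ker(\phi)^\perp\oplus A^\perp$, so $\Coker(\phi^*)\cong A^\perp\cong \Ker(\phi)^*$.

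\textbf{Step 4 (projectivity).} The dual of a finitely generated projective module is finitely generated projective, because Lemma~\ref{lem:dual-basis-of-dual} produces a pair of dual bases and Definition-Lemma~\ref{deflem:dual-basis-proj} then applies. Equivalently, $\Ker(\phi)^*$ is a summand of the dual of a finitely generated free module, which is free. So $\Coker(\phi^*)$ is projective.

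I expect no real obstacle. The only care needed is in Step 3: making sure the identification of $\Ima(\phi^*)$ is exactly $A'$ (or $\Ker(\phi)^\perp$), and that the complementary summand is the one isomorphic to $\Ker(\phi)^*$.
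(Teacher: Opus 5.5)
Your proof is correct and essentially the paper's: the paper splits $M=\Ker(\phi)\oplus A$, uses Lemmas~\ref{lem:dual-ann-ker-ima} and~\ref{lem:dual-direct-sum-ann} to get $M^*=\Ima(\phi^*)\oplus A^\perp$, and concludes that $\Coker(\phi^*)\cong A^\perp$ is projective as a direct summand of $M^*$, which is exactly your ``alternative'' in Step 3. Your main route through dual bases and Lemma~\ref{lem:comp-maps-dual} arrives at the same decomposition (in fact $A'=\Ker(\phi)^\perp$ and $K'=A^\perp$), just with more bookkeeping.
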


   \begin{proof}
      Clearly $\Coker(\phi)$ projective implies $\Ima(\phi) \cong M/\Ker(\phi)$ is projective. Hence there exists a submodule $A \subseteq M$ such that $M = \Ker(\phi) \oplus A$. Dualizing yields 

      \begin{center}
         $M^* = \Ker(\phi)^\perp \oplus A^\perp = \Ima(\phi^*) \oplus A^\perp$ 
      \end{center}
      
      by Lemmas~\ref{lem:dual-ann-ker-ima} and \ref{lem:dual-direct-sum-ann}. Observe $A^\perp$ is projective as a direct summand of $M^*$ which is projective. Therefore $\Coker(\phi^*) = M^* / \Ima(\phi^*) \cong A^\perp$ is projective.
   \end{proof}

   %%% Lemma: Decomposition Surjective Map %%%
   \begin{Lemma}\label{lem:decomp-surj-map}
      Let $\phi:M \to N$ be a surjection of $R$-modules such that $N$ is projective. If $N=N_1 \oplus N_2$, then we can choose an internal direct sum decomposition $M=M_1 \oplus M_2$ such that:
      \begin{enumerate}
         \item $M_1 \cong N_1$;
         \item $\phi(M_1) = N_1$;
         \item $\phi(M_2) = N_2$. 
      \end{enumerate}
   \end{Lemma}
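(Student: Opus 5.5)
Since $N$ is projective and $\phi$ is surjective, the short exact sequence $0 \to \Ker(\phi) \to M \xrightarrow{\phi} N \to 0$ splits. I will therefore fix a section $s:N \to M$ with $\phi \circ s = \Id_N$, so that $M = s(N) \oplus \Ker(\phi)$ and $s$ is an isomorphism of $N$ onto $s(N)$.

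Using the given decomposition $N = N_1 \oplus N_2$, define
\[ M_1 = s(N_1), \qquad M_2 = s(N_2) + \Ker(\phi). \]
Because $s$ is injective, $s(N) = s(N_1) \oplus s(N_2)$. Combined with $M = s(N) \oplus \Ker(\phi)$, this gives the internal direct sum
\[ M = s(N_1) \oplus s(N_2) \oplus \Ker(\phi) = M_1 \oplus M_2. \]

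Each required property then follows in one line:
\begin{enumerate}
   \item $s|_{N_1}$ is an isomorphism $N_1 \to M_1$, so $M_1 \cong N_1$;
   \item $\phi(M_1) = \phi(s(N_1)) = N_1$, since $\phi \circ s = \Id_N$;
   \item $\phi(M_2) = \phi(s(N_2)) + \phi(\Ker(\phi)) = N_2 + 0 = N_2$.
\end{enumerate}

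The only step needing care is verifying that the sum defining $M$ is direct, and this reduces to the injectivity of $s$ together with the splitting. Projectivity of $N$ is used solely to obtain the section $s$. No Noetherian or commutativity hypotheses are needed, which makes the lemma applicable as a general tool in the later constructions.
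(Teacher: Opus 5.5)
Your proposal is correct and follows the paper's proof essentially verbatim. The paper likewise takes a section $\psi$ of $\phi$, which exists because $N$ is projective. It then writes $M = \Ker(\phi) \oplus \psi(N_1) \oplus \psi(N_2)$ and sets $M_1 = \psi(N_1)$ and $M_2 = \Ker(\phi) \oplus \psi(N_2)$.
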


   \begin{proof}
      Since $\phi$ is surjective and $N$ is projective, there is a homomorphism $\psi: N \to M$ such that $\phi \psi = \Id_N$. It follows that $M = \Ker(\phi) \oplus \psi(N) = \Ker(\phi) \oplus \psi(N_1) \oplus \psi(N_2)$. Set $M_1=\psi(N_1)$ and $M_2=\Ker(\phi) \oplus \psi(N_2)$ for the desired decomposition.
   \end{proof}

   %%% Definition: Dual Persistence Module %%%
   \begin{Definition}\label{def:dual-pers-mod}
      Let $F:P \to \RMod$ be a totally ordered $P$-persistence module. Let $P^{\text{op}}$ be the poset with opposite ordering denoted by $\leq_{\text{op}}$. Define $F^\vee:P^{\text{op}} \to \RMod$ to be the totally ordered $P^{\text{op}}$-persistence module such that:

      \begin{enumerate}
         \item $F^\vee_x = F_x^*$ for all $x \in P^{\text{op}}$;
         \item $F^\vee_{x,y} = F^*_{y,x}$ for all $x \leq_{\text{op}} y$ in $P^{\text{op}}$.
      \end{enumerate}
   \end{Definition}

   The proof of the following proposition is a generalization of our argument in \cite{gan25}*{\S6} when proving the existence of interval decompositions of pointwise finite dimensional persistence modules over fields. 

   %%% Proposition: Decompose One Projective Constant Module %%%
   \begin{Proposition}\label{prp:total-one-pc-decomp}
      Let $F:P \to \RMod$ be a nonzero p.f.g. totally ordered persistence module. Suppose that the cokernel of every internal morphism $F_{x,y}$ is projective. Then there exists an internal direct sum decomposition $F=\mathbb{I} \oplus G$, where $\mathbb{I}$ is a module of projective constant type.
   \end{Proposition}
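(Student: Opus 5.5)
I will follow the field-case argument of \cite{gan25}*{\S6}, replacing vector-space complements by the splittings supplied by projectivity. First fix an index $z$ with $F_z\neq 0$. By Lemma~\ref{lem:comp-decom-ima-ker}, $F_z=\bigoplus_{G\in\mathcal A}G$ with every $G$ projective and the decomposition compatible with the (finite) chains of images and kernels at $z$. I would choose the "pure" summand in lexicographic fashion. Take the smallest image $\Ima[x,z]$ in the chain that contains a nonzero summand of the decomposition, and among the summands inside it one lying in the largest kernel. Call this summand $M\subseteq F_z$; it is nonzero and projective. Finiteness of the chains (Lemma~\ref{lem:fin-chain-ima-ker}) yields a cut of $P$ below $z$ and above $z$. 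The cut below is the set of $x\le z$ with $M\subseteq\Ima[x,z]$, together with the complementary summands in the chain. The cut above is the set of $y\ge z$ with $M\cap\Ker[z,y]=0$. Let $I$ be the resulting interval, which is convex and contains $z$.

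For $y\ge z$ in $I$, set $\mathbb I_y=F_{z,y}(M)\cong M$. For $x\le z$ in $I$, we need a lift $\mathbb I_x\subseteq F_x$ mapped isomorphically onto $M$ by $F_{x,z}$, chosen coherently in $x$. Going up is easy. The hard part is the lifts downward, and the complements at every index. This is where I would use the dual module $F^\vee$ of Definition~\ref{def:dual-pers-mod}. By Lemma~\ref{lem:coker-dual-map-proj}, $F^\vee$ again has projective cokernels. Its internal maps reverse direction, so images and kernels swap roles under annihilators, via Lemma~\ref{lem:dual-ann-ker-ima}. The plan has two halves. For indices above $z$, the forward images $F_{z,y}(M)$ are direct summands of $F_y$, and a complement $C_y$ must be chosen compatibly with the maps. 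Here I would define the complement as a kernel of functionals: extend the projection $F_z\to M$ (a family of functionals, i.e.\ a summand $M'$ of $F_z^*\cong$ dual of $M$ via Lemma~\ref{lem:dual-dir-sum}) backward along $F^\vee$. For $x\le z$, the symmetric problem of lifting $M$ is solved using Lemma~\ref{lem:decomp-surj-map}. Apply it to the surjection $F_{x,z}^{-1}(\Ima[x,z])\to\Ima[x,z]$, with $\Ima[x,z]$ projective and decomposed compatibly. Coherence in $x$ comes from the minimality in the choice of $M$: the maps $F_{x,x'}$ carry the chosen lift onto the next one.

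Concretely, I expect the structure to be as follows. $\mathbb I$ is the persistence submodule given by $\mathbb I_x=s_x(M)$ for $x\le z$ in $I$, where $s_x$ is a coherent section, and $\mathbb I_y=F_{z,y}(M)$ for $y\ge z$ in $I$, and $0$ elsewhere. $G$ is defined by $G_w=\Ker(\pi_w)$, where $\pi_w:F_w\to M$ is a coherent family of retractions. For $w\ge z$ these retractions are built by dualizing: we obtain $\pi_w$ as elements of $\mathrm{Hom}(F_w,M)$ compatible with $F^\vee$. This is exactly the dual of the lifting problem, solved in $F^\vee$ by the same argument as for $x\le z$. That duality is why commutativity of $R$ and Lemma~\ref{lem:comp-maps-dual} are needed. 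Naturality of $G$ then follows from the compatibilities $\pi_{w'}F_{w,w'}=\pi_w$, and $F_w=\mathbb I_w\oplus G_w$ follows from $\pi_w s_w=\Id$.

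The main obstacle is building the coherent families $s_x$ for $x$ below $z$ (and dually $\pi_y$ above), since $P$ may be infinite. My first approach uses the finiteness of the image chain: $I\cap\{x\le z\}$ is partitioned into finitely many blocks on which $\Ima[x,z]$ is constant. On each block, the sections $s_x$ can be defined from a single choice at one representative and transported by the maps, which are isomorphisms on the relevant summand. The difficulty is when a block has no least element. I would then use the kernel chains at the indices involved, which are also finite, to show the maps restricted to the lifted summand are eventually injective. If this fails, the fallback is a Zorn's lemma argument over partial coherent sections, with Lemma~\ref{lem:fin-chain} guaranteeing that chains stabilize pointwise.
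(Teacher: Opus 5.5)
Your overall architecture matches the paper's. You split $F_z$ compatibly with the image and kernel chains, build the summand on $\{x\le z\}$ by lifting $M$ coherently, obtain the part on $\{x\ge z\}$ by running the same construction on $F^\vee$ and dualizing back, and glue at $z$. The gap is the one hard step: producing coherent lifts $s_x\colon M\to F_x$ (with $F_{x,x'}s_x=s_{x'}$ and $F_{x,z}s_x$ the inclusion) over $Q=\{x\le z: M \text{ a summand of } \Ima[x,z]\}$ when $Q$ has no least element. You never carry this out, and the mechanisms you offer fail as stated.

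\emph{Why your mechanisms fail.} Lifts chosen separately at each $x$ by Lemma~\ref{lem:decomp-surj-map} need not be coherent, and the choice of $M$ does not help. Take $x<x'<z$ with $F_x=R\to F_{x'}=R^2$, $a\mapsto(a,0)$, and $F_{x'}\to F_z=R$ the first projection. All cokernels are projective, there is a single image block, and $M=F_z$. The section $1\mapsto(1,1)$ at $x'$ is a legitimate output of Lemma~\ref{lem:decomp-surj-map}, but it does not lie in $\Ima[x,x']$, so nothing at $x$ maps onto it. The same example shows the following:
\begin{itemize}
\item maps inside a block need not carry one chosen lift onto another;
\item a single choice can only be transported forward, never backward;
\item Zorn's lemma over partial coherent sections ordered by extension yields maximal elements that are not total (the section on $\{x',z\}$ cannot be extended).
\end{itemize}
``Eventually injective on the lifted summand'' is automatic, since $F_{x,z}$ is injective on any lift, so it cannot be what resolves the problem. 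What is needed is an element of an inverse limit of nonempty sets of lifts. Over a Noetherian ring, the relevant descending chains need not stabilize unless projectivity of suitable quotients is built in. Your right-hand half is the dual of this problem, so it inherits the same gap.

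\emph{How the paper closes it.} The paper does not build sections pointwise. It applies Zorn to families $(A_x)_{x\in Q}$ defined on all of $Q$ at once, ordered by reverse inclusion, subject to (S1)--(S4):
\begin{itemize}
\item $A_x\subseteq F_{x,z}^{-1}(M)$ and $F_{x,y}(A_x)\subseteq A_y$;
\item $F_x/A_x$ and $F_y/F_{x,y}(A_x)$ are projective.
\end{itemize}
Condition (S3) lets Lemma~\ref{lem:fin-chain} show that a chain of families takes finitely many values at each index, so its intersection is attained and stays in $\Sigma$. Condition (S4) yields minimal images $A'_x$, which for an inclusion-minimal family force $F_{x,y}(A_x)=A_y$. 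Finally, the paper splits $A_w=M_w\oplus K_w$ and replaces $A_x$ by the \emph{canonical} preimages $F_{x,w}^{-1}(M_w)\cap A_x$ for $x<w$, which involves no backward choices. Minimality then forces $A_w=M_w\cong M$. (Also, $M=N_0$ may be any summand; your lexicographic choice is unnecessary.)

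\emph{Repairing your own route.} Your kernel-chain idea can be made to work, but it needs two ingredients you do not mention.
\begin{itemize}
\item Confine every lift to the stable image $D_x=\bigcap_{u\in Q,\,u\le x}\Ima[u,x]$. This intersection is attained by Lemma~\ref{lem:fin-chain-ima-ker}, contains $M$ when $x=z$, and satisfies $F_{x,x'}(D_x)=D_{x'}$. Hence the successor steps of a transfinite recursion along a coinitial well-ordered sequence $(x_\alpha)$ in $Q$ go through.
\item At a limit stage $\lambda$, the finite kernel chain at $x_\lambda$ makes each $F_{x_\alpha,x_\beta}$ injective on $\Ima[x_\lambda,x_\alpha]$ for $\alpha\ge\beta$ close to $\lambda$. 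Then any lift of one late $s_{\alpha_0}$ into $D_{x_\lambda}$ is automatically compatible with every earlier $s_\alpha$.
\end{itemize}
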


   \begin{proof}
      %%% Set Up for Proof %%%
      \begingroup
      Fix $z \in P$ with $F_z \neq 0$. By Lemma~\ref{lem:comp-decom-ima-ker}, there exists an internal direct sum decomposition of $F_z$ that is compatible with the chain of images and the chain of kernels of $F_z$. In particular, there are only finitely many nonzero summands in this decomposition since $F_z$ is a finitely generated projective $R$-module. Hence we can write $F_z=\bigoplus_{i=0}^k N_i$ for some $k \geq 0$, where each $N_i$ is nonzero. Fix $M=N_0$ and let $N= N_1 \oplus \cdots \oplus N_k$ so that $F_z = M \oplus N$. 

      The proof proceeds in three main steps. First, we construct a decomposition of $F$ over indices $x \leq z$ that is compatible with the decomposition at $F_z$; we refer to this as the left-hand decomposition of $F$. Second, we construct an analogous decomposition over indices $x \geq z$ using a dualizing argument, which we call the right-hand decomposition. Finally, we combine these two decompositions to obtain the desired result.	
      \endgroup

      %%% Left-hand Decomposition %%%
      \begingroup
      \textbf{Left-hand Decomposition}

      Let $Q$ be the set of indices $x \leq z$ such that $M$ is a direct summand of $\Ima[x,z]$. Note that $Q$ is nonempty since $z \in Q$. Our goal is to find a sequence of $R$-submodules $\{A_x\}_{x \in Q}$ with $A_x \subseteq F_x$ such that:

      \begin{enumerate}
         \item $A_x \cong M$ for all $x \in Q$; 
         \item $F_{x,z}(A_x)=M$ for all $x \in Q$;
         \item $F_{x,y}(A_x)=A_y$ for all $x<y$ in $Q$.
      \end{enumerate}

      Let $E_x$ be the preimage of $M$ in $F_x$ for all $x \in Q$. Define $\Sigma$ to be the set of all families $(A_x)_{x \in Q}$ with the following conditions:
      
      \begin{enumerate}
         \item[\textbf{(S1)}] $A_x$ is a nontrivial submodule with $A_x \subseteq E_x$ for all $x \in Q$;
         
         \item[\textbf{(S2)}] $F_{x,y}(A_x) \subseteq A_y$ for all $x<y$ in $Q$; 
         
         \item[\textbf{(S3)}] $F_x / A_x$ is projective for all $x \in Q$;
         
         \item[\textbf{(S4)}] $F_y / F_{x,y}(A_x)$ is projective for all $x<y$ in $Q$. \vspace{10pt}
      \end{enumerate} 

      %%% Claim 1: (E_x) is in Sigma %%%
      \textbf{Claim~\ref*{prp:total-one-pc-decomp}.1.} $(E_x)_{x \in Q}$ is a family in $\Sigma$.

         \begin{proof}[Proof of Claim~\ref*{prp:total-one-pc-decomp}.1] 
         Conditions \textbf{(S1)} and \textbf{(S2)} are obvious. 

         Consider the surjection $F_{x,z}:F_x \to \Ima[x,z]$. Since $F_{x,z}(E_x)=M$, there is an induced homomorphism $\overline{F_{x,z}}:F_x/E_x \to \Ima[x,z]/M$. This map is easily verified to be an isomorphism. Hence $F_x/E_x$ is projective since $\Ima[x,z]/M$ is projective, proving condition \textbf{(S3)}. 

         Consider the surjection $F_{y,z}|_{\Ima[x,y]}:\Ima[x,y] \to \Ima[x,z]$. Since $F_{y,z}(F_{x,y}(E_x))=M$, there is an induced homomorphism $\overline{F_{y,z}|_{\Ima[x,y]}}:\Ima[x,y]/F_{x,y}(E_x) \to \Ima[x,z]/M$. This map is easily verified to be an isomorphism. Hence $\Ima[x,y]/F_{x,y}(E_x)$ is projective since $\Ima[x,z]/M$ is projective. We have a short exact sequence
         %%% S.E.S. %%%
         \begin{center}
            \begin{tikzcd}
               0 \arrow [r] 
               & \Frac{\Ima[x,y]}{F_{x,y}(E_x)} \arrow[r, hook]
               & \Frac{F_y}{F_{x,y}(E_x)} \arrow[r, two heads]
               & \Frac{F_y}{\Ima[x,y]} \arrow[r] 
               & 0.
            \end{tikzcd}
         \end{center}

      The sequence splits since $\Coker[x,y]$ is projective, and thus $F_y/F_{x,y}(E_x)$ is projective as it is isomorphic to a direct sum of projective $R$-modules.
      \end{proof}

      %%% Definition of Partial Order on Sigma %%%
      \begingroup
      Define a partial order on $\Sigma$ by $(A_x)_{x \in Q} \leq (B_x)_{x \in Q}$ if $B_x \subseteq A_x$ for all $x \in Q$. Let $\{(A_x[\alpha])\}_\alpha$ be a chain of families in $\Sigma$ indexed by $\alpha$. Define the family $(B_x)_{x \in Q}$ by 
      
      \begin{center}
         $B_x = \bigcap\limits_{\alpha} A_x[\alpha]$.
      \end{center}
      \endgroup
      
      %%% Claim 2: (B_x) in Sigma %%%
      \textbf{Claim~\ref*{prp:total-one-pc-decomp}.2.} $(B_x)_{x \in Q}$ is in $\Sigma$.
      
      \begin{proof}[Proof of Claim~\ref*{prp:total-one-pc-decomp}.2.]
         At each fixed index $x$, $\{A_x[\alpha]\}_{\alpha}$ is a chain of projective $R$-submodules of $F_x$ with $F_x/A_x[\alpha]$ projective for every $\alpha$. Lemma~\ref{lem:fin-chain} guarantees there are only finitely many submodules $A_x[\alpha]$ in this chain. Therefore $B_x$ will be equal to the minimal submodule in the chain $\{A_x[\alpha]\}_{\alpha}$. This guarantees conditions $\textbf{(S1)}$, \textbf{(S3)} and \textbf{(S4)} are satisfied.

         Finally, since $B_x \subseteq A_x[\alpha]$ for all $\alpha$, $F_{x,y}(B_x) \subseteq F_{x,y}(A_x[\alpha]) \subseteq A_y[\alpha]$ for all $\alpha$. Hence $F_{x,y}(B_x) \subseteq \bigcap\limits_{\alpha} A_y[\alpha]= B_y$, satisfying condition \textbf{(S2)}.
      \end{proof}

      %%% (B_x) is an Upper Bound %%%
      \begingroup
      Now $(B_x)_{x \in Q}$ is in $\Sigma$ and $(A_x[\alpha])_{x \in Q} \leq (B_x)_{x \in Q}$ for every $\alpha$ since $B_x \subseteq A_x[\alpha]$ for every $x \in Q$. This shows $(B_x)_{x \in Q}$ is an upper bound of the chain $\{(A_x[\alpha])\}_\alpha$. Zorn's Lemma implies the existence of a maximal element, say $(A_x)_{x \in Q}$, in $\Sigma$.

      Define $A'_x$ to be the minimal image of $A_r$ in $A_x$ for all $r<x$ in $Q$, where the minimal image here is with regard to subset containment.
      \endgroup

      %%% Claim 3: The Minimal Image A'_x Exists %%%
      \textbf{Claim~\ref*{prp:total-one-pc-decomp}.3.} The minimal image $A'_x$ exists in $A_x$.

      \begin{proof}[Proof of Claim~\ref*{prp:total-one-pc-decomp}.3.]
         The set of images $\{F_{r,x}(A_r): r \leq x\}$ form a chain of projective $R$-submodules of $F_x$, where $F_x/F_{r,x}(A_r)$ is projective for every $r \leq x$ by condition \textbf{(S4)}. Lemma~\ref{lem:fin-chain} guarantees there are only finitely many submodules in this chain. Therefore a minimal image $A'_x$ exists.
      \end{proof}

      %%% Claim 4: A'x maps surjectively to A'y %%%
      \textbf{Claim~\ref*{prp:total-one-pc-decomp}.4.} $A'_x$ maps surjectively to $A'_y$ whenever $x \leq y$.

      \begin{proof}[Proof of Claim~\ref*{prp:total-one-pc-decomp}.4.]
         By definition, $A'_x=F_{r,x}(A_r)$ for some $r \leq x$ and $A'_y=F_{w,y}(A_w)$ for some $w \leq y$. We have two cases to check.

         Suppose $r \leq w$. Then $F_{r,w}(A_r) \subseteq A_w$ implies $F_{r,y}(A_r) \subseteq F_{w,y}(A_w) = A'_y$. Minimality of $A'_y$ requires $F_{r,y}(A_r) = A'_y$. Hence
         \begin{center}
            $A'_y= F_{r,y}(A_r) = F_{x,y}F_{r,x}(A_r) = F_{x,y}(A'_x)$. 
         \end{center}

         Now suppose $w < r$. Then $F_{w,r}(A_w) \subseteq A_r$ implies $F_{w,x}(A_w) \subseteq F_{r,x}(A_r) = A'_x$. Minimality of $A'_x$ requires $F_{w,x}(A_w) = A'_x$. Hence 
         \begin{center}
            $F_{x,y}(A'_x) = F_{x,y}F_{w,x}(A_w) = F_{w,y}(A_w)=A'_y$.
         \end{center} 
      \end{proof}

      %%% Claim 5: A'x is in Sigma %%%
      \textbf{Claim~\ref*{prp:total-one-pc-decomp}.5.} $(A'_x)_{x \in Q}$ is in $\Sigma$.

      \begin{proof}[Proof of Claim~\ref*{prp:total-one-pc-decomp}.5.]
         By definition, $A'_x=F_{r,x}(A_r)$ for some $r \leq x$. This implies conditions \textbf{(S1)} and \textbf{(S3)} are immediately satisfied. By Claim~\ref*{prp:total-one-pc-decomp}.4., we have $F_{x,y}(A'_x) = A'_y$. Thus condition \textbf{(S2)} is satisfied, and moreover $F_y/F_{x,y}(A'_x) = F_y/A'_y$ is projective which satisfies \textbf{(S4)}. 
      \end{proof}

      %%% Subproof: Ax maps surjectively to Ay, and A'x=Ax %%%
      Now $(A'_x)_{x \in Q}$ is in $\Sigma$ with $(A_x)_{x \in Q} \leq (A'_x)_{x \in Q}$. Therefore maximality of $(A_x)_{x \in Q}$ implies $(A_x)_{x \in Q}=(A'_x)_{x \in Q}$. Since it was shown in Claim~\ref*{prp:total-one-pc-decomp}.4 that $A'_x$ maps to $A'_y$ surjectively for all $x \leq y$, we must have $A_x$ maps to $A_y$ surjectively for all $x \leq y$.

      %%% Definition of Cx %%%
      \begingroup
      Fix $w \in Q$. The map $F_{w,z}|_{A_w}:A_w \to M$ is a surjection. We can apply Lemma~\ref{lem:decomp-surj-map} to decompose $A_w = M_w \oplus K_w$ so that $M_w \cong M, F_{w,z}(M_w)=M$, and $F_{w,z}(K_w)=0$. 
      
      Define the family $(C_x)_{x \in Q}$ by 
      
      \begin{center}
         $C_x= \begin{cases} 
            \Inv{F_{x,w}}(M_w) \cap A_x & \text{if } x < w, \\
            M_w & \text{if } x=w, \\
            A_x & \text{if } x > w, \\ 
         \end{cases}$
      \end{center}

      where $\Inv{F_{x,w}}(M_w)$ denotes the preimage of $M_w$ in $F_x$.
      \endgroup

      %%% Claim 6: (C_x) is a family in Sigma %%%
      \textbf{Claim~\ref*{prp:total-one-pc-decomp}.6.} $(C_x)_{x\in Q}$ is a family in $\Sigma$.

      \begin{proof}[Proof of Claim~\ref*{prp:total-one-pc-decomp}.6.]
         Condition \textbf{(S1)} is obvious for $x \geq w$. When $x<w$, surjectivity of $A_x$ onto $A_w$ guarantees that $\Inv{F_{x,w}}(M_w) \cap A_x$ will be a nonzero $R$-submodule contained in $E_x$. Condition \textbf{(S2)} is obvious for all indices. It remains to carefully justify conditions \textbf{(S3)} and \textbf{(S4)}. 
         
         Condition \textbf{(S3)} is immediate when $x > w$. For $x=w$, we have a short exact sequence 
         %%% S.E.S. %%%
         \begin{center}
            \begin{tikzcd}
               0 \arrow [r] 
               & \Frac{A_w}{M_w} \arrow[r, hook]
               & \Frac{F_w}{M_w} \arrow[r, two heads]
               & \Frac{F_w}{A_w} \arrow[r] 
               & 0.
            \end{tikzcd}
         \end{center}
         
         The sequence splits, and thus $F_w/M_w$ is projective as it is isomorphic to a direct sum of projective $R$-modules.

         Now suppose $x < w$. Consider the surjection $F_{x,w}|_{A_x}:A_x \to A_w$. There is an induced homomorphism $\overline{F_{x,w}|_{A_x}}:A_x/(\Inv{F_{x,w}}(M_w) \cap A_x) \to A_w/M_w$. This map is easily verified to be an isomorphism. Hence $A_x/(\Inv{F_{x,w}}(M_w) \cap A_x)$ is projective since $A_w/M_w$ is projective. Moreover, we have a short exact sequence

         %%% S.E.S. %%%
         \begin{center}
            \begin{tikzcd}
               0 \arrow [r] 
               & \Frac{A_x}{\Inv{F_{x,w}}(M_w) \cap A_x} \arrow[r, hook]
               & \Frac{F_x}{\Inv{F_{x,w}}(M_w) \cap A_x} \arrow[r, two heads]
               & \Frac{F_x}{A_x} \arrow[r] 
               & 0.
            \end{tikzcd}
         \end{center}

         The sequence splits, and thus $F_x/(\Inv{F_{x,w}}(M_w) \cap A_x)$ is projective as it is isomorphic to a direct sum of projective $R$-modules. This proves condition \textbf{(S3)}. 

         When $w \leq x < y$ or $x < w \leq y$, condition \textbf{(S4)} is obvious. Suppose $x < y < w$. We claim $F_{x,y}(\Inv{F_{x,w}}(M_w) \cap A_x) = \Inv{F_{y,w}}(M_w) \cap A_y$. 
         
         Indeed, the forward inclusion is straightforward. To see the reverse inclusion, suppose $a_y \in \Inv{F_{y,w}}(M_w) \cap A_y$. By surjectivity, there exists $a_x \in A_x$ such that $F_{x,y}(a_x)=a_y$. This implies $F_{x,w}(a_x)=F_{y,w}(a_y) \in M_w$. Hence $a_x \in \Inv{F_{x,w}}(M_w) \cap A_x$, implying $a_y \in F_{x,y}(\Inv{F_{x,w}}(M_w) \cap A_x)$. 

         The equality above means $F_y / F_{x,y}(\Inv{F_{x,w}}(M_w) \cap A_x) = F_y /(\Inv{F_{y,w}}(M_w) \cap A_y)$, which was shown to be projective by condition \textbf{(S3)}. 
      \end{proof}

      %%% Maximality of C_x %%%
      Now $(C_x)_{x\in Q}$ is in $\Sigma$ with $(A_x)_{x \in Q} \leq (C_x)_{x \in Q}$. Therefore maximality of $(A_x)_{x \in Q}$ implies $(C_x)_{x \in Q} = (A_x)_{x \in Q}$, and in particular that $A_w=M_w$. For every $x \geq w$, surjectivity of $A_w$ onto $A_x$ combined with the fact that $A_w \cong M$ implies $A_w$ maps bijectively to $A_x$ so that $A_x \cong M$ for all $w \leq x \leq z$ in $Q$. Since $w$ was arbitrary, the maximal element $(A_x)_{x \in Q}$ must be a family satisfying:
      \begin{enumerate}
         \item $A_x \cong M$ for all $x \in Q$;
         \item $F_{x,z}(A_x)=M$ for all $x \in Q$;
         \item $F_{x,y}(A_x)=A_y$ for all $x<y$ in $Q$.
      \end{enumerate}

      %%% Finish Left-hand Decomposition %%%	
      Define the persistence submodules $\mathbb{I}^-$ and $G^-$ of $F|_{\{x \leq z\}}$ as follows:
      \begin{align*}
         \mathbb{I}^-_x = 
         \begin{cases} 
            A_x & \text{if } x \in Q, \\
            0 & \text{if } x \notin Q; \\
         \end{cases} 
         \hspace{15pt} 
         & G^-_x = \Inv{F_{x,z}}(N) \text{ for all } x \leq z.
      \end{align*}

      Then $F|_{\{x \leq z\}} = \mathbb{I}^- \oplus G^-$ where $\mathbb{I}^-$ is of projective constant type.
      \endgroup

      %%% Right-hand Decomposition %%%
      \begingroup
      \textbf{Right-hand Decomposition}

      %%% Set Up %%%
      \begingroup
      Recall that $F_z=\bigoplus_{i=0}^k N_i$ where $N_0=M$. Let $S$ be the set of indices $x \geq z$ such that $N_0$ is not a direct summand of $\Ker[z,x]$. Note that $S$ is nonempty since $z \in S$.   
      
      We will now focus on the persistence module $F|_{S}$. We will abuse notation and simply write $F$ instead of $F|_S$ below as it is understood from context, and view $F$ as an $S$-persistence module. 

      Consider the $S^{\text{op}}$-persistence module $F^\vee$ where $\leq_{\text{op}}$ denotes the opposite ordering in $S^{\text{op}}$ (see Definition~\ref{def:dual-pers-mod}). Since $\Coker(F_{x,y})$ is projective for every $x \leq y$ in $S$, $\Coker(F^\vee_{y,x})=\Coker(F^*_{x,y})$ is projective for every $y \leq_{\text{op}} x$ in $S^{\text{op}}$ by Lemma~\ref{lem:coker-dual-map-proj}.   

      By Definition-Lemma~\ref{deflem:dual-basis-proj}, there exists a pair of dual bases $\{(a_{i,j}, f_{i,j}) : 1 \leq j \leq \iota_i\}$ for each submodule $N_i$ and for some finite number of elements $\iota_i$. Using Lemma~\ref{lem:dual-dir-sum}, we can write $F^\vee_z=\bigoplus_{i=0}^k N'_i$ where each $N'_i$ is generated by the set of linear functionals $\{\bar{f}_{i,j} : 1 \leq j \leq \iota_i\}$. Recall here that $\bar{f}_{i,j}:F \to R$ is the extension of the linear functional $f_{i,j}:N_i \to R$ by setting $\bar{f}_{i,j}|_{N_r}=0$ for all $r \neq i$.  
      \endgroup

      %%% Claim 7: Decomposition is Compatible with Chain of Images %%%
      \textbf{Claim~\ref*{prp:total-one-pc-decomp}.7.} The decomposition $F^\vee_z=\bigoplus_{i=0}^k N'_i$ is compatible with the chain of images of $F^\vee_z$. 

      \begin{proof}[Proof of Claim~\ref*{prp:total-one-pc-decomp}.7.] 	
         To see this, let $x > z$. If $\Ker[z,x]=0$, the claim is immediate. Suppose $\Ker[z,x] \neq 0$. After possible reindexing of the submodules $N_i$ (excluding $i=0$), we can write $\Ker[z,x] = N_1 \oplus \cdots \oplus N_d$ for some $d \leq k$. 

         By Lemma~\ref{lem:dual-ann-ker-ima}, $\Ima(F^\vee_{x,z})=\Ima(F^*_{z,x}) = \Ker[z,x]^\perp$ for $x \leq_{\text{op}} z$. But it is easy to see that $\Ker[z,x]^\perp=N'_0 \oplus N'_{d+1} \oplus \cdots \oplus N'_{k}$, which implies the decomposition is compatible (see Definition~\ref{def:compatible-chain}).  

         Indeed, the right-hand side is obviously in the left-hand side. If $f \in \Ker[z,x]^\perp$, then we write $f=f_0+f_1+\cdots+f_k$ where each $f_i \in N'_i$. If $f_i$ is a nonzero linear functional on $N_i$ for some $1 \leq i \leq d$, then there exists $a \in N_i$ such that $f_i(a)$ is nonzero. But then $f(a)=f_i(a)$ is nonzero, a contradiction.
      \end{proof}

      %%% Apply Left-hand Decomposition %%%
      \begingroup
      Since the decomposition $F^\vee_z=\bigoplus_{i=0}^k N'_i$ is compatible with the chain of images of $F^\vee_z$, we can apply the results of the left-hand decomposition to $F^\vee$ so that $F^\vee = \mathcal{I} \oplus \mathcal{G}$ with the following properties:

      \begin{enumerate}
         \item $\mathcal{I}_z=N'_0$ and $\mathcal{G}_z = N'_1 \oplus \cdots \oplus N'_k$;
         
         \item $F^\vee_{x,y}(\mathcal{I}_x) = F_{y,x}^*(\mathcal{I}_x) = \mathcal{I}_y$ for all $x \leq_\text{op} y$ in $S^{\text{op}}$;
         
         \item $F^\vee_{x,y}(\mathcal{G}_x) = F_{y,x}^*(\mathcal{G}_x) \subseteq \mathcal{G}_y$ for all $x \leq_\text{op} y$ in $S^{\text{op}}$;
         
         \item $\mathcal{I}_x \cong N'_0 \cong N_0^*$ for all $x \in S^{\text{op}}$. 
      \end{enumerate}
      \endgroup

      %%% Dualize Again %%%
      \begingroup
      Dualizing again, consider the $S$-persistence module $F^{\vee \vee}$ where $\leq$ denotes the original total ordering on $S$. Observe by Definition~\ref{def:dual-pers-mod} that 
      
      \begin{enumerate}
         \item $F^{\vee \vee}_x=F^{**}_x$ for all $x \in S$; 
         \item $F^{\vee\vee}_{x,y} = F^{**}_{x,y}$ for all $x \leq y$ in $S$. 
      \end{enumerate}
      
      Since $\Coker(F^\vee_{y,x})$ is projective for every $y \leq_{\text{op}} x$ in $S^{\text{op}}$, 
      
      \begin{center}
         $\Coker(F^{\vee\vee}_{x,y})=\Coker((F^\vee_{y,x})^*)=\Coker(F^{**}_{x,y})$
      \end{center}

      is projective for every $x \leq y$ in $S$ by Lemma~\ref{lem:coker-dual-map-proj}. We will now use the decomposition $F^\vee = \mathcal{I} \oplus \mathcal{G}$ to construct a right-hand decomposition of $F^{\vee \vee}$.

      Observe that $\bigcup_{i=0}^k \{(a_{i,j}, \bar{f}_{i,j}) : 1 \leq j \leq \iota_i\}$ is a pair of dual bases of $F_z$ by Lemma~\ref{lem:dual-dir-sum}. Thus $\bigcup_{i=0}^k \{(\bar{f}_{i,j}, \hat{a}_{i,j}) : 1 \leq j \leq \iota_i\}$ is a pair of dual bases of $F^\vee_z$ by Lemma~\ref{lem:dual-basis-of-dual}. In particular, $\mathcal{I}_z$ is generated by the set $\{\bar{f}_{0,j} : 1 \leq j \leq \iota_0\}$ and $\mathcal{G}_z$ is generated by the set $\bigcup_{i=1}^k \{\bar{f}_{i,j} : 1 \leq j \leq \iota_i\}$. 

      Applying Lemma~\ref{lem:dual-dir-sum}, we can write $F^{\vee\vee}_z = \mathcal{I}'_z \oplus \mathcal{G}'_z$ where $\mathcal{I}'_z$ is generated by the set $\{\hat{a}_{0,j} : 1 \leq j \leq \iota_0\}$ and $\mathcal{G}'_z$ is generated by the set $\bigcup_{i=1}^{k} \{\hat{a}_{i,j} : 1 \leq j \leq \iota_i\}$. 
      
      Similarly, for any $x > z$ in $S$ with $F^\vee_x = \mathcal{I}_x \oplus \mathcal{G}_x$, we can write $F^{\vee\vee}_x=\mathcal{I}'_x \oplus \mathcal{G}'_x$ as in Lemma~\ref{lem:dual-dir-sum}. 
      
      Since $F^\vee_{y,x}(\mathcal{I}_y) \subseteq \mathcal{I}_x$ and $F^\vee_{y,x}(\mathcal{G}_y) \subseteq \mathcal{G}_x$ for any $y \leq_{\text{op}} x$ in $S^{\text{op}}$, Lemma~\ref{lem:comp-maps-dual} implies $F^{\vee\vee}_{x,y}(\mathcal{I}'_x) \subseteq \mathcal{I}'_y$ and $F^{\vee\vee}_{x,y}(\mathcal{G}'_x) \subseteq \mathcal{G}'_y$ for any $x \leq y$ in $S$. In particular, Lemma~\ref{lem:comp-maps-dual} guarantees that $F^{\vee\vee}_{x,y}$ maps $\mathcal{I}'_x$ to $\mathcal{I}'_y$ bijectively for all $x \leq y$ in $S$ since $F^\vee_{y,x}$ maps $\mathcal{I}_y$ to $\mathcal{I}_x$ bijectively for all $y \leq_{\text{op}} x$ in $S^{\text{op}}$.

      For all $x \in S$, let $\varepsilon_x:F_x \to F^{\vee\vee}_x$ be the natural isomorphism mapping $a \mapsto \hat{a}$ for any $a \in F_x$ (see Definition-Lemma~\ref{deflem:nat-double-dual-isom}). For any $x \leq y$ in $S$, we have the following commuting diagram of morphisms. 

      \begin{center}
         \begin{tikzcd}
            F_z \arrow[r, "F_{z,x}"] \arrow[d, "\cong"] \arrow[d, "\varepsilon_z", swap] \arrow[rr, "F_{z,y}", bend left=20]
            & F_x \arrow[r, "F_{x,y}"] \arrow[d, "\cong"] \arrow[d, "\varepsilon_x", swap] 	
            & F_y \arrow[d, "\cong"] \arrow[d, "\varepsilon_y", swap] \\

            F^{\vee\vee}_z = \mathcal{I}'_z \oplus \mathcal{G}'_z \arrow[r, "F^{\vee\vee}_{z,x}"] \arrow[rr, "F^{\vee\vee}_{z,y}", swap, bend right=20]
            & F^{\vee\vee}_x = \mathcal{I}'_x \oplus \mathcal{G}'_x \arrow[r, "F^{\vee\vee}_{x,y}"] 
            & F^{\vee\vee}_y = \mathcal{I}'_y \oplus \mathcal{G}'_y \\
         \end{tikzcd}
      \end{center}
      \endgroup

      %%% Final Right-hand Decomposition %%%
      \begingroup
      We can use this diagram to pull back the decomposition of $F^{\vee\vee}$ to a decomposition of $F$ for indices $x \in S$. Define the persistence submodules $\mathbb{I}^+$ and $G^+$ of $F|_{\{x \geq z\}}$ as follows:
      \begin{align*}
         \mathbb{I}^+_x = 
         \begin{cases} 
            \Inv{\varepsilon_x}(\mathcal{I}'_x) & \text{if } x \in S, \\
            0 & \text{if } x \notin S; \\
         \end{cases} 
         \hspace{15pt} 
         & G^+_x = 
         \begin{cases} 
            \Inv{\varepsilon_x}(\mathcal{G}'_x) & \text{if } x \in S, \\
            F_x & \text{if } x \notin S. \\
         \end{cases}
      \end{align*}
      
      Then $F|_{\{x \geq z\}} = \mathbb{I}^+ \oplus G^+$ where $\mathbb{I}^+$ is of projective constant type. In particular, we have $\mathbb{I}^+_z=M$ and $G^+_z=N$ since $\varepsilon_z(a_{i,j})=\hat{a}_{i,j}$ for all such indices $i,j$.
      \endgroup
      \endgroup

      %%% Overall Decomposition %%%
      \begingroup
      \textbf{Overall Decomposition}

      The left-hand and right-hand decompositions constructed above are compatible at the index $z$ with $\mathbb{I}^-_z=\mathbb{I}^+_z=M$ and $G^-_z=G^+_z=N$. Define the $P$-persistence submodule $\mathbb{I}$ so that $\mathbb{I}_x = \mathbb{I}^-_x$ if $x \leq z$ and $\mathbb{I}_x = \mathbb{I}^+_x$ if $x > z$. Likewise, define the $P$-persistence submodule $G$ so that $G_x = G^-_x$ if $x \leq z$ and $G_x = G^+_x$ if $x > z$. Then $F=\mathbb{I} \oplus G$ where $\mathbb{I}$ is a persistence submodule of projective constant type.
      \endgroup
   \end{proof}

   %%% Lemma: Submodule Projective Cokernel %%%
   \begin{Lemma}\label{lem:proj-coker-submod}
      Let $F:P \to \RMod$ be a totally ordered persistence module. Suppose that the cokernel of every internal morphism $F_{x,y}$ is projective. If $F=G \oplus H$, then the cokernels of every internal morphism $G_{x,y}$ and $H_{x,y}$ are also projective. 
   \end{Lemma}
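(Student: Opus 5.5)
The plan is to work pointwise and use that direct sums commute with cokernels. Fix $x \leq y$ in $P$. Since $F = G \oplus H$ is an internal direct sum of persistence submodules, we have $F_x = G_x \oplus H_x$ and $F_y = G_y \oplus H_y$. Naturality gives $F_{x,y}(G_x) \subseteq G_y$ and $F_{x,y}(H_x) \subseteq H_y$, with $G_{x,y} = F_{x,y}|_{G_x}$ and $H_{x,y} = F_{x,y}|_{H_x}$. So $F_{x,y} = G_{x,y} \oplus H_{x,y}$ as a map $G_x \oplus H_x \to G_y \oplus H_y$.

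The key step is the identification
\[ \Ima(F_{x,y}) = \Ima(G_{x,y}) \oplus \Ima(H_{x,y}), \]
where the two summands lie in $G_y$ and $H_y$ respectively. This follows because any $a = g + h \in F_x$ has $F_{x,y}(a) = G_{x,y}(g) + H_{x,y}(h)$. Consequently the natural map
\[ G_y/\Ima(G_{x,y}) \oplus H_y/\Ima(H_{x,y}) \longrightarrow F_y/\Ima(F_{x,y}) \]
is an isomorphism. Well-definedness and surjectivity are immediate. Injectivity uses that the sum $G_y + H_y$ is direct: if $g + h \in \Ima(F_{x,y})$ with $g \in G_y$ and $h \in H_y$, then comparing components gives $g \in \Ima(G_{x,y})$ and $h \in \Ima(H_{x,y})$.

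Hence $\Coker(G_{x,y}) \oplus \Coker(H_{x,y}) \cong \Coker(F_{x,y})$, which is projective by hypothesis. A direct summand of a projective module is projective, so both $\Coker(G_{x,y})$ and $\Coker(H_{x,y})$ are projective. This is the same principle already invoked in Lemma~\ref{lem:pcc-more-facts}(3).

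There is no real obstacle. The only point needing care is the component comparison in the injectivity check, which rests on $G$ and $H$ being submodules of $F$ closed under the internal morphisms, so that images land in the correct summands. No finiteness or Noetherian hypothesis is needed.
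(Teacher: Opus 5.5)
Your proof is correct and follows the paper's argument: the paper also splits $\Ima[x,y]=F_{x,y}(G_x)\oplus F_{x,y}(H_x)$, deduces $\Coker[x,y]\cong \Coker(G_{x,y})\oplus \Coker(H_{x,y})$, and concludes because a direct summand of a projective module is projective. Your componentwise injectivity check makes explicit the step the paper only calls ``easy to verify.''
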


   \begin{proof}
      For any $x \leq y$, it is easy to verify that $\Ima[x,y]=F_{x,y}(G_x)\oplus F_{x,y}(H_x)$. Observe that 

      \begin{center}
         $\Coker[x,y] = \Frac{F_y}{\Ima[x,y]} = \Frac{G_y \oplus H_y}{F_{x,y}(G_x)\oplus F_{x,y}(H_x)} \cong \bigg( \Frac{G_y}{F_{x,y}(G_x)} \bigg) \oplus \bigg( \Frac{H_y}{F_{x,y}(H_x)}$ \bigg). 
      \end{center}

      Since $\Coker[x,y]$ is projective, both $G_y/F_{x,y}(G_x)$ and $H_y/F_{x,y}(H_x)$ are projective. But $G_{x,y}(G_x)=F_{x,y}(G_x)$ and $H_{x,y}(H_x)=F_{x,y}(H_x)$. This implies that $\Coker(G_{x,y})$ and $\Coker(H_{x,y})$ are both projective for any $x \leq y$. 
   \end{proof}

   The above lemma guarantees that the desired cokernel property descends to direct summands. With this, we are finally ready to state the main theorem of this section, namely Theorem~\ref{thm:main-pc-total}.  

   %%% Theorem: Complete Totally Ordered Projective Constant Decomposition %%%
   \begin{Theorem}\label{thm:total-complete-pc-decomp}
      Let $F:P \to \RMod$ be a nonzero p.f.g. totally ordered persistence module. Suppose that the cokernel of every internal morphism $F_{x,y}$ is projective. Then $F$ admits a projective constant decomposition.
   \end{Theorem}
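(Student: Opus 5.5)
The plan is to apply Lemma~\ref{lem:decomp-criteria} with $\mathcal{E}$ the set of all nonzero persistence submodules of $F$ of projective constant type. Since $R$ is Noetherian and $F$ is p.f.g., the only thing to check is the hypothesis of that lemma: every nonzero direct summand $H$ of $F$ has a direct summand lying in $\mathcal{E}$. Once this holds, Lemma~\ref{lem:decomp-criteria} gives a subset $\mathcal{A}\subseteq\mathcal{E}$ with $F=\bigoplus_{G\in\mathcal{A}}G$. That is exactly a projective constant decomposition.

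To verify the hypothesis, let $H$ be a nonzero direct summand of $F$, say $F=H\oplus H'$. I first check that $H$ still satisfies the assumptions of Proposition~\ref{prp:total-one-pc-decomp}.
\begin{enumerate}
\item $H$ is a totally ordered persistence module indexed by $P$.
\item $H$ is p.f.g., because each $H_x$ is a direct summand, hence a quotient, of the finitely generated module $F_x$.
\item By Lemma~\ref{lem:proj-coker-submod}, the cokernel of every internal morphism $H_{x,y}$ is projective.
\end{enumerate}
Proposition~\ref{prp:total-one-pc-decomp} then gives $H=\mathbb{I}\oplus G$ with $\mathbb{I}$ of projective constant type. The proposition constructs $\mathbb{I}$ with $\mathbb{I}_z=M=N_0\neq 0$, so $\mathbb{I}$ is nonzero. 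Since $\mathbb{I}$ is a submodule of $H$, it is also a submodule of $F$, so $\mathbb{I}\in\mathcal{E}$ and it is a direct summand of $H$, as required.

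The one point needing care is the implicit use of projectivity of the $F_x$ inside the proposition. That proof dualizes and relies on the dual basis lemmas, which require each $F_z$ to be finitely generated projective. This follows from the hypothesis: taking $x=y$, $\Coker(F_{x,x})=0$ gives nothing directly. Instead I would note that the chain-of-images argument uses $F_z/\Ima[x,z]$ being projective. If needed, I would justify projectivity of $F_z$ the same way the proposition implicitly does: via Theorem~\ref{thm:quiver-pc-decomp} applied in Lemma~\ref{lem:comp-decom-ima-ker}, which produces $F_z$ as a sum of projective submodules. The argument then passes to summands in the same way.

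The expected obstacle is this bookkeeping that hypotheses descend to summands; the real work already sits in Proposition~\ref{prp:total-one-pc-decomp} and Lemma~\ref{lem:decomp-criteria}. With those in hand, the theorem follows by combining them as above.
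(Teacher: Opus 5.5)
Your reduction is the same as the paper's proof. You use Lemma~\ref{lem:decomp-criteria} with $\mathcal{E}$ the projective constant submodules, apply Proposition~\ref{prp:total-one-pc-decomp} to each nonzero summand $H$, and use Lemma~\ref{lem:proj-coker-submod} to transfer the cokernel condition to $H$. Your restriction of $\mathcal{E}$ to nonzero submodules is actually what Lemma~\ref{lem:decomp-criteria} requires, since $F(I,0)$ is formally of projective constant type.

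The genuine gap is your paragraph on projectivity of $F_z$. It does not follow from the hypothesis. Your justification is also circular: Lemma~\ref{lem:comp-decom-ima-ker} works only because it asserts that $F|_S$ satisfies the PCC, and \textbf{(C1)} with $x=y$ says precisely that each $F_x$ is projective. Projectivity of the $F_x$ is used elsewhere too. In Lemma~\ref{lem:fin-chain-ima-ker}, $\Ima[z,y]$ is projective only as a summand of a projective $F_y$. The dual-basis and double-dual steps of the right-hand decomposition in Proposition~\ref{prp:total-one-pc-decomp} also need it.

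In fact the statement as written is false without this hypothesis. Over $R=\Z$, take the constant module $F(P,\Z/2\Z)$ on any totally ordered $P$. Every internal morphism is the identity, so every cokernel is $0$. Yet there is no projective constant decomposition, because any such decomposition would exhibit $F_x=\Z/2\Z$ as a direct sum of projective $\Z$-modules.

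The missing hypothesis is that each $F_x$ is projective. It is necessary for any projective constant decomposition to exist. It is automatic under the PCC by Lemma~\ref{lem:pcc-more-facts}(1), so the zigzag and $A_n$ theorems are unaffected. It also passes to direct summands. The paper's one-line proof silently relies on it as well. Once you add it to the statement, your argument goes through and coincides with the paper's.
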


   \begin{proof}
      Immediate from Lemma~\ref{lem:decomp-criteria}, Proposition~\ref{prp:total-one-pc-decomp} and Lemma~\ref{lem:proj-coker-submod} when letting $\mathcal{E}$ in Lemma~\ref{lem:decomp-criteria} be the set of submodules of $F$ of projective constant type. 
   \end{proof}

   We deduce Corollary~\ref{cor:main-pid-total} in the following result. 

   %%% Corollary: PID or Local Ring %%% 
   \begin{Corollary}\label{cor:total-pid-decomp}
      Let $R$ be a commutative Noetherian ring with identity such that every finitely generated projective $R$-module is free. Let $F:P \to \RMod$ be a nonzero p.f.g. totally ordered persistence module. Suppose that the cokernel of every internal morphism $F_{x,y}$ is projective. Then $F$ is interval decomposable.
   \end{Corollary}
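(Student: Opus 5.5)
The plan is to obtain Corollary~\ref{cor:total-pid-decomp} by combining Theorem~\ref{thm:total-complete-pc-decomp} with Lemma~\ref{lem:free-interval-decomp}, in the same way Corollary~\ref{cor:quiver-pid-interval-decomp} was derived. First, since $R$ is a commutative Noetherian ring with identity and $F$ is a nonzero p.f.g.\ totally ordered persistence module whose internal morphisms all have projective cokernels, Theorem~\ref{thm:total-complete-pc-decomp} gives a projective constant decomposition
\[
F=\bigoplus_{G\in\mathcal{A}} G,
\]
where each $G\cong F(I_G,M_G)$ for some interval $I_G$ and some projective $R$-module $M_G$.

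Second, I will check the finiteness hypothesis that Lemma~\ref{lem:free-interval-decomp} needs. Pick any $x\in I_G$. Then $M_G\cong G_x$, and $G_x$ is a direct summand of the finitely generated module $F_x$, so $M_G$ is finitely generated. By hypothesis every finitely generated projective $R$-module is free. Hence $M_G\cong R^{m}$ for some finite $m$, and therefore $G\cong F(I_G,R)^{\oplus m}$.

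Third, I transport this isomorphism back into $F$. Each summand $G$ then splits internally as a direct sum of interval submodules, and collecting these over all $G\in\mathcal{A}$ gives an interval decomposition of $F$. This is exactly the content of Lemma~\ref{lem:free-interval-decomp}, which can be invoked directly.

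The argument is essentially formal, so there is no real obstacle. The only point needing care is the observation in the second step that each $M_G$ is finitely generated. This follows from the pointwise finite generation of $F$, and it is what allows the freeness hypothesis on finitely generated projectives to apply.
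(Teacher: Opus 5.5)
Your proposal is correct and is the paper's own argument: the paper derives the corollary immediately from Theorem~\ref{thm:total-complete-pc-decomp} and Lemma~\ref{lem:free-interval-decomp}. Your extra check that each $M_G$ is finitely generated, because $G_x$ is a direct summand of the finitely generated module $F_x$, is a detail the proof of Lemma~\ref{lem:free-interval-decomp} uses without stating it, so it clarifies that step rather than changing the argument.
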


   \begin{proof}
      Immediate from Theorem~\ref{thm:total-complete-pc-decomp} and Lemma~\ref{lem:free-interval-decomp}.
   \end{proof}

   The result of Corollary~\ref{cor:total-pid-decomp} when $R$ is a PID was also proved by Luo and Henselman-Petrusek \cites{luo23, luo25}.

\section{Zigzag Persistence Modules: Finite Extrema}\label{sec:finite-zigzag}

   Throughout this section, we assume that $R$ is a commutative Noetherian ring with identity, unless otherwise specified. Building on the decomposition results for totally ordered persistence modules established in Section~\ref{sec:total-order}, we now turn to our main decomposition theorem for zigzag persistence modules with finite extrema. The following result is the statement of Theorem~\ref{thm:main-pc-zigzag} when $\Gamma = \{1,2, \ldots, n\}$ for $n \geq 2$. The proof follows a similar argument as that of Theorem~\ref{thm:quiver-pc-decomp}. 

   %%% Theorem: Finite Zigzag PC Decomp %%%
   \begin{Theorem}\label{thm:fin-zigzag-pc-decomp}
      Let $F:P \to \RMod$ be a nonzero p.f.g. zigzag persistence module with extrema $\{z_1,z_2, \ldots, z_n\}$ for $n \geq 2$ satisfying the PCC. Then $F$ admits a projective constant decomposition.
   \end{Theorem}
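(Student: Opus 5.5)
We follow the strategy of Theorem~\ref{thm:quiver-pc-decomp}, with the $A_n$ "arms" between consecutive extrema replaced by totally ordered pieces handled by Theorem~\ref{thm:total-complete-pc-decomp}. The proof is by induction on $n$.

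For $n=2$, the poset $P=[z_1,z_2]$ is totally ordered. By Lemma~\ref{lem:pcc-more-facts}(2), every internal morphism has projective cokernel. Hence Theorem~\ref{thm:total-complete-pc-decomp} applies directly.

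For the inductive step, we first prove an analogue of Claim~\ref*{thm:quiver-pc-decomp}.1. Fix an interior extremum $z_i$ with $1<i<n$. The restrictions $F|_{[z_1,z_i]}$ and $F|_{[z_i,z_n]}$ are zigzag modules with fewer extrema, and they satisfy the PCC, because the colimits over $\sqsubseteq$-intervals inside a subposet are computed the same way. By induction, each admits a projective constant decomposition. Group the summands into those vanishing at $z_i$ and those supported at $z_i$. By Lemma~\ref{lem:peak-facts}, the latter have $z_i$ as a peak, and both groups must agree with $F_{z_i}$ at $z_i$. Gluing as in Claim~\ref*{thm:quiver-pc-decomp}.1 then gives $F=G\oplus H\oplus J$, where $H$ has a peak at $z_i$, $G$ is supported in $[z_1,z_i)$, and $J$ is supported in $(z_i,z_n]$.

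Apply this at $z_2$, and then to $H$ at $z_{n-1}$. This yields summands supported on fewer extrema, which are handled by induction using Lemma~\ref{lem:pcc-more-facts}(3). It leaves a summand $\tilde H$ with peaks at both $z_2$ and $z_{n-1}$. When $n\ge 4$, all maps of $\tilde H$ between $z_2$ and $z_{n-1}$ are injective in one direction and surjective in the other, hence isomorphisms. So $\tilde H$ is determined by $\tilde H|_{[z_1,z_2]}$ and $\tilde H|_{[z_{n-1},z_n]}$, glued through an isomorphic middle. Contracting the middle reduces us to the case $n=3$ with a peak at $z_2$.

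The $n=3$ case with a peak at $z_2$ is the main obstacle. If $z_2$ is a sink, both arms $[z_1,z_2]$ and $[z_2,z_3]$ are totally ordered with injective maps into $F_{z_2}$, and $\Colim(F)=F_{z_2}$. If $z_2$ is a source, both arms map surjectively out of $F_{z_2}$, and $\Colim=F_{z_2}/(\text{sum of kernels})$. In either case we need a decomposition of $F_{z_2}$ compatible simultaneously with the chain of images (respectively kernels) coming from the left arm and the one coming from the right arm. These are two chains in $F_{z_2}$, each finite by Lemma~\ref{lem:fin-chain}.

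We would pick representative indices for each distinct member of both chains and restrict $F$ to this finite set together with $z_2$. This gives an $A_m$-persistence module satisfying the PCC, and Theorem~\ref{thm:quiver-pc-decomp} gives a projective constant decomposition of it. As in Lemma~\ref{lem:comp-decom-ima-ker}, this produces a decomposition $F_{z_2}=\bigoplus N_j$ compatible with both chains.

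We then transport each $N_j$ along the arms. On a sink-side arm, we take preimages under the injective maps, choosing complements inductively along the finitely many jumps of the chain. On a source-side arm, we take images under the surjections. Each $N_j$ then spans a projective constant submodule, and these sum to $F$. The delicate point is checking that the complements chosen on the arms can be made consistent at every index of the arm, not only at the representative indices. For this we plan to argue that between consecutive representatives the relevant maps restricted to the chosen pieces are isomorphisms, which is forced by the peak condition together with finiteness of the chains.
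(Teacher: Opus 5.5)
Your argument follows the paper's proof. The base case is Theorem~\ref{thm:total-complete-pc-decomp}, and the splitting at $z_2$ and $z_{n-1}$ is copied from the proof of Theorem~\ref{thm:quiver-pc-decomp}. In the peak case you obtain a decomposition of $F_{z_2}$ compatible with both chains from a finite $A_m$-restriction and Theorem~\ref{thm:quiver-pc-decomp}, then transport it by preimages (sink) or images (source). Transporting all $N_j$ at once is a harmless simplification. The paper instead splits off one summand $M_1$ and repeats on the complement.

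There is one missing case. For $n=3$, and after contraction for odd $n$, your sink/source dichotomy is exhaustive. When $n\ge 4$ is even, however, $z_2$ and $z_{n-1}$ have opposite types. Contracting the isomorphic middle of $\tilde H$ then yields a totally ordered module $z_1\le\cdots\le c\le\cdots\le z_n$ (or its reverse). There the contracted point $c$ is neither a sink nor a source, so neither of your two cases applies.

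The paper lists exactly this configuration as its Case~1 and disposes of it with Theorem~\ref{thm:total-complete-pc-decomp}. The projectivity needed there comes from the PCC of $\tilde H$. For $x$ in the left arm and $y$ in the right arm, the colimit of $\tilde H|_{[x,y]}$ is $\tilde H_y$ (resp.\ $\tilde H_x$) up to the middle isomorphisms. Hence (C2) (resp.\ (C3)) is precisely projectivity of the cokernel of the contracted map $x\to y$ (resp.\ $y\to x$).

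Separately, the ``delicate point'' you flag at the end does not arise. Each $\Ima[x,z_2]$ (resp.\ $\Ker[z_2,x]$) literally equals one of the representative members of its chain, so it is a direct sum of some of the $N_j$. Therefore $F_x=\bigoplus_j F_{x,z_2}^{-1}(N_j)$ (resp.\ $F_x=\bigoplus_j F_{z_2,x}(N_j)$) holds at every index of the arm. No complements need to be chosen and nothing needs checking between representatives.
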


   \begin{proof}
      %%% Case for N=1 and N=2 %%%
      \begingroup
         We will proceed by induction on $n$. For the base case of $n=2$, $F$ is a nonzero p.f.g. totally ordered persistence module such that the cokernel of every internal morphism $F_{x,y}$ is projective (see Lemma~\ref{lem:pcc-more-facts}). Theorem~\ref{thm:total-complete-pc-decomp} guarantees $F$ admits a projective constant decomposition.
      \endgroup

      %%% Reduce to N=3 %%%
      \begingroup
         Let $n \geq 3$ and assume the inductive hypothesis holds up to $n-1$. Let $\sqsubseteq$ be the associated total order. As in the proof of Theorem~\ref{thm:quiver-pc-decomp}, we can reduce to the case of showing that $F$ admits a projective constant decomposition when $n=3$ and $F$ has a peak at $z_2$. We will not rewrite all the details of this claim here as they are completely analogous to the proof of Theorem~\ref{thm:quiver-pc-decomp}.
      \endgroup
      
      %%% N=3 Case #1 %%%
      \begingroup
         \textbf{Case 1.} Consider the $n=3$ case with the following diagram.
         \begin{center}
               \begin{tikzpicture}
                  \draw[black, -] (-2,0) -- (2,0); %Number line
                  
                  %Points		
                  \filldraw [black] (-2,0) circle (1.5pt);
                  \node[label={90:$F_{z_1}$}] at (-2,0) {};
                  
                  \filldraw [black] (0,0) circle (1.5pt);
                  \node[label={90:$F_{z_2}$}] at (0,0) {};
                  
                  \filldraw [black] (2,0) circle (1.5pt);
                  \node[label={90:$F_{z_3}$}] at (2,0) {};
                  
                  %Arrows
                  \draw[black, ->] (-2,0) -- (-1,0); 
                  \draw[black, ->] (0,0) -- (1,0); 
               \end{tikzpicture}
         \end{center}

         This is a nonzero p.f.g. totally ordered persistence module such that the cokernel of every internal morphism $F_{x,y}$ is projective (see Lemma~\ref{lem:pcc-more-facts}). Theorem~\ref{thm:total-complete-pc-decomp} guarantees that $F$ admits a projective constant decomposition. When the morphisms are reversed with $z_3 \leq z_2 \leq z_1$, the same argument holds.
      \endgroup

      %%% N=3 Case #2 %%%
      \begingroup 
      \textbf{Case 2.} Consider the $n=3$ case with the following diagram.
      \begin{center}
         \begin{tikzpicture}
            \draw[black, -] (-2,0) -- (2,0); %Number line
            
            %Points		
            \filldraw [black] (-2,0) circle (1.5pt);
            \node[label={90:$F_{z_1}$}] at (-2,0) {};
            
            \filldraw [black] (0,0) circle (1.5pt);
            \node[label={90:$F_{z_2}$}] at (0,0) {};
            
            \filldraw [black] (2,0) circle (1.5pt);
            \node[label={90:$F_{z_3}$}] at (2,0) {};
            
            %Arrows
            \draw[black, ->] (-2,0) -- (-1,0); 
            \draw[black, <-] (1,0) -- (2,0); 
         \end{tikzpicture}
      \end{center}

      Note all morphisms are injective since $z_2$ is a peak, and $F_{z_2} \neq 0$.
      
      Observe that $F|_{[z_1,z_2]}$ and $F|_{[z_2,z_3]}$ are both totally ordered persistence modules such that the cokernel of every internal morphism is projective. Lemma~\ref{lem:fin-chain-ima-ker} implies there is a finite chain of images $L=\{\Ima[x,z_2] : z_1 \sqsubseteq x \sqsubseteq z_2 \}$ of $F_{z_2}$ and another finite chain of images $R=\{\Ima[x,z_2] : z_2 \sqsubseteq x \sqsubseteq z_3 \}$ of $F_{z_2}$.

      Let $l$ and $r$ be the number of unique images of $L$ and $R$, respectively. Choose indices $x_i \sqsubseteq z_2$ for $1 \leq i \leq l$ so that each $\Ima[x_i,z_2]$ is one of the unique images in the chain $L$. Likewise, choose indices $y_j \sqsupseteq z_2$ for indices $1 \leq j \leq r$ so that each $\Ima[y_j,z_2]$ is one of the unique images in the chain $R$. 
      
      Let $S = \{x_i\}_{i=1}^l \cup \{y_j\}_{j=1}^r \cup \{z_2\}$. Then $F|_S$ is a nonzero $A_n$-persistence module satisfying the PCC, and therefore admits a projective constant decomposition by Theorem~\ref{thm:quiver-pc-decomp}. This decomposition at the index $z_2$ looks like a finite internal direct sum of projective $R$-modules $F_{z_2}=\bigoplus_{k=1}^m M_k$ that is compatible with both chains of images $L$ and $R$.
      
      Fix $M=M_1$ and let $N=M_2 \oplus \cdots \oplus M_m$. Then $F_{z_2} = M \oplus N$. Let $Q$ be the set of indices $x \in P$ such that $M$ is a direct summand of $\Ima[x,z_2]$. 
      
      Define $\mathbb{I}_x$ to be the preimage of $M$ in $F_x$ for every $x \in Q$, and $\mathbb{I}_x=0$ for every $x \notin Q$. For all indices $x \in P$, define $G_x$ to be the preimage of $N$ in $F_x$. We claim that $F_x = \mathbb{I}_x \oplus G_x$ for every $x \in P$, and also that $\mathbb{I}_x \cong M$ for every $x \in Q$. 
      
      Indeed, the claim is obvious for $x \notin Q$ since $F_{x,z_2}$ is injective. For $x \in Q$, consider $\Ima[x,z_2]$ in $F_{z_2}$. Then $\Ima[x,z_2] = \bigoplus_{k \in K}M_k$ for some subset of indices $K \subseteq \{1, \ldots, m\}$. Since $F_{x,z_2}$ is injective, it is easy to see how we pull back this decomposition to yield $F_{x}=\mathbb{I}_x \oplus G_x$, and moreover that $\mathbb{I}_x \cong M$. 
      
      Hence $F = \mathbb{I} \oplus G$ where $\mathbb{I}$ is a persistence submodule of projective constant type. If $G=0$, we are done. If $G \neq 0$, then $G$ has a peak at $z_2$ by Lemma~\ref{lem:peak-facts} and satisfies the PCC by Lemma~\ref{lem:pcc-more-facts}. We can repeat the above process on $G$ to decompose another direct summand of projective constant type. After $m$ iterations, we obtain a complete projective constant decomposition of $F$.
      \endgroup

      %%% N=3 Case #3 %%%
      \begingroup 
      \textbf{Case 3.} Consider the $n=3$ case with the following diagram.	
      \begin{center}
         \begin{tikzpicture}
            \draw[black, -] (-2,0) -- (2,0); %Number line
            
            %Points		
            \filldraw [black] (-2,0) circle (1.5pt);
            \node[label={90:$F_{z_1}$}] at (-2,0) {};
            
            \filldraw [black] (0,0) circle (1.5pt);
            \node[label={90:$F_{z_2}$}] at (0,0) {};
            
            \filldraw [black] (2,0) circle (1.5pt);
            \node[label={90:$F_{z_3}$}] at (2,0) {};
            
            %Arrows
            \draw[black, ->] (0,0) -- (1,0); 
            \draw[black, <-] (-1,0) -- (0,0); 
         \end{tikzpicture}
      \end{center}

      Note all morphisms are surjective since $z_2$ is a peak, and $F_{z_2} \neq 0$. 
      
      Observe that $F|_{[z_1,z_2]}$ and $F|_{[z_2,z_3]}$ are both totally ordered persistence modules such that the cokernel of every internal morphism is projective. Lemma~\ref{lem:fin-chain-ima-ker} implies there is a finite chain of kernels $L=\{\Ker[z_2,x] : z_1 \sqsubseteq x \sqsubseteq z_2 \}$ of $F_{z_2}$ and another finite chain of kernels $R=\{\Ker[z_2,x] : z_2 \sqsubseteq x \sqsubseteq z_3 \}$.
      
      By an argument analogous to Case 2, $F_{z_2}$ admits an internal direct sum decomposition $F_{z_2}=\bigoplus_{k=1}^m M_k$ of projective $R$-modules that is compatible with both chains of kernels $L$ and $R$.

      Fix $M=M_1$ and $N = M_2 \oplus \cdots \oplus M_m$. Then $F_{z_2}=M \oplus N$. Let $Q$ be the set of indices $x \in P$ such that $M$ is not a direct summand of $\Ker[z_2,x]$. 
      
      Define $\mathbb{I}_x=F_{z_2,x}(M)$ for every $x \in Q$, and $\mathbb{I}_x=0$ for every $x \notin Q$. For all indices $x \in P$, define $G_x=F_{z_2,x}(N)$. We claim that $F_x=\mathbb{I}_x \oplus G_x$ for every $x \in P$, and also that $\mathbb{I}_x \cong M$ for every $x \in Q$. 
      
      Indeed, the claim is obvious for $x \notin Q$ since $F_{z_2,x}$ is surjective. For $x \in Q$, clearly $\mathbb{I}_x + G_x$ generates all of $F_x$ by surjectivity of $F_{z_2,x}$. To see the sum is direct, suppose $a \in \mathbb{I}_x \cap G_x$. Then there exist $b \in M, c \in N$ such that $F_{z_2,x}(b)=F_{z_2,x}(c)=a$. This implies $b - c \in \Ker[z_2,x]$, where $\Ker[z_2,x] \subseteq N$ since $M$ is not a direct summand of $\Ker[z_2,x]$. Hence $b \in M \cap N \implies b=0 \implies a=0$. Moreover, $F_{z_2,x}|_{M}: M \to \mathbb{I}_x$ is clearly a bijection.

      Hence $F = \mathbb{I} \oplus G$ where $\mathbb{I}$ is a persistence submodule of projective constant type. If $G=0$, we are done. If $G \neq 0$, then $G$ has a peak at $z_2$ by Lemma~\ref{lem:peak-facts} and satisfies the PCC by Lemma~\ref{lem:pcc-more-facts}. We can repeat the above process on $G$ to decompose another direct summand of projective constant type. After $m$ iterations, we obtain a complete projective constant decomposition of $F$.
      \endgroup
   \end{proof}

   We deduce Corollary~\ref{cor:main-pid-zigzag} when $\Gamma = \{1,2, \ldots, n\}$ for $n \geq 2$ in the following result.

   %%% Corollary: PID or Local Ring %%%
   \begin{Corollary}\label{cor:fin-zigzag-int-decomp}
      Let $R$ be a commutative Noetherian ring with identity such that every finitely generated projective $R$-module is free. Let $F:P \to \RMod$ be a nonzero p.f.g. zigzag persistence module with extrema $\{z_1,z_2, \ldots, z_n\}$ for $n \geq 2$ satisfying the PCC. Then $F$ is interval decomposable.
   \end{Corollary}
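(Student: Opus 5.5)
This corollary follows by combining Theorem~\ref{thm:fin-zigzag-pc-decomp} with Lemma~\ref{lem:free-interval-decomp}, exactly as Corollaries~\ref{cor:quiver-pid-interval-decomp} and \ref{cor:total-pid-decomp} were deduced from their theorems.

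First we check that the hypotheses of Theorem~\ref{thm:fin-zigzag-pc-decomp} hold. $R$ is a commutative Noetherian ring with identity. $F$ is a nonzero p.f.g. zigzag persistence module with extrema $\{z_1,\ldots,z_n\}$ for $n\geq 2$, and it satisfies the PCC. The extra hypothesis that finitely generated projectives are free is not needed at this stage. The theorem therefore gives a projective constant decomposition $F=\bigoplus_{G\in\mathcal{A}}G$, where each $G\cong F(I_G,M_G)$ for some interval $I_G$ and some projective $R$-module $M_G$.

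Next we apply Lemma~\ref{lem:free-interval-decomp}. Its only requirement beyond the existence of a projective constant decomposition is that $F$ be p.f.g. This ensures that each $M_G$ is finitely generated, since $M_G\cong G_x$ is a direct summand of the finitely generated module $F_x$ for any $x\in I_G$. By hypothesis $M_G$ is then free of finite rank. Hence $G\cong F(I_G,R)^{\oplus r}$, and transporting a basis along the isomorphism splits $G$ internally into interval modules.

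There is no genuine obstacle here; the substantive work is already contained in Theorem~\ref{thm:fin-zigzag-pc-decomp}. The only point that needs care is the finite generation of each $M_G$, and that is supplied by the p.f.g. assumption as described above.
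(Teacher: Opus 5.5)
Your proposal is correct and is the paper's argument: the paper deduces the corollary immediately from Theorem~\ref{thm:fin-zigzag-pc-decomp} and Lemma~\ref{lem:free-interval-decomp}. Your extra remark that each $M_G\cong G_x$ is finitely generated, being a direct summand of the finitely generated module $F_x$, fills in a step that the paper's proof of Lemma~\ref{lem:free-interval-decomp} leaves implicit.
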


   \begin{proof}
      Immediate from Theorem~\ref{thm:fin-zigzag-pc-decomp} and Lemma~\ref{lem:free-interval-decomp}. 
   \end{proof}

\section{Zigzag Persistence Modules: Infinite Extrema}\label{sec:infinite-zigzag}

   Throughout this section, we assume that $R$ is a commutative Noetherian ring with identity, unless otherwise specified. Several of the arguments in this section are inspired by Botnan's approach to decomposing zigzag persistence modules over fields indexed by infinite discrete zigzag posets~\cite{bot17}. As in Section~\ref{sec:total-order} on totally ordered persistence modules, we first prove in Proposition~\ref{prp:inf-single-pc-decomp} that a pointwise finitely generated zigzag persistence module with infinite extrema satisfying the PCC admits a decomposition containing a single direct summand of projective constant type. We then use this result to prove Theorems~\ref{thm:inf-complete-pc-decomp} and \ref{thm:inf-zigzag-pc-decomp-nat}.

   %%% Notation: Brackets with Interval Notation %%%
   \begin{Notation}
      Let $P$ be a zigzag poset. Recall the four interval notations $[x,y]$, $[x,y)$, $(x,y]$ and $(x,y)$ outlined in Notation~\ref{not:zigzag-interval}. We use bracket notation to indicate the following:

      \begin{enumerate}
         \item $\langle x,y]$ is an interval of the form $(x,y]$ or $[x,y]$;
         \item $[x,y \rangle$ is an interval of the form $[x,y)$ or $[x,y]$;
         \item $\langle x,y)$ is an interval of the form $(x,y)$ or $[x,y)$;
         \item $(x,y \rangle$ is an interval of the form $(x,y)$ or $(x,y]$;
         \item $\langle x,y \rangle$ is any one of the four intervals $[x,y]$, $[x,y)$, $(x,y]$ or $(x,y)$.
      \end{enumerate}

      Additionally, we will denote a persistence module of projective constant type over the interval $\langle x,y \rangle$ by $\mathbb{I}^{\langle x,y \rangle}$, with notation for other intervals defined analogously.
   \end{Notation}

   %%% Lemma: Extending a Finite Interval Decomposition %%%
   \begin{Lemma}\label{lem:extend-single-pc-decomp}
      Let $F:P \to \RMod$ be a p.f.g. zigzag persistence module with extrema $\{z_i \mid i \in \Z\}$ satisfying the PCC. Let $\sqsubseteq$ be the associated total order. If there exists an interval $[z_s,z_t]$ such that the projective constant decomposition of $F|_{[z_s,z_t]}$ includes a submodule supported on $\langle r,w \rangle$ with $z_s \sqsubset r \sqsubset w \sqsubset z_t$, then $F$ admits a decomposition $F = \mathbb{I}^{\langle r,w \rangle} \oplus G$ for some submodule $\mathbb{I}^{\langle r,w\rangle}$ of projective constant type. 
   \end{Lemma}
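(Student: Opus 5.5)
We would prove the lemma by extending the given finite decomposition outward, exploiting the fact that the summand $\mathbb{I}^{\langle r,w\rangle}$ of $F|_{[z_s,z_t]}$ vanishes strictly inside $(z_s,z_t)$ near both ends. Write $F|_{[z_s,z_t]} = \mathbb{I} \oplus H$, where $\mathbb{I} = \mathbb{I}^{\langle r,w\rangle}$ is of projective constant type supported on $\langle r,w\rangle$ and $H$ is the direct sum of the remaining summands. Since $z_s \sqsubset r$ and $w \sqsubset z_t$, we have $\mathbb{I}_x = 0$ for $x$ in a neighbourhood of each endpoint: in particular $\mathbb{I}_{z_s}=0=\mathbb{I}_{z_t}$, so $F_{z_s}=H_{z_s}$ and $F_{z_t}=H_{z_t}$. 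The plan is to define $\mathbb{I}$ on all of $P$ by setting $\mathbb{I}_x=0$ for $x\notin[z_s,z_t]$, and to define $G$ by $G_x = H_x$ for $x\in[z_s,z_t]$ and $G_x = F_x$ for $x \notin [z_s,z_t]$. We then check that $F=\mathbb{I}\oplus G$ as persistence modules.

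The pointwise decomposition $F_x=\mathbb{I}_x\oplus G_x$ is immediate. The content lies in checking that $\mathbb{I}$ and $G$ are persistence submodules, that is, that they are closed under the internal morphisms $F_{x,y}$ of $P$. Morphisms between two indices of $[z_s,z_t]$ are handled by the given decomposition. Morphisms between two indices outside $[z_s,z_t]$ are trivial, since there $\mathbb{I}=0$ and $G=F$.

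The only remaining step is to control morphisms that cross $z_s$ or $z_t$, and this is the step requiring care. By Definition~\ref{def:zigzag-poset}(2) and Lemma~\ref{lem:zigzag-shape}(2), any comparable pair $x\le y$ with one index inside $[z_s,z_t]$ and the other outside lies in a single segment $[z_{i},z_{i+1}]$. Consequently the pair passes through the extremum $z_s$ (respectively $z_t$): either one of $x,y$ equals $z_s$, or the morphism factors through $F_{z_s}$. This factorisation holds because within a segment the order is total along $\sqsubseteq$ (Definition~\ref{def:zigzag-poset}(3)), so $x\le z_s\le y$ or $y\le z_s\le x$ appropriately.

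Given this factorisation, closure follows in both directions. For $\mathbb{I}$, a map from outside to inside factors through $F_{z_s}$ with $\mathbb{I}_{z_s}=0$, and the source value is already $0$. A map from inside to outside lands where $\mathbb{I}=0$, and $\mathbb{I}_x$ maps into $\mathbb{I}_{z_s}=0$ first when $x$ is inside, since the factorisation $x\to z_s\to y$ uses the given module structure on $[z_s,z_t]$. For $G$, a map from inside to outside lands in $F_y=G_y$ trivially. A map from outside to inside factors as $F_x\to F_{z_s}=H_{z_s}\to F_y$, and $H$ is closed on $[z_s,z_t]$, so the image lies in $H_y=G_y$. The same argument applies at $z_t$.

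The main obstacle is exactly this factorisation claim: that no morphism of $P$ bypasses $z_s$ or $z_t$ when entering or leaving $[z_s,z_t]$. We would verify it carefully from Lemma~\ref{lem:zigzag-shape}(2),(3) and the segment structure. Once it is established, $\mathbb{I}$ is isomorphic to $F(\langle r,w\rangle,M)$ for a projective $M$ as a $P$-module, since its support $\langle r,w\rangle$ remains an interval in $P$ by Lemma~\ref{lem:interval-in-zigzag}. This gives the desired decomposition $F=\mathbb{I}^{\langle r,w\rangle}\oplus G$.
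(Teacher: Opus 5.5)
Your construction is the paper's proof: you extend the chosen summand by zero outside $[z_s,z_t]$ and extend the complement $H$ by $F$ there. Your closure check, which the paper leaves implicit, is correct, and it is simpler than you expect: by Lemma~\ref{lem:zigzag-shape}(2), any comparable pair straddling $z_s$ or $z_t$ has that extremum as one of its endpoints, so the ``factorisation'' is automatic and the only real input is $\mathbb{I}_{z_s}=0=\mathbb{I}_{z_t}$.
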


   \begin{proof}
      Note that $F|_{[z_s,z_t]}$ admits a projective constant decomposition by Theorem~\ref{thm:fin-zigzag-pc-decomp}. Suppose ${J}$ is a direct summand in the projective constant decomposition of $F|_{[z_s,z_t]}$ that is supported on $\langle r,w \rangle$ with $z_s \sqsubset r \sqsubset w \sqsubset z_t$. Then we can write $F|_{[z_s,z_t]} = {J} \oplus H$ where $H$ is the direct sum of the remaining direct summands in the decomposition.
      
      Define the persistence submodules $\mathbb{I}^{\langle r,w \rangle}$ and $G$ as follows:
      \begin{align*}
         \mathbb{I}^{\langle r,w \rangle}_x = 
         \begin{cases} 
            {J}_x & \text{if } z_s \sqsubseteq x \sqsubseteq z_t, \\
            0 & \text{otherwise;} \\
         \end{cases} 
         \hspace{15pt} 
         & G_x = 
         \begin{cases} 
            H_x & \text{if } z_s \sqsubseteq x \sqsubseteq z_t, \\
            F_x &  \text{otherwise.} \\
         \end{cases}
      \end{align*}

      This results in a decomposition $F=\mathbb{I}^{\langle r,w \rangle} \oplus G$ as desired, where $\mathbb{I}^{\langle r,w \rangle}$ is a persistence submodule of projective constant type supported on $\langle r,w \rangle$.
   \end{proof}

   For a totally ordered poset with distinct minimum and maximum elements, a totally ordered persistence module indexed by this poset can be treated as a zigzag persistence module with two finite extrema consisting of the minimum and maximum, and whose associated total order coincides with the total ordering of $P$. The following lemmas characterize the conditions under which such a persistence module decomposes into an internal direct sum of a single module of projective constant type and a complementary submodule.

   %%% Lemma: Injective Decomposition on [r,w] %%%
   \begin{Lemma}\label{lem:injective-decomp}
      Let $P$ be a totally ordered poset with distinct minimum and maximum elements; that is, there exist $r,w \in P$ with $r \neq w$ such that $r \leq x$ for all $x \in P$ and $w \geq x$ for all $x \in P$. Suppose that $F:P \to \RMod$ is a totally ordered persistence module such that:
      \begin{enumerate}
         \item[(a)] $\Coker(F_{r,w})$ is projective;
         \item[(b)] every morphism $F_{x,y}$ is injective;
         \item[(c)] $F_r=M \oplus N$ for some nonzero projective $R$-submodule $M$.
      \end{enumerate}

      Then there exists a decomposition $F=\mathbb{I}^{[r,w]} \oplus G$, where:
      \begin{enumerate}
         \item $\mathbb{I}^{[r,w]}$ is a persistence module of projective constant type supported on $[r,w]$;
         \item $\mathbb{I}^{[r,w]}_x \cong M$ for all $r \leq x \leq w$;
         \item $\mathbb{I}^{[r,w]}_{r}=M$ and $G_{r}=N$.
      \end{enumerate}
   \end{Lemma}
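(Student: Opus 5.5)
The plan is to push $M$ forward along the injective maps and then produce a complement at the top index $w$ that pulls back consistently to every $x$. For each $x \in P$, set
\[ \mathbb{I}_x = F_{r,x}(M). \]
Since every $F_{r,x}$ is injective by (b), $F_{r,x}|_M \colon M \to \mathbb{I}_x$ is an isomorphism. For $x \leq y$ we have $F_{x,y}(\mathbb{I}_x) = F_{r,y}(M) = \mathbb{I}_y$, and $F_{x,y}$ restricted to $\mathbb{I}_x$ is a bijection onto $\mathbb{I}_y$. Hence $\mathbb{I}$ is a persistence submodule isomorphic to $F([r,w],M)$, which is of projective constant type. Conditions (1), (2) and the identity $\mathbb{I}_r = M$ in (3) then hold automatically. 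The remaining work is to build a complement $G$ with $G_r = N$.

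The key step is to find a complement $C$ of $\mathbb{I}_w = F_{r,w}(M)$ in $F_w$ that contains $F_{r,w}(N)$. By (a), $F_w = \Ima[r,w] \oplus B$ for some submodule $B$. Because $F_{r,w}$ is injective, $\Ima[r,w] = F_{r,w}(M) \oplus F_{r,w}(N)$. Set
\[ C = F_{r,w}(N) \oplus B, \]
so that $F_w = \mathbb{I}_w \oplus C$. Now define
\[ G_x = F_{x,w}^{-1}(C) \quad \text{for all } x \in P. \]

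I then verify three properties of $G$.
\begin{itemize}
\item $G$ is a submodule. If $x \leq y$ and $a \in G_x$, then $F_{y,w}(F_{x,y}(a)) = F_{x,w}(a) \in C$, so $F_{x,y}(a) \in G_y$.
\item $G_r = N$. Injectivity of $F_{r,w}$ gives $F_{r,w}^{-1}(C) = N$, because $C \cap \Ima[r,w] = F_{r,w}(N)$.
\item $F_x = \mathbb{I}_x \oplus G_x$ for each $x$. For the intersection, $\mathbb{I}_x \cap G_x$ maps injectively under $F_{x,w}$ into $\mathbb{I}_w \cap C = 0$.
\end{itemize}

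The main obstacle is spanning, i.e.\ showing $F_x = \mathbb{I}_x + G_x$. Take $a \in F_x$ and write $F_{x,w}(a) = m + c$ with $m \in \mathbb{I}_w$ and $c \in C$. Since $\mathbb{I}_w = F_{x,w}(\mathbb{I}_x)$, we can choose $a' \in \mathbb{I}_x$ with $F_{x,w}(a') = m$. Then $F_{x,w}(a - a') = c \in C$, so $a - a' \in G_x$, giving $a \in \mathbb{I}_x + G_x$. Thus spanning works precisely because $\mathbb{I}_w$ lies in the image of $\mathbb{I}_x$, and this argument does not even need projectivity of $\Coker[x,w]$ for $x \neq r$. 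The only use of (a) is to split off $B$, and the only use of projectivity of $M$ is that it appears as a direct summand in (c).
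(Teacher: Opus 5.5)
Your proof is correct and is essentially the paper's argument. The paper likewise sets $\mathbb{I}^{[r,w]}_x = F_{r,x}(M)$, uses (a) to write $F_w = \Ima[r,w] \oplus H$, takes $G_w = F_{r,w}(N) \oplus H$ and $G_x = F_{x,w}^{-1}(G_w)$, and checks spanning and directness exactly as you do; your explicit check that $\Ima[r,w] \cap C = F_{r,w}(N)$, which gives $G_r = N$, fills in a step the paper only asserts.
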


   \begin{proof}
      Define $\mathbb{I}^{[r,w]}_x = F_{r,x}(M)$ for all $r \leq x \leq w$. Observe that $\mathbb{I}^{[r,w]}_r=M$. Additionally, define $N_{w} = F_{r,w}(N)$.

      By injectivity, we have $\Ima[r,w] = \mathbb{I}^{[r,w]}_{w} \oplus N_{w}$. Since $\Coker[r,w]$ is projective by assumption, there exists a submodule $H$ such that $F_{w}=\Ima[r,w] \oplus H$.
      
      Define $G_{w} = N_{w} \oplus H$. Then $F_{w}=\mathbb{I}^{[r,w]}_{w}\oplus G_{w}$. Additionally, define $G_x=\Inv{F_{x,w}}(G_{w})$ for all $r \leq x<w$. Observe that $G_r=N$. 
      We claim $F_x=\mathbb{I}^{[r,w]}_x \oplus G_x$ for all $x \in P$. Indeed, let $a \in F_x$. Then $F_{x,w}(a)=b+c$ for some $b \in \mathbb{I}^{[r,w]}_{w}, c \in G_{w}$. Moreover, there exists $d \in \mathbb{I}^{[r,w]}_{r}$ such that $F_{r,w}(d)=b$. We have the following implications:
      \begin{flalign*}
         & F_{x,w}(a)=b+c = F_{x,w}F_{r,x}(d)+c\\
         \implies & F_{x,w}(a - F_{r,x}(d)) = c \\
         \implies & a - F_{r,x}(d) \in G_x \\
         \implies & a \in \mathbb{I}^{[r,w]}_x + G_x.
      \end{flalign*}

      To show the sum is direct, let $a \in \mathbb{I}^{[r,w]}_x \cap G_x$. Then $F_{x,w}(a) \in \mathbb{I}^{[r,w]}_{w} \cap G_{w}$. Hence $F_{x,w}(a)=0$ and injectivity implies $a=0$. This proves the claim, and moreover by construction we have $F_{x,y}(\mathbb{I}^{[r,w]}_x)=\mathbb{I}^{[r,w]}_y$ and $F_{x,y}(G_x) \subseteq G_y$ for all $x \leq y$. Hence $F=\mathbb{I}^{[r,w]} \oplus G$ is the desired decomposition. 
   \end{proof}

   %%% Lemma: Surjective Decomposition on [r,w] %%%
   \begin{Lemma}\label{lem:surjective-decomp}
      Let $P$ be a totally ordered poset with distinct minimum and maximum elements; that is, there exist $r,w \in P$ with $r \neq w$ such that $r \leq x$ for all $x \in P$ and $w \geq x$ for all $x \in P$. Suppose that $F:P \to \RMod$ is a totally ordered persistence module such that:
      \begin{enumerate}
         \item[(a)] every morphism $F_{x,y}$ is surjective;
         \item[(b)] $F_w$ is a projective $R$-module;
         \item[(c)] $F_w=M \oplus N$ for some nonzero projective $R$-submodule $M$.
      \end{enumerate}

      Then there exists a decomposition $F=\mathbb{I}^{[r,w]} \oplus G$, where:
      \begin{enumerate}
         \item $\mathbb{I}^{[r,w]}$ is a persistence module of projective constant type supported on $[r,w]$;
         \item $\mathbb{I}^{[r,w]}_x \cong M$ for all $r \leq x \leq w$;
         \item $\mathbb{I}^{[r,w]}_{w}=M$ and $G_{w}=N$.
      \end{enumerate}
   \end{Lemma}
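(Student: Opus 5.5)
We build the decomposition by lifting the splitting at the maximum $w$ back along the surjective maps, dual to Lemma~\ref{lem:injective-decomp}. Since $F_{r,w}$ is surjective and $F_w$ is projective (condition (b)), Lemma~\ref{lem:decomp-surj-map} applied to $\phi = F_{r,w}$ and $F_w = M\oplus N$ gives a decomposition $F_r = M_r \oplus N_r$ with $M_r \cong M$, $F_{r,w}(M_r)=M$ and $F_{r,w}(N_r)=N$. In fact the proof of that lemma gives more: $F_{r,w}$ restricts to a bijection $M_r \to M$ (since $M_r=\psi(M)$ with $F_{r,w}\psi=\Id$), and $N_r = \Ker[r,w] \oplus \psi(N)$.

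Next we define the two summands. Set $\mathbb{I}^{[r,w]}_x = F_{r,x}(M_r)$ for all $r\le x\le w$, so $\mathbb{I}^{[r,w]}_r=M_r$ and $\mathbb{I}^{[r,w]}_w=M$. For the complement, set $G_x = F_{r,x}(N_r)$. Then $G_w=N$, and for $x\le y$ we get $F_{x,y}(G_x)=G_y$ and $F_{x,y}(\mathbb{I}_x)=\mathbb{I}_y$ by functoriality, so both are persistence submodules.

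It remains to check that $F_x=\mathbb{I}_x\oplus G_x$ at each $x$.
\begin{enumerate}
\item \emph{Spanning.} Since $F_{r,x}$ is surjective and $F_r=M_r\oplus N_r$, we have $F_x=\mathbb{I}_x+G_x$.
\item \emph{Directness.} Let $a\in\mathbb{I}_x\cap G_x$. Pushing forward by $F_{x,w}$ gives $F_{x,w}(a)\in M\cap N=0$. Write $a=F_{r,x}(m)$ with $m\in M_r$. Then $F_{r,w}(m)=F_{x,w}(a)=0$, and injectivity of $F_{r,w}$ on $M_r$ gives $m=0$, hence $a=0$.
\item \emph{Constant type.} By the same computation, $F_{x,w}$ restricted to $\mathbb{I}_x$ is injective, since an element $F_{r,x}(m)$ it kills has $F_{r,w}(m)=0$. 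It is also surjective onto $M$. Hence $\mathbb{I}_x\cong M$, and every $F_{x,y}$ restricted to $\mathbb{I}_x\to\mathbb{I}_y$ is a bijection: it is surjective by construction and injective because its composite to $w$ is injective.
\end{enumerate}
Consequently $\mathbb{I}^{[r,w]}\cong F([r,w],M)$ with $M$ projective, so it is of projective constant type with the required values at $w$.

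The main obstacle is precisely the directness step. Choosing $G_x$ as the preimage of $N$ would fail to be a complement, so the key idea is to use the section $\psi$ to split $F_r$ so that $M_r$ meets $\Ker[r,w]$ trivially. Once that choice is made, all remaining checks are routine.
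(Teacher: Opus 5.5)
Your proof is correct and is essentially the paper's argument: split $F_r$ via Lemma~\ref{lem:decomp-surj-map}, push both summands forward along $F_{r,x}$, and get directness from injectivity of $F_{r,w}$ on the lifted copy of $M$. You justify that injectivity from the section $\psi$, which the paper uses without comment, and you also verify the constant-type property that the paper leaves as ``by construction.''

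One correction to your closing aside: with your choice of $\mathbb{I}_x$, the preimage $F_{x,w}^{-1}(N)$ \emph{is} a complement, and in fact it equals your $G_x=F_{r,x}(N_r)$; the choice that genuinely fails is taking the $M$-summand to be a preimage of $M$, since that would contain $\Ker[x,w]$.
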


   \begin{proof}
      Since $F_{r,w}$ is surjective with $F_{w}=M \oplus N$ projective, by Lemma~\ref{lem:decomp-surj-map} we can choose a decomposition $F_{r}=\mathbb{I}^{[r,w]}_{r} \oplus G_{r}$ such that:
      \begin{enumerate}
         \item[(i)] $\mathbb{I}^{[r,w]}_r \cong M$;
         \item[(ii)] $F_{r,w}(\mathbb{I}^{[r,w]}_r) = M$;
         \item[(iii)] $F_{r,w}(G_r) = N$. 
      \end{enumerate}

      Define $\mathbb{I}^{[r,w]}_x=F_{r,x}(\mathbb{I}^{[r,w]}_r)$ and $G_x=F_{r,x}(G_r)$ for all $r < x \leq w$. Observe that $\mathbb{I}^{[r,w]}_w=M$ and $G_w=N$. We claim $F_x=\mathbb{I}^{[r,w]}_x \oplus G_x$ for all $x \in P$.
      
      Indeed, clearly $\mathbb{I}^{[r,w]}_x + G_x$ generates $F_x$ by surjectivity of $F_{r,x}$. To see the sum is direct, suppose $a \in \mathbb{I}^{[r,w]}_x \cap G_x$. Then there exists $b \in \mathbb{I}^{[r,w]}_{r}$ with $F_{r,x}(b)=a$. In particular, we have

      \begin{center}
         $F_{r,w}(b)=F_{x,w}F_{r,x}(b)=F_{x,w}(a) =0$
      \end{center}

      since $F_{x,w}(a) \in M \cap N$. But $F_{r,w}$ maps $\mathbb{I}^{[r,w]}_r$ to $\mathbb{I}^{[r,w]}_w$ bijectively, hence $F_{r,w}(b)=0 \implies b=0 \implies a= 0$.

      This proves the claim, and moreover by construction we have $F_{x,y}(\mathbb{I}^{[r,w]}_x)=\mathbb{I}^{[r,w]}_y$ and $F_{x,y}(G_x) \subseteq G_y$ for all $x \leq y$. Therefore $F=\mathbb{I}^{[r,w]} \oplus G$ is the desired decomposition. 
   \end{proof}

   %%% Proposition: Decomposing a Single Interval %%%
   \begin{Proposition}\label{prp:inf-single-pc-decomp}
      Let $F:P \to \RMod$ be a nonzero p.f.g. zigzag persistence module with extrema $\{z_i \mid i \in \Z\}$  satisfying the PCC. Then there exists an internal direct sum decomposition $F = \mathbb{I} \oplus G$, where $\mathbb{I}$ is a module of projective constant type. 
   \end{Proposition}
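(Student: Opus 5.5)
Following Botnan's strategy for infinite discrete zigzags, I will reduce to finite windows $[z_s,z_t]$ and use Theorem~\ref{thm:fin-zigzag-pc-decomp} together with Lemma~\ref{lem:extend-single-pc-decomp}. Fix an index $x_0$ with $F_{x_0}\neq 0$. Choose $s<t$ with $z_s\sqsubset x_0\sqsubset z_t$. By Lemma~\ref{lem:pcc-more-facts}(3) the restriction $F|_{[z_s,z_t]}$ satisfies the PCC, since each condition \textbf{(C1)}--\textbf{(C4)} only involves colimits over subintervals. Theorem~\ref{thm:fin-zigzag-pc-decomp} then gives a projective constant decomposition of $F|_{[z_s,z_t]}$. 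If some summand of this decomposition is supported on an interval $\langle r,w\rangle$ with $z_s\sqsubset r\sqsubset w\sqsubset z_t$, i.e.\ bounded away from both ends of the window, Lemma~\ref{lem:extend-single-pc-decomp} yields $F=\mathbb{I}\oplus G$ directly (the degenerate case $r=w$ is handled the same way). So the main case is when, for every window, every summand through $x_0$ touches an endpoint $z_s$ or $z_t$.

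In that case I will proceed in three steps.

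\textbf{Step 1.} For each $k\ge 1$ take the window $W_k=[z_{-k},z_k]$ (shifted so that it contains $x_0$). Record the chain of submodules of $F_{x_0}$ carried by summands reaching the left end, the right end, or both. The subspaces reaching $z_{-k}$ (the images or kernels along the path from $z_{-k}$ to $x_0$) form chains in $F_{x_0}$. Each member has projective quotient: this follows from the PCC, specifically \textbf{(C2)}, \textbf{(C3)} and the colimit description of these images along the zigzag. Lemma~\ref{lem:fin-chain} then shows these chains stabilize as $k\to\infty$, and likewise on the right.

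\textbf{Step 2.} After stabilization, run the argument of Case 2 and Case 3 of Theorem~\ref{thm:fin-zigzag-pc-decomp} and of Lemma~\ref{lem:comp-decom-ima-ker}. This produces a decomposition $F_{x_0}=\bigoplus M_i$ compatible with the stable left and right chains, and I fix a summand $M$ with each side behaving uniformly.

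\textbf{Step 3.} Propagate $M$ outward, one extremum segment at a time. Between consecutive extrema the module is totally ordered. On each segment I apply Lemma~\ref{lem:injective-decomp} or Lemma~\ref{lem:surjective-decomp}, according to whether $M$ is carried injectively (toward a sink) or surjectively (from a source). Each time I choose the complement compatibly at the shared endpoint, exactly as in Claim~\ref*{thm:quiver-pc-decomp}.1. Gluing these pieces gives a submodule $\mathbb{I}$ of projective constant type over a possibly unbounded interval, with a complement $G$ defined pointwise.

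The main obstacle is Step 3's gluing across infinitely many segments. At each new extremum the choice of complement must remain compatible with all previously made choices, and the complements must not force the transported copy of $M$ to die prematurely. I would handle this by applying the propagation inductively. At each stage $k$ I choose the decomposition at $z_{\pm k}$ compatible with the stabilized chains of images and kernels at that point. Stabilization from Step 1 should guarantee that $M$ is a direct summand of the relevant image or kernel quotient at every stage, so each choice can be extended. If the inductive choices fail to cohere, the fallback is a Zorn's lemma argument modeled on the left-hand decomposition in Proposition~\ref{prp:total-one-pc-decomp}. There I would consider families $(A_x)$ satisfying analogues of \textbf{(S1)}--\textbf{(S4)} along the zigzag, with Lemma~\ref{lem:fin-chain} ensuring that chains have lower bounds.
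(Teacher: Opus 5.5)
Your first case (a summand of some window decomposition bounded away from both ends, extended via Lemma~\ref{lem:extend-single-pc-decomp}) matches the paper. The remaining case, however, has a genuine gap, and it is the one you flag yourself.

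In Step~3 you invoke Lemmas~\ref{lem:injective-decomp} and~\ref{lem:surjective-decomp} segment by segment, but their hypotheses are global: \emph{every} morphism of the segment must be injective (respectively surjective). Knowing that $M$ is ``carried injectively'' is not enough. The split $F_{z_j}=\mathbb{I}_{z_j}\oplus G_{z_j}$ at a shared extremum is constrained by the choices already made inward (it is forced if the inner maps are isomorphisms), and the next segment must be compatible with that split. Here is an example, already when $R$ is a field. Let $z_j$ be a sink with $F_{z_j}=R^2$, $\mathbb{I}_{z_j}=Re_2$ and $G_{z_j}=R(e_1+e_2)$, and suppose $F_{z',z_j}$ from the adjacent outer extremum $z'$ has image $Re_1$. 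Then no submodule decomposition of that segment restricts to the given split, because the image meets both summands trivially.

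Steps~1--2 do not prevent this. The submodules of $F_{x_0}$ ``reaching $z_{-k}$'' are read off from a different decomposition of each window $W_k$, so they need not form a chain. Even after stabilization, they only say that $M$ reaches each $z_{-k}$ in \emph{some} decomposition, not in one coherent system of choices. Control at $x_0$ also says nothing about the constraints that arise at each $z_{\pm k}$ from the infinitely many segments beyond it. Along paths with several turns these submodules are alternating images and preimages, not images or kernels of a single morphism, so the appeal to \textbf{(C2)} and \textbf{(C3)} is unsupported. Finally, the Zorn fallback is unspecified, and the argument of Proposition~\ref{prp:total-one-pc-decomp} that it would imitate relies essentially on the total order.

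The missing idea is the key claim in the paper's proof. If no window summand is bounded away from both ends and $F_{z_i}\neq 0$, then there are extrema $z_s\sqsubset z_i\sqsubset z_t$ beyond which every morphism pointing away from $z_i$ is injective and every morphism pointing toward $z_i$ is surjective. The reason is as follows. A non-injective outward map far to the right yields, in the decomposition of a window $[z_i,z_k]$, a summand that dies before the right end. By the case hypothesis it must start at the left end $z_i$. The ascending chain condition on the finitely generated module $F_{z_i}$ allows only finitely many such deaths.

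With this in hand, no careful choice of $M$ is needed. Choose $s,t$ of the appropriate parity and take \emph{any} summand of the decomposition of $F|_{[z_s,z_t]}$. Extend it outward alternately by Lemma~\ref{lem:injective-decomp} (outward maps injective, cokernels projective by the PCC) and Lemma~\ref{lem:surjective-decomp} (inward maps surjective onto projective modules). Both lemmas accept an arbitrary split at their starting endpoint and reproduce it there. So the pieces glue automatically, and your Steps~1--2 become unnecessary.
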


   \begin{proof}
      %%% Set Up %%%
      \begingroup
      Let $\sqsubseteq$ denote the associated total order of the zigzag poset $P$. Without loss of generality, assume that for all $x,y \in P$ such that $z_i\sqsubseteq x \sqsubseteq y \sqsubseteq z_{i+1}$:
      \begin{align*}
         \begin{cases} 
            i \mbox{ is odd } \quad \Longrightarrow \quad x\leq y; \\
            i \mbox{ is even } \quad \Longrightarrow \quad  y\leq x.
         \end{cases}  
         \end{align*}

      See Definition~\ref{def:zigzag-poset} for a reminder on the above statement.
      
      Recall by Theorem~\ref{thm:fin-zigzag-pc-decomp} that $F|_{[z_{-n},{z_n}]}$ admits a projective constant decomposition for any positive integer $n$. We split this proof into two cases. 

      \textbf{Case 1.} Suppose there exists a positive integer $n$ such that the projective constant decomposition of $F|_{[z_{-n},{z_n}]}$ contains a submodule supported on $\langle r,w \rangle$ with $-n \sqsubset r \sqsubset w \sqsubset n$. Then we are done by Lemma~\ref{lem:extend-single-pc-decomp}. 

      \textbf{Case 2.} Suppose no such $n$ exists. Fix an index $z_i$ such that $F_{z_i} \neq 0$. Note that such an index exists since if all $F_{z_i}=0$, then we would have Case 1.
      \endgroup

      %%% Claim 1: Existence of Sufficient Injective/Surjective %%%
      \textbf{Claim~\ref*{prp:inf-single-pc-decomp}.1.} There exists $z_t \sqsupset z_i$ such that $F_{x,y}$ is injective and $F_{y,x}$ is surjective for all $z_t \sqsubseteq x \sqsubseteq y$, when such maps are defined. Dually, there exists $z_s \sqsubset z_i$ such that $F_{y,x}$ is injective and $F_{x,y}$ is surjective for all $x \sqsubseteq y \sqsubseteq z_s$, when such maps are defined. 
      \begin{proof}[Proof of Claim~\ref*{prp:inf-single-pc-decomp}.1.] 	
         To see this, we will show there are finitely many morphisms $F_{x,y}$ that are non-injective for $z_i \sqsubset x \sqsubseteq y$. Suppose by way of contradiction there is no $z_t \sqsupset z_i$ such that all maps $F_{x,y}$ are injective for $z_t \sqsubseteq x \sqsubseteq y$. Then there exist indices $x_1, y_1 \sqsupset z_i$ with $x_1 \leq y_1$ such that $\Ker[x_1,y_1] \neq 0$. 

         Recall from the Definition~\ref{def:zigzag-poset}(2) there exists $k_1 \in \Z$ such that $z_{k_1-1} \sqsubseteq x_1 \sqsubseteq y_1 \sqsubseteq z_{k_1}$. In particular, we have $x_1 \leq y_1 \leq z_{k_1}$ with respect to the partial order. Since $\Ker[x_1, y_1] \neq 0$, it follows that $\Ker[x_1,z_{k_1}] \neq 0$.
         
         Looking at the projective constant decomposition of $F|_{[z_i, z_{k_1}]}$ which exists by Theorem~\ref{thm:fin-zigzag-pc-decomp}, there exists a submodule $\mathbb{I}^{\langle r_1,w_1 \rangle}$ of projective constant type in the decomposition with $z_i \sqsubseteq r_1 \sqsubset w_1 \sqsubset z_{k_1}$. Now if $z_i \sqsubset r_1 \sqsubset w_1 \sqsubset z_{k_1}$, then we would have Case 1. Therefore $r_1=z_i$ and the submodule of projective constant type $\mathbb{I}^{[z_i,w_1 \rangle }$ is supported on the interval $[z_i,w_1 \rangle$. 

         Continuing in this manner, there exists an $x_2 \sqsupset z_{k_1}$ and $z_{k_2} \geq x_2$ such that $\Ker[x_2,z_{k_2}] \neq 0$. The projective constant decomposition of $F|_{[z_i,z_{k_2}]}$ will contain a submodule $\mathbb{I}^{[z_i,w_2 \rangle}$ of projective constant type supported on the interval $[z_i,w_2\rangle$ where $w_1 \sqsubset z_{k_1} \sqsubset w_2$. Repeating this process indefinitely results in an infinite ascending chain of $R$-submodules 
         
         \begin{center}
            $\mathbb{I}_{z_i}^{[z_i,w_1\rangle} \subseteq \mathbb{I}_{z_i}^{[z_i,w_1\rangle} \oplus \mathbb{I}_{z_i}^{[z_i,w_2\rangle} \subseteq \mathbb{I}_{z_i}^{[z_i,w_1\rangle} \oplus \mathbb{I}_{z_i}^{[z_i,w_2\rangle} \oplus \mathbb{I}_{z_i}^{[z_i,w_3\rangle} \subseteq \cdots$
         \end{center}

         of $F_{z_i}$, a contradiction since $F_{z_i}$ is Noetherian. Therefore such a sufficiently large index $z_t$ must exist. The settings for surjections and for $z_s \sqsubset z_i$ are analogous.
      \end{proof}

      %%% Beginning Decomposition %%%
      \begingroup
      Let $z_s$ and $z_t$ be the indices as described in Claim~\ref*{prp:inf-single-pc-decomp}.1. Without loss of generality, we can choose $s$ and $t$ to be odd integers.

      By Theorem~\ref{thm:fin-zigzag-pc-decomp}, $F|_{[z_s,z_t]}$ admits a projective constant decomposition. Choose a submodule $\mathbb{I}^{\langle r,w \rangle}$ in this decomposition supported on $\langle r,w \rangle$ so that we write $F|_{[z_s,z_t]} = \mathbb{I}^{\langle r,w \rangle} \oplus H$ where $H$ is the direct sum of the remaining submodules in the decomposition. Our goal is to extend this decomposition to all of $F$ so that we can write $F=\mathbb{I} \oplus G$. 
      
      Define $G_x = H_x$ and $\mathbb{I}_x=\mathbb{I}^{\langle r,w \rangle}_x$ for all $z_s \sqsubseteq x \sqsubseteq z_t$. It remains to carefully define what happens for indices $x \sqsupset z_t$ and $x \sqsubset z_s$.  
      \endgroup

      %%% Infinite Decomposition %%%
      \begingroup
      First we will focus on indices $x \sqsupset z_t$. If $\mathbb{I}^{\langle r,w \rangle}_{z_t}=0$ and $H_{z_t}=F_{z_t}$, then the decomposition is easy. Define $G_x = F_x$ and $\mathbb{I}_x=0$ for all $x \sqsupset z_t$.

      If $\mathbb{I}^{\langle r,w \rangle}_{z_t} \neq 0$, define $\mathbb{I}_{z_t}=\mathbb{I}^{\langle r,w \rangle}_{z_t}$ and $G_{z_t}=H_{z_t}$. Since $t$ is odd, we have $z_t < z_{t+1}$ with respect to the partial order. In particular, $F|_{[z_t,z_{t+1}]}$ is a totally ordered persistence module indexed by the poset $[z_t,z_{t+1}]$ with a minimum $z_t$ and maximum $z_{t+1}$ such that:
      \begin{enumerate}
         \item[(a)] $\Coker(F_{z_t,z_{t+1}})$ is projective;
         \item[(b)] every morphism $F_{x,y}$ is injective for $z_t \leq x \leq y \leq z_{t+1}$;
         \item[(c)] $F_{z_t}=\mathbb{I}_{z_t} \oplus G_{z_t}$, where $\mathbb{I}_{z_t}$ is a nonzero projective $R$-module.
      \end{enumerate}
      
      By Lemma~\ref{lem:injective-decomp}, there exists a decomposition $F|_{[z_t,z_{t+1}]} = \mathbb{I}^{[z_t,z_{t+1}]} \oplus H^{(t)}$ such that:
      \begin{enumerate}
         \item $\mathbb{I}^{[z_t,z_{t+1}]}$ is a persistence submodule of projective constant type supported on $[z_t,z_{t+1}]$;
         \item $\mathbb{I}^{[z_t,z_{t+1}]}_x \cong \mathbb{I}_{z_t}$ for all $z_t \sqsubseteq x \sqsubseteq z_{t+1}$;
         \item $\mathbb{I}^{[z_t,z_{t+1}]}_{z_t} = \mathbb{I}_{z_t}$ and $H^{(t)}_{z_t}=G_{z_t}$.
      \end{enumerate}

      Define $\mathbb{I}_x=\mathbb{I}^{[z_t,z_{t+1}]}_x$ and $G_x = H^{(t)}_x$ for all $z_t \sqsubset x \sqsubseteq z_{t+1}$. This, in effect, has extended the decomposition over $[z_s,z_t]$ to $[z_s,z_{t+1}]$. 

      Similarly, since $t+1$ is even, we have $z_{t+2} < z_{t+1}$ with respect to the partial order. In particular, $F|_{[z_{t+1},z_{t+2}]}$ is a totally ordered persistence module indexed by the poset $[z_{t+1},z_{t+2}]$ with a minimum $z_{t+2}$ and maximum $z_{t+1}$ such that:
      \begin{enumerate}
         \item[(a)] every morphism $F_{y,x}$ is surjective for $z_{t+1} \sqsubseteq x \sqsubseteq y \sqsubseteq z_{t+2}$;
         \item[(b)] $F_{z_{t+1}}$ is a projective $R$-module;
         \item[(c)] $F_{z_{t+1}}=\mathbb{I}_{z_{t+1}} \oplus G_{z_{t+1}}$, where $\mathbb{I}_{z_{t+1}}$ is a nonzero projective $R$-module.
      \end{enumerate}
      
      By Lemma~\ref{lem:surjective-decomp}, there exists a decomposition $F|_{[z_{t+1},z_{t+2}]} = \mathbb{I}^{[z_{t+1},z_{t+2}]} \oplus H^{(t+1)}$ such that:

      \begin{enumerate}
         \item $\mathbb{I}^{[z_{t+1},z_{t+2}]}$ is a persistence submodule of projective constant type supported on $[z_{t+1},z_{t+2}]$;
         \item $\mathbb{I}^{[z_{t+1},z_{t+2}]}_x \cong \mathbb{I}_{z_{t+1}}$ for all $z_{t+1} \sqsubseteq x \sqsubseteq z_{t+2}$;
         \item $\mathbb{I}^{[z_{t+1},z_{t+2}]}_{z_{t+1}} = \mathbb{I}_{z_{t+1}}$ and $H^{(t+1)}_{z_{t+1}}=G_{z_{t+1}}$.
      \end{enumerate} 

      Define $\mathbb{I}_x=\mathbb{I}^{[z_{t+1},z_{t+2}]}_x$ and $G_x = H^{(t+1)}_x$ for all $z_{t+1} \sqsubset x \sqsubseteq z_{t+2}$. This, in effect, has extended the decomposition over $[z_s,z_{t+1}]$ to $[z_s,z_{t+2}]$.
      
      We can repeat this process indefinitely over the intervals $[z_{t+2}, z_{t+3}], [z_{t+3}, z_{t+4}]$, and so on to obtain a decomposition $F_x = \mathbb{I}_x \oplus G_x$ for all $x \sqsupset z_t$.
      
      Dually, we can apply the above argument for indices $x \sqsubset z_s$. This results in a decomposition $F = \mathbb{I} \oplus G$ as desired.
      \endgroup
   \end{proof}

   We now proceed to the main results of this section. Namely, the following two theorems are the statement of Theorem~\ref{thm:main-pc-zigzag} when $\Gamma=\Z$ and $\Gamma=\N$.

   %%% Theorem: Complete Projective Constant Decomposition in Z %%%
   \begin{Theorem}\label{thm:inf-complete-pc-decomp}
      Let $F:P \to \RMod$ be a nonzero p.f.g. zigzag persistence module with extrema $\{z_i \mid i \in \Z\}$ satisfying the PCC. Then $F$ admits a projective constant decomposition. 
   \end{Theorem}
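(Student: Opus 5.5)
The natural route mirrors the proof of Theorem~\ref{thm:total-complete-pc-decomp}: we apply Lemma~\ref{lem:decomp-criteria} with $\mathcal{E}$ the set of nonzero persistence submodules of $F$ of projective constant type. That lemma needs only that $F$ is p.f.g. over a Noetherian ring, which holds by hypothesis. What remains is its key hypothesis: every nonzero direct summand $H$ of $F$ must itself have a direct summand in $\mathcal{E}$.

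So let $H$ be a nonzero direct summand of $F$, say $F = H \oplus H'$. The plan has three steps.
\begin{enumerate}
\item $H$ is again a zigzag persistence module indexed by the same zigzag poset $P$ with extrema $\{z_i \mid i\in\Z\}$.
\item $H$ is p.f.g., because each $H_x$ is a direct summand, hence a quotient, of the finitely generated module $F_x$.
\item $H$ satisfies the PCC by Lemma~\ref{lem:pcc-more-facts}(3).
\end{enumerate}
Proposition~\ref{prp:inf-single-pc-decomp} then applies to $H$ and yields $H = \mathbb{I}\oplus G$ with $\mathbb{I}$ of projective constant type. Since $H \neq 0$, the proposition's construction gives $\mathbb{I}\neq 0$: in both cases there it is built from a nonzero summand at some index. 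Thus $\mathbb{I}\in\mathcal{E}$, and it is a direct summand of $H$.

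A small point needs checking here. An element of $\mathcal{E}$ is defined as a submodule of $F$, while Proposition~\ref{prp:inf-single-pc-decomp} produces $\mathbb{I}$ as a submodule of $H$. Since $H \subseteq F$ as persistence submodules, $\mathbb{I}$ is also a persistence submodule of $F$. It is of projective constant type intrinsically, since that notion depends only on the isomorphism type as a $P$-persistence module. Hence $\mathbb{I}$ lies in $\mathcal{E}$.

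With the hypothesis verified, Lemma~\ref{lem:decomp-criteria} provides $\mathcal{A}\subseteq\mathcal{E}$ with $F=\bigoplus_{G\in\mathcal{A}}G$, which is a projective constant decomposition.

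The only step requiring care is the third one, that PCC descends to direct summands. This rests on colimits over the intervals $[x,y]$ commuting with finite direct sums, and on cokernels of direct sums being direct sums of cokernels. Projectivity then passes to direct summands. The p.f.g. hypothesis ensures that only finitely many components are involved at each index, as noted in the proof of Lemma~\ref{lem:pcc-more-facts}; for a two-term decomposition $F=H\oplus H'$ this is immediate. All of the substantive work, namely handling infinitely many extrema, is already contained in Proposition~\ref{prp:inf-single-pc-decomp}, so I expect no further obstacle.
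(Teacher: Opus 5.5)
Your proposal is correct and matches the paper's proof: the paper also applies Lemma~\ref{lem:decomp-criteria} with $\mathcal{E}$ the submodules of $F$ of projective constant type, and verifies its hypothesis on direct summands via Lemma~\ref{lem:pcc-more-facts}(3) and Proposition~\ref{prp:inf-single-pc-decomp}. Your extra checks (that $H$ is p.f.g., that $\mathbb{I}\neq 0$ so that it lies in a set of nonzero submodules, and that $\mathbb{I}\subseteq H\subseteq F$) make explicit details the paper leaves implicit.
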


   \begin{proof}
      Immediate from Lemma~\ref{lem:decomp-criteria}, Proposition~\ref{prp:inf-single-pc-decomp} and Lemma~\ref{lem:pcc-more-facts} when letting $\mathcal{E}$ in Lemma~\ref{lem:decomp-criteria} be the set of submodules of $F$ of projective constant type. 
   \end{proof}

   %%% Theorem: Projective Constant Decomposition for Extrema in N %%%
   \begin{Theorem}\label{thm:inf-zigzag-pc-decomp-nat}
      Let $F:P \to \RMod$ be a nonzero p.f.g. zigzag persistence module with extrema $\{z_i \mid i \in \N\}$ satisfying the PCC. Then $F$ admits a projective constant decomposition. 
   \end{Theorem}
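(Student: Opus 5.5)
The plan is to reduce to the $\Gamma=\Z$ case, Theorem~\ref{thm:inf-complete-pc-decomp}, by extending $P$ to the left with a zero module, and then restricting the resulting decomposition back to $P$.

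\textbf{Step 1: extend the poset.} Let $P$ have extrema $\{z_i \mid i\in\N\}$ with associated total order $\sqsubseteq$. Construct a zigzag poset $\widetilde{P}$ with extrema $\{\tilde z_i \mid i\in\Z\}$ as follows.
\begin{enumerate}
\item Adjoin new points $\tilde z_i$ for $i<0$, placed $\sqsubset$-below $z_0$, and set $\tilde z_i=z_i$ for $i\geq 0$.
\item For each $i<0$, order the pair $\tilde z_i,\tilde z_{i+1}$ in $\leq$ with the direction forced by the parity rule of Definition~\ref{def:zigzag-poset}(3). The choice is consistent because the parity alternation on $P$ already fixes which of the two alternatives in (3) holds.
\item Keep all relations of $P$, and add no other relations.
\end{enumerate}
Conditions (1)--(3) of Definition~\ref{def:zigzag-poset} are then checked directly. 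The segment $[\tilde z_{-1}, z_0]$ contains only its two endpoints, and $P$ is a convex, connected subset of $\widetilde{P}$, hence an interval by Lemma~\ref{lem:interval-in-zigzag}.

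\textbf{Step 2: extend the module and check the PCC.} Define $\widetilde{F}:\widetilde{P}\to\RMod$ by $\widetilde{F}|_P=F$ and $\widetilde{F}_x=0$ for $x\notin P$. All new morphisms are zero maps, so functoriality is immediate, and $\widetilde{F}$ is pointwise finitely generated. For the PCC, take $x\sqsubseteq y$ in $\widetilde{P}$.
\begin{enumerate}
\item If $x,y\in P$, then $[x,y]\subseteq P$ and the conditions \textbf{(C1)}--\textbf{(C4)} hold by hypothesis.
\item If $y\notin P$, the diagram is entirely zero and all four conditions are trivial.
\item If $x\notin P$ and $y\in P$, the diagram over $[x,y]$ is the diagram of $F|_{[z_0,y]}$ together with zero objects attached along a path. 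These contribute nothing to the colimit, so $\Colim(\widetilde{F}|_{[x,y]})=\Colim(F|_{[z_0,y]})$.
\end{enumerate}
In case (3), since $\widetilde{F}_x=0$, the cokernels in \textbf{(C2)} and \textbf{(C4)} reduce to the colimit itself and to $\Coker(F_y\to\Colim)$, respectively. Both are projective by the PCC for $F$ on $[z_0,y]$, using \textbf{(C1)} and \textbf{(C3)}.

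\textbf{Step 3: decompose and restrict.} Theorem~\ref{thm:inf-complete-pc-decomp} now gives a projective constant decomposition $\widetilde{F}=\bigoplus_{G\in\mathcal A} G$. Restricting to $P$ yields $F=\bigoplus_{G\in\mathcal A} G|_P$. Drop the summands with $G|_P=0$. For each remaining $G\cong F(I,M)$, the support $I$ lies inside $\operatorname{supp}\widetilde F\subseteq P$ whenever $M\neq 0$. So $G|_P\cong F(I,M)$ with $I$ an interval of $P$, since convexity and connectivity pass to the convex subset $P$. Hence $F$ admits a projective constant decomposition.

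The main obstacle is Step 1: writing down $\widetilde{P}$ carefully, including the parity alignment and the verification of Definition~\ref{def:zigzag-poset}(2) and (3). The colimit computation in Step 2 is routine once $\widetilde{P}$ is in hand.
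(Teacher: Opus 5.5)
Your overall route matches the paper's proof. The paper embeds $F$ into a $\Z$-indexed zigzag module that is zero on $\{x \sqsubset z_0\}$, applies Theorem~\ref{thm:inf-complete-pc-decomp}, and restricts back to $P$; it does not check the PCC for the extended module at all. Your Steps 1 and 3 are fine. However, the colimit computation in Step 2, case (3), is wrong in one of the two parity cases.

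Suppose $z_0$ is a source of $P$, which happens under the second alternative of Definition~\ref{def:zigzag-poset}(3). Then the parity rule forces $z_0 \leq \tilde z_{-1}$, so $[x,y]$ contains the morphism $F_{z_0} \to \widetilde F_{\tilde z_{-1}} = 0$. Every cocone must therefore vanish on $F_{z_0}$, and
\[ \Colim(\widetilde F|_{[x,y]}) \cong \Coker\bigl(F_{z_0} \to \Colim(F|_{[z_0,y]})\bigr), \]
not $\Colim(F|_{[z_0,y]})$. For example, $R \xrightarrow{\Id} R$ on $z_0 \leq z_1$ has colimit $R$ over $[z_0,z_1]$ but colimit $0$ over $[\tilde z_{-1},z_1]$. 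Your claim that the zero objects ``contribute nothing'' holds only when $z_0$ is a sink. In that case the zero objects map into $P$ rather than receive maps from it.

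The repair is immediate and uses the other two conditions of the PCC for $F$ on $[z_0,y]$. In the source case, \textbf{(C1)} and \textbf{(C2)} for $\widetilde F$ on $[x,y]$ both amount to \textbf{(C2)} for $F$. Likewise, \textbf{(C3)} and \textbf{(C4)} for $\widetilde F$ both amount to projectivity of $\Coker(F_{z_0} \oplus F_y \to \Colim(F|_{[z_0,y]}))$, which is \textbf{(C4)} for $F$. So $\widetilde F$ satisfies the PCC in both parity cases, and the rest of your argument goes through unchanged.
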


   \begin{proof}
      There is an obvious way to embed a zigzag persistence module $F$ with extrema $\{z_i \mid i \in \N\}$ into a zigzag persistence module $\bar{F}$ with extrema $\{z_i \mid i \in \Z\}$ so that $\bar{F}|_{\{x \sqsupseteq z_0\}}=F$ and $\bar{F}|_{\{x \sqsubset z_0\}} =0$. The projective constant decomposition of $\bar{F}$ guaranteed by Theorem~\ref{thm:inf-complete-pc-decomp} yields a projective constant decomposition of $F$.
   \end{proof}

   We deduce Corollary~\ref{cor:main-pid-zigzag} when $\Gamma=\Z$ and $\Gamma=\N$ in the following result.

   %%% Corollary: PID or Local Ring %%%
   \begin{Corollary}\label{cor:inf-pid-zigzag-pc-decomp}
      Let $R$ be a commutative Noetherian ring with identity such that every finitely generated projective $R$-module is free. Let $F:P \to \RMod$ be a nonzero p.f.g. zigzag persistence module with either extrema $\{z_i \mid i \in \Z\}$ or extrema $\{z_i \mid i \in \N\}$ satisfying the PCC. Then $F$ is interval decomposable.
   \end{Corollary}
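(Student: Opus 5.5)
The corollary follows by combining the two infinite-extrema decomposition theorems with Lemma~\ref{lem:free-interval-decomp}, in the same way as Corollaries~\ref{cor:total-pid-decomp} and \ref{cor:fin-zigzag-int-decomp}. The plan is to split into the two cases for the index set $\Gamma$.

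If the extrema are $\{z_i \mid i \in \Z\}$, Theorem~\ref{thm:inf-complete-pc-decomp} applies, since $R$ is commutative Noetherian with identity and $F$ is a nonzero p.f.g. zigzag persistence module satisfying the PCC. It gives a projective constant decomposition $F=\bigoplus_{G\in\mathcal{A}} G$. If the extrema are $\{z_i \mid i \in \N\}$, Theorem~\ref{thm:inf-zigzag-pc-decomp-nat} gives such a decomposition instead.

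In either case $F$ is p.f.g., and $R$ is a Noetherian ring with identity over which every finitely generated projective module is free. Lemma~\ref{lem:free-interval-decomp} then converts the projective constant decomposition into an interval decomposition. The point is that each summand $G\cong F(I,M)$ has $M\cong G_x$ for $x\in I$, which is a direct summand of the finitely generated module $F_x$. Hence $M$ is finitely generated projective, therefore free of finite rank, so $G$ splits as a finite direct sum of copies of $F(I,R)$.

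I expect essentially no obstacle here; the content lies entirely in the two theorems being invoked. The only check is that Lemma~\ref{lem:free-interval-decomp} needs the projective modules $M$ to be finitely generated, and this is exactly what pointwise finite generation of $F$ supplies.
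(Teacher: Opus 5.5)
Your proposal is correct and matches the paper's argument exactly: the paper also derives this corollary directly from Theorem~\ref{thm:inf-complete-pc-decomp}, Theorem~\ref{thm:inf-zigzag-pc-decomp-nat}, and Lemma~\ref{lem:free-interval-decomp}. Your observation that $M\cong G_x$ is finitely generated because $G_x$ is a direct summand of $F_x$ makes explicit a step that the paper's proof of Lemma~\ref{lem:free-interval-decomp} leaves unstated.
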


   \begin{proof}
      Immediate from Theorem~\ref{thm:inf-complete-pc-decomp}, Theorem~\ref{thm:inf-zigzag-pc-decomp-nat} and Lemma~\ref{lem:free-interval-decomp}. 
   \end{proof}

\section{Necessity}\label{sec:necessity}

   Throughout this section, we assume that $R$ is a (not necessarily commutative) Noetherian ring with identity, unless otherwise specified. In the preceding sections, we showed that the projective colimit conditions are sufficient to guarantee the existence of projective constant decompositions of $A_n$-persistence modules and zigzag persistence modules. We further established that requiring the cokernels of all internal morphisms to be projective is sufficient to guarantee the existence of a projective constant decompositions of totally ordered persistence modules. These conditions also implied the existence of interval decompositions when $R$ had the added property that all finitely generated projective $R$-modules are free. We now show that these conditions are in fact necessary whenever a projective constant (respectively, interval) decomposition exists. 

   %%% Lemma: Necessity for Zigzag Finite Extrema PC Module %%%
   \begin{Lemma}\label{lem:nec-zigzag-interval-pc-finite}
      Let $F:P \to \RMod$ be a nonzero zigzag persistence module with finite extrema $\{z_1,z_2, \cdots, z_n\}$ for $n \geq 2$ of projective constant type. Then:
      \begin{enumerate}
         \item[\textbf{(N1)}] $\Colim(F)$ is projective;
         \item[\textbf{(N2)}] $\Coker(F_{z_1} \to \Colim(F))$ is projective;
         \item[\textbf{(N3)}] $\Coker(F_{z_n} \to \Colim(F))$ is projective;
         \item[\textbf{(N4)}] $\Coker(F_{z_1} \oplus F_{z_n} \to \Colim(F))$ is projective.
      \end{enumerate}
   \end{Lemma}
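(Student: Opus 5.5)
The plan is to compute $\Colim(F)$ explicitly. Since $F$ is of projective constant type, we may replace $F$ by $F(I,M)$ for some interval $I$ in $P$ and some projective $R$-module $M$; colimits and cokernels are invariant under isomorphism of diagrams. The claim to establish is
\[ \Colim(F(I,M)) \cong M, \]
with the canonical map $F_x \to \Colim(F)$ equal to $\Id_M$ when $x \in I$ and $0$ when $x \notin I$. Once this is in place, the four conditions reduce to simple checks.

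To identify the colimit, I will verify the universal property directly. A cocone $(\phi_x : F_x \to T)_{x\in P}$ must satisfy $\phi_y \circ F_{x,y} = \phi_x$ for all $x \le y$. When $x,y\in I$ this says $\phi_y = \phi_x$. Since $I$ is connected (Definition~\ref{def:interval}), any two elements of $I$ are joined by a zigzag of comparabilities inside $I$, so all $\phi_x$ with $x \in I$ agree with a single map $\phi : M \to T$. For $x \notin I$ we have $F_x = 0$, so $\phi_x = 0$. For a relation $x \le y$ with exactly one endpoint in $I$, the structure map is $0$, and the cocone condition reads $\phi_y \circ 0 = \phi_x$. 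This holds trivially when $x\notin I$. If instead $x\in I$ and $y \notin I$, it forces $\phi_x = 0$.

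This last case is the main obstacle. Such pairs exist in general: for example, $I$ could be a proper subinterval of a totally ordered $P$ with points of $P$ after it. For those $I$, the colimit of $F(I,M)$ over all of $P$ collapses to $0$ rather than $M$. I would handle this as follows. Each of (N1)--(N4) only asks for projectivity, and $0$ is projective, so a zero colimit causes no harm. In general, I would show that the colimit is a quotient of $M$ by the images generated through such ``exits'', and that it is either $M$ or $0$. Indeed, any such exit forces $\phi = 0$ via the connectedness of $I$ already used above. So if some $x\in I$ has some $y\notin I$ with $x\le y$, then $\Colim(F)=0$. Otherwise every cocone is determined by an arbitrary $\phi : M \to T$, and $\Colim(F) = M$ with the maps described above.

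With $\Colim(F)\in\{0,M\}$, the four conditions are direct. (N1) holds because $M$ and $0$ are projective. When the colimit is $0$, all the cokernels in (N2)--(N4) are $0$. When the colimit is $M$, the map $F_{z_1} \to M$ is either $\Id_M$ (if $z_1\in I$), giving cokernel $0$, or $0$ (if $z_1 \notin I$), giving cokernel $M$. Either way (N2) holds, and (N3) follows in the same way for $z_n$. For (N4), the map $F_{z_1}\oplus F_{z_n}\to M$ is the sum of the two components, so its image is $M$ if either endpoint lies in $I$ and $0$ otherwise. Its cokernel is therefore $0$ or $M$, which is projective.
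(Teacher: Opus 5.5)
Your argument is correct, but it takes a different route from the paper. One caveat first: your opening sentence announces $\Colim(F(I,M))\cong M$ unconditionally, which is false. You catch this yourself, but the write-up should state the dichotomy from the start.

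The paper splits into cases according to whether $z_1$ and $z_n$ are sinks or sources. It presents $\Colim(F)$ as the quotient of $\bigoplus F_{z_i}$, taken over the sinks $z_i$, by the relations coming from the sources. Claim~\ref*{lem:nec-zigzag-interval-pc-finite}.1 shows this quotient vanishes when an extreme extremum $z_s$ or $z_t$ of $I$ is a source. Otherwise the paper identifies the colimit with $M$ through the summation map $\theta$, omitting the check that $E=\Ker(\theta)$. Cases~2 and~3 are then reduced to Case~1 by passing to $[z_2,z_{n-1}]$ or $[z_1,z_{n-1}]$.

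You instead verify the universal property directly and get a single dichotomy. If $I$ is an up-set of $P$, then $\Colim(F(I,M))$ is $M$, with canonical maps $\Id_M$ on $I$ and $0$ off $I$; otherwise it is $0$. The only inputs are that $I$ is nonempty and connected. In the paper's Case~1, your ``no exit'' condition is exactly that $I$ contains an extremum and that $z_s,z_t$ are sinks. Your observation that a single exit kills every cocone via connectedness is the abstract form of the chain of identifications in Claim~\ref*{lem:nec-zigzag-interval-pc-finite}.1.

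Your approach is shorter, avoids case analysis, and has no omitted computation. It uses neither the zigzag shape nor the finiteness of the extrema, so it works for any poset and any finite set of points. The paper's approach gives an explicit sink/source presentation of the colimit, which this lemma does not actually need.
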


   \begin{proof}
      %%% Set Up %%%
      \begingroup
      Let $\sqsubseteq$ denote the associated total order. By Definition~\ref{def:pc-module}, $F$ is isomorphic to $F(I, M)$ for some interval $I$ in P and for some projective $R$-module $M$. It suffices to prove the conditions hold for $F=F(I, M)$. Recall this means that $F_x=M$ for all $x \in I$ and $F_x=0$ otherwise. Additionally, $F_{x,y}=\Id_{x,y}$ for all $x \leq y$ in $I$ and $F_{x,y}=0$ otherwise. 

      When $n=2$, either $\Colim(F)=F_{z_2}$ if $z_1 < z_2$ or $\Colim(F)=F_{z_1}$ if $z_2 < z_1$. This observation makes the four conditions immediate.
      
      Let $n \geq 3$. We divide the remainder of this proof into three cases based on whether $z_1$ and $z_n$ are sinks or sources (see Definitions~\ref{def:source-and-sink} and \ref{def:zigzag-poset}). In each case, we show that the four conditions hold.
      \endgroup

      %%% Case 1: Two Sinks %%%
      \begingroup
      \textbf{Case 1.} Suppose that $z_1$ and $z_n$ are both sinks. This implies that $n$ is odd, and that for all $x,y \in P$ such that $z_i \sqsubseteq x \sqsubseteq y \sqsubseteq z_{i+1}$:
      \begin{align*}
         \begin{cases} 
            i \mbox{ is odd } \quad \Longrightarrow \quad y \leq x; \\
            i \mbox{ is even } \quad \Longrightarrow \quad  x \leq y.
         \end{cases}  
         \end{align*}
      
      Below is an example diagram of $F$; the thick lines indicate the interval $I$ where $F$ takes the value $M$, and the arrows indicate the direction of the morphisms.
      
      \begin{center}
         \begin{tikzpicture}
            %%% Main Lines %%%
            \draw[black, -] (-5,-0.5) -- (-4,0.5);
            \draw[black, -] (-3,-0.5) -- (-4,0.5);
            \draw[black, -] (-3,-0.5) -- (-2,0.5);
            \draw[black, ultra thick, -] (-2,0.5) -- (-1,-0.5);
            \draw[black, -] (3,-0.5) -- (2,0.5);
            \draw[black, -] (2,0.5) -- (1,-0.5);
            
            %%% Partial Lines %%%
            \draw[black, ultra thick, -] (-2.5,0) -- (-2,0.5);
            \draw[black, ultra thick, -] (1,-0.5) -- (1.75,0.25);
            \draw[black, ultra thick, -] (-2.5,0) -- (-2,0.5);
            \draw[black, ultra thick, -] (-1,-0.5) -- (-0.5,0);
            \draw[black, ultra thick, -] (1,-0.5) -- (0.5,0);

            \node[label={90:$\cdots$}] at (0,-0.25) {};
            
            %%% Arrows %%%
            \draw[black, ->] (-4,0.5) -- (-4.75,-0.25);
            \draw[black, ->] (-4,0.5) -- (-3.25,-0.25);
            \draw[black, ->] (-2,0.5) -- (-2.75,-0.25);
            \draw[black, ->] (2,0.5) -- (2.75,-0.25);
            \draw[black, ->] (-2,0.5) -- (-1.25,-0.25);
            \draw[black, ->] (2,0.5) -- (1.25,-0.25);
            \draw[black, ->] (-0.5,0) -- (-0.75,-0.25);
            \draw[black, ->] (0.5,0) -- (0.75,-0.25);

            %%% Points %%%	
            \filldraw [black] (-5,-0.5) circle (1.5pt);
            \node[label={270:$F_{z_1}$}] at (-5,-0.5) {};
            
            \filldraw [black] (-4,0.5) circle (1.5pt);
            \node[label={90:$F_{z_2}$}] at (-4,0.5) {};

            \filldraw [black] (-3,-0.5) circle (1.5pt);
            \node[label={270:$F_{z_3}$}] at (-3,-0.5) {};
            
            \filldraw [black] (-2,0.5) circle (1.5pt);
            \node[label={90:$F_{z_4}$}] at (-2,0.5) {};

            \filldraw [black] (-1,-0.5) circle (1.5pt);
            \node[label={270:$F_{z_5}$}] at (-1,-0.5) {};

            \filldraw [black] (1,-0.5) circle (1.5pt);
            \node[label={270:$F_{z_{n-2}}$}] at (1,-0.5) {};
            
            \filldraw [black] (2,0.5) circle (1.5pt);
            \node[label={90:$F_{z_{n-1}}$}] at (2,0.5) {};

            \filldraw [black] (3,-0.5) circle (1.5pt);
            \node[label={270:$F_{z_n}$}] at (3,-0.5) {};

            \filldraw [black] (-2.5,0) circle (1.5pt);
            \node at (-2.5,0) {};

            \filldraw [black] (1.75,0.25) circle (1.7pt);
            \node at (1.75,0.25) {};

            \filldraw [white] (1.75,0.25) circle (1.2pt);
            \node at (1.75,0.25) {};
         \end{tikzpicture}
      \end{center}

      Let $K=\{1,3, \ldots, n-2, n\}$ be the odd integers of the sinks $z_i$ of $F$, and let $C=\{2, 4, \ldots, n-3, n-1\}$ be the even integers of the sources $z_i$ of $F$. It follows that 
      
      \begin{center}
         $\Colim(F)= \Frac{\bigoplus_{i \in K} F_{z_i}}{E} = \Frac{F_{z_1} \oplus F_{z_3} \oplus \cdots \oplus F_{z_{n-2}} \oplus F_{z_n}}{E}$.
      \end{center}
      
      The submodule $E$ above is generated by the set of elements
      
      \begin{center}
         $\{\iota_{z_{i-1}}(F_{z_i,z_{i-1}}(a)) - \iota_{z_{i+1}}(F_{z_i,z_{i+1}}(a)) \mid a \in F_{z_i}, i \in C\}$,
      \end{center}

      where $\iota_{z_j}:F_{z_j} \to \bigoplus_{i \in K} F_{z_i}$ is the natural inclusion map for all sinks $z_j$ with $j \in K$. This defines an equivalence relation $\sim$ on $\bigoplus_{i \in K} F_{z_i}$ that identifies 
      
      \begin{center}
         $\iota_{z_{i-1}}(F_{z_i,z_{i-1}}(a)) \sim \iota_{z_{i+1}}(F_{z_i,z_{i+1}}(a))$
      \end{center} 

      for all sources $z_i$ with $i \in C$ and for all $a \in F_{z_i}$.  
      
      Consider the interval $I$. If no extrema $z_i$ lie in $I$, then $\Colim(F)=0$ since $F_{z_i}=0$ for all sinks $z_i$ with $i \in K$. The four conditions are immediate. 
      
      Now suppose that at least one extremum $z_i$ lies in $I$. Let $z_s$ and $z_t$ be the minimum and maximum extrema in $I$, respectively, with respect to the associated total order. Note that it is possible for $z_s=z_t$ when exactly one extremum lies in $I$.

      %%% Claim 1: Interval Starts with Sources Implies Colimit is 0 %%%
      \textbf{Claim~\ref*{lem:nec-zigzag-interval-pc-finite}.1.} If $z_s$ or $z_t$ are sources, then $\Colim(F)=0$.

      \begin{proof}[Proof of Claim~\ref*{lem:nec-zigzag-interval-pc-finite}.1.]

      Without loss of generality, assume that $z_s$ is a source. Hence $F_{z_s}=M$. Since $z_s$ is the minimum extrema in $I$, this implies $F_{z_{s-1}}=0$. Based on the equivalence relation $\sim$, we identify
      
      \begin{center}
         $0 = \iota_{z_{s-1}}(F_{z_s,z_{s-1}}(a)) \sim \iota_{z_{s+1}}(F_{z_s,z_{s+1}}(a))$ for all $a \in F_{z_s}$.
      \end{center}
      
      If $z_s$ is the maximum source in $P$, then $z_{s+1}=z_n$. Hence all elements of $\bigoplus_{i \in K} F_{z_i}$ get identified with $0$ under the equivalence relation $\sim$, and thus $\Colim(F)=0$.

      If $z_{s}$ is not the maximum source in $P$, then $z_{s+2}$ is a source in $P$. Based on the equivalence relation $\sim$, we identify

      \begin{center}
         $\iota_{z_{s+1}}(F_{z_{s+2},z_{s+1}}(a)) \sim \iota_{z_{s+3}}(F_{z_{s+2},z_{s+3}}(a))$ for all $a \in F_{z_{s+2}}$. 
      \end{center}

      But this implies
      \begin{center}
         $0 \sim \iota_{z_{s+1}}(F_{z_s,z_{s+1}}(a)) \sim \iota_{z_{s+1}}(F_{z_{s+2},z_{s+1}}(a))$ for all $a \in F_{z_{s+2}} \subseteq F_{z_s}$.
      \end{center}

      Continuing this process, it is easy to see that all elements of $\bigoplus_{i \in K} F_{z_i}$ get identified with $0$ under the equivalence relation $\sim$, and hence $\Colim(F)=0$.
      \end{proof}

      Based on the setting of Claim~\ref*{lem:nec-zigzag-interval-pc-finite}.1 when $z_s$ or $z_t$ are sources, the four conditions are immediate. Finally, assume that $z_s$ and $z_t$ are both sinks. Then we can equivalently describe the submodule $E$ as
      
      \begin{center}	
         $E = \Set{(a_{z_1}, a_{z_{3}}, \ldots, a_{z_{n-2}}, a_{z_{n}})}{\sum_{i \in K}a_{z_i}=0}$.
      \end{center}

      We omit the detailed calculations of this as it is easy to verify using the key observation that if $z_i$ is a source in $I$, then $z_{i-1}$ and $z_{i+1}$ are also in $I$. 

      Let $\theta: \bigoplus_{i \in K} F_{z_i} \to M$ be the map given by 
      
      \begin{center}
         $(a_{z_1}, a_{z_{3}}, \ldots, a_{z_{n-2}}, a_{z_{n}}) \mapsto \sum_{i \in K}a_{z_i}$. 
      \end{center}

      Then $\Ker(\theta) = E$, and moreover $\theta$ is surjective. Thus 
      
      \begin{center}
         $\Colim(F) = \Frac{\bigoplus_{i \in K} F_{z_i}}{E} = \Frac{\bigoplus_{i \in K} F_{z_i}}{\Ker(\theta)} \cong \Ima(\theta)= M$ 
      \end{center}

      by the First Isomorphism Theorem, which proves condition \textbf{(N1)} that $\Colim(F)$ is projective.	
      
      If $F_{z_1}=0$ then \textbf{(N2)} is immediate. If $F_{z_1}=M$, then $z_1=z_s$. The map $F_{z_1} \to \Colim(F)$ is easily seen to be surjective. Hence $\Coker(F_{z_1} \to \Colim(F)) = 0$ is trivially projective, which shows \textbf{(N2)}. Analogous arguments show that conditions \textbf{(N3)} and \textbf{(N4)} also hold.
      \endgroup

      %%% Case 2: Two Sources %%%
      \begingroup
      \textbf{Case 2.} Suppose that $z_1$ and $z_n$ are both sources. This implies that $n$ is odd, and that for all $x,y \in P$ such that $z_i \sqsubseteq x \sqsubseteq y \sqsubseteq z_{i+1}$:
      \begin{align*}
         \begin{cases} 
            i \mbox{ is odd } \quad \Longrightarrow \quad x \leq y; \\
            i \mbox{ is even } \quad \Longrightarrow \quad  y \leq x.
         \end{cases}  
         \end{align*}
      
      If $n=3$, then $\Colim(F)=F_{z_2}$ where $F_{z_2}=0$ or $F_{z_2}=M$. Hence $\Colim(F)$ is projective satisfying \textbf{(N1)}, and the other three conditions follow trivially. 
      
      If $n>3$, then $\Colim(F)=\Colim(F|_{[z_2, z_{n-1}]})$ where $z_2$ and $z_{n-1}$ are both sinks. Hence we apply Case 1 to show that $\Colim(F)$ is projective for condition \textbf{(N1)}. 
      
      If $\Colim(F|_{[z_2, z_{n-1}]})=0$, then the other three conditions are immediate. Suppose that $\Colim(F|_{[z_2, z_{n-1}]}) \cong M$. Then Claim~\ref*{lem:nec-zigzag-interval-pc-finite}.1 implies that the minimum and maximum extrema $z_s$ and $z_t$, respectively, in $I \cap [z_2,z_{n-1}]$ are both sinks.

      If $F_{z_1}=0$, then \textbf{(N2)} is immediate. If $F_{z_1} = M$, then $z_1 \in I$. Since $z_1 \sqsubseteq z_2 \sqsubseteq z_s$ with $z_1,z_s \in I$, Lemma~\ref{lem:interval-in-zigzag} implies $z_2 \in I$, and hence $z_2=z_s$ by minimality of $z_s$. It is easy to verify from this observation that $F_{z_1} \to \Colim(F)$ is a surjective map. Hence $\Coker(F_{z_1} \to \Colim(F))=0$ is trivially projective, which shows \textbf{(N2)} is satisfied. Conditions \textbf{(N3)} and \textbf{(N4)} follow similarly. 
      \endgroup

      %%% Case 3: One Sink and One Source %%%
      \begingroup
      \textbf{Case 3.} Suppose that $z_1$ is a sink and $z_n$ is a source. This implies that $n$ is even, and that for all $x,y \in P$ such that $z_i \sqsubseteq x \sqsubseteq y \sqsubseteq z_{i+1}$:
      \begin{align*}
         \begin{cases} 
            i \mbox{ is odd } \quad \Longrightarrow \quad y \leq x; \\
            i \mbox{ is even } \quad \Longrightarrow \quad  x \leq y.
         \end{cases}  
         \end{align*}
      
      Observe that $\Colim(F)=\Colim(F|_{[z_1,z_{n-1}]})$ where $z_1$ and $z_{n-1}$ are both sinks. Hence we apply Case 1 to show that $\Colim(F)$ is projective for condition \textbf{(N1)}.

      If $\Colim(F|_{[z_1,z_{n-1}]})=0$, then the other three conditions are immediate. Suppose that $\Colim(F|_{[z_1,z_{n-1}]}) \cong M$. Then Claim~\ref*{lem:nec-zigzag-interval-pc-finite}.1 implies that the minimum and maximum extrema $z_s$ and $z_t$, respectively, in $I \cap [z_1,z_{n-1}]$ are both sinks.
      
      If $F_{z_1}=0$, then \textbf{(N2)} is immediate. If $F_{z_1} = M$, then $z_1 \in I$. Hence $z_1=z_s$ by minimality of $z_s$. It is easy to verify that $F_{z_1} \to \Colim(F)$ is a surjective map. Hence $\Coker(F_{z_1} \to \Colim(F))=0$ is trivially projective, which shows \textbf{(N2)}.
      
      If $F_{z_n}=0$, then \textbf{(N3)} is immediate. If $F_{z_n}=M$, then $z_n \in I$. Since $z_t \sqsubseteq z_{n-1} \sqsubseteq z_n$ with $z_t,z_n \in I$, Lemma~\ref{lem:interval-in-zigzag} implies $z_{n-1} \in I$, and hence $z_{n-1}=z_t$ by maximality of $z_t$. It is easy to verify with this observation that $F_{z_n} \to \Colim(F)$ is a surjective map. Hence $\Coker(F_{z_n} \to \Colim(F))=0$ is trivially projective, which shows \textbf{(N3)}.

      Condition \textbf{(N4)} follows similarly as \textbf{(N2)} and \textbf{(N3)}. The case for when $z_1$ is a source and $z_n$ is a sink is analogous.
      \endgroup
   \end{proof}

   %%% Lemma: Necessity for Zigzag Infinite Extrema PC Module %%%
   \begin{Lemma}\label{lem:nec-zigzag-interval-pc}
      Let $F:P \to \RMod$ be a nonzero zigzag persistence module with extrema $\{z_i \mid i \in \Gamma \}$ of projective constant type. Let $\sqsubseteq$ be the associated total order. Then $F$ satisfies the projective colimit conditions; that is, for any $x \sqsubseteq y$ of $P$:
      \begin{enumerate}
         \item[\textbf{(C1)}] $\Colim(F|_{[x,y]})$ is projective;
         \item[\textbf{(C2)}] $\Coker(F_x \to \Colim(F|_{[x,y]}))$ is projective;
         \item[\textbf{(C3)}] $\Coker(F_y \to \Colim(F|_{[x,y]}))$ is projective;
         \item[\textbf{(C4)}] $\Coker(F_x \oplus F_y \to \Colim(F|_{[x,y]}))$ is projective.
      \end{enumerate}
   \end{Lemma}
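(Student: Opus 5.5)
The plan is to reduce to Lemma~\ref{lem:nec-zigzag-interval-pc-finite} by restricting to the finite subposet $[x,y]$ and viewing it as a zigzag poset with finitely many extrema.

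Fix $x \sqsubseteq y$ in $P$. If $x=y$, then $\Colim(F|_{[x,x]})=F_x$, which is either $0$ or isomorphic to $M$. In this case (C1) holds, and the three cokernels in (C2)--(C4) are all $0$, since the maps are surjective (for (C4) the map is $(a,b)\mapsto a+b$).

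Now assume $x \sqsubset y$. The first step is to observe that the restriction $F|_{[x,y]}$ is a zigzag persistence module with finite extrema. Take the extrema $z_i$ of $P$ lying strictly between $x$ and $y$; by Lemma~\ref{lem:zigzag-shape}(1) and Definition~\ref{def:zigzag-poset}(2) there are only finitely many of them. Order them by $\sqsubseteq$ and adjoin $x$ at the front and $y$ at the end. This gives a sequence $w_1=x \sqsubset w_2 \sqsubset \cdots \sqsubset w_m=y$ with $m\geq 2$. The subposet $[x,y]$, with the restricted partial order and the restricted total order, then satisfies Definition~\ref{def:zigzag-poset} with extrema $\{w_1,\ldots,w_m\}$. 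The one point needing care is the alternation in condition (3). If $x$ or $y$ is not an extremum of $P$, the first or last segment is just a truncation of a segment of $P$, so the orientation pattern persists after reindexing. The parity convention may flip, which is harmless because Definition~\ref{def:zigzag-poset}(3) allows either pattern.

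The second step is to check that $F|_{[x,y]}$ is of projective constant type whenever it is nonzero. Write $F\cong F(I,M)$. Then $F|_{[x,y]}\cong F(I\cap[x,y],M)$. If the intersection is nonempty, it is an interval of $[x,y]$: convexity in the $\sqsubseteq$ sense is inherited, and Lemma~\ref{lem:interval-in-zigzag} applied to the zigzag poset $[x,y]$ shows it is an interval. If $I\cap[x,y]=\emptyset$, then $F|_{[x,y]}=0$, and all four conditions hold trivially because the colimit is $0$.

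In the remaining case, applying Lemma~\ref{lem:nec-zigzag-interval-pc-finite} to $F|_{[x,y]}$ with extrema $w_1=x,\ldots,w_m=y$ gives (N1)--(N4). These are exactly (C1)--(C4) for the pair $x\sqsubseteq y$, since $F_{w_1}=F_x$ and $F_{w_m}=F_y$. I expect the main obstacle to be the first step, namely verifying carefully that the truncated subposet really is a zigzag poset whose extrema are the endpoints together with the interior extrema of $P$. This requires handling the boundary segments when $x$ or $y$ is not itself an extremum, and the degenerate case where $[x,y]$ lies inside a single segment and so is totally ordered with $m=2$.
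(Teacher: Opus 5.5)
Your proposal is correct and follows the paper's proof. You dispose of the cases $x=y$ and $I\cap[x,y]=\emptyset$ trivially, then view $F|_{[x,y]}$ as a zigzag module of projective constant type whose finite extrema are $x$, the extrema of $P$ between $x$ and $y$, and $y$, and apply Lemma~\ref{lem:nec-zigzag-interval-pc-finite}; your version just spells out what the paper leaves as ``observe'' (that the truncated subposet is a zigzag poset and that $I\cap[x,y]$ is an interval in it), and taking only the extrema strictly between $x$ and $y$ neatly avoids the possible repetition in the paper's list $\{x,z_{s+1},\ldots,z_t,y\}$.
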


   \begin{proof}
      By Definition~\ref{def:pc-module}, $F$ is isomorphic to $F(I, M)$ for some interval $I$ in P and for some projective $R$-module $M$. It suffices to prove the conditions hold for $F=F(I, M)$.  
         
      Let $x \sqsubseteq y$ be indices in $P$. When $x=y$, the four conditions are immediate. Similarly, when $[x,y] \cap I = \emptyset$, the four conditions are immediate. Let $x \sqsubset y$ such that $[x,y] \cap I \neq \emptyset$. 

      There exist $s,t \in \Gamma$ such that $z_{s} \sqsubseteq x \sqsubseteq z_{s+1}$ and $z_t \sqsubseteq y \sqsubseteq z_{t+1}$ by Lemma~\ref{lem:zigzag-shape}. Observe that $F|_{[x,y]}$ is a nonzero zigzag persistence module of projective constant type with either:
      \begin{enumerate}
         \item finite extrema $\{x,y\}$ if $s=t$;
         \item finite extrema $\{x, z_{s+1}, z_{s+2}, \ldots, z_{t-1},z_{t}, y \}$ if $s < t$.
      \end{enumerate}
      In either case, Lemma~\ref{lem:nec-zigzag-interval-pc-finite} guarantees the four conditions are satisfied. 
   \end{proof}

   The following result is the statement of Theorem~\ref{thm:main-pc-zigzag-nec} regarding zigzag persistence modules.

   %%% Theorem: Zigzag Necessity %%%
   \begin{Theorem}\label{thm:nec-zigzag-pc}
      Let $F:P \to \RMod$ be a nonzero p.f.g. zigzag persistence module with extrema $\{z_i \mid i \in \Gamma \}$. If $F$ admits a projective constant (respectively, interval) decomposition, then $F$ satisfies the projective colimit conditions.
   \end{Theorem}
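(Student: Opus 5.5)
The plan is to reduce to the single-summand case treated in Lemma~\ref{lem:nec-zigzag-interval-pc}. Colimits over a finite subposet, and cokernels of maps into them, commute with direct sums. Because $F$ is pointwise finitely generated, only finitely many summands contribute on a fixed finite region.

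First, I will fix a projective constant decomposition $F=\bigoplus_{G\in\mathcal{A}}G$. An interval decomposition is a special case, since $R$ is projective, so the parenthetical version follows at once. Next, I will fix indices $x\sqsubseteq y$ in $P$ and consider the restriction $F|_{[x,y]}=\bigoplus_{G\in\mathcal{A}}G|_{[x,y]}$. Each $G|_{[x,y]}$ is either zero or again of projective constant type, over the interval $I\cap[x,y]$. This uses Lemma~\ref{lem:interval-in-zigzag}: the intersection of two $\sqsubseteq$-convex sets is $\sqsubseteq$-convex, so $I\cap[x,y]$ is an interval of the zigzag poset $[x,y]$. Moreover, $[x,y]$ is itself a zigzag poset whose extrema are $x$, $y$, and the $z_i$ strictly between them, as in the proof of Lemma~\ref{lem:nec-zigzag-interval-pc}.

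The key step is finiteness. A colimit over $[x,y]$ is a quotient of $\bigoplus_{i} F_{z_i}$ over the finitely many sink extrema in $[x,y]$. More concretely, it is a quotient of the direct sum of $F_w$ over the finitely many $\sqsubseteq$-extremal points of $[x,y]$ (the $z_i$ in $[x,y]$ together with $x$ and $y$). Any summand $G$ vanishing at all these finitely many points has $G|_{[x,y]}$ with zero colimit and zero contribution to $F_x$ and $F_y$. Each of those points carries a finitely generated module, so only finitely many $G$ are nonzero at any of them. Hence
\[
\Colim(F|_{[x,y]})\cong\bigoplus_{G\in\mathcal{A}'}\Colim(G|_{[x,y]})
\]
for a finite $\mathcal{A}'\subseteq\mathcal{A}$, because colimits commute with direct sums in $\RMod$. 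The maps $F_x\to\Colim$, $F_y\to\Colim$ and $F_x\oplus F_y\to\Colim$ are likewise the direct sums of the corresponding maps for the $G$'s; the $G$ outside $\mathcal{A}'$ contribute maps into $0$. Cokernels commute with direct sums, so each of the four modules in (C1)--(C4) for $F$ is the direct sum of the corresponding modules for the $G\in\mathcal{A}'$, which are projective by Lemma~\ref{lem:nec-zigzag-interval-pc}.

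Since a direct sum of projective modules is projective, all four conditions hold for $F$. The main obstacle I expect is making the finiteness reduction precise, i.e.\ justifying that every summand $G$ that is zero at all $\sqsubseteq$-extremal points of $[x,y]$ contributes nothing. If $G$'s support meets $[x,y]$ only strictly inside some $(z_i,z_{i+1})$ on the source side, it still could contribute to the colimit, so I need care. I would first try an alternative: any point $w$ of $[x,y]$ maps to some sink extremum $z$ of $[x,y]$, and the colimit is generated by the images of $F_z$ for sinks $z$. A summand with $G_z=0$ at every sink $z$ of $[x,y]$ then has zero colimit. If instead that fails for a particular shape, the fallback is to note that projectivity is only needed for the infinite-sum case, where p.f.g.\ gives that each of $F_x$, $F_y$, and the colimit is finitely generated, and to argue directly that the summands with nonzero contribution form a finite set.
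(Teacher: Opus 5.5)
Your proposal is correct and follows essentially the same route as the paper. Both apply Lemma~\ref{lem:nec-zigzag-interval-pc} to each summand, use that colimits and cokernels commute with direct sums, and invoke pointwise finite generation to keep only finitely many summands in play. Your sink argument correctly settles the finiteness step you flagged as the main obstacle, but that step is in fact dispensable: colimits and cokernels commute with arbitrary direct sums in $\RMod$, and an arbitrary direct sum of projective modules is projective.
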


   \begin{proof}
      Let $F = \bigoplus_{G \in \mathcal{A}}G$ be a projective constant decomposition of $F$. Then each $G$ satisfies the PCC by Lemma~\ref{lem:nec-zigzag-interval-pc}. It is clear that $F$ will also satisfy the PCC based on the fact that colimits commute with coproducts and cokernels in the category $\RMod$ \cite{par70}*{\S2, Corollary 2}. In particular, it is essential that $F$ is pointwise finitely generated so that only finitely many $G_x$ are nontrivial for any fixed $x \in P$. An interval decomposition of $F$ is a special case of a projective constant decomposition, which makes the respective claim immediate.
   \end{proof}

   Consequently, we also establish Theorem~\ref{thm:main-pc-quiver-nec} regarding $A_n$-persistence modules.

   %%% Theorem: A_n Quiver Necessity %%%
   \begin{Theorem}\label{thm:nec-quiver-pc}
      Let $F:A_n \to \RMod$ be a nonzero p.f.g. $A_n$-persistence module. If $F$ admits a projective constant (respectively, interval) decomposition, then $F$ satisfies the projective colimit conditions.
   \end{Theorem}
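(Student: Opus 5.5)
The plan is to reduce the statement to Theorem~\ref{thm:nec-zigzag-pc}, treating the small values of $n$ separately. The relevant facts are these. For $n \geq 2$, an $A_n$-persistence module is a zigzag persistence module with finite extrema. Its associated total order $\sqsubseteq$ is the natural order on $\{1,\ldots,n\}$. Its PCC is by definition the zigzag PCC of Definition~\ref{def:proj-colim-cond}.

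\textbf{Case $n=1$.} Here $F$ consists of a single module $F_1$, and the PCC only asks that $F_1$ be projective. Write the projective constant decomposition as $F=\bigoplus_{G\in\mathcal{A}}G$. Each nonzero $G_1$ is isomorphic to some projective module $M_G$, so $F_1=\bigoplus_{G} G_1$ is a direct sum of projectives and is therefore projective. Note that p.f.g.\ is not even needed for this.

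\textbf{Case $n\geq 2$.} Viewed as a zigzag poset, $A_n$ has extrema $z_1,\ldots,z_m$: these are the sources and sinks together with the endpoints $1$ and $n$, listed in increasing order. We need $m\geq 2$, which holds because $1\neq n$. If $n=2$, the extrema are simply $\{1,2\}$. In every case the hypotheses of Definition~\ref{def:zigzag-poset} hold: the total order is the natural one, each comparable pair lies between consecutive extrema, and the orientation alternates between consecutive extrema.

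A projective constant decomposition of $F$ as an $A_n$-module is verbatim a projective constant decomposition of $F$ as a zigzag module. The notion of interval is defined purely from the poset $A_n$, and Lemma~\ref{lem:interval-in-zigzag} confirms that it agrees with the zigzag description. Theorem~\ref{thm:nec-zigzag-pc} therefore gives the PCC directly. The ``respectively, interval'' clause follows because an interval decomposition is in particular a projective constant decomposition.

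\textbf{Main obstacle.} The only delicate point is checking that the extrema sequence really exhibits $A_n$ as a zigzag poset. One must choose the extrema to include every orientation change so that condition (3) of Definition~\ref{def:zigzag-poset} holds, and one may need to flip between its two alternatives depending on the orientation of the first arrow. The paper already asserts this identification when defining $A_n$-persistence modules, so I would cite that and give only the brief verification above.
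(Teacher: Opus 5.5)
Your proposal is correct and follows the paper's proof. The case $n=1$ is handled directly (a direct sum of projective modules is projective), and for $n \geq 2$ you view $F$ as a zigzag persistence module with finite extrema and invoke Theorem~\ref{thm:nec-zigzag-pc}, covering interval decompositions as a special case; your check that the orientation-change points, including the endpoints, serve as extrema satisfying Definition~\ref{def:zigzag-poset} just fills in what the paper asserts without proof.
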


   \begin{proof}
      Suppose that $F$ admits a projective constant decomposition. When $n=1$, the result is immediate. When $n \geq 2$, $F$ can be viewed as a nonzero p.f.g. zigzag persistence module with finite extrema which satisfies the PCC by Theorem~\ref{thm:nec-zigzag-pc}. An interval decomposition of $F$ is a special case of a projective constant decomposition, which makes the respective claim immediate.
   \end{proof}

   Finally, we state Theorem~\ref{thm:main-pc-total-nec} regarding totally ordered persistence modules.

   %%% Theorem: Totally Ordered Necessity %%%
   \begin{Theorem}\label{thm:nec-total-pc}
      Let $F:P \to \RMod$ be a nonzero p.f.g. totally ordered persistence module. If $F$ admits a projective constant (respectively, interval) decomposition, then the cokernel of every internal morphism $F_{x,y}$ is projective.
   \end{Theorem}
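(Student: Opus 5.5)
The plan is to prove the statement directly from the decomposition, without passing through the zigzag machinery. There is no commutativity assumption here, but none is needed. Let $F=\bigoplus_{G\in\mathcal{A}} G$ be a projective constant decomposition and fix $x\le y$ in $P$. Since each $G\in\mathcal{A}$ is a persistence submodule, $F_{x,y}$ maps $G_x$ into $G_y$, so $F_{x,y}$ is the direct sum of the maps $G_{x,y}:G_x\to G_y$. Hence
\[
\Coker[x,y]=\Frac{\bigoplus_{G} G_y}{\bigoplus_{G} G_{x,y}(G_x)}\cong \bigoplus_{G\in\mathcal{A}}\Coker(G_{x,y}).
\]
This is the same computation as in Lemma~\ref{lem:proj-coker-submod}, now applied to an arbitrary set of summands. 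A direct sum of projective modules is projective, so it suffices to show that each $\Coker(G_{x,y})$ is projective.

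Fix $G\cong F(I,M)$ with $M$ projective. Transporting along the isomorphism, it is enough to compute the cokernel of $F(I,M)_{x,y}$, which is just the cokernel of a map between $F(I,M)_x$ and $F(I,M)_y$. There are three cases. If $x,y\in I$, the map is $\Id_M$ and the cokernel is $0$. If $y\notin I$, the target is $0$ and the cokernel is $0$. If $y\in I$ but $x\notin I$, the map is $0\to M$ and the cokernel is $M$, which is projective. In every case the cokernel is projective. Notice that convexity of $I$ is not even needed here.

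If one wants to avoid infinite sums, pointwise finite generation can be used to note that only finitely many $G_y$ are nonzero. Then the direct sum above is a finite sum, although this is not necessary for projectivity. For the interval decomposition case, each summand is $F(I,R)$, which is a special case with $M=R$ projective, so the claim follows at once.

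The only point needing care is the identification of the cokernel of a direct sum of maps with the direct sum of the cokernels. This holds because quotients of internal direct sums by componentwise submodules split as direct sums of quotients, and it is routine. I do not expect any real obstacle.
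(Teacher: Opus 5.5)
Your proof is correct, but it takes a different route from the paper's. The paper never computes the cokernel directly. For $x<y$ it restricts $F$ to $[x,y]$ and regards $F|_{[x,y]}$ as a zigzag persistence module with extrema $\{x,y\}$; this restriction inherits a projective constant decomposition because the intersection of an interval with $[x,y]$ is again an interval or empty. It then applies Theorem~\ref{thm:nec-zigzag-pc} to obtain the PCC, and reads off projectivity of $\Coker(F_{x,y})$ from Lemma~\ref{lem:pcc-more-facts}(2): there $\Colim(F|_{[x,y]})=F_y$, so condition (C2) is exactly the desired statement.

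You bypass all of that machinery, namely the case analysis of Lemma~\ref{lem:nec-zigzag-interval-pc-finite} and the appeal to colimits commuting with coproducts and cokernels. In its place you use the splitting $\Coker[x,y]\cong\bigoplus_{G\in\mathcal{A}}\Coker(G_{x,y})$, which is the arbitrary-index version of Lemma~\ref{lem:proj-coker-submod}, together with the three-case computation for $F(I,M)$.

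Your route is shorter, self-contained, and strictly more general. It holds for an arbitrary poset $P$ and uses neither the p.f.g., nonzero, nor Noetherian hypotheses. As you note, it does not even use convexity of $I$. The paper's route instead buys uniformity: the totally ordered statement falls out as a special case of the full PCC necessity theorem. The paper needs that theorem anyway for zigzag modules, where conditions such as (C1) and (C4) involve colimits over several extrema and do not reduce to the cokernel of a single internal morphism.
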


   \begin{proof}
      When $x=y$, the claim is immediate. When $x<y$, $F|_{[x,y]}$ can be viewed as a nonzero p.f.g. zigzag persistence module with finite extrema $\{x,y\}$. Since $F$ admits a projective constant decomposition, $F|_{[x,y]}$ also admits a projective constant decomposition and therefore $F|_{[x,y]}$ satisfies the PCC by Theorem~\ref{thm:nec-zigzag-pc}. In particular, $\Coker(F_{x,y})$ is projective by Lemma~\ref{lem:pcc-more-facts}. An interval decomposition of $F$ is a special case of a projective constant decomposition, which makes the respective claim immediate.
   \end{proof}

   Note in Theorems~\ref{thm:nec-zigzag-pc},~\ref{thm:nec-quiver-pc}, and \ref{thm:nec-total-pc}, the necessity of the conditions hold regardless of whether $R$ is commutative or noncommutative. This is in contrast to the sufficiency results, which in some cases required $R$ to be commutative.

\section{Uniqueness}\label{sec:uniqueness}

   We conclude this article by investigating the uniqueness of projective constant decompositions. Given that modules over Noetherian rings need not admit a unique decomposition into indecomposable summands, uniqueness of projective constant decompositions cannot be expected in full generality. 

   Nevertheless, by imposing additional conditions on $\RMod$, we obtain meaningful uniqueness results. We first demonstrate that if $\RMod$ is a Krull-Schmidt category—an additive category where objects decompose uniquely into finite direct sums of indecomposables with local endomorphism rings—then every projective constant decomposition of a persistence module admits a refinement to an indecomposable version (see Definition~\ref{def:indec-pc-decomp}). This decomposition is unique up to isomorphism and permutation of summands. Furthermore, we show that when $R$ is an integral domain, an interval decomposition of any pointwise finitely generated persistence module indexed by a small poset category likewise satisfies this uniqueness property. Notably, we relax the Noetherian requirement on $R$ throughout this section, assuming only that $R$ is a unital ring and that $\RMod$ denotes the category of all unitary left $R$-modules. Additionally, the results are established for persistence modules indexed by arbitrary posets, extending beyond the cases of $A_n$-type quivers, totally ordered sets, or zigzag posets presented in this article.

   %%% Defintion: Indecomposable Persistence Module %%%
   \begin{Definition}
      Let $P$ be a poset. We say a persistence module $F:P \to \RMod$ is \defn{indecomposable} if $F$ is nonzero and $F \cong G \oplus H$ implies either $G=0$ or $H=0$. Likewise, an $R$-module $M$ is \defn{indecomposable} if $M$ is nonzero and $M \cong N \oplus L$ implies either $N=0$ or $L=0$.
   \end{Definition}

   %%% Lemma: F(I,M) is Indecomposable %%%
   \begin{Lemma}\label{lem:pc-constant-decomposable}
      Let $P$ be a poset and let $F:P \to \RMod$ be a module of projective constant type with $F \cong F(I,M)$ for some interval $I$ in $P$ and for some indecomposable projective $R$-module $M$. Then $F$ is indecomposable.
   \end{Lemma}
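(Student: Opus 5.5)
The plan is to reduce to $F = F(I,M)$ and show that its endomorphism ring is isomorphic to $\End_R(M)$. Indecomposability of $M$ then forces $\End_R(M)$ to have no nontrivial idempotents, and hence $F$ has none either. Since indecomposability is invariant under isomorphism of persistence modules, it suffices to treat $F = F(I,M)$. Note that $F \neq 0$ because $I$ is nonempty and $M \neq 0$.

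Suppose $F \cong G \oplus H$. Transporting along the isomorphism, we obtain an idempotent natural transformation $e: F \to F$ (the projection onto the image of $G$), with $1-e$ corresponding to $H$. I will show that any natural transformation $\eta: F(I,M) \to F(I,M)$ is ``constant'' on $I$. Concretely, for $x, y \in I$ with $x \leq y$, naturality gives $\eta_y \circ \Id_M = \Id_M \circ \eta_x$, so $\eta_x = \eta_y$. For arbitrary $x, z \in I$, connectedness of $I$ (Definition~\ref{def:interval}) provides a zigzag $x = y_0, y_1, \ldots, y_n = z$ in $I$ whose consecutive elements are comparable. Applying the previous observation along this path gives $\eta_x = \eta_z$. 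Outside $I$, every component is a map $0 \to 0$. Hence $\eta \mapsto \eta_x$, for a fixed $x \in I$, is an injective ring homomorphism $\End(F) \to \End_R(M)$. It is in fact an isomorphism, since any $\phi \in \End_R(M)$ defines a natural transformation by using $\phi$ at every point of $I$ and $0$ elsewhere; naturality is clear because the structure maps are identities or zero.

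Applying this to the idempotent $e$, the component $e_x$ is an idempotent of $\End_R(M)$. It therefore splits $M = e_x(M) \oplus (1-e_x)(M)$. Since $M$ is indecomposable, one summand vanishes, so $e_x = 0$ or $e_x = \Id_M$. By constancy, $e = 0$ or $e = \Id_F$, which means $G = 0$ or $H = 0$. Thus $F$ is indecomposable.

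The only point requiring care is the constancy of $\eta$ across all of $I$. This is where both conditions on $I$ enter: connectedness is needed for the zigzag path, and the path must stay inside $I$ so that every map along it is $\Id_M$ rather than zero. Convexity is not actually needed for this argument; the definition of an interval already builds it in.
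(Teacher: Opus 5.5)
Your proof is correct, but it takes a different route from the paper.

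The paper argues pointwise. From $F(I,M)\cong G\oplus H$ it gets $M\cong G_x\oplus H_x$ for each $x\in I$, so at each point one of $G_x,H_x$ vanishes. It then asserts that the \emph{same} summand vanishes at every $x\in I$, and as written that step is not justified. Filling it in needs the ingredient you isolate. For $x\le y$ in $I$, the map $\Id_M$ corresponds under the isomorphism to $G_{x,y}\oplus H_{x,y}$, so both components are isomorphisms and $G_x=0$ if and only if $G_y=0$. Connectedness of $I$ then propagates the vanishing along zigzag paths.

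Your argument instead works with the idempotent $e\in\End(F(I,M))$. Using naturality and connectedness, you show that every endomorphism is constant on $I$, i.e.\ $\End(F(I,M))\cong\End_R(M)$. This makes the role of connectedness explicit and rigorous. It is also exactly the computation the paper carries out separately in the proof of Lemma~\ref{lem:krull-local-endo}, so your route handles both lemmas with one observation. The paper's pointwise approach is shorter and avoids endomorphism rings, but it is complete only once the missing propagation step is supplied.

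One small remark: convexity does enter your argument implicitly, because it is what makes $F(I,M)$ a functor at all. Without it, $F_{x,z}=\Id_M$ could factor through a zero module.
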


   \begin{proof}
      It suffices to prove the result for $F(I,M)$. Suppose that $F(I,M)$ admits a decomposition $F(I,M) \cong G \oplus H$. Then for each $x \in I$, we have $F(I,M)_x \cong G_x \oplus H_x$. Since $F(I,M)_x = M$ is an indecomposable projective $R$-module, it follows that either $G_x=0$ for all $x \in I$ or $H_x=0$ for all $x \in I$, implying that $F(I,M)$ is indecomposable.
   \end{proof}

   %%% Defintion: Indecomposable Projective Constant Decomposition %%%
   \begin{Definition}\label{def:indec-pc-decomp}
      Let $P$ be a poset and let $F:P \to \RMod$ be a persistence module. An \defn{indecomposable projective constant decomposition} of $F$ is a projective constant decomposition $F = \bigoplus_{G \in \mathcal{A}} G$ such that for every $G \in \mathcal{A}$, we have $G \cong F(I,M)$ for some interval $I$ in $P$ and for some indecomposable projective $R$-module $M$.
   \end{Definition}

   %%% Lemma: F(I,M) Local Endomorphism Ring %%%
   \begin{Lemma}\label{lem:krull-local-endo}
      Let $P$ be a poset and let $F:P \to \RMod$ be a module of projective constant type with $F \cong F(I,M)$ for some interval $I$ in $P$ and for some indecomposable projective $R$-module $M$. If $\RMod$ is a Krull-Schmidt category, then $F$ has a local endomorphism ring.
   \end{Lemma}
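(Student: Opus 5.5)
We reduce to $F = F(I,M)$ with $M$ an indecomposable projective $R$-module. An isomorphism $F \cong F(I,M)$ induces an isomorphism of endomorphism rings, and locality is invariant under ring isomorphism, so nothing is lost. Since $\RMod$ is Krull--Schmidt and $M$ is indecomposable, $\End_R(M)$ is local. The plan is to show that $\End(F(I,M)) \cong \End_R(M)$ as rings.

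First, define $\Phi:\End(F(I,M)) \to \End_R(M)$ by choosing a fixed $x_0 \in I$ (possible since intervals are nonempty) and sending a natural transformation $\eta$ to its component $\eta_{x_0}$. This is a ring homomorphism, because composition and addition of natural transformations are computed componentwise.

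Injectivity comes from connectedness of $I$. Take any $x \in I$ and a zigzag $x_0 = y_0, y_1, \ldots, y_n = x$ in $I$ with consecutive elements comparable. Naturality along a morphism $y_{i-1} \leq y_i$ (or the reverse) inside $I$, where both structure maps are $\Id_M$, gives $\eta_{y_{i-1}} = \eta_{y_i}$. By induction $\eta_x = \eta_{x_0}$ for all $x \in I$. Components outside $I$ are maps between zero modules or into and out of $0$, so they vanish. Hence $\eta$ is determined by $\eta_{x_0}$.

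Surjectivity: given $\varphi \in \End_R(M)$, define $\eta_x = \varphi$ for $x \in I$ and $\eta_x = 0$ otherwise. We must check naturality for every $x \leq y$ in $P$.
\begin{enumerate}
\item If $x, y \in I$, both squares involve $\Id_M$ and commute trivially.
\item If neither is in $I$, all modules are zero.
\item If exactly one is in $I$, then $F(I,M)_{x,y} = 0$. The square reads $\eta_y \circ 0 = 0 \circ \eta_x$, and both sides are zero maps because one of the source or target modules is $0$.
\end{enumerate}
Here we do not even need convexity: the structure map is defined to be zero whenever one endpoint lies outside $I$, and functoriality of $F(I,M)$ is already given by the definition. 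So $\Phi$ is a ring isomorphism, and $\End(F)$ is local.

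The only delicate point is the connectedness step for injectivity, and it is routine. The remaining subtlety is citing that in a Krull--Schmidt category indecomposable objects have local endomorphism rings; the paper's description of Krull--Schmidt categories in the introduction to this section supplies exactly this, so we would invoke it directly.
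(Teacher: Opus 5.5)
Your proposal is correct and follows the paper's argument: naturality along comparable pairs in $I$ (where the structure maps are $\Id_M$), together with connectedness of $I$, forces all components to agree. This gives $\End(F(I,M)) \cong \End_R(M)$, which is local because $M$ is indecomposable in a Krull--Schmidt category; you are in fact more careful than the paper, which does not explicitly check that the evaluation map is surjective or that components outside $I$ vanish.
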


   \begin{proof}
      It suffices to prove the result for $F(I,M)$. Let $f: F(I,M) \to F(I,M)$ be a morphism. Observe that for all $r,w \in I$ such that $r \leq w$, we have 
      \[ F(I,M)_{r,w} \circ f_r  = f_w \circ F(I,M)_{r,w}.\]
      Hence $f_r = f_w$ since $F(I,M)_{r,w} = \Id_M$.

      Now consider any $x,y \in I$. Since $I$ is connected, there exist $y_0, y_1, \ldots, y_n \in I$ such that Definition \ref{def:interval}(2) holds. By the observation above, we will have $f_x=f_{y_0}=f_{y_1}=\cdots=f_{y_n}=f_y$. This yields an isomorphism 
      \[ \End(F(I,M)) \cong \End_R(M),\]
      where $\End(F(I,M))$ and $\End_R(M)$ are the endomorphism rings of $F(I,M)$ and $M$, respectively. As $M$ is indecomposable, $\End_R(M)$ is local, which in turn ensures that $\End(F(I, M))$ is local.
   \end{proof}

   We now state and prove Theorem~\ref{thm:main-krull-uniqueness}. 

   %%% Theorem: Uniqueness of Indecomposable Projective Constant Decompositions %%%
   \begin{Theorem}\label{thm:krull-uniqueness}
      Let $P$ be a poset and let $F:P \to \RMod$ be a nonzero persistence module. Suppose that $\RMod$ is a Krull-Schmidt category. If $F$ admits a projective constant decomposition, then $F$ admits a unique indecomposable projective constant decomposition up to isomorphism and permutation of the summands.
   \end{Theorem}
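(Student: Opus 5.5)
The proof has two parts: existence, obtained by refining the given projective constant decomposition, and uniqueness, obtained from the Krull--Remak--Schmidt--Azumaya theorem.

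\textbf{Existence.} Start from a projective constant decomposition $F=\bigoplus_{G\in\mathcal{A}}G$. Each summand satisfies $G\cong F(I_G,M_G)$ with $M_G$ projective. Since $\RMod$ is Krull--Schmidt, I would write $M_G=\bigoplus_{j} M_{G,j}$ with each $M_{G,j}$ indecomposable, and each $M_{G,j}$ is projective as a direct summand of a projective module. Fix an isomorphism $\varphi_G: F(I_G,M_G)\to G$. Transport the splitting of $M_G$ along $\varphi_G$: set $G^{(j)}_x=\varphi_{G,x}(M_{G,j})$ for $x\in I_G$ and $G^{(j)}_x=0$ otherwise. Because the structure maps of $F(I_G,M_G)$ are identities on $I_G$, each $G^{(j)}$ is a persistence submodule. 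Moreover $G^{(j)}\cong F(I_G,M_{G,j})$ and $G=\bigoplus_j G^{(j)}$. Collecting all the $G^{(j)}$ gives an indecomposable projective constant decomposition of $F$ (Definition~\ref{def:indec-pc-decomp}).

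\textbf{Uniqueness.} Each summand $F(I,M)$ with $M$ indecomposable projective has a local endomorphism ring by Lemma~\ref{lem:krull-local-endo}. The category of $P$-persistence modules is the functor category $\RMod^P$, which is abelian with arbitrary direct sums computed pointwise. Azumaya's theorem \cite{azu50} states that any two decompositions of an object into direct sums of objects with local endomorphism rings are isomorphic up to a bijection of summands. It is usually stated for modules, but it holds verbatim in any Grothendieck category; alternatively, $\RMod^P$ is equivalent to modules over the (non-unital) category algebra $R[P]$ with local units. Applying it to two indecomposable projective constant decompositions of $F$ gives a bijection between the index sets with isomorphic corresponding summands. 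This is exactly uniqueness up to isomorphism and permutation.

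\textbf{Main obstacle.} The delicate step is the existence refinement when $M_G$ is not finitely generated, since the paper's Krull--Schmidt hypothesis on $\RMod$ only guarantees decompositions into finitely many indecomposables. I would interpret the hypothesis as asserting that every module considered (in particular each $M_G$) decomposes into indecomposables with local endomorphism rings. In the main case of interest, where $F$ is pointwise finitely generated, each $M_G$ is finitely generated, so the hypothesis applies directly. A second point to justify is that Azumaya's theorem transfers to the functor category. I expect citing its validity for modules over rings with enough idempotents to suffice.
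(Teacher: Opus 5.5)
Your proposal is correct and follows the paper's own argument. The paper likewise refines each summand $F(I,M)$ by splitting $M$ into indecomposable projectives using the Krull--Schmidt hypothesis, then gets uniqueness from Lemmas~\ref{lem:pc-constant-decomposable} and \ref{lem:krull-local-endo} together with the Krull--Remak--Schmidt--Azumaya theorem. Your extra care goes beyond what the paper writes: you transport the splitting explicitly along $\varphi_G$, justify that Azumaya's theorem applies in the functor category $\RMod^P$, and flag that the Krull--Schmidt hypothesis only controls finite decompositions, all points the paper passes over silently.
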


   \begin{proof}
      Let $F = \bigoplus_{G \in \mathcal{A}} G$ be a projective constant decomposition of $F$. For each $G \in \mathcal{A}$, we have $G \cong F(I,M)$ for some interval $I$ in $P$ and for some projective $R$-module $M$. Since $\RMod$ is a Krull-Schmidt category, we can decompose $M$ into a finite direct sum of indecomposable projective $R$-modules. This induces a finite decomposition $G = \bigoplus_{i=1}^n G^{(i)}$ where each $G^{(i)} \cong F(I,N_i)$ for some indecomposable projective $R$-module $N_i$. Thus each $G \in \mathcal{A}$ admits an indecomposable projective constant decomposition, and hence $F$ admits an indecomposable projective constant decomposition.

      To see uniqueness, let $F = \bigoplus_{H \in \mathcal{B}} H$ be an indecomposable projective constant decomposition of $F$. By Lemmas~\ref{lem:pc-constant-decomposable} and \ref{lem:krull-local-endo}, each $H \in \mathcal{B}$ is indecomposable with local endomorphism ring. The Krull-Remak-Schmidt-Azumaya theorem~\cite{azu50} guarantees this decomposition is unique up to isomorphism and permutation of the summands.
   \end{proof}

   %%% Corollary: Uniqueness of Interval Decompositions %%%
   \begin{Corollary}\label{cor:uniquness-int}
      Let $P$ be a poset and let $F:P \to \RMod$ be a nonzero persistence module. Suppose that $\RMod$ is a Krull-Schmidt category and that $R$ is an indecomposable $R$-module. If $F$ admits an interval decomposition, then the interval decomposition is unique up to isomorphism and permutation of the summands.
   \end{Corollary}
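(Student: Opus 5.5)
The plan is to reduce Corollary~\ref{cor:uniquness-int} directly to Theorem~\ref{thm:krull-uniqueness}, by showing that an interval decomposition is already an indecomposable projective constant decomposition.

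\emph{Step 1.} Let $F = \bigoplus_{G \in \mathcal{A}} G$ be an interval decomposition. Each $G \in \mathcal{A}$ satisfies $G \cong F(I_G, R)$ for some interval $I_G$ in $P$. Since $R$ is free, hence projective, this is a projective constant decomposition of $F$. Moreover, $R$ is assumed to be indecomposable as an $R$-module, so the decomposition is in fact an \emph{indecomposable} projective constant decomposition in the sense of Definition~\ref{def:indec-pc-decomp}.

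\emph{Step 2.} By Lemma~\ref{lem:pc-constant-decomposable}, each summand $F(I_G,R)$ is indecomposable. Because $\RMod$ is Krull-Schmidt, Lemma~\ref{lem:krull-local-endo} shows each summand has local endomorphism ring; concretely, $\End(F(I_G,R)) \cong \End_R(R) \cong R^{\text{op}}$, so $R$ itself is local.

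\emph{Step 3.} Now take two interval decompositions of $F$. By Step~1, both are indecomposable projective constant decompositions of $F$. Theorem~\ref{thm:krull-uniqueness}, or directly the Krull-Remak-Schmidt-Azumaya theorem~\cite{azu50} applied to decompositions into summands with local endomorphism rings, then gives a bijection $\sigma:\mathcal{A} \to \mathcal{B}$ with $G \cong \sigma(G)$. This is exactly uniqueness up to isomorphism and permutation of the summands.

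The only point needing care is the final translation: an isomorphism $F(I,R) \cong F(J,R)$ of persistence modules forces $I = J$, since both modules have supports $I$ and $J$ respectively. So uniqueness up to isomorphism also recovers the multiset of intervals. I expect no substantive obstacle beyond checking that the hypotheses of Lemmas~\ref{lem:pc-constant-decomposable} and \ref{lem:krull-local-endo} are met with $M = R$.
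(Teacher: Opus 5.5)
Your proposal is correct and follows the paper's own argument. The paper also deduces the corollary immediately from Theorem~\ref{thm:krull-uniqueness} together with Lemmas~\ref{lem:pc-constant-decomposable} and~\ref{lem:krull-local-endo}, by observing, as in your Step~1, that indecomposability of $R$ makes every interval decomposition an indecomposable projective constant decomposition; your extra remarks (that $R$ is then forced to be local, and that $F(I,R)\cong F(J,R)$ implies $I=J$ when $R\neq 0$) are correct but not needed for the statement as phrased.
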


   \begin{proof}
      Immediate from Theorem~\ref{thm:krull-uniqueness}, Lemma~\ref{lem:pc-constant-decomposable} and Lemma~\ref{lem:krull-local-endo}.
   \end{proof}

   The assumption in Corollary~\ref{cor:uniquness-int} that $R$ is indecomposable as an $R$-module is essential. This condition ensures that an interval decomposition is indeed an indecomposable projective constant decomposition.

   Recall that a category is \defn{small} if both its collection of objects and its collection of morphisms form sets. The poset categories of $A_n$-type quivers, totally ordered sets, and zigzag posets are all small. For persistence modules indexed by small poset categories, we establish the uniqueness of interval decompositions over integral domains without assuming the Krull-Schmidt property. This is achieved via scalar extension to the field of fractions, which reduces the problem to the known uniqueness results for persistence modules over fields.

   %%% Definition: Tensor Product of Persistence Modules %%%
   \begin{Definition}
      Let $R$ be an integral domain, $\mathfrak{F}$ the field of fractions of $R$, and $\Vect(\mathfrak{F})$ the category of $\mathfrak{F}$-vector spaces. Let $P$ be a poset and let $F:P \to \RMod$ be a persistence module. Define $\mathfrak{F} \otimes F: P \to \Vect(\mathfrak{F})$ to be the persistence module such that:
      \begin{itemize}
         \item $(\mathfrak{F} \otimes F)_x = \mathfrak{F} \otimes_R F_x$ for all $x \in P$;
         \item $(\mathfrak{F} \otimes F)_{x,y} = \Id_{\mathfrak{F}} \otimes F_{x,y}$ for all $x \leq y$ in $P$. 
      \end{itemize}
   \end{Definition}

   We now state and prove Theorem~\ref{thm:main-domain-uniqueness}. This result is obtained by generalizing the proof of \cite{luo23}*{\S4, Lemma 1}, which establishes the uniqueness of interval decompositions for pointwise freely and finitely generated persistence modules over PIDs indexed by finite linear quivers.

   %%% Theorem: Uniqueness of Interval Decompositions over Integral Domains %%%
   \begin{Theorem}\label{thm:domain-uniqueness}
      Let $R$ be an integral domain. Let $P$ be a small poset category and let $F:P \to \RMod$ be a nonzero p.f.g. persistence module. If $F$ admits an interval decomposition, then this decomposition is unique up to isomorphism and permutation of the summands.
   \end{Theorem}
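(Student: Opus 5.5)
Suppose $F = \bigoplus_{G \in \mathcal{A}} G = \bigoplus_{H \in \mathcal{B}} H$ are two interval decompositions. I would show that for each interval $I$ in $P$, the number of summands isomorphic to $F(I,R)$ is the same in $\mathcal{A}$ and in $\mathcal{B}$. Then a bijection $\mathcal{A} \to \mathcal{B}$ matching isomorphic summands gives uniqueness up to isomorphism and permutation. The tool is scalar extension to the fraction field $\mathfrak{F}$. This reduces the problem to the known uniqueness of interval decompositions of pointwise finite-dimensional persistence modules over a field, which follows from Krull--Remak--Schmidt--Azumaya since $\End(F(I,\mathfrak{F})) \cong \mathfrak{F}$ is local; compare Lemma~\ref{lem:krull-local-endo} with $\mathrm{Vec}(\mathfrak{F})$ in place of $\RMod$.

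The steps, in order, are as follows.

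\textbf{Step 1.} Check that $\mathfrak{F} \otimes -$ is an additive functor from $P$-persistence modules over $R$ to $P$-persistence modules over $\mathfrak{F}$. It commutes with arbitrary direct sums, since the tensor product commutes with direct sums pointwise and the internal maps are $\Id_{\mathfrak{F}} \otimes F_{x,y}$. Smallness of $P$ ensures these families of modules and natural transformations are genuine sets.

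\textbf{Step 2.} Compute $\mathfrak{F} \otimes F(I,R) \cong F(I,\mathfrak{F})$, using $\mathfrak{F} \otimes_R R \cong \mathfrak{F}$ and $\mathfrak{F} \otimes_R 0 = 0$. An isomorphism $G \cong F(I,R)$ therefore tensors to an isomorphism $\mathfrak{F} \otimes G \cong F(I,\mathfrak{F})$.

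\textbf{Step 3.} Conclude that $\mathfrak{F} \otimes F = \bigoplus_{G \in \mathcal{A}} \mathfrak{F} \otimes G = \bigoplus_{H \in \mathcal{B}} \mathfrak{F} \otimes H$ are two interval decompositions over $\mathfrak{F}$. Since $F$ is p.f.g., each $\mathfrak{F} \otimes F_x$ is finite-dimensional, so only finitely many summands are nonzero at each $x$.

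\textbf{Step 4.} Apply uniqueness over the field. Each $F(I,\mathfrak{F})$ with $I$ nonempty is indecomposable with local endomorphism ring $\mathfrak{F}$, so Azumaya's theorem gives a bijection $\sigma:\mathcal{A} \to \mathcal{B}$ with $\mathfrak{F} \otimes G \cong \mathfrak{F} \otimes \sigma(G)$.

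\textbf{Step 5.} Descend back to $R$. If $F(I,\mathfrak{F}) \cong F(J,\mathfrak{F})$, then their supports agree, so $I = J$. This follows because $\mathfrak{F} \neq 0$: the support of $\mathfrak{F} \otimes F(I,R)$ is exactly $I$. Hence $G \cong F(I,R) \cong \sigma(G)$.

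The main obstacle is Step 4, specifically citing or justifying the field-case uniqueness for an arbitrary small poset and possibly infinitely many summands. Azumaya's theorem handles arbitrary index sets provided each summand has local endomorphism ring, so I would verify $\End(F(I,\mathfrak{F})) \cong \mathfrak{F}$ by the connectivity argument of Lemma~\ref{lem:krull-local-endo}.

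A secondary subtlety is that the decomposition of $\mathfrak{F} \otimes F$ should be internal. The natural map $\mathfrak{F} \otimes G_x \to \mathfrak{F} \otimes F_x$ is injective because $\mathfrak{F}$ is flat over $R$, being a localization. Alternatively, I would just work up to isomorphism, which is all Azumaya's theorem needs.
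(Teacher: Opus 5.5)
Your proposal is correct and follows essentially the same route as the paper: extend scalars to the fraction field $\mathfrak{F}$, observe that each summand $G\cong F(I,R)$ becomes $F(I,\mathfrak{F})$, invoke uniqueness of interval decompositions over a field, and transport the resulting bijection back to the summands over $R$. The differences are only in the level of detail. Where the paper cites Botnan--Crawley-Boevey for the field case, you justify it directly via Azumaya's theorem and $\End(F(I,\mathfrak{F}))\cong\mathfrak{F}$. You also make the paper's ``obvious bijection'' explicit by noting that isomorphic interval modules over $\mathfrak{F}$ have the same support.
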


   \begin{proof}
      Let $\mathfrak{F}$ be the field of fractions of $R$ and $\Vect(\mathfrak{F})$ the category of $\mathfrak{F}$-vector spaces. Let $F=\bigoplus_{G \in \mathcal{A}} G$ be an interval decomposition of $F$. For each $x \in P$, we have
      \begin{flalign*}
         (\mathfrak{F} \otimes F)_x 
         = \mathfrak{F} \otimes_R F_x 
         = \mathfrak{F} \otimes_R \bigg( \bigoplus_{G \in \mathcal{A}} G_x \bigg) 
         \cong \bigoplus_{G \in \mathcal{A}} (\mathfrak{F} \otimes_R G_x)
         \cong \bigoplus_{G \in \mathcal{A}} (\mathfrak{F} \otimes G)_x,
      \end{flalign*}
      where the first isomorphism follows from the fact that only finitely many $G_x$ are nontrivial. Hence there is an obvious way to define a persistence module isomorphism such that
      \[ \mathfrak{F} \otimes F \cong \bigoplus_{G \in \mathcal{A}} (\mathfrak{F} \otimes G).\] 
      In particular, if $G$ is supported on the interval $I$ in $P$, then $\mathfrak{F} \otimes G$ is an interval module supported on $I$. Indeed, for all $x \in I$ we have
      \[ (\mathfrak{F} \otimes G)_x = \mathfrak{F} \otimes_R G_x \cong \mathfrak{F} \otimes_R R \cong \mathfrak{F}, \]
      and moreover for all $x,y \in I$ such that $x \leq y$, we see that the morphism
      \[ (\mathfrak{F} \otimes G)_{x,y} = \Id_{\mathfrak{F}} \otimes G_{x,y}\]
      is a surjection between one-dimensional $\mathfrak{F}$-vector spaces, and hence an isomorphism. Thus $\mathfrak{F} \otimes F$ is interval decomposable, and this decomposition is unique up to isomorphism and permutation of the summands whenever $P$ is a small poset category \cite{bot18}*{\S2}. The obvious bijection between the interval summands of $F$ and those of $\mathfrak{F} \otimes F$ guarantees the interval decomposition of $F$ is likewise unique up to isomorphism and permutation of the summands. 
   \end{proof}

   %%% Corollary: Uniqueness of Interval Decompositions over Specific Rings %%%
   \begin{Corollary}\label{cor:domain-unique-examples}
      Let $R$ be one of the following rings:
      \begin{enumerate}
         \item a principal ideal domain;
         \item the polynomial ring in finite variables $k[x_1, \ldots, x_n]$, where $k$ is a principal ideal domain or a field;
         \item a local Noetherian integral domain;
         \item the ring of formal power series $S[[x]]$, where is $S$ is a local Noetherian integral domain.
      \end{enumerate}
      Let $P$ be a poset and let $F:P \to \RMod$ be a nonzero p.f.g. persistence module. If $F$ admits an interval decomposition, then this decomposition is unique up to isomorphism and permutation of the summands.
   \end{Corollary}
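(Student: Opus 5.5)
The plan is to deduce Corollary~\ref{cor:domain-unique-examples} directly from Theorem~\ref{thm:domain-uniqueness}. For each of the four listed rings we check two hypotheses: $R$ is an integral domain, and the indexing poset can be treated as a small poset category. Nothing further is needed, since Theorem~\ref{thm:domain-uniqueness} needs neither the Noetherian hypothesis nor the condition that projectives are free.

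\textbf{Step 1: each ring is an integral domain.}

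(1) A principal ideal domain is an integral domain by definition.

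(2) If $k$ is a domain (a PID or a field), then $k[x_1,\ldots,x_n]$ is a domain. This follows by induction on $n$, comparing leading coefficients: the leading coefficient of a product of nonzero polynomials over a domain is the product of the leading coefficients, hence nonzero.

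(3) The ring is an integral domain by hypothesis.

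(4) If $S$ is a domain, then $S[[x]]$ is a domain. Write $f=\sum_{i\ge m}a_ix^i$ and $g=\sum_{j\ge l}b_jx^j$ with $a_m,b_l\neq 0$. The coefficient of $x^{m+l}$ in $fg$ is $a_mb_l\neq 0$, so $fg\neq 0$.

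\textbf{Step 2: smallness of the poset.}

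The statement says only "a poset", whereas Theorem~\ref{thm:domain-uniqueness} assumes a small poset category. A poset is by definition a set $P$ with a relation. Its associated category therefore has a set of objects, and at most one morphism $x\to y$ for each pair, so its morphisms are indexed by a subset of $P\times P$. Every poset category is thus small, and the smallness hypothesis is automatic.

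\textbf{Step 3: apply the theorem.}

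With both hypotheses verified, Theorem~\ref{thm:domain-uniqueness} applies to any nonzero p.f.g.\ $F:P\to\RMod$ that admits an interval decomposition, and gives uniqueness up to isomorphism and permutation of summands.

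No step here is a real obstacle; the only point requiring any care is the domain property in case (4), handled above. One remark: in cases (3) and (4), a local Noetherian ring or a power series ring over one need not be a domain in general, which is exactly why the corollary adds "integral domain" to those hypotheses compared with Corollary~\ref{cor:free-interval-decomp-examples}.
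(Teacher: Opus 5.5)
Your proposal is correct and follows the paper's proof, which applies Theorem~\ref{thm:domain-uniqueness} after observing that every listed ring is an integral domain. Your extra checks fill in details the paper leaves implicit: that $k[x_1,\ldots,x_n]$ and $S[[x]]$ are domains, and that every poset category is automatically small, which reconciles the ``poset'' hypothesis here with the ``small poset category'' hypothesis of the theorem.
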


   \begin{proof}
      Immediate from Theorem~\ref{thm:domain-uniqueness} noting every ring listed is an integral domain.
   \end{proof}

   Recall that the rings listed in Corollary~\ref{cor:domain-unique-examples} have the property that every finitely generated projective module is free (see Corollary~\ref{cor:free-interval-decomp-examples}). Consequently, Corollaries~\ref{cor:main-pid-quiver}, \ref{cor:main-pid-total}, \ref{cor:main-pid-zigzag}, and \ref{cor:domain-unique-examples} collectively establish the existence and essential uniqueness of interval decompositions for persistence modules over such rings indexed by $A_n$-type quivers, totally ordered sets, and zigzag posets, provided the relevant necessary and sufficient conditions are met.

\end{document}